\documentclass{amsart}
\usepackage{epsfig,amssymb,graphicx,rotating,multirow,multicol}
\usepackage{amsmath,amsthm,amsfonts,amscd,relsize}
\usepackage[toc,page]{appendix}
\usepackage{caption,comment,marginnote,mathrsfs}
\usepackage[mathscr]{euscript}
\usepackage{epstopdf}
\usepackage{float}
\usepackage{hyperref}

\usepackage{blindtext}

\usepackage{epigraph}
\swapnumbers
\newtheorem{theorem}{Theorem}[subsection]

\newtheorem{lemma}[theorem]{Lemma}

\newtheorem{cor}[theorem]{Corollary}
\newtheorem{proposition}[theorem]{Proposition}
\theoremstyle{remark}
\newtheorem{remark}[theorem]{Remark}
\numberwithin{equation}{subsection}
\newcommand{\K}{\ensuremath{\mathbb{K}}}

\def\lay{\operatorname{\Cal L}}

\def\NN{\mathscr{N}}

{\catcode`\@=11 \gdef\mnote#1{\marginpar{\tiny
 \tolerance\@M\spaceskip2.6\p@ plus10\p@ minus.9\p@\rm#1}}}
\def\Dg:{\endgraf{\bf Dg:\enspace}\ignorespaces}

\let\Bbb\mathbb
\let\Cal\mathcal
\def\AA{\mathbb A}

\def\DD{\mathbb D}

\def\O{\mathop O}

\def\s{\sigma}
\def\sm{\smallsetminus}
\def\-{\sm}

\def\A{\mathscr{A}}
\def\B{\mathscr{B}}
\def\CC{\mathscr{C}}
\def\DDD{\mathscr{D}}

\newcommand{\be}{\begin{equation}}
\newcommand{\ee}{\end{equation}}
\let\ge\geqslant 
\let\le\leqslant 
\let\la\langle
\let\ra\rangle
\let\til\widetilde
\def\Im{\operatorname{Im}}

\def\res{\operatorname{res}}

\def\Z{\Bbb Z}
\def\R{\Bbb R}
\def\C{\Bbb C}
\def\Q{\Bbb Q}

\def\T{\Bbb T}
\def\S{\Bbb S}
\def\E{\mathcal E}

\def\LL{\mathcal L}
\def\SS{\Sigma}
\def\O{\mathcal O}
\def\P{\Bbb P}

\def\W{\mathcal W}
\def\Rp#1{\Bbb{RP}^{#1}}

\def\X{\mathcal X}
\def\XX{\mathcal H}

\def\Pin{\operatorname{Pin}}
\def\pt{\operatorname{pt}}

\def\id{\operatorname{id}}

\def\Pic{\operatorname{Pic}}

\def\lay{\operatorname{\Cal L}}

\def\conj{\operatorname{conj}}

\def\dsum{\bot\!\!\!\bot}
\def\bhv{\operatorname{\varUpsilon}}

\def\om{\omega}
\let\+\dsum

\let\a=\alpha

\def\Pic{\operatorname{Pic}}
\def\inh{{\rm inh}}

\def\FS{\Theta^{(1)}}

\let\oo=\varnothing
\let\e=\varepsilon
\let\:=\colon

\makeatletter
\newcommand{\addresseshere}{%
  \enddoc@text\let\enddoc@text\relax
}
\makeatother

\title[Conic Bundle]{ On wall-crossing invariant combinations of Welschinger numbers for rational surfaces with a conic bundle}

\author[]{S.~Finashin, V.~Kharlamov}

\begin{document}

\begin{abstract}
We refine the real enumerative invariants insensitive to changes in the real structure introduced in our previous paper on
del Pezzo surfaces, and extend these refined invariants to arbitrary rational surfaces.
The construction involves a choice of an auxiliary conic bundle together with a $\Pin^-$-structure.
We prove that the resulting invariants are
independent of these auxiliary choices and establish recursive relations for their computation.
\end{abstract}

\maketitle


\setlength\epigraphwidth{.68\textwidth}
\epigraph{\dots the course of my own investigations and views makes me think that 
[this my] consideration may be of service to those persevering labourers (amongst whom I endeavour to class myself), who, occupied in the comparison of physical ideas with fundamental principles, and continually sustaining and aiding themselves by experiment and observation, delight to labour for the advance of natural knowledge, and strive to follow it into undiscovered regions.}{M.~Faraday 1857, On the Conservation of Force. \\}



\section{Introduction} 

\subsection{Objects and goals}
The goal of this paper is to show how the counting scheme for real rational curves that is insensible to  wall-crossing,
introduced in \cite{surgery-invariant} for del Pezzo surfaces, can be extended to arbitrary rational surfaces 
equipped with an auxiliary conic bundle structure.

Throughout the paper we denote by $X$ a nonsingular complex rational surface
endowed with a real structure given by 
an antiholomorphic involution
$\conj_X \: X\to X.$ 
By a {\it real conic bundle} we mean a 
proper regular map $\pi\:X\to\P^1$ equivariant with respect to $\conj_X$ and the standard real structure $\conj_{\P^1}$ on $\P^1$,
whose generic fibers $\pi^{-1}(p)$, $p\in\P^1$,
are non-singular rational curves,  while the
singular fibers split into pairs of  $(-1)$-curves.
As is known (and explained in Section 2.1 below), there is a natural fiber-preserving $\conj$-invariant embedding 
$X\subset \P(\E)$
into the projectivization of a suitable 3-dimensional 
real vector bundle $\E\to \P^1$,
in which the fibers of $\pi$ 
become conics in the $\P^2$-fibers of $\P(\E)$.

The advantage of a conic bundle structure in $X$ is twofold.

On the one hand, it allows us to
 introduce, on the real locus 
 $X_\R={\rm Fix}(\conj),$
 certain  inherited $\Pin^-$-structures
 (see sections \ref{inh-even}--\ref{inh-odd} for precise definition).
As in our previous work \cite{surgery-invariant},
these structures provide a natural
sign correction to the Welschinger numbers \cite{Welsch}. 

On the other hand, in the del Pezzo case studied in \cite{surgery-invariant}, we summed
the modified Welschinger numbers
over the divisor classes $D$ 
having a fixed
intersection index 
$m=-KD\in \Z.$
Since $K^2>0$ in the del Pezzo case,
such a count involves only a finite number of divisor classes.
In contrast, if $K^2\le 0$, their number is infinite.
Another role played by conic bundles 
is therefore to make the count finite by restricting the sum to divisor classes that, in addition to having a fixed 
$m=-KD$,
have a fixed intersection index
with the divisor class of the fibers of the conic bundle (see Lemma \ref{finiteness}). 

\subsection{Counting scheme}
Since $X$ is assumed to be a rational surface, we have $\Pic(X)=H^2(X)$ and using Poincar\'e duality identify it with $H_2(X)$. 
We define the {\it bi-layer}, $\lay^{m,n}(X)\subset\Pic(X)= H_2(X)$, as the set of divisor classes $D$ with $-KD=m$ and $CD=n$, where $C$ is the divisor class of
the fibers of a fixed
conic bundle $\pi : X\to \P^1$. In the real case, we let $\lay^{m,n}_\R(X)= \lay^{m,n}(X)\cap\ker(1+\conj_*)$.

\begin{lemma}\label{finiteness}
For any 
conic bundle $X\to \P^1$ and any $m,n\in \Z$, the number of effective 
divisor classes $D$ of bi-level $(m,n)$ is finite.
\end{lemma}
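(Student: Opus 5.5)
The plan is to describe $\Pic(X)$ explicitly through the conic bundle and bound effective classes of bi-level $(m,n)$ coordinate by coordinate.

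\textbf{Setup.} A conic bundle $\pi\:X\to\P^1$ with $k$ singular fibers is obtained from a Hirzebruch surface $\mathbb F_r$ by blowing up $k$ points in distinct fibers. Write $F_1,\dots,F_k$ for the $(-1)$-curves that are one component of each singular fiber, and $F_i'=C-F_i$ for the other component. Let $S$ be a section with $S^2=-s$. Then $\Pic(X)$ has basis $S,C,F_1,\dots,F_k$, and
\[
K=-2S-(s+2)C+\textstyle\sum F_i .
\]
Take $D=aS+bC-\sum c_iF_i$. Then $CD=a=n$, so $a$ is fixed. Also $-KD=m$ gives a linear equation in $b$ and the $c_i$. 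With $a$ fixed, this reads $b=\text{const}+\tfrac12\sum c_i$, up to the exact constants.

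\textbf{Key step: bounding the $c_i$ for effective $D$.} If $D$ is effective, then $C\cdot D=n\ge 0$, because $C$ is nef (it moves in a base-point-free pencil). In addition, $D\cdot F_i\ge 0$ unless $F_i$ is a component of $D$, and similarly for $F_i'$. I would first write $D=\sum \alpha_i F_i+\sum\beta_iF_i'+\gamma E+D_0$, where $E$ is the negative section (if $s>0$) and $D_0$ is effective with no such curves as components. Then $D_0$ is nef on every fiber component and on $E$.

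Here $D_0\cdot F_i=c_i^0\ge0$ and $D_0\cdot F_i'=a_0-c_i^0\ge0$, so $0\le c_i^0\le a_0$. Also $a_0\le n$, since every removed curve has $C\cdot(\cdot)\ge0$. Every removed curve has $-K\cdot(\cdot)\ge 1$, except possibly $E$ when $s\ge2$. So $m\ge \sum\alpha_i+\sum\beta_i+\gamma(-KE)+(-KD_0)$.

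\textbf{Main obstacle.} The main difficulty is that $-KD_0$ and $-KE$ may be negative, or the section term may be unbounded. I would handle this with Riemann–Roch/adjunction on $D_0$: for effective nef-on-fibers $D_0$ with fixed $a_0$, the term $-KD_0=2b_0-\dots$ grows with $b_0$. Boundedness of $a_0$ and of the $c_i^0$ then bounds $b_0$ from above through the $m$ constraint, once the other terms are known to be bounded below.

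Using $-KF_i=-KF_i'=1$ and $-KE=2-s$, I would bound $\gamma$ by $\gamma\le C$-degree considerations. Since $C\cdot E=1$, we get $\gamma\le n$. This bounds $\sum\alpha_i+\sum\beta_i\le m+n|2-s|+\text{const}$. So all summands range over finite sets, given a lower bound on $-KD_0$, which should follow from $D_0$ being effective with $D_0\cdot E\ge0$, i.e.\ $b_0\ge s a_0$. Hence $D$ ranges over a finite set.
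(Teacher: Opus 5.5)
Your argument is correct, but it follows a different route from the paper. The paper proves the lemma in one line. By Proposition \ref{very-ample}, $-K+rC$ is ample for $r\gg0$. Every effective $D$ of bi-level $(m,n)$ has the fixed degree $(-K+rC)\cdot D=m+rn$ with respect to this ample class. There are only finitely many effective classes of bounded degree with respect to an ample class (\cite[Example 1.4.31]{lazar}, essentially compactness of slices of the cone of curves).

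You instead use explicit coordinates on $\Pic(X)$ from a Hirzebruch model. You split off the fiber components and one section, and bound everything by intersecting with $C$, $F_i$, $F_i'$, $E$. The lower bound you need, $-KD_0\ge (s+2-k)a_0$ with $k$ the number of singular fibers, is then pure linear algebra from $b_0\ge sa_0$ and $0\le c^0_i\le a_0\le n$. So the appeal to Riemann--Roch is unnecessary.

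Your route is self-contained and gives explicit bounds on the coordinates of effective classes of a given bi-level. It needs neither the ampleness of $-K+rC$ (which the paper gets from the embedding in $\P(\E_{a_0,a_1,a_2})$ with $a_i>0$) nor the cone-theoretic finiteness result. The paper's route is much shorter and avoids the bookkeeping.

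One point should be made explicit. The formula $K=-2S-(s+2)C+\sum F_i$, the value $-KE=2-s$, and the equivalence $D_0\cdot E\ge0\iff b_0\ge sa_0$ all presuppose that $E=S$ is an irreducible section meeting none of the $F_i$. You can always arrange this: take any section $S$ (one exists by Tsen's theorem) and let $F_i$ be the component of the $i$-th singular fiber not meeting $S$. Then $S$ is the pull-back of a section of self-intersection $-s$ on the contracted Hirzebruch surface, and adjunction gives the stated $K$. With this choice the argument works for every value of $s$, so the restriction ``if $s>0$'' is unnecessary.
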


\begin{proof}
Follows from \cite[Example. 1.4.31]{lazar}
and very ampleness of $-K+rC$ for any $r$ sufficiently large (see Proposition \ref{very-ample}).
\end{proof}

The main object of this paper is the following double sequence of integers
\begin{equation}\label{invrnt-definition}
\NN_{m,n}(X,\theta)= \sum_{\alpha\in \lay^{m,n}_\R(X)}
i^{\hat q(\alpha)-m^2}w_k(\alpha),\quad k\in\{0,1\}, \,k=m-1\,\text{mod}\,2
\end{equation}
where (in accordance with the definitions given in Section \ref{prelim-pin}):
\begin{itemize}
\item $\theta$ is a  $\Pin^-$-structure on $X_\R$ inherited from a $\Pin^-$-structure on $\P_\R(\Cal E)$, 
\item  $w_k(\alpha)$ is the {\it modified Welschinger number},
which differs from the original Welschinger number
by a factor
$(-1)^{g_a(\alpha)}$, with $g_a(\alpha)=\frac
{\alpha^2+\alpha  K}2-1$,
\item $\hat q=q\circ \bhv $ where $\bhv$ is the Viro homomorphism (see Section \ref{lifting})
and $q : H_1(X_\R;\Z/2) \to \Z/4$ is
the quadratic function defined by $\theta$ (see, for example, \cite{Kirby}).
\end{itemize}
If the number of singular fibers is $s$, we restrict our choice of $\theta$ to {\it monic $\Pin^-$-structures}, that is to those with $ q(w^*_1(X_\R))=1$.

\subsection{Main results} 
Note that the number $s$ of singular fibers of a conic bundle $X\to \P^1$ is $\chi(X)-4= 8-K^2$,
so, parity of $s$ is the same as for $\chi(X)$ and, thus, for $\chi(X_\R)$.

 \begin{theorem}\label{main-main} 
 The double sequence $\NN_{m,n}(X,\theta)$ is the same for all real conic bundles
 $X\to\P^1$ 
 with given number $s\ge 2$ of singular fibers and $X_\R\ne\varnothing$. In particular, this double sequence does not depend  on a choice of
 an inherited $\Pin^-$-structure $\theta$ (monic, if $s$ is odd).
Furthermore,
the numbers  $\NN_{m,n}(X,\theta)$ vanish if 
$n\notin 2\Z_{\ge0}$. Besides, if $s$ is even, then they vanish also if $m\notin 2\Z_{>0}$.
\end{theorem}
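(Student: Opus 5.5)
We prove the three assertions of Theorem \ref{main-main} separately: independence of $\theta$, invariance within and across deformation classes, and the vanishing statements.

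\emph{Step 1: independence of $\theta$.} We fix the real conic bundle $X\to\P^1$ and compare two inherited $\Pin^-$-structures $\theta,\theta'$ on $X_\R$. Both are induced from $\Pin^-$-structures on $\P_\R(\E)$, so they differ by a class $c\in H^1(X_\R;\Z/2)$ restricted from $H^1(\P_\R(\E);\Z/2)$. This group is generated by the pull-back of the generator of $H^1(\P^1_\R)$ and by the tautological class. Then $q'(x)=q(x)+2\langle c,x\rangle$. For $x=\bhv(\alpha)$, the pairing $\langle c,\bhv(\alpha)\rangle$ should be computable in terms of $\alpha$. The fibre class gives $CD=n \bmod 2$. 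The tautological class should give $-K\alpha=m \bmod 2$, or a combination of the two, since the conic fibres sit in $\P^2$-fibres as degree-2 curves. Hence $i^{\hat q'(\alpha)}=\pm i^{\hat q(\alpha)}$, with a sign depending only on $(m,n)$ and not on $\alpha$. On the whole bi-layer this is an overall sign, so we must show it is trivial. The restriction to monic structures when $s$ is odd is needed here: it pins down the value on $w_1^*$, so the change of structure preserving monicity has $\langle c,w_1^*\rangle=0$. Where the sign would be nontrivial, we expect $\NN_{m,n}$ to vanish by Step 3. This vanishing absorbs the ambiguity.

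\emph{Step 2: invariance under deformation and wall-crossing.} Within a fixed deformation class of real conic bundles, Welschinger's invariance theorem gives invariance of each $w_k(\alpha)$. Here $k=m-1 \bmod 2$ real points are placed in pairs. The $\Pin^-$-structure, Viro homomorphism and bi-layer are transported continuously. It remains to cross between different real structures with the same $s$. We would use the classification of real conic bundles: they are connected by elementary real degenerations in which $X_\R$ undergoes a Morse-type surgery. Examples are two real singular fibres merging into a pair of conjugate singular fibres, or a real fibre component passing through a node. For each such wall we adapt the surgery formula of \cite{surgery-invariant}. The Welschinger numbers of the two sides are related through a nodal surface: classes $\alpha$ on one side correspond to $\alpha$ and $\alpha\pm E$ on the other, where $E$ is the vanishing class. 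The weights $i^{\hat q(\alpha)}$ must be chosen so that these contributions cancel or match. This is the main obstacle. Carrying out this step requires:
\begin{itemize}
\item checking that inherited $\Pin^-$-structures extend across the surgery, which holds because the surgery is fibrewise inside $\P_\R(\E)$;
\item verifying that $\hat q$ changes by exactly the $i$-power compensating the Welschinger sign changes, including the factor $(-1)^{g_a}$ and the normalization $i^{-m^2}$.
\end{itemize}
I would first reduce to local models near a single degenerate fibre and reuse the del Pezzo computation. The constraint $CD=n$ is fibrewise and preserved, so the argument from \cite{surgery-invariant} should go through verbatim.

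\emph{Step 3: vanishing.} Invariance lets us choose a convenient real structure, for instance one with $X_\R$ containing a real fibre and an extra symmetry. For $n$ odd, we choose a structure admitting a real automorphism or a change of $\theta$ that multiplies the weight by $-1$ while fixing $w_k$. Step 1 gives a sign $(-1)^n$, and equality with the original, guaranteed by invariance, forces $\NN_{m,n}=0$. If $n<0$, no effective class exists, since $C$ is nef. For even $s$, the same trick with the tautological class gives the sign $(-1)^m$, which kills odd $m$. For $m\le0$ the sum is empty or zero: effective curves satisfy $-KD>0$ along the moving part, and $w_k$ vanishes for classes with no rational curves through $m-1$ points.
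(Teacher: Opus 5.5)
\textbf{There is a genuine gap, located in Step~3, and Step~1 depends on it.}

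Your Step~1 is correct as far as it goes. For a fixed $X$, two inherited structures differ by $c=a\,w_1+b\,[C_\R]^*$, which gives $\NN_{m,n}(X,\theta')=(-1)^{am+bn}\NN_{m,n}(X,\theta)$. Monicity forces $a=0$ when $s$ is odd. So $\theta$-independence is \emph{equivalent} to the parity vanishing.

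Step~3, however, derives that vanishing from ``equality with the original, guaranteed by invariance'', that is, from $\theta$-independence. This is circular. The alternative you mention, a real automorphism acting on $\Pin^-_\inh(X_\R)$ by $[C_\R]^*$ (resp.\ $w_1$), is never constructed. Step~2 cannot substitute for it: wall-crossing gives equality only for \emph{coherently transported} structures along a path. It would change $\theta$ on a fixed $X$ only if you exhibited a loop of deformations and walls with that monodromy, which you do not.

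\textbf{The missing idea is simpler and is what the paper uses.} By wall-crossing, move to a real conic bundle $Y\in\XX^s$ with $Y_\R=S^2$ ($s$ even) or $Y_\R=\Rp2$ ($s$ odd).
\begin{itemize}
\item Such $Y$ has real fibres $C$ with $C_\R=\varnothing$. For every $D\in H_2^-(Y)$ this gives $D\cdot C\equiv\bhv(D)\cdot[C_\R]=0 \bmod 2$, so $\lay^{m,n}_\R(Y)=\varnothing$ for odd $n$: there is nothing to cancel.
\item If $s$ is even, $D\cdot K\equiv\bhv(D)\cdot\bhv(K)=0$ because $H_1(S^2;\Z/2)=0$. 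So the odd-$m$ layers are empty as well.
\item $S^2$ carries a unique $\Pin^-$-structure and $\Rp2$ a unique monic one. Every $\theta^X$ extends to a coherent family, so transporting it to $Y$ yields $\theta$-independence directly, without needing your Step~1 at all.
\end{itemize}

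Step~2 is in the right spirit: it is the push-forward formula of \cite{surgery-invariant} combined with coherence of inherited structures. But it remains a sketch. You also need three further ingredients:
\begin{itemize}
\item every real conic bundle with $s\ge2$ deforms into $\XX^s$ (Theorem~\ref{exhaustive});
\item any two real members of $\XX^s$ are joined by real Morse--Lefschetz walls avoiding empty real loci (Proposition~\ref{connecting}, via Lefschetz pencils in $|2M+dF|$);
\item for odd $s$, the distinguished fibre $C$ can be carried along away from the nodes.
\end{itemize}
The last point is also what handles the dependence on the choice of $C$, which your Step~1 does not address.
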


This theorem
allows to forget about auxiliary conic bundle and a $\Pin^-$-structure and
attribute numbers $\NN_{m,n}$
to any rational surface $X$ with fixed $K^2\le6$. In Section  \ref{positivity} we show that these numbers are all greater or equal to $0$.

In the next theorem, 
by $GW_X(\alpha)$ 
we mean the  genus-$0$ Gromov-Witten  invariant
of $\alpha \in H_2(X)$ and, thanks to Theorem \ref{main-main}, drop $\theta$ from notation  $\NN_{2m,2n}(X,\theta)$

\begin{theorem}\label{magic-theorem}
For any $m\in \Z_{\ge 1}, n\in\Z_{\ge 0}$, and any real conic bundle $X\to\P^1$ 
with $s\ge 4$ singular fibers and $X_\R\ne\varnothing $, we have
\begin{equation}\label{mag-formula}
\NN_{2m,2n}(X)= 2^{m-3}\sum_{\alpha\in \lay^{m,n} (X)} (e\alpha)^2 GW_X(\alpha),
\end{equation}
where 
$e$ is an arbitrary element of $\la K,C\ra^\perp\subset H_2(X)$ with $e^2=-2$.
\end{theorem}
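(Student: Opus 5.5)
Since Theorem \ref{main-main} makes $\NN_{2m,2n}$ independent of the real structure, conic bundle and $\Pin^-$-structure, I will compute it on one convenient model. For $s\ge 4$ singular fibers I will take a real conic bundle whose real structure makes the real part as degenerate as possible while $X_\R\ne\varnothing$: a pair of singular fibers are complex conjugate to each other, and their components are exchanged by $\conj$ in a way that puts most classes into $\ker(1+\conj_*)$ only through ``symmetric'' combinations. Concretely, I want a real structure with $\conj_*$ acting on $\la K,C\ra^\perp$ so that the real classes are of the form $\alpha+\conj_*\alpha$ in the relevant bi-layer. Then I apply the Welschinger-type formula for curves through purely imaginary (conjugate-pair) points, in the spirit of Shustin/Brugall\'e--Puignau: for real structures with $X_\R$ a sphere or a disjoint union like $\Rp1\times\Rp1$ quotient, $w_1$ with all points in conjugate pairs equals a sum over decompositions $D=\alpha+\conj_*\alpha$ of $(\alpha\cdot\conj_*\alpha)$-type factors times $GW_X(\alpha)$, up to a power of $2$.

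The key steps, in order, are as follows. First, fix the model and compute $\conj_*$ on $H_2(X)$, verifying that $\lay^{2m,2n}_\R(X)$ is exactly $\{\alpha+\conj_*\alpha:\alpha\in\lay^{m,n}(X)\}$ plus classes contributing zero. Second, invoke the degeneration/symplectic-sum formula for Welschinger invariants with $k=1$ (one real point, $m-1$ conjugate pairs, since $2m-1\equiv 1$) expressing $w_1(D)$ as $\sum_{\alpha+\conj_*\alpha=D}(\pm)2^{m-1}\,(\text{correction})\,GW_X(\alpha)$; the factor $(e\alpha)^2$ should arise from $\alpha\cdot\conj_*\alpha$ relative to $\alpha^2$, since $\conj_*\alpha=-\alpha+(\text{something in }\la K,C\ra)$ modulo reflection in $e$, so that $\alpha\cdot\conj_*\alpha-\alpha^2$ is a linear function of $(e\alpha)^2$ after summing over the $W$-orbit. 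Third, compute the sign $i^{\hat q(D)-4m^2}(-1)^{g_a(D)}$ through the Viro homomorphism and check that it cancels the Welschinger signs, leaving all contributions nonnegative (consistent with the positivity proved in Section \ref{positivity}). Finally, use invariance of $GW_X$ under the Weyl group action on $\la K,C\ra^\perp$ (monodromy of conic bundles with $s$ singular fibers acts as a group of type $D_s$ which is transitive on roots $e$ with $e^2=-2$ when $s\ge 4$) to replace the model-specific vector by an arbitrary $e$, averaging to get the stated $2^{m-3}$.

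The main obstacle is the second step: producing an honest formula for $w_1$ in the chosen model in terms of $GW_X(\alpha)$ with exactly the weight $(e\alpha)^2$. I would first try a degeneration of the real structure to a surface where two conjugate $(-1)$-curves collapse, applying the Brugall\'e--Puignau comparison which yields multiplicities of the form $(\alpha\cdot E)^2$ for the vanishing class, identifying $E-\conj_*E$ with $e$. The hypothesis $s\ge4$ is needed both to have such a conjugate pair available and for the transitivity of monodromy on roots.
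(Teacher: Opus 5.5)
Your opening reduction matches the paper: by Theorem \ref{main-main} the left side can be computed on a single model, and by Proposition \ref{root-moments} the right side does not depend on the root $e$. The gap is the step in between, which you flag yourself as the main obstacle, and the way you propose to fill it would not work.

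Knowing that real classes of bi-level $(2m,2n)$ are homologically of the form $\alpha-\conj_*\alpha$ (in the paper's convention $H_2^-=\ker(1+\conj_*)$) says nothing about how real curves are counted. The weight $(e\alpha)^2$ does not come from comparing $\alpha\cdot\conj_*\alpha$ with $\alpha^2$. The factor $2^{m-3}$ cannot come from averaging over roots, since that averaging has no $m$-dependence.

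More seriously, the degeneration you suggest has the wrong kind of vanishing cycle. If $E$ and $\bar E=\conj(E)$ are conjugate $(-1)$-curves with $e=E-\bar E$ a root, then $E\cdot\bar E=0$, so $E\cdot e=-1$. On the other hand $E\cap X_\R\subset E\cap\bar E=\varnothing$. Hence $e$ is never the class of a component of $X_\R$ (a vanishing sphere in this class meets $X_\R$ in two points), and you cannot put the real point constraint on it. A telling symptom: every ingredient you invoke is already available for $s=3$, namely a conjugate pair of singular fibers, a root $E-\bar E$, and transitivity of the Weyl group on roots (which holds for $s\ge 3$). Yet, as the paper notes at the end, the formula fails for $s=3$.

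The missing idea is the model of Proposition \ref{spherical}, which exists precisely for $s\ge4$. It has $X_\R=S^2$ for even $s$ and $X_\R=\Rp2\+S^2$ for odd $s$, and its sphere component is a real vanishing cycle collapsing to the node of a real Morse--Lefschetz family. This, not the availability of conjugate fibers, is where $s\ge 4$ enters.

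One puts the single real point on that sphere. The degeneration argument of \cite{surgery-invariant}, run in a symplectic degeneration whose singular fiber has a $\P^1\times\P^1$-component, then reduces the count to conjugate pairs of complex curves of classes $\alpha$ and $-\conj_*\alpha$ with $-K\alpha=m$. This is what produces $(e\alpha)^2GW(\alpha)$ with $e=[S^2]$, together with the factor $2^{m-3}$. On this model $i^{\hat q-4m^2}\equiv 1$, so there is no sign cancellation to arrange.

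You also do not address the refinement by $n$. That degeneration argument concerns whole layers. To restrict it to conic level $2n$ (real curves) and $n$ (complex curves), the paper uses a $\conj$-invariant family of fibers $C(z)$ avoiding the node. The dual class of this family extends over the total space of the degeneration and vanishes on the $\P^1\times\P^1$-component.
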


Due to this theorem, 
for any real conic bundle $X$ with $s\ge 4$ singular fibers,  computating $\NN_{2m,2n}$ can be reduced
to a computation of the second moments of Gromov-Witten invariants, 
$$
GW^{[2]}_{m,n}:=\sum_{\alpha\in \lay^{m,n} (X)} (e\alpha)^2 GW(\alpha), \, e\in (K,C)^\perp, \, e^2=-2,
$$
(which are indeed independent on a choice of a root $e\in (K,C)^\perp$, see Proposition \ref{root-moments}).  To compute these moments we 
provide a pair of recursive relations for computing simultaneously 
$GW^{[2]}_{m,n}$ and 
$$GW_{m,n}:=\sum_{\alpha\in \lay^{m,n} (X)} GW(\alpha)
$$
starting from $GW_{m,0}$,
$GW_{1,n}$, and 
$GW^{[2]}_{1,n}$ as initial valuers, see  Proposition \ref{GWm0} and Theorems \ref{simple-recursion-any-d}, \ref{moment-recursion-any-d}.

Note that, due to Theorem \ref{main-main}, if the number of singular fibers is even, the numbers $\NN_{m,n}$ vanish unless both $m,n$ are even. Thus, in the case of even number of singular fibers, the 
cited recurrence relations
reduce the computation of the whole double sequence $\NN_{m,n}$ to a computation of 
sequences $GW_{m,0}$, $GW_{1, n}$, and $GW^{[2]}_{1,n}$.

In the case of odd number of singular fibers, the above recurrence relations are missing the numbers $\NN_{m,n}$ with odd $m$. Thus, to complete the recursion 
we provide an additional recurrence relation  (see Corollary \ref{N-recursion}), which allows us to reduce computing the double sequence $\NN_{m,n}$ to a computation of 
three sequences of initial values, $GW_{1,n}$,  $GW^{[2]}_{1,n}$, and $\NN_{1,n}$.

\subsection{The relation to previously defined invariants for del Pezzo surfaces}
To compare with the results in \cite{surgery-invariant}, we recall that the invariants $\NN_m$ introduced there
\footnote{Our notation for $\NN_m$ in \cite{surgery-invariant} 
is $N_{m,k}$, where $k\in\{0,1\}$ is determined by 
$k=m-1\, \text{\rm mod }\, 2$.} 
were defined following the same counting scheme  (\ref{invrnt-definition}). The only difference is that
the summation was taken over $\alpha\in \lay^{m}_\R(X):=\cup_{n\ge 0}\lay^{m,n}_\R(X)$ and that the $\Pin^-$-structure was chosen differently and is not associated with a conic bundle.
Our new invariants $\NN_{m,n}$ refine the previous $\NN_m$ in the following sense.

\begin{theorem}\label{refinement}
For any 
$m\ge1$, we have
$\NN_m=\sum_{n\ge0}\NN_{m,n}.$
\end{theorem}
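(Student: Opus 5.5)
The identity is a statement about regrouping one sum, so the plan is to check that both sides are sums of the same terms with the same weights. By definition, $\lay^m_\R(X)=\bigcup_{n\ge0}\lay^{m,n}_\R(X)$, and the union is disjoint because $n=CD$ is determined by $D$. The modified Welschinger number $w_k(\alpha)$ depends only on $X$, $\alpha$ and $k$, with $k\equiv m-1 \bmod 2$ in both settings. Also, $w_k(\alpha)=0$ unless $\alpha$ is effective, so only classes with $n=C\alpha\ge0$ contribute, since $C$ is nef. Finiteness of each summand is Lemma \ref{finiteness}, and finiteness of the total is automatic in the del Pezzo case. The only real discrepancy is the sign factor $i^{\hat q(\alpha)-m^2}$. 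In \cite{surgery-invariant} it is computed with a $\Pin^-$-structure $\theta_0$ chosen without reference to a conic bundle. Here it is computed with an inherited structure $\theta$, monic when $s$ is odd. So the whole proof reduces to comparing these quadratic functions on the classes that actually occur.

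First I would take the del Pezzo surface $X$ with a real conic bundle; one exists since $K^2\le 6$ is forced by $s\ge 2$. Then I would recall the characterization of $\theta_0$ from \cite{surgery-invariant}. I expect it is the $\Pin^-$-structure whose quadratic function $q_0$ is determined by the requirement that the Viro-lifted classes of real $(-1)$-curves, or equivalently of $K$-orthogonal vanishing classes, take prescribed values. Alternatively it is specified through an embedding compatible with the anticanonical map. Second, I would show that some inherited $\theta$ coincides with $\theta_0$ on the relevant part of $H_1(X_\R;\Z/2)$. By Theorem \ref{main-main}, $\NN_{m,n}(X,\theta)$ does not depend on the choice of inherited (monic) $\theta$, so it is enough to exhibit one such $\theta$. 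Any two $\Pin^-$-structures differ by a class $c\in H^1(X_\R;\Z/2)$, and $q_\theta(x)=q_{\theta_0}(x)+2\langle c,x\rangle$. Hence it suffices to show that $\langle c,\bhv(\alpha)\rangle=0$ for all $\alpha$ in $\ker(1+\conj_*)$, or at least for all $\alpha$ with $w_k(\alpha)\ne0$.

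To get this, I would compute both $q$'s on a convenient basis of $\ker(1+\conj_*)$. Natural choices are the fiber class $C$, the components $E,C-E$ of real singular fibers, pairs of conjugate $(-1)$-curves, and a section. For each such class I would identify $\bhv(\alpha)$ with an explicit circle or its absence in $X_\R$. For the inherited structure, its value is read off from how the corresponding real curve sits in $\P_\R(\E)$: a real fiber conic is null-homotopic in a $\Rp2$-fiber and gets $q=0$, with the monic normalization fixing $q(w_1^*)$. Then I would check that these values agree with the prescriptions defining $\theta_0$. Where needed, I would use the free choice of $\theta$ among inherited structures, namely the choice of $\Pin^-$-structure on $\P_\R(\E)$, to match the generator values.

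I expect the main obstacle to be precisely this identification. The two $\Pin^-$-structures are defined in different ways, and the Viro homomorphism must be tracked on each basis class, separately for even and odd $s$. If the direct comparison gets messy, my fallback is to avoid it altogether. I would use the wall-crossing invariance of both $\NN_m$ and $\sum_n\NN_{m,n}$ to deform to a single convenient real structure, for example a maximal one, where the equality $\theta=\theta_0$ can be verified by hand.
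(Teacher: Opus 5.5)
Your bookkeeping is correct: $\lay^m_\R(X)=\bigsqcup_{n\ge0}\lay^{m,n}_\R(X)$, the factors $w_k(\alpha)$ are the same on both sides, and everything hinges on the signs $i^{\hat q(\alpha)-m^2}$. The gap is in the step that carries the proof: producing an inherited $\theta$ with $\hat q_\theta(\alpha)=\hat q_{\theta_0}(\alpha)$ for every $\alpha$ with $w_k(\alpha)\ne0$. You only announce this step, and you have no description of $\theta_0$ to compare against. This termwise agreement is also stronger than anything the invariance results give. Independence of $\NN_m$ and of $\NN_{m,n}$ from the real structure only equates the \emph{totals}. The two families of $\Pin^-$-structures come from different ambient data, and on a surface with large $H^1(X_\R;\Z/2)$ nothing forces them to meet. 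Your fallback has the right frame (deform, using invariance of both sides), but a maximal surface is the worst test case you could pick. There $H^1(X_\R;\Z/2)$ is as large as it can be, so saying that $\theta=\theta_0$ ``can be verified by hand'' just restates the unresolved comparison.

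The missing idea is to deform to the opposite extreme, where there is nothing to compare. By Proposition \ref{spherical-construction} (with $s_\R=2$, resp.\ $s_\R=3$) there is a real conic bundle $Y$ with the same $s$ and $Y_\R=S^2$ if $s$ is even, $Y_\R=\Rp2$ if $s$ is odd. On $S^2$ the $\Pin^-$-structure is unique, so $\hat q\equiv0$ for both constructions. On $\Rp2$ there are exactly two structures, distinguished by $q(w_1^*)=\pm1$, and the monic normalization picks the one used in \cite{surgery-invariant}. Hence $\hat q_\theta=\hat q_{\theta_0}$ on $H_2^-(Y)$, and the two sums for $Y$ coincide term by term by your disjoint decomposition. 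The identity then passes to every $X$ because both $\NN_m$ (\cite{surgery-invariant}, Remark \ref{without-dP}) and $\NN_{m,n}$ (Theorem \ref{main-main}) are independent of the real structure. This is exactly the paper's proof.
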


\begin{remark}\label{without-dP}
In \cite{surgery-invariant} we explored
an enumerative (curve counting) meaning of the numbers $\NN_{m}$, and by this reason all the results 
were stated only for del Pezzo surfaces.
However, since the deformation classification of real del Pezzo surfaces coincides with that of real rational surfaces having $K^2>0$, all the results 
in \cite{surgery-invariant}
can be extended to the latter class of surfaces.
Note also that the cases $5\le K^2\le 6$ were excluded 
from consideration in \cite{surgery-invariant} because
they required a different approach to define
the $\Pin^-$-structures.
\end{remark}

\subsection{Plan of the paper}\label{plan}
In Section \ref{preliminaries}, we 
reduce the study of arbitrary conic bundles to 3 particular types of embedded ones (see Theorem \ref{exhaustive}).
In Section \ref{prelim-pin}, we introduce the notion of inherited $\Pin^-$-structures and establish some basic properties of our
counting scheme.
Section \ref{proof-main} is devoted to proving 
Theorems stated in the Introduction.
In Section \ref{sect-recursion}, we provide recursion relations for computing numbers $GW_{m,n}$, $GW_{m,n}^{[2]}$ and finally
$\NN_{m,n}$.
In Sections \ref{upto8} and \ref{nine}, we compute the initial values for the above recursion,
in the cases $K^2>0$ and $K^2=0$, respectively. 
 In addition, in Section \ref{nine} we provide a generating function for $\NN_{m,n}$ in the case $K^2=0$.  
 In Section \ref{concluding} we give additional recursion relations 
and present tables with some numerical values of our invariants.
In conclusion, we discuss the case of real rational surfaces that do not admit any conic bundle structure.

\subsection{Conventions}
Occasionally, we extend our considerations to
complex-analytic (rather than algebraic) varieties $Z$ (usually, to an open subset $Z\subset X$ in a rational surface).
We call such variety $Z$ {\it real} if it is endowed with 
an anti-holomorphic involution $\conj :Z\to Z$, called a {\it complex conjugation}, or {\it real structure}.
We denote by $Z_\R\subset Z$ the fixed point set of $\conj$ and call it  the {\it real locus} of $Z$.

A notion of conic bundles 
extends
literally to the complex-analytic setting, with any 
smooth complex-analytic space
$B$ as a base.
Such a conic bundle $\pi:Z\to B$ is called {\it real}, if $X$ and $B$ are real and $\pi$ is equivariant.
By {\it minimality} of a conic bundle $X\to\P^1$ we mean relative minimality, i.e., absence of exceptional curves in the fibers.

Given a pair of fiber bundles over the same base, by a {\it bundle isomorphism} we mean 
an isomorphism 
acting identically on the base.
By
a {\it bundle automorphism} we understand an automorphism sending
each fiber to itself.

Given a real Lefschetz family 
$f : \X\to \DD$ with nonsingular $\X$ and a nodal degeneration at $0\in\DD$ (which we will call Morse-Lefschetz families, see details in Section \ref{ML-families}) we mean by a
{\it real vanishing cycle of $X(z)=f^{-1}(z), z\ne 0,$ in $\X$}
a $\conj$-invariant representative of classical Lefschetz vanishing spheres.

\subsection{Acknowledgements} 
This research was initiated in 2024
during our Research in Residence visits to Centre International de Rencontres Math\'ematiques, Luminy, and to  Institut des Hautes \'Etudes Scientifiques, Paris. We completed it during our Research Fellows staying in MFO, Oberwolfach, 2026.
We thank these institutions for hospitality and excellent working conditions.

\section{Conic bundles}\label{preliminaries}

\subsection{Embedded conic bundles}\label{CanEmbed}
Let $\pi_{\Cal E} : \Cal E\to\P^1$ be a rank 3 vector bundle, $\P(\Cal E)$ its projectivization
and $X\subset \P(\Cal E)$ a nonsingular hypersurface such
that every fiber $X_p= X\cap \pi_\E^{-1}(p)$, $p\in \P^1$, is a reduced conic.
Then, the embedding $X\subset \P(\Cal E)$  enriched by the restriction $\pi:X\to\P^1$ of $\pi_{\E}$ is called an {\it embedded conic bundle}.
For any embedded conic bundle $\pi:X\subset \P(\E ) \to\P^1$, at every point $p\in \P^1$, one can find a neighborhood $p\in U\subset \P^1$
 over which
the conic bundle is isomorphic to one of two standard conic bundles
embedded into $U\times \P^2\cong \pi^{-1}(U)$ : 
one being given by equation $ax^2+by^2+cz^2=0$ where $x,y,z$ are homogeneous coordinates in $\P^1$ and $a,b,c$ 
are elements of the ring of regular functions $\Cal O(U)$  not vanishing at $p$, if 
the fiber over $p$ is nonsingular, and, otherwise, given by equation $ax^2+by^2+ct z^2=0$ where, in addition, $t$ is a local coordinate in $U$ 
vanishing at $p$ ({\it cf.} \cite[Lemme 1.5.2]{Beau}). Over $\C$
up to a choice of complex analytic isomorphism
 $U\times \P^2\cong \pi^{-1}(U)$ 
 one can choose $a, b, c$ equal to $1$, while over $\R$ up to a $\conj$-equivariant isomorphism they can be choosen equal to $\pm 1$.

On the other hand, as is well known 
\cite{Beau, iskov, sarkis},
for any conic bundle $\pi:X\to B$, there exists a natural, functorial, construction realizing it as an embedded conic bundle. Namely, the anticanonical line bundle $-K_X$  is
{\it relatively ample} and defines 
a so called {\it canonical embedding}  $\phi: X\hookrightarrow  \P(\Cal E)$ where $\Cal E=\pi_*(\Cal O(-K_X))$ is a 3-dimensional vector bundle.
The image $\phi(X)$ is the zero-locus for
a nonzero section of the line bundle $\O_{\P(\Cal E)}(2)\otimes \pi^*(\det \Cal E\otimes K_B)$.
For each point $p\in B$, the image $\phi(X_p)$ of the fiber $X_p=\pi^{-1}(p)$
is a conic in $\P(\Cal E_p)$.
This construction works for conic bundles over an algebraic, or complex-analytic, manfiold $B$ of any dimension. In particular, this implies the following statement.
\begin{proposition}\label{deform-canonical}
Any smooth family of conic bundles $\pi(t): X(t)\to \P^1, t\in S$, over a parameter space $S$ of any dimension, defines a smooth family of
3-dimensional vector bundles $\E(t)=\pi_*(\Cal O(-K_{X_t}))$ endowed with the
canonical
conic bundle embeddings 
$\phi(t): X(t)\hookrightarrow  P(\E(t)), t\in S$.
For real families of conic bundles, the families $\E(t), \phi(t)$ are real as well.
\qed
\end{proposition}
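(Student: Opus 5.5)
The plan is to apply the canonical embedding construction just recalled fiberwise in the parameter $t$. Put the whole family together as a single total space $\X=\bigcup_{t\in S}X(t)$ with the projection $\Pi:\X\to \P^1\times S$, $\Pi|_{X(t)}=\pi(t)$. Then $\Pi$ is itself a conic bundle over the base $B=\P^1\times S$, and the construction of the canonical embedding works over a base of any dimension (as recalled above from \cite{Beau, iskov, sarkis}). We use the relative anticanonical bundle $\omega^{-1}_{\X/S}$. Its restriction to each $X(t)$ is $-K_{X(t)}$, it is relatively ample for $\Pi$, and $\Cal F=\Pi_*\omega^{-1}_{\X/S}$ is a vector bundle of rank $3$ on $\P^1\times S$. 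We then take $\E(t)=\Cal F|_{\P^1\times\{t\}}$ and let the global embedding $\X\hookrightarrow\P(\Cal F)$ restrict to $\phi(t)$.

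The key steps, in order, are these:
\begin{enumerate}
\item Local freeness and base change. On each fiber $\Pi^{-1}(p,t)$, a conic $Q$, we have $h^0(-K|_Q)=h^0(\O_{\P^2}(1)|_Q)=3$ and $h^1=0$. This holds for smooth and for reducible reduced conics alike, since $-K|_Q$ has degree $2$ on a smooth conic and degree $1$ on each component of a reducible one. Grauert's theorem then shows that $\Cal F$ is locally free of rank $3$ and that its formation commutes with restriction to $\P^1\times\{t\}$. Consequently $\Cal F|_{\P^1\times\{t\}}=\pi(t)_*\O(-K_{X(t)})$, which is exactly $\E(t)$.
\item The embedding. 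Relative very ampleness is checked fiberwise: $-K$ embeds each fiber as a plane conic. So the natural map $\X\to\P(\Cal F)$ is a closed embedding over $\P^1\times S$, and its restriction to $X(t)$ is the canonical embedding $\phi(t)$.
\item Reality. Suppose the family is real, with an involution on $\X$ covering $\conj_{\P^1}$ times an involution of $S$. The involution preserves the relative canonical bundle antiholomorphically, so it induces an antilinear involution on $\Cal F$ compatible with the embedding. The functoriality of $\pi_*$ and of $\omega$ gives this directly.
\end{enumerate}

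The main obstacle is step 1, the cohomology and base change for the singular fibers. This is where one needs $h^1(-K|_Q)=0$ and the constancy of $h^0$ over all of $\P^1\times S$. The constancy of $h^0$ holds because every fiber is a reduced conic, so $h^0=3$ throughout. The same fiberwise argument also covers the case where the number and position of singular fibers vary with $t$. In the smooth (non-holomorphic) parameter setting, the same reasoning is applied using the continuity of the rank-$3$ direct images, and this yields a smooth family of vector bundles.
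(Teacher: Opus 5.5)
Your proposal is correct and takes the same route as the paper. The paper's entire justification is the remark that the canonical embedding $\E=\pi_*\O(-K)$ works over a base of any dimension, applied to the total family over $\P^1\times S$; your base-change, fiberwise very-ampleness and reality checks fill in what the paper leaves implicit (two small additions: Grauert also needs flatness of $\Pi\colon\X\to\P^1\times S$, which holds because source and target are smooth and all fibers are curves, and $\omega^{-1}_{\X/S}$ and $\O(-K_\X)$ differ only by a pull-back from $S$, so they give the same $\E(t)$).
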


\begin{proposition}\label{unicity-general} Let 
$X\subset \P(\Cal E^X)$, $Y\subset \P(\Cal E^Y)$
be a pair of
complex or real
 conic bundles embedded into $\P^2$-bundles
$\P(\E^X)\to \P^1$, $\P(\E^Y)\to \P^1$.
Then, 
any 
conic-bundle
isomorphism
$f: X\to Y$ 
extends to 
a unique  bundle 
isomorphism $\P(\Cal E^X)\to \P(\Cal E^Y)$.
If $X$ and $Y$ are real embedded bundles, then this isomorphism is equivariant.
\end{proposition}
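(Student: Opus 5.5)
The plan is to reduce everything to the canonical embedding of the previous paragraph, using its functoriality. The key point is that for an arbitrary embedded conic bundle $X\subset\P(\E^X)$, the bundle $\E^X$ agrees with $\pi_*\O(-K_X)$ up to a twist by a line bundle on $\P^1$. So an abstract isomorphism $f$ will induce an isomorphism of the ambient projective bundles. Uniqueness will come from the fact that each fiber conic spans its $\P^2$.

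\emph{Step 1: the identification.} Let $X\subset\P(\E^X)$ be an embedded conic bundle. Adjunction for the smooth hypersurface $X$ in the $\P^2$-bundle $\P(\E^X)\to\P^1$ gives $\O_{\P(\E^X)}(1)|_X\cong -K_X\otimes\pi^*M$ for some line bundle $M$ on $\P^1$. To see this, note that on each fiber conic $X_p$, $\O(1)$ has degree $2=-K_X\cdot C$; both sides are line bundles of fiberwise equal degree on a rational surface fibered over $\P^1$, so they differ by a pullback. Fiberwise restriction $H^0(\P(\E^X_p),\O(1))\to H^0(X_p,\O(1)|_{X_p})$ is an isomorphism, because a reduced plane conic is not contained in a line and $h^0$ of $\O(1)$ on a conic is $3$. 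Hence by base change $\pi_*(\O(1)|_X)\cong(\E^X)^*$ (or $\E^X$, depending on the convention for $\P$). We therefore obtain a canonical isomorphism
$$\E^X\cong \pi_*\O(-K_X)\otimes M'$$
for a line bundle $M'$. Under it, the given embedding becomes the canonical embedding $\phi$, since both are defined by the complete fiberwise linear system of $\O(1)|_X$.

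\emph{Step 2: existence of the extension.} Let $f\colon X\to Y$ be a conic-bundle isomorphism covering the identity of $\P^1$. Then $f^*K_Y=K_X$, so $f$ induces $\pi_*\O(-K_Y)\cong\pi_*\O(-K_X)$. Composing with Step 1 gives an isomorphism $\P(\E^X)\cong\P(\E^Y)$, since projectivization kills the line-bundle twists. This isomorphism extends $f$ by naturality of the embeddings defined by $|\O(1)|$ on fibers.

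\emph{Step 3: uniqueness.} Two extensions differ by a bundle automorphism of $\P(\E^X)$ that is the identity on $X$, hence fiberwise a projective transformation of $\P^2$ fixing a conic pointwise. If the conic is smooth it contains four points in general position, and if it is a line pair it contains two lines, so again such a quadruple exists. Either way the transformation is the identity.

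\emph{Step 4: the real case.} If $X$ and $Y$ are real and $f$ is equivariant, then $\conj_Y\circ F\circ\conj_X$ is another extension of $f$. By uniqueness it equals $F$, so $F$ is equivariant.

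I expect the main obstacle to be Step 1, namely checking that the relation between $\O(1)|_X$ and $-K_X$ holds globally, including over the singular fibers. The route I will try first is to treat the line bundle $\O(1)|_X\otimes K_X$ directly: it has degree $0$ on every fiber component (each line of a degenerate fiber has degree $1$ for both), and it is therefore pulled back from $\P^1$ because $\pi$ has connected fibers.
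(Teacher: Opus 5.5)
Your argument is correct. It proves existence by a different route than the paper. Uniqueness is handled the same way in both: a projectivity fixing a reduced conic pointwise is the identity. Your Step 4 is the natural way to obtain equivariance.

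\textbf{The paper's route.} The paper proves existence locally. By uniqueness, local extensions glue, so it suffices to work over small $U\subset\P^1$. There, the normal forms $ax^2+by^2+cz^2=0$ and $ax^2+by^2+ctz^2=0$ give ambient isomorphisms of $(U\times\P^2,X_U)$ and $(U\times\P^2,Y_U)$ with a single canonically embedded model $Z_U\subset\P(\mathcal H)$. Functoriality of the canonical embedding of $Z_U$ then extends $f_U$.

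\textbf{Your route.} You work globally. Adjunction together with cohomology and base change identifies $\P(\E^X)$ with the projectivization of $\pi_*\O(-K_X)$; the base-change input is that $H^0(\P^2,\O(1))\to H^0(Q,\O_Q(1))$ is bijective and $H^1(Q,\O_Q(1))=0$ for every conic $Q$. Hence any embedded conic bundle is the canonical one up to a bundle isomorphism, and $f$ acts on $\pi_*\O(-K)$ by naturality.

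\textbf{What each buys.} Your approach avoids local normal forms and the gluing step. It also proves directly the content of Proposition \ref{unicity-canonical}, which the paper instead derives from the present statement. In fact, the paper's local ambient isomorphisms onto a canonically embedded $Z_U$ are themselves a local instance of your Step 1. The paper's route, being purely local, transfers verbatim to complex-analytic conic bundles over any base.

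\textbf{Two small remarks.} First, agreement of degrees on whole fibers does not by itself show that $\O(1)|_X\otimes K_X$ is a pullback when fibers are reducible. Rely instead on the componentwise argument you sketch at the end. Alternatively, use adjunction with Proposition \ref{classM} directly, which gives $\O_{\P(\E)}(1)|_X\cong -K_X\otimes\pi^*\O_{\P^1}(d+\sum a_i-2)$, as in the proof of Proposition \ref{very-ample}. Second, the isomorphism of Step 1 is canonical only up to a nonzero scalar, which is harmless after projectivization.
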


\begin{proof}
The uniqueness part of the statement follows from applying to each 
pair of $\P^2$-fibers, $(\P(\Cal E^X_p), \P(\Cal E^Y_p))$,  $p\in \P^1$, the uniqueness of a projective transformation $\P(\Cal E^X_p)\to \P(\Cal E^Y_p)$
extending a given isomorphism between two given reduced
conics, $X_p\subset \P(\Cal E^X_p)$ and $Y_p\subset\P(\Cal E^Y_p)$. 
The same uniqueness property
together with gluing argument reduces a proof of the existence part to its local counter-part, that is to considering restrictions $X_U, Y_U$ of $X,Y$ to small bundle neighborhoods $\pi^{-1}(U)$, 
$p\in U\subset \P^1$, of the fibers $\pi^{-1}(p), p\in\P^1$.

To justify a local existence, we argue as follows.
Let $Z_U\to \P(\Cal H)$ be a canonical embedding of a local conic bundle isomorphic to $X_U$ and $Y_U$, and let $f_U: X_U\to Y_U$ 
is a given fiber-preserving isomorphism. Since the local model depends only on the type, singular or nonsingular, of the 
central fiber $\pi^{-1}(p)$,
there exist isomorphisms $(\Phi_U,\phi_U): (U\times\P^2, X_U)\to (\P(\Cal H),Z_U)$ and $(\Psi_U,\psi_U): (U\times\P^2, Y_U)\to (\P(\Cal H),Z_U)$.  By functoriality of canonical embeddings, the automorphism
$\psi_Uf_U\phi_U^{-1}: Z_U\to Z_U$ extends up to an ambient $H_U: \P(\Cal H)\to \P(\Cal H)$. Now, a required extension of $f_U$ is given by $\Psi_U^{-1}H_U\Phi_U$.
\end{proof}

 By Grothendieck-Birkhoff theorem, every vector bundle on $\P^1$ over any underlying field of characteristic $\ne 2$, splits into a direct sum of line bundles, which are unique up to permutation  (see \cite{splitting}). 
Accordingly, in what follows, we set 
$$\Cal E=\E_{a_0,a_1,a_2}=\bigoplus_{i=0}^2 \O_{\P^1}(a_i).$$
For working with equations in $\P(\E)$
 we use the standard toric presentation of $\P(\E)$
 as a $\C^*\times\C^*$-quotient of $(\C^2\sm \{0\})\times (\C^3\sm\{0\})$, which provides
 us, in addition to standard homogeneous coordinates $(u,v)$ coming from the first factor, with  weighted-homogeneous coordinates $(x_0,x_1, x_2)$ coming from the second one. Recall that the action
of $\C^*\times\C^*$ is given as follows (see, for example, \cite{reid}):
\be\label{homogeneity}
\begin{aligned}
(\lambda, 1) &: (u,v; x_0, x_1, x_2) \mapsto (\lambda u, \lambda v; \lambda^{-a_0}x_0, \lambda^{-a_1}x_1, \lambda^{-a_2}x_2),  \\
(1, \mu) &: (u,v; x_0, x_1, x_2) \mapsto (u, v; \mu x_0, \mu x_1, \mu x_2).  \\
\end{aligned}
\ee

Consider  divisor classes $M_i$, $i=0,1,2$, of the projective line subbundles in $\P(\Cal E)$ obtained from $\E$ by dropping summand $\O_{\P^1}(a_i)$
(in other words, the class of $\{x_i=0\}\subset\P(\E)$). Consider also class
$M \in \Pic \P(\Cal E)\cong H_2(\P(\Cal E))$
%
of the tautological line bundle $\O_{P(\Cal E)}(1)$, and the fiber class $F$.
Standard homology computations then yield the following.

\begin{proposition}\label{classM}$($cf. \cite{reid,frei}$)$
The classes $M$ and $F$ form together a basis of $\Pic \P(\Cal E)= \Z^2$. They satisfy the following:
\begin{enumerate}
\item $F^2=0, FM^2=1, M^3=\sum_{i=0}^2 a_i.$
\item $K_\E=-3M +(\sum^2_{i=0} a_i -2)F,$ 
\item $a_i F-M=M_i$ for each $i=0,1,2$.
\item Any embedded conic bundle $X\subset \P(\Cal E)$ has divisor class $X=2M+dF$ for some $d\in\Z$.
\qed \end{enumerate}
\end{proposition}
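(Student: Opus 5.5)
We would prove the four items by reduction to standard facts on projective bundles over $\P^1$. The basis claim uses the projective bundle formula: $H^*(\P(\E))$ is generated over $H^*(\P^1)=\Z[F]/(F^2)$ by $M=c_1(\O_{\P(\E)}(1))$, subject to the single relation
\[
M^3 - c_1(\E)FM^2=0
\]
(with the sign convention making $M_i=a_iF-M$ as in (3); alternatively $M^3+c_1M^2F=0$ under the opposite convention). Hence $\Pic\P(\E)=H^2=\Z M\oplus\Z F$.

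For item (1), $F^2=0$ because two distinct fibers are disjoint. $FM^2=1$ because $M$ restricted to a fiber $\P^2$ is the hyperplane class, so $M^2$ meets a fiber in one point. Then $M^3=\sum a_i$ follows from the relation together with $c_1(\E)=\sum a_i$.

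For item (3), we use the toric description (\ref{homogeneity}). The coordinate $x_i$ has bidegree making it a section of $\O_{\P(\E)}(1)\otimes\pi^*\O(-a_i)$ in the paper's sign convention, or of the dual. So its zero divisor $\{x_i=0\}$ has class $\pm(M-a_iF)$, and the paper's convention gives $M_i=a_iF-M$. As a check, intersecting with a fiber must give a line, so $M_i F M$ should equal $\pm1$.

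For item (2), we apply the relative Euler sequence
\[
0\to\O\to\pi^*\E^{(\vee)}\otimes\O(1)\to T_{\P(\E)/\P^1}\to0
\]
together with $K_{\P^1}=-2F$. This gives $K_\E=-3M\mp c_1(\E)F-2F$, consistent with the stated formula $-3M+(\sum a_i-2)F$ under the same convention. Equivalently, we can use adjunction on the toric variety: $K=-\sum(\text{torus-invariant divisors})=-(M_0+M_1+M_2)-2F$. Then item (3) immediately gives $-(\sum a_i F-3M)-2F$, which matches. This second route is the cleanest, so I would use it.

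For item (4), we write $X=xM+dF$. Since $X$ meets a general fiber in a conic, $X\cdot F\cdot M=2$ while $M\cdot F\cdot M=1$ and $F\cdot F\cdot M=0$. Hence $x=2$.

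The only real obstacle is keeping the sign convention for $\O(1)$ consistent between the Euler sequence and (\ref{homogeneity}). Using the toric anticanonical formula avoids this by deriving (2) directly from (3).
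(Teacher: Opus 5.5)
\textbf{The sign in (3), and your derivation of (2) from it, do not hold up.} The paper itself gives no argument beyond ``standard homology computations'' with citations to Reid and Frei. Your treatment of the basis, of (1) and of (4) is fine.

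Your ``cleanest'' toric route does not match. Substituting $M_i=a_iF-M$ into $K=-(M_0+M_1+M_2)-2F$ gives $K=3M-(\sum a_i+2)F$. Item (2) asserts $K=-3M+(\sum a_i-2)F$, so both coefficients disagree. Your hedges (``$\pm(M-a_iF)$'', ``or of the dual'', ``under the same convention'') hide a genuine inconsistency, not a harmless choice of convention:
\begin{itemize}
\item The zero divisor of a section of a line bundle $L$ has class $c_1(L)$; there is no sign freedom.
\item By (\ref{homogeneity}), $u,v$ have $(\lambda,\mu)$-bidegree $(1,0)$ and $x_i$ has bidegree $(-a_i,1)$.
\item $M=c_1(\O_{\P(\E)}(1))$ is the bidegree $(0,1)$ class. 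This is the normalization in which $M^3=\sum a_i$, and in which the conic equations $\sum A_{ij}x_ix_j$ of Proposition \ref{number-fibers} have class $2M+dF$.
\item Hence $\{x_i=0\}$ has class $M_i=M-a_iF$.
\end{itemize}
Your Euler-sequence formula with the lower sign does give (2), but in that same convention the coordinate subbundles have class $M-a_iF$, not $a_iF-M$.

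No convention can rescue the printed sign in (3), because it contradicts (4). In a fiber plane, $M_i\cap F$ is a line and $X\cap F$ is a conic, so $M_i\cdot F\cdot X=2$. But $(a_iF-M)\cdot F\cdot(2M+dF)=-2$. Your own check ``$M_i\cdot F\cdot M=\pm1$'' would have caught this had you kept the sign: it must be $+1$, and $a_iF-M$ gives $-1$.

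With the correct relation $M_i=M-a_iF$ everything closes up:
\begin{itemize}
\item $-K=2F+\sum_i(M-a_iF)$ yields exactly (2).
\item $M_0M_1M_2=0$, since the three coordinate subbundles have no common point, expands to $M^3-(\sum a_i)M^2F=0$ and gives $M^3=\sum a_i$ directly.
\end{itemize}
So you should prove $M_i=M-a_iF$ and flag item (3) as a sign misprint. As written, your argument for (3) is just an assertion that ``the paper's convention gives'' $a_iF-M$, and your derivation of (2) from it is arithmetically false.
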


The integer $d$ in Proposition \ref{classM}(4) will be called the {\it degree} of $X$ in 
$\P(\E)$.

\begin{proposition}\label{very-ample}
For every conic bundle $\pi : X\to \P^1$ the divisor classes $-K_X+ rC$, where $C$ is the fiber divisor class, are very ample for any $r$ sufficiently large.
\end{proposition}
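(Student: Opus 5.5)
We use the canonical embedding $\phi\colon X\hookrightarrow\P(\E)$, $\E=\pi_*\O(-K_X)$, described above, and pull back ample classes from the projective bundle. Relative ampleness of $-K_X$ is what makes $\phi$ a closed embedding, and $\phi^*\O_{\P(\E)}(1)=\O_X(-K_X)$ by construction. Once this identification is established, the problem reduces to the statement that $M+rF$ is very ample on $\P(\E)$ for $r\gg0$. Restriction of a very ample class to a closed subvariety stays very ample, and $\phi^*F=C$, so this gives the claim.

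The key steps are as follows.

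\begin{enumerate}
\item Identify $\phi^*M=-K_X$. The tautological evaluation $\pi^*\pi_*\O(-K_X)\to\O(-K_X)$ is surjective, because $-K_X$ is relatively globally generated: on each fiber it has degree $2$ on a smooth conic, and on a line pair it has degree $1$ on each component. So $\phi$ is the morphism defined by this quotient, and $\phi^*\O_{\P(\E)}(1)=\O_X(-K_X)$.
\item Split $\E=\E_{a_0,a_1,a_2}$ by Grothendieck--Birkhoff. For $r\ge -\min a_i$, the twist $\E(r)=\bigoplus\O(a_i+r)$ is globally generated. For $r>-\min a_i$ it is ample, since a sum of ample line bundles on $\P^1$ is ample.
\item Use that $\P(\E)\cong\P(\E(r))$ with $\O_{\P(\E(r))}(1)=\O_{\P(\E)}(1)\otimes\pi^*\O(r)$, i.e.\ the class $M+rF$. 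This class is ample for $r>-\min a_i$.
\item Upgrade to very ampleness. In the toric coordinates (\ref{homogeneity}), the sections $u^{j}v^{a_i+r-j}x_i$, $0\le j\le a_i+r$, embed $\P(\E)$ as a rational normal scroll whenever all $a_i+r\ge1$. This requires checking that these sections separate points and tangents on each fiber and across fibers, which is standard for scrolls. Alternatively, ample classes on the smooth projective toric variety $\P(\E)$ are very ample.
\item Restrict to $X$. This shows $-K_X+rC$ is very ample for all $r>-\min a_i$, hence for every sufficiently large $r$.
\end{enumerate}

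The main obstacle is the first step, the identification $\phi^*M=-K_X$. The paper asserts the existence of the canonical embedding but not explicitly the pullback of the tautological class. To secure it, I would verify fiberwise, using the two local models recalled above, that $h^0(X_p,-K_X|_{X_p})=3$ and that the restriction map is surjective. Cohomology and base change then makes $\E$ locally free of rank $3$ and the evaluation map surjective. The rest is the standard fact that $\O_{\P(\E)}(1)\otimes\pi^*\O(r)$ is very ample on $\P^1$-based projective bundles for $r$ large.
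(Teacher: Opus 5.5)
Your argument is correct and is essentially the paper's: realize $X$ in a $\P^2$-bundle over $\P^1$ whose splitting type is positive after twisting, use very ampleness of the tautological class there, and restrict to $X$. The only difference is that the paper gets $-K_X=M_X-(d-2+\sum a_i)C$ by adjunction from Proposition~\ref{classM} for an arbitrary such embedding, and then adds the base-point-free class $C$. So the identification $\phi^*M=-K_X$ that you flag as the main obstacle is never needed; it is just the case $d-2+\sum a_i=0$ of that formula.
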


\begin{proof} Consider the conic bundle $\pi : X\to \P^1$ as embedded in $\P(\E_{a_0,a_1,a_2})\to \P^1$ with each $a_i>0$.
Then, as is known (see, f.e., \cite[Lemma 2.7]{reid}), the divisor class $M$ is very ample on $\E_{a_0,a_1,a_2}$,
and therefore the class $M_X$ induced by $M$ on $X$ is also very ample. On the other hand, as it follows by adjunction from Proposition \ref{classM},
$-K_X= 3M_X-(\sum a_i-2)C - (2M_X+ d C)=M_X-(d-2+\sum a_i)C$. Since $C$ is nef, we can conclude from above that $-K_X+rC$ is very ample
for every $r\ge d-2+\sum a_i$.

\end{proof}

\begin{proposition}\label{unicity-canonical}
Let $\pi : \Cal E\to  \P^1$ be a rank 3 vector bundle and $X\subset P(\Cal E)$ a nonsingular hypersurface such
that every fiber $X_b= X\cap \pi^{-1}(b)$, $b\in B$, is a conic. 
Then, $\pi\vert_X : X\to B$ is a conic bundle. Furthermore, there exists a line bundle $\Cal L$ over $B$ such that 
$X\subset \P(\Cal E)=\P(\Cal E\otimes \Cal L)$ is a canonical embedding up to 
a bundle automorphism of $\P(\Cal E)$. 
\end{proposition}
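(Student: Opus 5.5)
We first show that $\pi|_X$ is a conic bundle, and then compare the given embedding with the canonical one by identifying $\O_{\P(\E)}(1)|_X$ with $-K_X$ up to a pullback from $B$.

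\emph{Step 1: $\pi|_X$ is a conic bundle.} Since $X$ is nonsingular and every fiber $X_b$ is a conic in $\P(\E_b)\cong\P^2$, the map $\pi|_X$ is proper and flat.

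Over a point where the conic is smooth we get a smooth rational curve. A fiber cannot be a double line: with local equation $F_t=\ell^2+tG$, the surface would be singular at the points where $\ell=G=0$. Hence a singular fiber is a pair of distinct lines. Locally the equation is $\lambda_1x^2+\lambda_2y^2+\lambda_3z^2=0$ with $\lambda_1,\lambda_2$ nonvanishing and $\lambda_3$ vanishing at $b$. Smoothness of $X$ at the node $[0:0:1]$ forces $\lambda_3$ to vanish to first order. Then each of the two lines has self-intersection $-1$ in $X$, since each line $L$ satisfies $L\cdot C=0$ and $L\cdot L'=1$ for the other component $L'$. This gives the local forms recalled at the start of Section~\ref{CanEmbed}.

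\emph{Step 2: compare $\O(1)|_X$ with $-K_X$.} Write $X=2M+dF$ as in Proposition~\ref{classM}(4). By adjunction and Proposition~\ref{classM}(2),
$$K_X=(K_{\P(\E)}+X)|_X=(-M+(\textstyle\sum a_i-2+d)F)|_X .$$
So $-K_X=\O_{\P(\E)}(1)|_X\otimes\pi^*\LL^{-1}$ for a line bundle $\LL=\O_{\P^1}(\sum a_i-2+d)$. For a general base $B$ the same computation applies using $K_{\P(\E)/B}=-3M+\pi^*\det\E$ and the adjunction formula.

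Twisting, the hypersurface $X\subset\P(\E)=\P(\E\otimes\LL^{\pm1})$ has tautological bundle $\O_{\P(\E\otimes\LL^{\pm 1})}(1)$ restricting to exactly $-K_X$. Here the sign of the exponent is fixed by the convention for $\P(\cdot)$ (lines versus hyperplanes), so that the tautological bundle picks up the twist $\pi^*\LL^{-1}$.

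\emph{Step 3: the restriction map is an isomorphism.} Write $\E'$ for the twisted bundle. It remains to show that restriction of sections induces an isomorphism $\pi_*\O_{\P(\E')}(1)\to\pi_*\O_X(-K_X)$. The target is $\E_{\rm can}:=\pi_*\O(-K_X)$, and the source is $\E'$ or its dual, depending on convention.

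We check this fiberwise. On each fiber, $H^0(\P^2,\O(1))\to H^0(X_b,\O(1)|_{X_b})$ is injective, because a conic, even a reducible one, lies in no line. Both spaces are $3$-dimensional: $h^0(\O_{X_b}(1))=3$ for a plane conic, computed from $0\to\O_{\P^2}(-1)\to\O_{\P^2}(1)\to\O_{X_b}(1)\to0$. By flatness and cohomology and base change, $\pi_*\O_X(1)$ is locally free of rank $3$, and the restriction map is an isomorphism of rank-$3$ bundles.

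Consequently the canonical embedding $\phi\colon X\to\P(\E_{\rm can})$ is identified with the composition of the inclusion $X\subset\P(\E')$ and the induced isomorphism $\P(\E')\cong\P(\E_{\rm can})$. This identification may differ by a bundle automorphism, which is the ambiguity allowed in the statement; uniqueness of such extensions is given by Proposition~\ref{unicity-general}.

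The main obstacle is the bookkeeping in Step 2: getting the relative canonical class right for an arbitrary base $B$, and handling the dual or undual convention for $\P(\E)$ so that the twist $\LL$ has the correct sign.
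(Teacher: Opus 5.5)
Your proof is correct, and it takes a genuinely different route from the paper.

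\textbf{The paper's route.} The conic-bundle assertion is simply quoted from Prokhorov's lectures. The second assertion is obtained abstractly, by combining three facts:
\begin{enumerate}
\item the existence of the canonical embedding $X\hookrightarrow\P(\pi_*\O(-K_X))$;
\item Proposition~\ref{unicity-general}, under which the identity of $X$ extends to a bundle isomorphism $\P(\E)\to\P(\pi_*\O(-K_X))$;
\item the fact that two projective bundles over the same base are isomorphic over it only when the vector bundles differ by a line-bundle twist.
\end{enumerate}

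\textbf{Your route.} You check the conic-bundle property by a direct local computation: no double-line fibers, first-order vanishing of $\lambda_3$, and $L^2=L\cdot C-L\cdot L'=-1$. You then find the twist explicitly by adjunction; this is the same computation the paper uses in the proof of Proposition~\ref{very-ample}. Finally, cohomology and base change shows that restriction of sections identifies the twisted bundle with $\pi_*\O(-K_X)$, so the given inclusion becomes literally the canonical embedding.

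\textbf{Comparison.} Your argument is self-contained and produces $\LL$ concretely. It also does not lean on Proposition~\ref{unicity-general}, whose own proof already relies on canonical embeddings and on local models of embedded conic bundles. The paper's route gains brevity and independence from the convention for $\P(\cdot)$, at the cost of delegating everything to cited results.

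\textbf{Two small remarks.}
\begin{enumerate}
\item On the sign you flagged: the paper's formulas $K_{\P(\E)}=-3M+(\sum a_i-2)F$ and $M^3=\sum a_i$ correspond to Grothendieck's projectivization, with $\pi_*\O(1)=\E$. In that convention the correct twist is $\E'=\E\otimes\LL^{-1}$. This gives $\pi_*\O(-K_X)\cong\E\otimes\O(-(\sum a_i-2+d))$, of degree $6-s$. You can check this against $\P^1\times\P^1$, where $\pi_*\O(-K)=\O(2)^{3}$.
\item Your closing appeal to Proposition~\ref{unicity-general} is superfluous. The isomorphism $\E'\cong\pi_*\O(-K_X)$ from Step~3 already provides the required identification of $X\subset\P(\E')$ with the canonical embedding.
\end{enumerate}
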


\begin{proof} For the first statement, see \cite[Proposition 3.6.1]{prokh-lectures}. Since two projective bundles $\P(\Cal E)$ and $\P(\Cal E')$ over the same base $B$ are isomorphic 
over $B$
if an and only
if there exists a line bundle $\Cal L$ such that $\E'=\E\otimes \Cal L$ (see, for example, \cite[Corollary 9.5]{E-H}),
the second statement follows from the first statement and Proposition \ref{unicity-general}.
\end{proof}

In the following Proposition, $f_n=f_n(u,v)$ stand for homogeneous polynomials of degree $n$ in coordinates $[u:v]$ of $\P^1$.

\begin{proposition}\label{number-fibers}
Let $X\to \P^1$ be a conic bundle. Then:
\begin{enumerate}\item
$X$ admits a conic bundle embedding $X\subset \P(\E)$,
$\E=\E_{0,a_1,a_2}$,
where $0\le a_1\le a_2$ are uniquely determined by $X$.
\item
$X$ is defined in $\P(\E)$ by equation
$\sum_{0\le i, j\le2}\, A_{ij} x_ix_j=0$, where
 $x_0,x_1,x_2$ are weighted-homogeneous coordinates in 
the fibers of $O_{\P^1}, O_{\P^1}(a_1), O_{\P^1}(a_2)$.
and  the matrix $\{A_{ij}\}$ has form
\begin{equation}\label{eq-matrix}
\begin{pmatrix}
f_{d}&f_{a_1+d}&f_{a_2+d} \\
f_{a_1+d}&f_{2a_1+d}&f_{a_1+a_2+d}\\
f_{a_2+d}&f_{a_1+a_2+d}&f_{2a_2+d}\\
\end{pmatrix},
\end{equation}
where $d$ is the degree of $X$ in $\P(\E)$.
\item
If a hypersurface
$X$ of  $\P(\E)$  defined by  equation 
of the above form is non-singular, then
$X\subset \P(\E)$ is an embedded conic bundle.
\item
The number, $s$,
of singular fibers of the conic bundle $X\to\P^1$ is equal to $3d+2(a_1+a_2)$.
\end{enumerate}
\end{proposition}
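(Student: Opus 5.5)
We treat the four items in turn, building on the canonical embedding and the splitting of vector bundles on $\P^1$.

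\emph{Item (1).} Start from the canonical embedding $X\subset\P(\E)$ with $\E=\pi_*\O(-K_X)$. By the Grothendieck--Birkhoff theorem, $\E\cong\E_{b_0,b_1,b_2}$ with $b_0\le b_1\le b_2$. Twisting by $\Cal L=\O(-b_0)$ does not change $\P(\E)$, so we may take $\E=\E_{0,a_1,a_2}$ with $a_i=b_i-b_0$.

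For uniqueness, suppose $X$ has two such embeddings, into $\P(\E)$ and $\P(\E')$, both normalized to have first summand $\O$. Apply Proposition~\ref{unicity-general} to the identity of $X$; it extends to a bundle isomorphism $\P(\E)\cong\P(\E')$. By \cite[Corollary 9.5]{E-H}, this gives $\E'\cong\E\otimes\Cal L$. Since the splitting type is unique up to permutation, the normalization (smallest summand equal to $0$) forces $\Cal L$ trivial, so $(a_1,a_2)$ is determined by $X$.

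\emph{Item (2).} By Proposition~\ref{classM}(4), $X\sim 2M+dF$, so $X$ is the zero locus of a section of $\O_{\P(\E)}(2)\otimes\pi^*\O(d)$. Using the toric presentation (\ref{homogeneity}), such a section is a quadratic form $\sum A_{ij}x_ix_j$ whose coefficients are bihomogeneous. Under $(\lambda,1)$, the monomial $x_ix_j$ acquires weight $\lambda^{-a_i-a_j}$. Hence $A_{ij}$ must be a form of degree $a_i+a_j+d$ in $(u,v)$ for the total to have bidegree $(d,2)$. With $a_0=0$, this is exactly the matrix (\ref{eq-matrix}).

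\emph{Item (3).} This is immediate from the first part of Proposition~\ref{unicity-canonical}. The only extra input is that every fiber is a conic, i.e. that the restricted quadratic form is not identically zero on any fiber. If $A(p)\equiv0$ at some $p$, then $X\supset\pi_\E^{-1}(p)$. In local coordinates $X$ would be $t\cdot g=0$, and it is singular along the locus $t=g=0$. That locus is a conic in the fiber, hence nonempty, contradicting nonsingularity.

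\emph{Item (4).} This is the main computation. Singular fibers correspond to the zeros of $\det(A_{ij})$, a form of degree
\[\sum_i (2a_i+d)=3d+2(a_1+a_2).\]
We need each zero to be simple. At a singular fiber, the local model of Section~\ref{CanEmbed}, $ax^2+by^2+ctz^2$ with $a,b,c$ units, has determinant $abc\,t$. Its order of vanishing at $t=0$ is $1$, and this order is invariant under the local change of frame, since the determinant only changes by a unit factor (the square of a unit transition determinant). At nonsingular fibers the determinant is a unit. So $\det A$ has exactly $3d+2(a_1+a_2)$ simple zeros, giving
\[s=3d+2(a_1+a_2).\]
The key point is matching the global determinant with the local normal form; this holds because changing frames multiplies the determinant by an invertible function.
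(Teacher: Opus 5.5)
Your proof is correct and follows essentially the same route as the paper: (1) and (3) go through Propositions~\ref{unicity-general} and~\ref{unicity-canonical} together with the twisting ambiguity $\E\mapsto\E\otimes\Cal L$, (2) goes through the weight count in the toric presentation, and (4) goes through the degree of $\det A$. The extra points you supply are handled correctly, and they are details the paper leaves implicit: that the quadratic form cannot vanish identically on a fiber of a nonsingular $X$, and that $\det A$ has only simple zeros by the local normal form $ax^2+by^2+ctz^2$, so that zeros counted with multiplicity equal the number of singular fibers.
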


\begin{proof} Part (1) of
 Proposition \ref{number-fibers} is immediate from the last statement of Proposition \ref{unicity-canonical}. In (2), the existence and the form of the quadratic matrix equation (\ref{eq-matrix}) follows, for instance, from  toric presentation of 
$\P(\E_{0, a_1,a_2})$ as a $\C^*\times\C^*$-quotient of $(\C^2\sm 0)\times (\C^3\sm 0)$, see, for example, \cite{frei}.
Part (3) follows from Proposition \ref{unicity-canonical}.
Part (4)  follows from that the number of singular fibers is equal to the number of zeros
of $\det A $, that is $\deg(\det A)=3d+2(a_1+a_2)$. 
\end{proof}
%

\subsection{Exhaustive representation of real deformation classes}
In what follows we let $\E^s=\E_{0,a_1,a_2}$,
where
$(a_1,a_2)$ is $(0,0)$  if $s=3k$,
$(0,1)$  if $s=3k+2$, and
$(1,1)$  if $s=3k+4$, $k\ge0$,
 and let $\XX^s$ be the linear system $2M+dF$ in $\P(\E^s)$.
 
 \begin{theorem}\label{exhaustive}
Any real conic bundle $X\to\P^1$ with  $s\ge 2$ singular fibers is real deformation equivalent
to an embedded conic bundle from the linear system  $\XX^s$.
\end{theorem}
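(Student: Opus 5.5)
Via Proposition \ref{deform-canonical} and Proposition \ref{unicity-canonical}, I regard the given real conic bundle $X\to\P^1$ as an embedded real conic bundle $X\subset\P(\E_{0,a_1,a_2})$, $0\le a_1\le a_2$. Such an $X$ is cut out by a real symmetric matrix $A$ of the form (\ref{eq-matrix}), with $s=3d+2(a_1+a_2)$ by Proposition \ref{number-fibers}(4). The goal is to connect $X$ by a real path of nonsingular embedded conic bundles, all with $s$ singular fibers, to a member of $\XX^s$. The natural mechanism is to deform the ambient bundle $\E$ itself: on $\P^1$ a rank~3 bundle degenerates in a real family to any bundle of the same rank and degree that is \emph{more balanced}. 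Concretely, $\E_{a_0,a_1,a_2}$ with $a_2-a_0\ge 2$ is the special member of a real family of extensions $0\to\O(a_0+1)\to\E_t\to\ldots$ whose general member has $a_2-a_0$ smaller. Iterating, every $\E$ deforms within its degree to the unique balanced splitting type with all $|a_i-a_j|\le1$. After twisting by a line bundle, which does not change $\P(\E)$, that type is exactly $\E^s$ (normalized to $a_0=0$). The normalization of $\E^s$ according to $s \bmod 3$ matches the choice $(0,0),(0,1),(1,1)$, because $s\equiv 2(a_1+a_2)\pmod 3$.

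The key step is to lift the deformation of $X$ along this family of bundles. I would work on the total space $\P(\E_t)$, $t\in\DD$ real, and consider the relative linear system $|2M+dF|$. Since the dimension of $H^0(\P(\E_t),\O(2M+dF))=H^0(\P^1,S^2\E_t^\vee\otimes\O(d))$ (with the appropriate sign convention) is upper semicontinuous, I need that the section defining $X$ extends to nearby $t$. For this it suffices to know $H^1(\P^1,S^2\E_0^{\vee}(d))=0$, or at least that the obstruction vanishes, and to check this against the degree constraints. Nonsingularity is an open condition. The set of real reduced-fiber conic bundles is also open, and so is the number $s$ of singular fibers, which is the number of zeros of $\det A$, constant as the degree is fixed. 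The extended family is real provided the family $\E_t$ and the extension of the section are chosen real, which is possible because the relevant spaces are defined over $\R$. So $X$ is real deformation equivalent to a nonsingular real embedded bundle in $\P(\E^s)$ of the right class, that is, to a member of $\XX^s$.

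The main obstacle is the $H^1$-vanishing, equivalently the unobstructedness of the deformation of $X$ along with $\E$. When $a_2$ is large relative to $d$, some entries $f_{d}$ and $f_{a_1+d}$ in (\ref{eq-matrix}) have negative degree and vanish, and $S^2\E^\vee(d)$ has summands of very negative degree. I would first try to avoid the problem by arguing directly with the deformation theory of the surface. Since $X$ is rational, $H^2(T_X)=0$, so the deformations of $X$ are unobstructed. Since $H^1(\P^1,T_{\P^1})=0$, deformations of the conic bundle structure are unobstructed as well. Hence the real deformation space of the real conic bundle $(X,\pi)$ is smooth and its real part is a manifold near $X$. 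Over this space the bundles $\pi_*\O(-K)$ vary, and a generic real deformation realizes the generic, that is balanced, splitting type, since the unbalanced types form a proper closed subvariety. To see that this subvariety is proper, I would write down explicit real examples in each class, for instance by perturbing diagonal equations $x_0^2 f+x_1^2g+x_2^2h$. Finally, I would use the hypothesis $s\ge2$ to ensure that $d$ is in the admissible range and that examples exist, the cases $s=0,1$ being degenerate.
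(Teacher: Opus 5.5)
You correctly flag that your first route is problematic. The group you would need to vanish, $H^1(\P(\mathcal E),\mathcal O(2M+dF))$, is the direct sum of the groups $H^1(\P^1,\mathcal O(j))$, with $j$ running over the degrees of the entries of (\ref{eq-matrix}). It is nonzero as soon as $d\le -2$, which is what happens for very unbalanced $\mathcal E$ (e.g.\ $\Sigma_k$ blown up at $s\ll k$ points). So everything rests on your second route.

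\textbf{The gap.} The decisive step of the second route is that the unbalanced locus is a \emph{proper} analytic subset of the deformation germ of the \emph{given} $X$. You assert this but do not prove it, and it is essentially the whole content of the theorem.

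Smoothness of the deformation space only makes the germ irreducible. It says nothing about whether the induced deformations of $\pi_*\mathcal O(-K)$ ever reach the balanced type. Exhibiting balanced examples somewhere does not help either, unless you also show they occur arbitrarily close to an arbitrary $X$. That would require, for instance, proving that the parameter space of conic bundles with $s$ singular fibres is connected, hence irreducible, and you never address this.

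This is not a formality. For $s=0$, every small deformation of $\Sigma_1$ is $\Sigma_1$, and its $\pi_*\mathcal O(-K)\cong\mathcal O(1)\oplus\mathcal O(2)\oplus\mathcal O(3)$ is never balanced. Yet balanced examples with $s=0$ exist: $\P^1\times\P^1\in\XX^0$, with $d=0$ admissible. So your argument as written would ``prove'' the statement for $s=0$, where it is false. The hypothesis on $s$ must therefore enter exactly in the properness, not through the admissible range of $d$. Finally, if ``explicit real examples in each class'' means each \emph{real} deformation class, you need the list of these classes, and then the deformation theory is superfluous.

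\textbf{What the paper does.} The paper uses exactly that list and never deforms $\mathcal E$. It applies Comessatti's classification (Theorem \ref{R-minimal}) and the Degtyarev--Kharlamov criterion (Theorem \ref{R-min-deform}, Corollary \ref{determine-defclass}) to reduce to explicit combinatorial types:
\begin{itemize}
\item aspherical with no real singular fibre and $X_\R=\mathbb T^2$ or $\mathbb K^2$;
\item aspherical with $s_\R\ge1$;
\item spherical with a cyclically ordered partition $s_\R=s_1+\dots+s_k$, $s_i\ge2$.
\end{itemize}
It then realizes each type by an explicit, mostly diagonal, equation in $\XX^s$, with a prescribed order of the real roots of $\mathscr A,\mathscr B,\mathscr C$.

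\textbf{How to repair your route.} You must prove the local statement over $\C$. One way:
\begin{itemize}
\item Contract one component in each singular fibre to write $X$ as $\Sigma_k$ blown up at $s$ points in distinct fibres.
\item Deforming $(\Sigma_k;p_1,\dots,p_s)$ gives, arbitrarily close to $X$, blowups of $\Sigma_0$ or $\Sigma_1$ at general points.
\item Blowing up $p\in C_q$ replaces $\pi_*\mathcal O(-K)$ by the kernel of evaluation at $p$. This elementary modification lowers a top-degree summand unless $p$ lies in a certain proper linear subspace of the fibre plane, which a general point of the conic avoids.
\item Hence such blowups are balanced; for $\Sigma_1$ this needs $s\ge1$.
\end{itemize}
Your passage to real deformations then works, because a proper analytic subset of a smooth germ, invariant under the real structure, meets the real part in a nowhere dense set. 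This route would even cover $X_\R=\varnothing$, which the paper's proof does not treat.
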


Before proving Theorem \ref{exhaustive}, let us recall the following  result going back to A.~Comessatti \cite{com} (see \cite{iskov} for a contemporary presentation).
\begin{theorem}\label{R-minimal} Each $\R$-minimal real rational conic bundle $X\to \P^1$ with $X_\R\ne \emptyset $ is either 
$(1)$ a conic bundle with $X_\R= mS^2$, $m\ge1$, with $2m$ 
 singular fibers which are all real, or
$(2)$ a geometrically ruled surface $\Sigma_k$, $k\ge 0$, 
with the standard real structure. 
In the latter case $X_\R$ is homeomorphic to 
a torus, $\T^2$, if $k$ is even, and to a
Klein bottle, $\K^2$, otherwise.
\qed
\end{theorem}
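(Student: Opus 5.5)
The plan is to run a real relative minimal model argument on the conic bundle $\pi\colon X\to\P^1$, and then read off the topology of $X_\R$ fiberwise over the circle $\P^1_\R$. First I would list which singular fibers can be contracted equivariantly over $\P^1$.

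Let $p\notin\P^1_\R$ be a non-real critical value, and let $E\subset\pi^{-1}(p)$ be one component of its fiber. Then $E$ and $\conj(E)\subset\pi^{-1}(\bar p)$ are disjoint $(-1)$-curves, so they can be contracted simultaneously and equivariantly. Now let $p\in\P^1_\R$ be a real critical value whose two components are each $\conj$-invariant. Then either component can be contracted over $\R$. Hence $\R$-minimality forces every singular fiber to be real, with its two components interchanged by $\conj$. For such a fiber the real locus is the single node. In the local model $ax^2+by^2+ctz^2=0$ of Section \ref{CanEmbed}, with $a,b,c=\pm1$, the swapped case is the one where $a,b$ have the same sign. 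The real conic $ax^2+by^2+ctz^2=0$ is then a circle for $t$ of one sign and empty for the other, so the real type of the fiber changes at each critical value.

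Next I would describe $X_\R$ through the map $\pi|_{X_\R}\colon X_\R\to\P^1_\R\cong S^1$. Over the complement of the critical values, every real fiber is a nonsingular real conic, which is either empty or a circle. By the previous step, the type switches exactly at each critical value.

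If $s>0$, the switching forces $s$ to be even, say $s=2m$. The circle $\P^1_\R$ is then cut into $2m$ arcs, alternately carrying circles and empty fibers. Over each arc carrying circles, the circles shrink to the nodes at both endpoints, so that piece of $X_\R$ is a $2$-sphere. This gives $X_\R=mS^2$ with all $2m$ singular fibers real, which is case $(1)$.

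If $s=0$, then $X$ is a geometrically ruled surface $\P(\E)$ for a rank $2$ real bundle $\E$ on $\P^1$. Since $X_\R\neq\varnothing$, the real fibers over $\P^1_\R$ are circles, so $X_\R$ is a circle bundle over $S^1$, hence a torus or a Klein bottle.

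The remaining step, which I expect to be the main obstacle, is to show that in the case $s=0$ the real structure is the standard one on $\Sigma_k$, and that the parity of $k$ determines the topology. I would first apply the Grothendieck–Birkhoff splitting over $\R$, which the paper notes holds over any field of characteristic $\ne2$. Because $X_\R\ne\varnothing$, the real structure on $\P(\E)$ lifts to a real structure on the vector bundle $\E$ itself, rather than to a quaternionic one. One must justify this, for instance by using the real points to produce a real section. The real splitting then gives $\E\cong\O(0)\oplus\O(k)$ with the standard conjugation. By Proposition \ref{unicity-general} and the analogous uniqueness statement for $\P^1$-bundles, the real structure is then equivariantly isomorphic to the standard one. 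Finally, $X_\R$ is the projectivization of the real bundle $\R\oplus\gamma^{\otimes k}$ over $\Rp1$, where $\gamma$ is the tautological line bundle. Its total space is orientable exactly when $w_1(\gamma^{\otimes k})=k\,w_1(\gamma)$ vanishes, i.e. when $k$ is even. So $X_\R$ is $\T^2$ for $k$ even and $\K^2$ for $k$ odd, which is case $(2)$.
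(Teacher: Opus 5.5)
The paper does not prove this statement. It quotes it (with \qed) from Comessatti, pointing to Iskovskikh for a modern account, where it is part of the classification of (relatively) minimal real rational surfaces. Your argument is a correct, self-contained proof by a different and more elementary route.

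For $s>0$, you use only the local normal form $ax^2+by^2+ctz^2=0$ of Section~\ref{CanEmbed}. From it you show that relative $\R$-minimality forces every singular fiber to be real with conjugate components, and that the real conic switches between empty and a circle exactly at those fibers. This gives case $(1)$ directly. For $s=0$, you reduce to splitting real rank-$2$ bundles on $\P^1$ and computing $w_1(\underline{\R}\oplus\gamma^{\otimes k})$.

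The step you leave open, lifting the real structure to $\E$, closes more quickly than through a real section:
\begin{itemize}
\item The involution $\sigma$ identifies $\P(\E)$ with $\P(\conj^*\bar\E)$ over $\P^1$. Since $\deg \conj^*\bar\E=\deg\E$, we get $\conj^*\bar\E\cong\E$.
\item Hence $\sigma$ lifts to an antilinear bundle map $\tilde\sigma$ of $\E$ covering $\conj_{\P^1}$, unique up to $\C^*$.
\item Then $\tilde\sigma^2$ is a real scalar, and rescaling $\tilde\sigma$ normalizes it to $\pm1$.
\item If $\tilde\sigma^2=-1$, no line in a fiber $\E_p$ with $p\in\P^1_\R$ is $\tilde\sigma$-invariant, because an antilinear map of a complex line squares to a positive scalar. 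This would give $X_\R=\varnothing$.
\end{itemize}
So $\tilde\sigma^2=1$, $\E$ descends to $\P^1_\R$, and Grothendieck splitting over $\R$ finishes the argument as you describe.

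The section route you suggest also works, for instance via Witt's theorem, or via the $\conj$-invariance of the unique $(-k)$-section when $k>0$. It still needs the extra step $\E=\pi_*\O_X(S)$, however. Your appeal to Proposition~\ref{unicity-general} is superfluous: a splitting of $\E$ over $\R$ already gives an equivariant isomorphism with the standard $\Sigma_k$.

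Citing the classification places this case inside the full picture, including the minimal surfaces without a conic bundle structure used in Section~\ref{except}. Your route proves the needed case using nothing beyond Section~\ref{CanEmbed} and a first Stiefel--Whitney class computation.
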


The R-minimal models of type (1) and (2)  in Theorem \ref{R-minimal} will be called respectively {\it spherical} and {\it aspherical}.

When proving Theorem \ref{exhaustive},
to control the deformation class we use the following criterium.

\begin{theorem}[\cite{degt-kh}]\label{R-min-deform}
$(1)$ $\R$-minimal spherical conic bundles
are real deformation equivalent if and only if they have the same number of singular fibers.

$(2)$ $\R$-minimal 
aspherical conic bundles,
$X^1$ and $X^2$, are real deformation equivalent if and only if $X^1_\R$ and  $X^2_\R$ are homeomorphic.
Since $X^i$, $i=1,2$, is some geometrically ruled surface with standard real structure, $\Sigma_{r_i}$,
homeomorphism 
$X^1_\R=X^2_\R$
is equivalent to 
 $r_1=r_2\operatorname{mod} 2$.\qed
\end{theorem}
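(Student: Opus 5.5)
The plan is to reduce both parts to explicit normal forms and then connect those normal forms inside connected real parameter spaces. The ``only if'' directions are immediate. The number $s=8-K^2$ of singular fibers is a deformation invariant. The topological type of $X_\R$ is also preserved under real deformation, and by Theorem~\ref{R-minimal} it distinguishes $\Sigma_{\rm even}$ (torus) from $\Sigma_{\rm odd}$ (Klein bottle). The work is therefore in the ``if'' directions.

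\emph{Part (2).} By Theorem~\ref{R-minimal} an aspherical $\R$-minimal conic bundle is $\Sigma_k=\P(\O_{\P^1}\oplus\O_{\P^1}(k))$ with its standard real structure. The parity of $k$ is fixed by the topology of $X_\R$, so it suffices to connect $\Sigma_k$ to $\Sigma_{k-2}$ by a real deformation. The standard way is to take a real extension class $\xi\in H^1(\P^1,\O(-k))$, which is nonzero for $k\ge2$. It defines a real one-parameter family of rank-$2$ bundles $\E_t$, given by extensions $0\to\O\to\E_t\to\O(k)\to0$ with class $t\xi$. For $t=0$ we get $\E_0=\O\oplus\O(k)$. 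For $t\ne0$ real and a suitable choice of $\xi$, the bundle $\E_t$ is real-isomorphic to $\O(1)\oplus\O(k-1)$; by Grothendieck--Birkhoff and the classical deformation of Hirzebruch surfaces, after a twist this gives $\Sigma_{k-2}$. Because $\xi$ is chosen real, the family of projectivizations is real, and every member is an $\R$-minimal aspherical conic bundle. Iterating the step reduces every case to $\Sigma_0$ or $\Sigma_1$.

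\emph{Part (1).} Let $X\to\P^1$ be spherical and $\R$-minimal with $2m$ singular fibers. All of these fibers are real, and relative $\R$-minimality forces the two components of each singular fiber to be complex conjugate. Otherwise one of them would be a real $(-1)$-curve in a fiber and could be contracted over $\R$.

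\begin{itemize}
\item First I would pass to the generic fiber, which is a smooth conic over $\R(\P^1)$. Its class in $\operatorname{Br}(\R(\P^1))$ is a quaternion algebra. Its residues are nontrivial exactly at the real discriminant points, i.e.\ the points where the real fiber changes from a circle to empty. By the Faddeev/Witt description of $\operatorname{Br}$ of a real function field, this class equals that of $(-1,f)$, where $f$ is a real binary form of degree $2m$ whose simple real roots are the discriminant points. Hence $X$ is birational over $\P^1$, over $\R$, to the surface $x_0^2+x_1^2=f(u,v)\,x_2^2$ in $\P(\E_{0,0,m})$. This surface is nonsingular, and its fibers are singular exactly over the $2m$ roots of $f$, with conjugate components there.
\item Second, two relatively $\R$-minimal real conic-bundle models of the same generic conic are connected by real elementary transformations centered at real points or at pairs of conjugate points. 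I would show that these transformations do not change the real deformation class. They can be performed inside a family, by letting the center move, and spherical $\R$-minimality pins down the fibers over the discriminant. Alternatively, one can show directly via Proposition~\ref{unicity-canonical} that the canonical embedding of any such model is real-isomorphic to the normal form above.
\item Finally, the space of real binary forms $f$ of degree $2m$ with $2m$ distinct real roots is connected once the overall sign is normalized. Indeed, the configuration space of $2m$ distinct points on $\R\P^1$ is connected, and the sign of $f$ is fixed by requiring $X_\R\ne\varnothing$; the opposite sign just exchanges the circle and empty intervals and is reached by a real coordinate change on $\P^1$. Connectedness of this space gives the real deformation equivalence.
\end{itemize}

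The main obstacle is the second step of Part (1): showing that the birational identification coming from the Brauer class can be upgraded to a real deformation between the two relatively $\R$-minimal models, not merely a birational map. I would first try to bypass elementary transformations entirely. The canonical embedding gives a real symmetric matrix of forms as in (\ref{eq-matrix}), and I would aim to diagonalize it over $\R[u,v]$ up to deformation. The fact that the form is definite on alternate intervals, and that its fibers over the roots consist of conjugate lines, should force a deformation to the shape $\operatorname{diag}(1,1,-f)$.
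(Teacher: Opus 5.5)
The paper does not prove this statement; it quotes it from \cite{degt-kh}, so your argument has to stand on its own. Your necessity directions and Part~(2) are fine. The real extension $0\to\O\to\O(1)\oplus\O(k-1)\to\O(k)\to0$, $1\mapsto(u,v^{k-1})$, gives the required real family from $\Sigma_k$ to $\Sigma_{k-2}$. Part~(1), however, has a genuine gap exactly at the step you flag, and the shortcut you offer for it is false.

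\textbf{What your first and third bullets actually give.} They show only two things. First, every spherical $\R$-minimal model is \emph{birational} over $\P^1$ to some $X_f=\{x_0^2+x_1^2=fx_2^2\}\subset\P(\E_{0,0,m})$. Second, the surfaces $X_f$ form a single real deformation class.

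\textbf{Why the shortcut fails.} A model need not be isomorphic to any $X_f$. By Propositions~\ref{unicity-canonical} and \ref{number-fibers}(1), the splitting type $(0,a_1,a_2)$ of the canonical $\E$ is an isomorphism invariant. Already for $m=2$ there is a counterexample: take $x_0^2+\B x_1^2+\CC x_2^2=0$ in $\P(\E_{0,1,1})$, with real quadratic forms $\B,\CC$ negative exactly on two disjoint intervals. This is the model Proposition~\ref{spherical-construction} produces for $s=4$. It is spherical $\R$-minimal with $X_\R=2S^2$, but its type is $(0,1,1)\neq(0,0,2)$.

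\textbf{What remains unproved.} The birational maps between models are therefore genuine real elementary transformations, and you must prove that they preserve the real deformation class. This is not automatic: $\Sigma_0\dashrightarrow\Sigma_1$ is an elementary transformation between surfaces that are not even complex deformation equivalent. Letting the center move through a connected family of admissible centers only shows that $\operatorname{elm}_pX$ and $\operatorname{elm}_{p'}X$ are deformation equivalent; it never connects $X$ itself to $\operatorname{elm}_pX$. Over $\C$ one can make that connection by pushing the center into a component $L_1$ of a singular fiber and contracting the other component $L_2$ in the family. In the spherical $\R$-minimal case, however, $L_1$ and $L_2$ are complex conjugate, so neither can be contracted alone over $\R$, and you give no real substitute.

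Your fallback has the same problem. Deforming an arbitrary matrix of the form (\ref{eq-matrix}) to $\operatorname{diag}(1,1,-f)$ forces the splitting type of $\E$ to jump along the path. Along the way you must keep the total space smooth and all $2m$ real singular fibers of conjugate-line type. Saying this ``should'' be possible restates Part~(1) rather than reducing it. As written, Part~(1) is unproved.
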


We say that a real conic bundle  $X$ is of {\it spherical or aspherical minimal type} if its R-minimal model
(resulted from blowing down in $X$ some number of real 
and some number of conjugate pairs of imaginary line-components of singular fibers)
is spherical or aspherical. Note that a real conic bundle is of aspherical minimal type if, and only if, it has no any empty real fiber.

The  following statement is a straighforward consequence of Theorem \ref{R-min-deform}.
\begin{cor}\label{determine-defclass}
Assume that real conic bundles $X$, $Y$ with nonempty real parts have the same number of singular fibers. Then they are deformation equivalent if:
\begin{enumerate}
\item either, $X$ and $Y$ are of aspherical minimal type, having no real singular fibers and homeomorphic real loci
$X_\R\cong Y_\R$.
\item or,
$X$ and  $Y$ are of aspherical minimal type and have the same number $\ge1$ of real singular fibers.
\item or, $X$ and $Y$ are of spherical minimal type and 
have
homeomorphic
$X_\R\cong Y_\R$, where a homeomorphism respects (up to reversing)  the cyclic order of the components induced by the projections to $\P^1_\R$.
 \qed
\end{enumerate}
\end{cor}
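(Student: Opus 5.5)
The statement immediately above is Corollary~\ref{determine-defclass}. The plan is to reduce each case to Theorem~\ref{R-min-deform} by passing to $\R$-minimal models and then blowing back up. A real conic bundle $X$ is obtained from its $\R$-minimal model $X^0$ by a sequence of equivariant blow-ups. Each blow-up is either at a real point lying on a real nonsingular fiber, or at a conjugate pair of imaginary points lying on a conjugate pair of fibers, or on one real fiber with empty or nonempty real part. The deformation class of the blown-up bundle depends only on two pieces of data: the real deformation class of $X^0$, and the combinatorial data of where the blown-up points lie. The reason is that the space of admissible choices of centers, with the real structure taken into account, is connected once we fix which connected component of the set of real fibers each real center sits on. So the key steps are as follows. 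First, identify the $\R$-minimal models of $X$ and $Y$, and show they are deformation equivalent by Theorem~\ref{R-min-deform}. Second, show the configurations of blown-up centers can be matched, and lift the deformation of the minimal models to a deformation of the configurations.

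In case (1) there are no real singular fibers. Then all blown-up points come in imaginary conjugate pairs, and the number of such pairs is fixed by $s$. The minimal models are geometrically ruled surfaces $\Sigma_r$, and blowing up conjugate pairs does not change the topology of the real locus. Hence $X_\R\cong Y_\R$ forces $r_X\equiv r_Y \bmod 2$, and Theorem~\ref{R-min-deform}(2) gives a deformation. The set of pairs of conjugate points with distinct, non-real, images in $\P^1$ is connected, so the blow-ups deform along.

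In case (2) the real singular fibers arise from blowing up real points on $\Sigma_r\to\P^1$. Blowing up a real point on a real fiber changes $r$ by elementary transformations. After an elementary transformation, the parity of $r$ can be changed, so the minimal model can be chosen within either parity class. Fixing one class, the real centers lie on $\P^1_\R$, the imaginary pairs lie off it, and with the numbers fixed both configuration spaces are connected. Here one must take care that the minimal model is not unique. The plan is to perform an elementary transformation at one real singular fiber, which is available since their number is $\ge 1$, so that both minimal models have the same parity of $r$.

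In case (3) the minimal model is a spherical bundle with $2m$ real singular fibers, where $m$ is the number of sphere components. The remaining singular fibers of $X$ come from blow-ups: real points on the spheres, or conjugate pairs. A homeomorphism of real loci respecting the cyclic order tells us how many real blow-ups sit on each sphere, and in which order. Blow-ups inside a fixed sphere, or on a fixed arc of $\P^1_\R$ over a sphere, can be moved freely. Case (1) of Theorem~\ref{R-min-deform} then matches the minimal models while preserving the cyclic labeling of their spheres.

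The main obstacle is the non-uniqueness of $\R$-minimal models, together with checking that the realizations of Theorem~\ref{R-min-deform} can be performed compatibly with the cyclic order. To handle this I would first try to normalize all configurations by elementary transformations before invoking Theorem~\ref{R-min-deform}.
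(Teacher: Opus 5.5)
Your overall route is the one the paper intends: it records the corollary as an immediate consequence of Theorem~\ref{R-min-deform}, namely pass to $\R$-minimal models, match them, and carry the blow-up centers along using connectedness of their configuration spaces. Cases (1) and (2) work as you argue them. In (2), your observation that contracting the other real component of one real singular fiber flips the parity of $r$ is exactly what is needed. One correction: drop your third type of blow-up. Two conjugate points on one real fiber would produce a fiber with three components, so this never occurs in a conic bundle.

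The gap is in case (3). Theorem~\ref{R-min-deform}(1) only says that the spherical minimal models $X^0$ and $Y^0$ are deformation equivalent. It gives no control over which sphere of $X^0$ is carried to which sphere of $Y^0$. Yet to transport the blow-up data, the sphere carrying $k_i$ real blow-ups must land on a sphere carrying the same number. A deformation over the fixed base $\P^1$ moves the $2m$ real discriminant points without collision, so it induces some rotation of the cyclically ordered spheres, and that rotation need not be the one matching the $k_i$.

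Your proposed remedy cannot fix this. Elementary transformations centered at real points of the spheres, or at conjugate pairs, never touch the $2m$ non-contractible singular fibers, so they never move the spheres. What you need is a self-deformation that realizes a cyclic shift. Take the loop $Z_\psi=\{x_0^2+x_1^2+f(R_\psi(u,v))\,x_2^2=0\}\subset\P(\E_{0,0,m})$, where $f=\operatorname{Im}(u+iv)^{2m}$ and $R_\psi$ is the rotation by $\psi\in[0,\pi/m]$. Each $Z_\psi$ is a spherical minimal bundle and $Z_{\pi/m}=Z_0$. Along the loop the $2m$ real roots advance by two positions, so the spheres shift by one. Conjugating this loop by a path from $Y^0$ to $Z_0$ realizes every rotation.

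A reversal of the cyclic order cannot be obtained by any deformation over a fixed base. It must come from pulling $Y$ back along the real automorphism $[u:v]\mapsto[u:-v]$ of $\P^1$. So deformation equivalence has to be understood up to real isomorphism, which is what the ``up to reversing'' in the statement presupposes.
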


\subsection{Scheme of proof of Theorem \ref{exhaustive}}
We prove Theorem \ref{exhaustive} analysing the three types of conic bundles in Corollary \ref{determine-defclass} and
presenting explicit equations for representatives of all deformation classes.

 An equation of the form $\A x_0^2+\B x_1^2+\CC x_2^2=0$, where $\A,\B,\CC$ are homogeneous polynomials in $(u,v)$
  will be called {\it diagonal}.
  Choosing an affine coordinate $t=\frac{u}v$, we consider below the  roots of $\A=\A(t)$, $\B=\B(t)$, $\CC=\CC(t)$.
According to Proposition \ref{number-fibers} we have the following.
 
 \begin{lemma}\label{diagonal}
A diagonal equation defines in $\P(\E^s)$ a hypersurface, $X$, in the linear system $\XX^s$ if $\A,\B,\CC$ have degrees
$d,d+2a_1,d+2a_2$, where $0\le a_1\le a_2\le 1$ and $s=3d+2(a_1+a_2)$. 
This hypersurface  is real if $\A,\B,\CC$ are real.
Moreover:
\begin{itemize}\item
$X$ is a conic bundle if and only if all roots of the product $\A\B\CC$ are distinct.
In this case, the $s$ singular fibers of $X$ are over the roots of $\A\B\CC$.
\item
 The real fibers $C_t\subset X$ have $C_{t\R}=\oo$ if and only if $\A(t),\B(t),\CC(t)$ are of the same sign.
 \qed
 \end{itemize}
 \end{lemma}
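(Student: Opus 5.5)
We prove Lemma~\ref{diagonal} by specializing Proposition~\ref{number-fibers} to a diagonal matrix and then doing a local computation at the roots of $\A\B\CC$.

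\emph{Membership in $\XX^s$ and reality.} Take $\E^s=\E_{0,a_1,a_2}$ with $0\le a_1\le a_2\le 1$. By (\ref{homogeneity}), the monomial $x_i^2$ has bidegree such that its coefficient must be a form of degree $d+2a_i$ in $(u,v)$. This matches the diagonal entries $f_d,f_{2a_1+d},f_{2a_2+d}$ of (\ref{eq-matrix}). Hence a diagonal equation with $\deg\A=d$, $\deg\B=d+2a_1$, $\deg\CC=d+2a_2$ is a section of $\O(2)\otimes\pi^*\O(d)$, so its zero locus lies in the class $2M+dF$, which is $\XX^s$. Since $s=3d+2(a_1+a_2)$ is exactly the count of Proposition~\ref{number-fibers}(4), the indexing is consistent. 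If $\A,\B,\CC$ are real, the equation is invariant under the standard real structure of $\P(\E^s)$, so $X$ is real.

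\emph{Conic bundle criterion.} By Proposition~\ref{number-fibers}(3), it is enough to check when $X$ is nonsingular and every fiber is a reduced conic. We work in the affine chart $t=u/v$ (the point $v=0$ is handled the same way after switching charts) and in a chart $x_k=1$ of the fiber. Over a value $t$, the fiber is the plane conic $\A(t)x_0^2+\B(t)x_1^2+\CC(t)x_2^2=0$.

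\begin{itemize}
\item If none of $\A(t),\B(t),\CC(t)$ vanishes, the fiber is a smooth conic and $X$ is smooth along it.
\item If exactly one coefficient vanishes, say $\A(t_0)=0$, the fiber is a pair of distinct lines meeting at $[1:0:0]$. At that point the partial derivative of the equation with respect to $t$ equals $\A'(t_0)$. So $X$ is smooth there if and only if $t_0$ is a simple root of $\A$, which is the local model $ax^2+by^2+ctz^2$ with $t$ a local coordinate.
\item If two coefficients vanish at $t_0$, the fiber is a double line, so it is not reduced.
\item If all three vanish, the whole fiber is contained in the singular locus of $X$.
\end{itemize}

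Therefore $X$ is a nonsingular conic bundle exactly when all roots of $\A\B\CC$ are simple and no two of $\A,\B,\CC$ share a root, i.e.\ when all roots of the product $\A\B\CC$ are distinct. In that case the singular fibers lie exactly over these roots, and there are $\deg(\A\B\CC)=s$ of them.

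\emph{Empty real fibers.} For real $t$ with $\A\B\CC(t)\ne0$, the real fiber is given by a real diagonal quadratic form. Its real zero set is empty if and only if the form is definite, i.e.\ $\A(t),\B(t),\CC(t)$ all have the same sign. At a root of $\A\B\CC$, the real fiber always contains the real point where the two lines meet, so it is nonempty. This is consistent with the statement, since "the same sign" excludes zero.

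The only step requiring any care is the smoothness computation at the roots. It reduces to the observation that the equation's partial derivative with respect to $t$ at the node of the fiber is $\A'(t_0)$ (respectively $\B'(t_0)$ or $\CC'(t_0)$).
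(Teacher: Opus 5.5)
Your proposal is correct and follows the argument the paper leaves implicit: specialize Proposition~\ref{number-fibers} to a diagonal matrix, so that $\det A=\A\B\CC$ has degree $s$, and use the local model $ax^2+by^2+ctz^2$ at a simple root. There is one harmless slip: if $\A,\B,\CC$ all vanish at $t_0$, the singular locus of $X$ inside that fiber is only the conic $\A'(t_0)x_0^2+\B'(t_0)x_1^2+\CC'(t_0)x_2^2=0$, not the whole $\P^2$; this does not affect the conclusion, since the fiber is then not a conic at all.
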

 

\subsection{The case of aspherical minimal type without real singular fibers}\label{i}
\begin{proposition}\label{construction1}
If $s\ge2$ is even, then the linear system $\XX^s$
contains embedded real conic bundles  $X^0$, $X^1$ of aspherical minimal type with $s$ singular fibers, all of which are imaginary, and with
$X^0_\R=\T^2$, $X_\R^1=\K^2$.
%
\end{proposition}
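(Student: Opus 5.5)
We will construct $X^0$ and $X^1$ explicitly by diagonal equations $\A x_0^2+\B x_1^2+\CC x_2^2=0$ and apply Lemma \ref{diagonal}.

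\emph{Choice of degrees.} Write $s=3d+2(a_1+a_2)$ with $(a_1,a_2)$ as in the definition of $\E^s$, so that $\deg\A=d$, $\deg\B=d+2a_1$ and $\deg\CC=d+2a_2$. Since $s$ is even, $d$ is even in all three residue cases ($s=3k$ with $k$ even, $s=3k+2$, $s=3k+4$). Hence all three degrees are even.

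\emph{Choice of polynomials.} We take $\A,\B,\CC$ real, with only non-real roots, and with all roots of $\A\B\CC$ pairwise distinct. Such polynomials exist because all degrees are even: use products of distinct factors $(u-\lambda v)(u-\bar\lambda v)$, or a constant if the degree is $0$. Each of $\A,\B,\CC$ then has constant sign on $\P^1_\R$, and we may prescribe that sign freely by an overall real scalar.

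\emph{Conic bundle structure.} By Lemma \ref{diagonal}, $X$ is a real conic bundle in $\XX^s$ whose $s$ singular fibers lie over the roots of $\A\B\CC$. All of these roots are imaginary, so there are no real singular fibers. We choose the signs so that not all of $\A,\B,\CC$ are positive (say $\A>0$, $\B>0$, $\CC<0$). Then no real fiber is empty, so $X$ is of aspherical minimal type.

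\emph{Real locus as a circle bundle.} The real locus $X_\R$ is a circle bundle over $\P^1_\R$ lying inside the $\Rp2$-bundle $\P_\R(\E^s)$. Which circle bundle we get, torus or Klein bottle, is decided by monodromy. In the local chart with $u\ne0$ versus $v\ne0$, the coordinates $x_i$ are rescaled by $t^{-a_i}$. The real conic $\{\A x_0^2+\B x_1^2+\CC x_2^2=0\}$ with $\A,\B>0>\CC$ projects isomorphically onto the circle $\{x_2\ne0\}$, with affine coordinates $(x_0/x_2,\,x_1/x_2)$. Going once around $\P^1_\R$, the transition function acts on these coordinates as $(t^{a_2-a_0},\,t^{a_2-a_1})$, read off at $t\to\pm$.

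The exponent parities $(a_2-a_0,\,a_2-a_1)$ are $(0,0)$, $(1,0)$ or $(0,0)$ in the three cases. In case $(1,0)$ the monodromy reflects one coordinate, which reverses the orientation of the circle and gives $\K^2$. In the other cases the monodromy preserves orientation and gives $\T^2$.

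\emph{Obtaining the other surface.} To realize the remaining surface, we permute which coefficient carries the negative sign, say $\A<0<\B,\CC$. Now the conic projects to the circle $\{x_0\ne0\}$, and the relevant parities become $(a_1-a_0,\,a_2-a_0)$, namely $(0,0)$, $(0,1)$ or $(1,1)$. A monodromy that reflects both coordinates is a rotation and preserves orientation, while one that reflects a single coordinate reverses it.

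\emph{Main obstacle.} The hard step is exactly this orientation bookkeeping, made so that every even $s$ yields both $\T^2$ and $\K^2$. This looks problematic when $(a_1,a_2)=(0,0)$, where every sign pattern gives trivial monodromy. In that case I would instead use a non-diagonal equation, or reparametrize: twisting by $\O(1)$ on a $\P^1$-subbundle changes the parity. Alternatively, I would show directly that $\Sigma_1$ with its standard structure, blown up at $s/2$ conjugate pairs of imaginary points in distinct fibers, is a conic bundle with $X_\R=\K^2$ whose canonical embedding lands in $\P(\E^s)$ after the normalization of Proposition \ref{number-fibers}(1). This would be combined with the uniqueness of $a_1\le a_2$, possibly also invoking Theorem \ref{R-min-deform} to move within the deformation class.
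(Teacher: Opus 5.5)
\paragraph{Where the proposal breaks down.} Your monodromy bookkeeping is a valid substitute for the paper's Lemma~\ref{T-or-K}, which instead reads off $\T^2$ versus $\K^2$ from the normal bundle $\O_\R(a_1)\oplus\O_\R(a_2)$ of the section onto which the disc-fibred half of $\P(\E_\R)\sm X_\R$ retracts. However, your parity table for $\CC<0$ is miscomputed. With $a_0=0$, the pair $(a_2-a_0,\,a_2-a_1)$ is $(0,0)$, $(1,1)$, $(1,0)$ for $(a_1,a_2)=(0,0),(0,1),(1,1)$, not $(0,0),(1,0),(0,0)$. So $\CC<0$ gives $\K^2$ exactly when $a_1$ is odd. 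Taken literally, your table makes both of your sign patterns give $\K^2$ when $(a_1,a_2)=(0,1)$, and both give $\T^2$ when $(a_1,a_2)=(1,1)$, so the coverage you claim does not follow. With the corrected table, the two patterns $\A<0$ and $\CC<0$ do give both surfaces in these two cases: $\K^2,\T^2$ for $(0,1)$ and $\T^2,\K^2$ for $(1,1)$.

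The genuine gap is the case $(a_1,a_2)=(0,0)$, i.e.\ $s\equiv 0\pmod 6$, with $X_\R=\K^2$. You correctly see that no diagonal equation works there, but none of your alternatives is an argument.
\begin{itemize}
\item Twisting a summand of $\E$ by $\O(1)$ changes the splitting type, so it takes you out of $\P(\E^s)$.
\item For the blow-up of $\Sigma_1$, uniqueness of $(a_1,a_2)$ for a \emph{given} $X$ says nothing about which type actually occurs. The splitting type of $\pi_*\O(-K_X)$ can change under deformation, so you would need a separate argument that the balanced type $\E_{0,0,0}$ is realized inside the required real deformation class.
\end{itemize}

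\paragraph{The missing idea.} The paper supplies a non-diagonal equation on $\P^1\times\P^2$ whose negative direction rotates with the base point:
$\A(u^2+2v^2)x_2^2+\B(ux_1-vx_0)^2=(\B+\epsilon\CC)(vx_1+ux_0)^2$, with $\A,\B,\CC>0$ of degree $d-2$ and $0<\epsilon\ll 1$. The half $\{\le\}$ is fibred in discs and retracts onto the section $x_2=0$, $(x_0,x_1)=(u,v)$. That section has nontrivial normal bundle, so $X_\R=\K^2$. In your language, replacing the representative $(u,v)$ by $(-u,-v)$ reverses exactly one normal coordinate.

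One caveat if you adopt this equation: the determinant of the form is $-\A\B(\B+\epsilon\CC)(u^2+2v^2)(u^2+v^2)^2$. So, as written, the surface has two conjugate imaginary nodes over $u=\pm iv$. A small generic real perturbation within the linear system smooths them without changing $X_\R$, and all singular fibers stay imaginary.
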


By Lemma \ref{diagonal}, the equation 
 $\A x_0^2+\B x_1^2+\CC x_2^2=0$, where $\A$, $\B$, $\CC$ are generic real polynomials without real roots of degrees $d$, $d+2a_1$, $d+2a_2$
respectively define a real conic bundle $X\subset\P(\E^s)$, $\E^s=\E_{0,a_1,a_2}$,
without real singular fibers.
If the signs of $\A,\B,\CC$ are not the same, it follows also that
$X_\R\ne\oo$ and, thus, it is either $\T^2$ or $\K^2$. 
The following Lemma shows how is it determined by the parity of $a_1$ and $a_2$.

\begin{lemma}\label{T-or-K}
$(1)$ If $\A<0$ and $\B,\CC> 0$, 
 then $X_\R=\T^2$ for even $a_1+a_2$  and $X_\R=\K^2$ for odd.

$(2)$ If $\B<0$ while $\A,\CC>0$,  then $X_\R=\T^2$ for even $a_2$  and $X_\R=\K^2$ for odd.

$(3)$ If $\CC<0$ while $\A,\B>0$,  then $X_\R=\T^2$ for even $a_1$  and $X_\R=\K^2$ for odd.
\end{lemma}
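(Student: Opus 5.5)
The plan is to identify $X_\R$ with the unit circle bundle of an explicit real rank-$2$ vector bundle over $\P^1_\R\cong S^1$, and then to read off orientability from the first Stiefel–Whitney class of that bundle. Recall that a circle bundle over $S^1$ is a torus or a Klein bottle, and it is a torus exactly when the monodromy preserves orientation of the fiber. For the unit circle bundle $S(V)$ of a rank-$2$ real vector bundle $V\to S^1$, this happens iff $w_1(V)=0$.

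First treat case (1). Over each real $t$ the real fiber is the conic $\{\A x_0^2+\B x_1^2+\CC x_2^2=0\}\subset\P^2_\R$ with $\A(t)<0<\B(t),\CC(t)$. Every real point has $x_0\ne0$, since otherwise $\B x_1^2+\CC x_2^2=0$ would force $x_1=x_2=0$. Hence $X_\R$ lies entirely in the open subset $\{x_0\ne0\}$ of $\P_\R(\E^s)$. This subset is the total space of the real vector bundle with fiber coordinates $y_1=x_1/x_0$ and $y_2=x_2/x_0$. By the weights in (\ref{homogeneity}) with $a_0=0$, the coordinates $y_j$ transform under $\lambda$ by $\lambda^{-a_j}$. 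So $y_j$ is a fiber coordinate of the real part of $\O(\pm a_j)$, which over $\P^1_\R$ is a Möbius (nonorientable) line bundle iff $a_j$ is odd; the sign of the exponent does not affect this.

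In the chart $\{x_0\ne0\}$ the fiber equation reads $\B y_1^2+\CC y_2^2=|\A|$. This is an ellipse in each fiber of $V=L_1\oplus L_2$, where $L_j$ is the real part of $\O(\pm a_j)$. After a fiberwise rescaling, $X_\R$ is isotopic to the unit circle bundle $S(V)$ with respect to the metric $(\B y_1^2+\CC y_2^2)/|\A|$. Therefore $X_\R=\T^2$ iff $w_1(V)=w_1(L_1)+w_1(L_2)=(a_1+a_2)\bmod 2$ vanishes, which is the claim of (1).

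Cases (2) and (3) are handled in the same way, using the charts $\{x_1\ne0\}$ and $\{x_2\ne0\}$ respectively. In case (2) the fiber coordinates $x_0/x_1$ and $x_2/x_1$ have weights of parity $a_1$ and $a_2-a_1$. Hence $w_1=a_1+(a_2-a_1)\equiv a_2$. In case (3) the weights have parity $a_2$ and $a_1-a_2$, giving $w_1\equiv a_1$.

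The only step requiring care is the identification of the chart $\{x_i\ne0\}$ of $\P(\E)$, restricted over $\P^1_\R$, with the total space of the real bundle $\bigoplus_{j\ne i}\O(a_j-a_i)_\R$. One must also justify that $\O(a)_\R$ is the Möbius bundle exactly for odd $a$; this follows from its transition function $(u/v)^{a}$ on the real circle. Once this is checked from the toric presentation (\ref{homogeneity}), the rest is the elementary statement about circle bundles over $S^1$.
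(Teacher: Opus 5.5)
Your proposal is correct and follows essentially the paper's argument. The paper notes that the half $\{\A x_0^2+\B x_1^2+\CC x_2^2\le 0\}$ is a disc bundle retracting onto the section $x_1=x_2=0$, with normal bundle $\O_\R(a_1)\oplus\O_\R(a_2)$ and boundary $X_\R$; this is exactly your identification of $X_\R$ with the unit circle bundle inside the chart $\{x_0\neq 0\}$, and you additionally write out cases (2) and (3) and observe that the sign convention $\O(\pm a_j)$ does not matter mod $2$.
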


\begin{proof}
We will consider only case (1), since in other cases a proof is analogous.
The complement of $X_\R$ in $\E_\R$ has 2 connected components. The component defined by inequality $\A x_0^2+\B x_1^2+\CC x_2^2\le 0$
is fibered in 2-discs and contains as a deformation retract the section $x_0=1, x_1=x_2=0$. The normal bundle of this section is isomorphic to $O_\R(a_1)\oplus O_\R(a_2)$,
which is trivial if and only if $a_1+a_2$ is even.
\end{proof}

\begin{proof}[Proof of Proposition \ref{construction1}]
Lemma \ref{T-or-K} implies that if $(a_1,a_2)$ is $(0,1)$ or $(1,1)$, or equivalently, even $s\ge2$ not divisible by $6$,
we may construct $X\subset\P(\E^s)$ with either $X_\R=\T^2$ or $X_\R=\K^2$ by our choice.
For $(a_1,a_2)=(0,0)$, or equivalently, $s$ divisible by $6$,
a diagonal equation may give only $X_\R=\T^2$ and for construction $X$ with $X_\R=\K^2$ we use the following equation for $X\subset\P(\E^s)$
(cf., Lemma \ref{diagonal}):
$$
\A(u^2+2v^2)x_2^2+\B(ux_1-vx_0)^2=(\B+\epsilon \CC)(vx_1+ux_0)^2, \, 0<\epsilon\ll 1,
$$
where $\A, \B, \CC$ are generic homogeneous positive polynomials of the same degree $d-2\ge0$ in  $(u,v)$, $d=\frac{s}3$.
The component of $\E_\R\sm X_\R$ defined by inequality
$
\A(u^2+2v^2)x_2^2 +  \B(ux_1-vx_0)^2\le (\B+\epsilon \CC)(vx_1+ux_0)^2$
is fibered in 2-discs and has as a deformation retract the section $x_2=0, x_0=u, x_1=v$. Since the normal bundle of this section is non trivial, 
$X_\R$ is homeomorphic 
to the Klein bottle.

When $\epsilon=0$ the conic bundle has imaginary multiple degenerate fibers but its real part is nonsingular.
\end{proof}

\subsection{The case of aspherical minimal type with $\ge1$ real singular fibers}\label{aspherical-models}

\begin{proposition}\label{construction3} For any $s\ge 2$ and any $s_\R$ with $1\le s_\R\le s, s_\R=s\,{\rm\, mod\,2}$, the linear system 
 $\XX^s$ contains a conic bundle $X$ of aspherical minimal type with $s_\R$ real singular fibers.
\end{proposition}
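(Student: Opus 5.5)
Following the pattern of Section \ref{i}, I would construct $X$ by an explicit diagonal equation $\A x_0^2+\B x_1^2+\CC x_2^2=0$ in $\P(\E^s)$, with $\deg\A=d$, $\deg\B=d+2a_1$, $\deg\CC=d+2a_2$ and $s=3d+2(a_1+a_2)$. By Lemma \ref{diagonal}, provided the roots of $\A\B\CC$ are distinct, this is a real conic bundle in $\XX^s$. Its singular fibers lie over the roots of $\A\B\CC$, and the real singular fibers correspond exactly to the real roots. Since $\deg(\A\B\CC)=s$, the number of real roots automatically has the parity of $s$. So the task reduces to two things: distributing $s_\R$ real roots among $\A,\B,\CC$, and arranging that no real fiber is empty. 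The latter is what makes the conic bundle of aspherical minimal type, as noted before Corollary \ref{determine-defclass}.

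By Lemma \ref{diagonal}, the fiber over a real $t$ is empty exactly when $\A(t),\B(t),\CC(t)$ all have the same sign. The device I would use is to keep one polynomial of one fixed sign on $\P^1_\R$ and another of the opposite fixed sign, and to put all real roots into the third. Concretely, say $\B>0$ and $\CC<0$ everywhere on $\P^1_\R$, with no real roots. Then every real fiber has mixed signs and is nonempty, and at a real root of $\A$ the fiber is a real singular conic, two real lines because $\B$ and $\CC$ have opposite signs. It remains to place the $s_\R$ real roots.

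The main obstacle is degree bookkeeping. A real polynomial with no real roots has even degree, and $\A$ can carry at most $\deg \A$ real roots. So I would split into cases according to $s\bmod 3$, i.e.\ $(a_1,a_2)\in\{(0,0),(0,1),(1,1)\}$, and according to the parity of $d$. The key point is that the real roots need not all go into one polynomial. We only need, at each real $t$, two of $\A,\B,\CC$ to have opposite signs. I would therefore let $\B$ carry some real roots while $\CC<0$ stays sign-definite, and choose $\A$ to have real roots interlaced so that, wherever $\B$ becomes negative, $\A$ (or the other one) is positive. A cleaner choice is to require $\A$ and $\B$ never to be simultaneously negative where $\CC<0$: take $\A=\ell_1\cdots\ell_p\,g$ and $\B=-\ell_1\cdots\ell_p\, h'$ up to small perturbation, with the real linear factors placed so that the signs of $\A$ and $\B$ are opposite on the relevant intervals. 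When $d$ is odd, some polynomial of odd degree must have a real root, and this forces $s_\R\ge1$, consistent with the hypothesis. I would handle that case by letting the odd-degree polynomials carry the real roots, with the remaining ones of even degree chosen sign-definite. Then I would check case by case that every $s_\R\equiv s \pmod 2$ with $1\le s_\R\le s$ can be realized. In the extreme case $s_\R=s$, all three polynomials have only real roots, and I would interlace them so that on each interval between consecutive roots the signs are not all equal. This is possible by letting the sign patterns of $\A,\B,\CC$ change at distinct points in alternation.

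Finally, I would perturb generically so that the roots of $\A\B\CC$ are distinct. This is an open condition and does not change the sign conditions away from the roots. Then Lemma \ref{diagonal} gives a conic bundle in $\XX^s$ with $s_\R$ real singular fibers and no empty real fiber, hence of aspherical minimal type.
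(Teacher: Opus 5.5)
\textbf{There is a genuine gap: the case $s$ odd, $s_\R=1$ is not covered.} In $\XX^s$ the degrees of $\A,\B,\CC$ are $d$, $d+2a_1$, $d+2a_2$. These all have the parity of $d$, and $s=3d+2(a_1+a_2)$ is odd exactly when $d$ is odd. So for odd $s$, \emph{all three} polynomials have odd degree. Each then has at least one real root, and every diagonal equation gives $s_\R\ge 3$, not $s_\R\ge1$ as you wrote.

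Your plan of ``letting the odd-degree polynomials carry the real roots, with the remaining ones of even degree chosen sign-definite'' therefore has nothing to act on, since no even-degree polynomial remains. Your basic device of two sign-definite polynomials of opposite signs is also unavailable for odd $s$. Hence $s_\R=1$, which occurs for every odd $s$ (already for $s=3$), cannot be realized by any diagonal equation.

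For odd $s$ with $s_\R\ge 3$ the diagonal approach does work, but you need an explicit arrangement rather than ``check case by case''. The paper takes signs $-,+,+$ for $\A,\B,\CC$ at $-\infty$ and then passes $r^\CC$ roots of $\CC$, then $r^\A$ roots of $\A$, then $r^\B$ roots of $\B$, with all $r$'s odd. At each stage the two polynomials not currently changing sign have opposite signs, so no real fiber is empty. For even $s$ the analogous order $\B\dots\B\,\CC\,\A\dots\A\,\CC\dots\CC$ does the same job.

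The missing idea for $s_\R=1$ is to leave the diagonal form. Take
$F=\A x_0^2+2\DDD x_1x_2+\e(\B x_1^2+\CC x_2^2)$, where:
\begin{itemize}
\item $0<\e\ll 1$ and $\deg\DDD=d+a_1+a_2$;
\item $\A$ has exactly one real root;
\item $\B\CC<0$ at the real roots of $\DDD$.
\end{itemize}
Then the binary form in $(x_1,x_2)$ has discriminant $\e^2\B\CC-\DDD^2<0$ on all of $\P^1_\R$. Two consequences follow. First, this form is indefinite for every real $t$, so every real fiber is nonempty and $X$ is of aspherical minimal type. Second, this factor of $\det=\A(\e^2\B\CC-\DDD^2)$ has no real zeros, so the only real singular fiber lies over the unique real root of $\A$. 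This is exactly how the paper treats $s_\R=1$; it reserves diagonal equations for $s_\R\ge2$.
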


\begin{proof}
We start with the case $s_\R\ge2$, in which $X$ can be presented by an equation in a diagonal form 
$\A x_0^2+\B x_1^2+ \CC x_2^2=0$ where $\A,\B,\CC$ are real homogeneous polynomials  in $u$ and $v$ of degree
$\deg \A=d\le\deg \B= 2a_1+d\le \deg \C=2 a_2+d$ (see Lemma \ref{diagonal}). 

Note that we can split $s_\R\ge2$ into a sum $s_\R=r^\A+r^\B+r^\CC$, where 
$$\begin{aligned}
&0\le r^\A\le \deg \A,\qquad 0\le r^\B\le \deg \B,\qquad 1\le r^\CC\le \deg \CC,\\
&r^\A, r^\B,r^\CC\ \text{ are of the same parity as } \deg \A=\deg \B=\deg \CC\,{\rm mod}\, 2.
\end{aligned}
$$

Then, we pick $\A$, $\B$ and $\CC$ with $r^\A$, $r^\B$ and $r^\CC$ real mutually distinct roots.

To obtain $X$ of aspherical minimal type we need that $C_\R\ne\oo$ for all conic-fibers $C$, which means that $\A$, $\B$ and $\CC$ will not be of the same sign
for all $t\in\R$.
This is achieved if $\A$, $\B$ and $\CC$ have respectively signs $-$, $+$ and $+$ at $-\infty$ and 
their real roots are displaced in an order encoded by  the following sequences of letters $\A$, $\B$, $\CC$ (the order of letters refers to the order of roots of these polynomials):

$\bullet$ { If  $s$ is even},
the order of roots is
$\B\dots \B\CC\A\dots \A\CC\dots \CC$,
where fragments $\A\dots \A$, $\B\dots \B$ and  $\CC\dots \CC$ are of lengths
$r^\A$, $r^\B$ and $r^\CC-1$ respectively.

$\bullet$ { If $s$ is odd}, the order of roots is 
$\CC\dots \CC\A\dots \A\B\dots \B$, where fragments $\A\dots \A$, $\B\dots \B$ and  $\CC\dots \CC$ are of lengths
$r^\A$, $r^\B$ and $r^\CC$ respectively.

It is left finally to construct $X$ with $s_\R=1$ (and in particular, with odd $s$).
Such $X$ can be defined by equation $F=\A x_0^2+2\DDD x_1x_2+\e(\B x_1^2+\CC x_2^2)$, $0<\e<\!<1$, 
where $\A,\B,\CC,\DDD$ are real polynomials of degrees $d$, $d+2a_1$, $d+2a_2$, $d+a_1+a_2$,
with mutually distinct roots,
which implies (see Lemma \ref{diagonal}) that $X$ is from the linear system $\XX^s$ if $s=3d+2(a_1+a_2)$, so, $d\ge 1$ is odd.
The real singular fibers of $X$ are over the roots of 
$$\det\left[\begin{matrix}\A&0&0\\0&\e \B&\DDD\\0&\DDD&\e \CC\end{matrix}\right]=\A(\e^2\B\CC-\DDD^2),$$
and if we choose $\A$ to have a unique real root and $\B\CC$ to take negative values at the roots of $\DDD$, then
$X$ will be an aspherical minimal type conic bundle with only one real singular fiber.
\end{proof}

\subsection{The case of spherical minimal type}\label{ii}
\begin{proposition}\label{spherical-construction}
  For any integers $s\ge s_\R\ge2$, such that $s_\R=s\,{\rm mod}\,2$
  and any integer partition $s_\R=s_1+\dots+s_k$, where $s_i\ge2$ for $1\le i\le k$,
there exists a conic bundle $X$ of spherical minimal type in the linear system $\XX^s$, with $s$ complex and $s_\R$ real singular fibers,
where  $s_1,\dots, s_k$ are the numbers of real singular fibers on the connected components of $X_\R$
listed in the cyclic order determined by $\pi_\R:X_\R\to\P^1_\R$.
%
\end{proposition}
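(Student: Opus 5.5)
Following the pattern of Proposition \ref{construction3}, I will realize $X$ by a diagonal equation $\A x_0^2+\B x_1^2+\CC x_2^2=0$ in $\P(\E^s)$, where $\A,\B,\CC$ are real homogeneous polynomials of degrees $d$, $d+2a_1$, $d+2a_2$ and $s=3d+2(a_1+a_2)$. By Lemma \ref{diagonal}, two requirements must hold. First, the roots of $\A\B\CC$ must be simple and pairwise distinct; then there are exactly $s$ singular fibers, and the real ones lie over the real roots. Second, over $\P^1_\R$, the set where $\A,\B,\CC$ all have the same sign is exactly the set of empty real fibers. So the task is combinatorial: choose the numbers of real roots $r^\A,r^\B,r^\CC$ and their order on $\P^1_\R$ so that the sign pattern realizes the prescribed partition.

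The key observation is this. Walking along $\P^1_\R$, each real root flips the sign of exactly one of $\A,\B,\CC$. A fiber is empty when all three signs agree, and nonempty when exactly one of them disagrees with the other two; call that one the ``odd'' polynomial. Crossing a root of the odd polynomial toggles between empty and nonempty fibers. Crossing a root of one of the other two changes which polynomial is odd, while the fibers stay nonempty. A component of $X_\R$ is a sphere projecting onto an interval between two consecutive empty arcs. So a component with $s_i$ singular fibers is a nonempty interval with $s_i$ roots in total: the two boundary roots belong to the respective odd polynomials there, and the $s_i-2$ interior roots switch the odd polynomial. For example, on one interval I can use the word $\A\,\B\,\B\cdots\B\,\A$. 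After the first $\A$-root, $\A$ is odd, and I want it to stay odd until the closing $\A$, so the interior $\B$-roots must come in pairs. For odd $s_i$ I use a word like $\A\,\B\,\CC\cdots$, arranged so that the closing root belongs to whichever polynomial is odd at that point. Generic such words exist for every $s_i\ge2$, and the empty arcs between consecutive intervals contain no roots. Concatenating the $k$ words cyclically gives all $s_\R$ real roots in the required cyclic order.

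The remaining steps are bookkeeping. The total number of real roots of each polynomial must have the parity of its degree, since a real polynomial of degree $n$ on $\P^1_\R$ changes sign an even or odd number of times according to whether $n$ is even or odd (counting the root at infinity). I must also have $r^\A\le d$, $r^\B\le d+2a_1$, $r^\CC\le d+2a_2$. The remaining $s-s_\R$ roots are taken as imaginary conjugate pairs, which is possible because $s-s_\R$ is even. The parities of $\deg \A,\deg \B,\deg\CC$ all equal $d \bmod 2$, which equals $s \bmod 2$ and hence $s_\R \bmod 2$. To satisfy the parity condition I will distribute the letters in the words, exploiting the freedom to use any of the three letters and to pick which polynomial is odd at the start of each interval. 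I then fix global signs so that all three polynomials share a sign on the empty arcs; the sign changes around the full circle are automatically consistent once the parities are right.

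I expect the main obstacle to be the degree bounds when $d$ is small relative to $s_\R$, especially $d=0$ (the case $s=2$ or $s=4$). Then $\A$ is a nonzero constant, so every real root belongs to $\B$ or $\CC$, and the intervals must be built from two letters only. A word $\B\,\CC\cdots$ still works if the roots are arranged with care, because $\A$ can serve as the constant-sign reference. For $s\le4$ I would simply check by hand the few partitions $s_\R\in\{2,4\}$ with $s_\R\le s$. For larger $s$ the budget $d\ge s/3-4/3$ per polynomial makes the balancing feasible: I spread the $s_\R\le s$ real roots roughly evenly among $\A,\B,\CC$, which stays within the degrees up to parity adjustments handled by moving one letter.
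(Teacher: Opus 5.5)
Your setup is the paper's. You use a diagonal equation, note that a real fiber is empty exactly where $\A,\B,\CC$ share a sign, and build one word of real roots per spherical component. Your ``odd polynomial'' bookkeeping is equivalent to the paper's irreducibility condition on prefixes of a word.

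The gap is in the step you call bookkeeping, which is the entire combinatorial content of the proposition. For every partition $s_\R=s_1+\dots+s_k$ you must produce words whose total letter counts satisfy $r^\A\le d$, $r^\B\le d+2a_1$, $r^\CC\le d+2a_2$. Your sketch does not show this, and its proposed mechanism fails:

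\begin{enumerate}
\item The budgets are tight for every $d$, not only small $d$. When $s_\R=s$ you need $r^\A=d$, $r^\B=d+2a_1$, $r^\CC=d+2a_2$ exactly, so ``roughly evenly'' proves nothing.
\item Each interval is bounded by empty arcs on which all three signs agree. Hence the letter counts $(s_i^\A,s_i^\B,s_i^\CC)$ of each word are all congruent to $s_i$ modulo $2$. This makes the global parity condition automatic, so your parity concern is moot. It also means ``moving one letter'' is never a legal adjustment: within a word each count can only change by an even amount.
\item Not every triple of this parity is realized by one interval. The count $(4,0,0)$, i.e.\ the word $\A\A\A\A$, gives two components. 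Your sample words $\A\B^{s_i-2}\A$ are also far from balanced. For $s=s_\R=6$ (so $d=2$, degrees $(2,2,2)$) with the one-part partition, $\A\B\B\B\B\A$ needs four roots of a quadratic polynomial.
\end{enumerate}

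What is missing is exactly the paper's two lemmas. Lemma~\ref{trisection-partition} says that a partition with all parts $\ge2$ admits a balanced trisection. Each $s_i$ is split into three numbers with pairwise differences in $\{0,\pm2\}$, so that the sums over $i$ form the balanced trisection of $s_\R$. It is proved by induction, subtracting the compatibly ordered balanced trisection of the last part. Lemma~\ref{repeated} says that each balanced trisection of $r\ge2$ is the letter count of an irreducible word.

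You also need one observation. The degrees $(d,d+2a_1,d+2a_2)$ form the balanced trisection of $s$. This dominates entrywise, after sorting, the balanced trisection of any $s_\R\le s$ with $s_\R\equiv s \bmod 2$. With these three ingredients, your construction goes through.
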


We start proving with two elementary combinatorial lemmas and setting some terminology prior to them.
By a {\it balanced trisection} of integer $r\ge0$ we mean its partition $r=r^\A+r^\B+r^\CC$, into non-negative integers whose pairwise differences are allowed to be $-2,0,2$.
Clearly, for any $r\ge0$ such a trisection exists and unique up to permutation of the summands. Namely, it is 
\be\label{tri}
\begin{cases}
n+n+n &\text{ if } r=3n\\
(n-1)+(n+1)+(n+1)&\text{ if } r=3n+1\\
n+n+(n+2)&\text{ if } r=3n+2
\end{cases}
\ee
By a balanced trisection of a partition $r=r_1+\dots+r_k$, $r_i>0$ we mean a balanced trisection $r=r^\A+r^\B+r^\CC$ of $r$ and of each summand,
$r_i=r^\A_i+r^\B_i+r^\CC_i$, $1\le i\le k$, such that $r^\mathscr{Z}=r^\mathscr{Z}_1+\dots+r^\mathscr{Z}_k$ for $\mathscr{Z}=\A,\B,\CC$.

\begin{lemma}\label{trisection-partition}
Any partition $r=r_1+\dots+r_k$ with all $r_i\ge2$, admits a balanced trisection.
\end{lemma}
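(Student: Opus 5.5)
The plan is to prove the lemma by induction on $k$, choosing the trisections of the summands $r_1,\dots,r_k$ one at a time. At each stage the running sum should itself be a balanced trisection of $r_1+\dots+r_j$.

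First, I reformulate balancedness. A triple $(x,y,z)$ of non-negative integers with pairwise differences in $\{-2,0,2\}$ has all entries of the same parity. Modulo adding multiples of $(1,1,1)$, its shape is therefore one of $0$, $2e_j$, or $-2e_j$, where $e_1,e_2,e_3$ are the standard basis vectors. From (\ref{tri}), the balanced trisection of $r_i\ge2$ has shape $0$ if $r_i\equiv0$, shape $-2e_j$ if $r_i\equiv1$, and shape $2e_j$ if $r_i\equiv2 \pmod 3$. The index $j$ is free to choose, since the trisection is unique only up to permutation. The hypothesis $r_i\ge2$ is used exactly here. For $r_i=3n+1$ it guarantees $n\ge1$, so the entry $n-1$ is non-negative. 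Indeed $r_i=1$ admits no balanced trisection, as its three entries would have to share a parity.

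Second, I treat the sum of the chosen trisections. Its entries are automatically non-negative, and its shape modulo $(1,1,1)$ is the sum of the shapes. It remains to arrange that this total is again one of $0$ or $\pm 2e_j$. Balancedness of the total trisection then follows, and it is a balanced trisection of $r$ by the uniqueness up to permutation stated after (\ref{tri}).

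Third, I carry out the inductive step, with the partial shape $v\in\{0,\pm2e_j\}$ modulo $(1,1,1)$. Suppose the next summand contributes $+2e_a$ with $a$ free. If $v=0$, any $a$ works. If $v=-2e_j$, take $a=j$ to get $0$. If $v=2e_j$, take $a\ne j$; then $2e_j+2e_a\equiv-2e_b$ with $\{a,b,j\}=\{1,2,3\}$. A contribution of $-2e_a$ is handled symmetrically, and a contribution $0$ changes nothing.

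The only point needing care is the bookkeeping modulo $(1,1,1)$, namely the identity $e_a+e_j\equiv -e_b$. Once that is in place, the induction closes without obstruction.
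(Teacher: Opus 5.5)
Your argument is correct and is essentially the paper's: an induction on $k$ whose inductive step matches a balanced trisection of $r_k$ against a balanced trisection of the rest of the sum. The paper runs the step top-down: it subtracts the sorted trisection of $r_k$ from the sorted trisection of $r$ and checks case by case via (\ref{tri}) that the difference is balanced, tacitly using $r-r_k\ge2$ to keep the entries non-negative. Your bottom-up bookkeeping in $\Z^3/\Z(1,1,1)$ instead makes non-negativity automatic and reduces the case check to the identity $2e_j+2e_a\equiv-2e_b$.
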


\begin{proof}
For proving by induction in $k$, 
it is enough to verify, as an induction step,  that for a balanced trisections $r=r^\A+r^\B+r^\CC$, and $r_k=r_k^\A+r_k^\B+r_k^\CC$
the differences $r^\A-r_k^\A$, $r^\B-r_k^\B$, $r^\CC-r_k^\CC$ is a trisection of $r-r_k$, if we order the summands so that 
$r^\A\le r^\B\le r^\CC$ and $r^\A_k\le r^\B_k\le r^\CC_k$. The verification is straightforward with the formulas \ref{tri} for trisections.
\end{proof}

A word $w$ formed by r letters in alphabet $\{\A,\B,\CC\}$ will be said to be {\it irreducible}
 if for every $1\le k<r$
the numbers of letters $\A$, $\B$ and $\CC$ in its substring of $k$ first letters are not of the same pairity.


\begin{lemma}\label{repeated}
For any balanced trisection $r=r^\A+r^\B+r^\CC\ge2$
 there exists an irreducible word  with $r^\A$ letters $\A$,  $r^\B$ letters $\B$ and  $r^\CC$ letters $\CC$.
\end{lemma}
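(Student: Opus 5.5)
The plan is to translate irreducibility into a walk on the vertices of the cube $(\Z/2)^3$, and then write down explicit words, treating separately the cases where the counts $r^\A,r^\B,r^\CC$ are all odd or all even. Balancedness forces the three counts to have the same parity, so these are the only two cases.

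\emph{Reduction to a walk.} To a prefix of length $k$ I attach the parity vector $(a,b,c)\in(\Z/2)^3$ of its numbers of letters $\A,\B,\CC$. Appending a letter flips exactly one coordinate. The empty prefix sits at $000$. The word is irreducible exactly when no intermediate prefix ($1\le k<r$) lands on $000$ or $111$. The full word ends at $000$ if the counts are even and at $111$ if they are odd. Removing $000$ and $111$ from the cube graph leaves a hexagon, whose vertices are the six vectors with one or two nonzero coordinates. So I need a walk that leaves $000$, stays inside this hexagon, and exits into the required endpoint only at the last step. The useful moves are back-and-forth oscillations: a block $\mathscr{Z}\mathscr{Z}$ inserted at a hexagon vertex returns to that vertex, provided the intermediate vertex is again on the hexagon.

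\emph{Odd case.} I would take the word
$$\A\,\CC^{\,r^\CC-1}\,\B\,\A^{\,r^\A-1}\,\B^{\,r^\B-1}\,\CC .$$
The letter counts are correct. The walk goes $000\to100$, then oscillates $100\leftrightarrow101$ an even number of times and stays at $100$. Next it moves to $110$, oscillates $110\leftrightarrow010$ and then $110\leftrightarrow100$, each an even number of times, and stays at $110$. The final $\CC$ lands on $111$. Every exponent is even because all counts are odd, so all intermediate prefixes lie on the hexagon. For $1+1+1$ this is just $\A\B\CC$.

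\emph{Even case.} Since $r\ge2$, some count is at least $2$, and by symmetry of the statement I relabel so that $r^\A\ge2$. If $r^\B=r^\CC=0$, balancedness forces $r^\A=2$, and the word $\A\A$ works. Otherwise, swapping the roles of $\B$ and $\CC$ if necessary, I may assume $r^\B\ge2$. I would then take
$$\A\,\B\,\A^{\,r^\A-2}\,\B^{\,r^\B-1}\,\CC^{\,r^\CC}\,\A .$$
The walk goes $000\to100\to110$, oscillates $110\leftrightarrow010$ an even number of times, then applies an odd number of $\B$'s alternating $110\leftrightarrow100$ and ending at $100$. After that it oscillates $100\leftrightarrow101$ an even number of times, and the final $\A$ reaches $000$.

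The only step I expect to need care is this bookkeeping: checking the parity of each exponent and that every oscillation is between two hexagon vertices. It also relies on the degenerate cases (some count $0$, or $r=2$) being covered by the relabeling above. Balancedness enters only through the equal parity of the counts and the fact that $r^\A\ge2$ together with $r^\B=r^\CC=0$ forces $r^\A=2$.
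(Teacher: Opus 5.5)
Your proposal is correct and follows essentially the paper's route: explicit words for the all-odd and all-even cases, with the degenerate even case $2+0+0$ handled by a doubled letter. Your words differ only cosmetically from the paper's, your hexagon-walk bookkeeping makes explicit the verification the paper leaves to the reader, and your degenerate case is stated correctly, whereas the paper's version under its normalization $r^\A\le r^\B\le r^\CC$ should read $r^\A=r^\B=0$, $r^\CC=2$ with the word $\CC\CC$.
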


\begin{proof}
Without loss of generality we may suppose that $r^\A\le r^\B\le r^\CC$, from where $r^\CC>0$ since $r>0$.

If $r$ is even and $r_\B\ge1$,
than the required word is 
$$\CC\A\dots \A\B\CC\dots \CC\B\dots \B\CC$$ where strings $\A\dots \A$, $\B\dots \B$, $\CC\dots \CC$, are of lengths $r^\A$, $r^\B-1$ and $r^\CC-2$ respectively. 
If $r_\B=0$, then our assumptions imply that $r_\A=2$, $r_\CC=0$, and the required word is simply $\A\A$.
If $r$ is odd, than the required word is 
$$\A\CC\dots \CC\B\dots \B\A\dots \A\CC$$ where strings $\A\dots \A$, $\B\dots \B$, $\CC\dots \CC$, are of lengths $r^\A-1$, $r^\B$ and $r^\CC-1$ respectively. 
\end{proof}

\begin{proof}[Proof of Proposition \ref{spherical-construction}]
Like before, we consider $X$ defined by an equation
in the diagonal form $\A x_0^2+\B x_1^2+\CC x_2^2=0$,
where $\A,\B,\CC$ are homogeneous polynomials in $(u,v)$ of degree $d,d+2a_1,d+2a_2$ respectively. 
The singular fibers of $X$ appear over the roots of $\A,\B,\CC$ in $\P^1$.
We will specify the order in which the real roots of $\A,\B,\CC$ are displaced on the line $\P^1_\R$, and displace the remaining imaginary
roots at arbitrary distinct complex-conjugate pairs of points of $\P^1\-\P^1_\R$, to make the polynomials real and without common roots.
Our assumptions allow to do it so that $\A,\B,\CC$ have the required degrees and thus, $X$ belongs to the linear system $\XX^s$.
To set the order of roots, we  apply Lemma \ref{trisection-partition}
to the partition $s_\R=s_1+\dots+s_k$, and obtain a balanced trisection with $s_i=s_i^\A+s_i^\B+s_i^\CC$, $i=1,\dots,k$.
The real roots of $\A\B\CC$ will be split into $k$ consecutive groups, and the
numbers $s_i^\A$, $s_i^\B$, $s_i^\CC$ will denote the number of 
roots of $\A$, $\B$ and $\CC$ in each group. The order of the latter ones are set to be the same as
the order of letters $\A,\B,\CC$  
in the corresponding $k$ words 
specified by Lemma \ref{repeated}, for each of the balanced trisections $s_i=s_i^\A+s_i^\B+s_i^\CC$.
Let us choose the signs of polynomials $\A,\B,\CC$ so that  at $-\infty$ the values 
are all positive.
Note that after passing $s_i$ roots of $\A,\B,\CC$ in each of the $k$ groups the signs of $\A,\B,\CC$ have to be the same, since
$s_i^\A,s_i^\B,s_i^\CC$ have the same parity. Now it is left to notice that the irreducibility condition for each of the $k$ groups of $s_i$ real roots
implies that the signs of $\A,\B,\CC$ will not be the same in the intervals between the $s_i$ roots of the same group, which implies that
the real loci of conic fibers withing each group will be non-empty and therefore, each of the $k$ groups will give precisely one connected component
of $X_\R$ with precisely $s_i$ real singular fibers.
\end{proof}

\subsection{Proof of Theorem \ref{exhaustive}} Using Theorems \ref{R-minimal}-\ref{R-min-deform} and Corollary \ref{determine-defclass}, it  is straightforward to check
that the examples constructed in Sections \ref{i}-\ref{ii} include representatives for each of the real deformation classes 
of real conic bundles with $s\ge 2$ singular fibers over $\C$ and nonempty real locus.
\qed

\subsection{Morse-Lefschetz families and wall-crossing}\label{ML-families}
We call a complex analytic family $X(z)$,  $z\in \mathbb D=\{z\in\C\,|\, \vert z\vert <1\}$
 a {\it Morse-Lefschetz family}, if:
\setlength{\leftmargini}{1.25em}
 \begin{itemize}
  \item
 $\X=\cup_z X(z)$ is a non-singular 3-fold, each $X(z)$ with $z\ne 0$ is non-singular, while
$X(0)$ has one singular point, a node $x_0\in X(0)$;
\item
the projection $f : \X\to \mathbb D$ is a proper map
being a submersion at every point except $x_0$;
\item
in appropriate local coordinates $z_1,z_2,z_3$ around $x_0$
 the projection can be written as $z= \sum_i z_i^2$.
\end{itemize} 
If $\X$ is equipped with a real structure sending $X(z)$ to $X(\bar z)$ for each $z\in \mathbb D$, the family is called {\it real}.
Then the nodal point of $X(0)$ has to be real and with respect to appropriate real local coordinates at this point the map $f $ is given by
\begin{equation}
f(z_1,z_2,z_3)=a_1z_1^2+a_2z_2^2+a_3z_3^2,
\quad a_1, a_2, a_3 \in {\mathbb R}, \quad
a_1a_2a_3\ne0.\label{ee11_new}\end{equation}

\begin{proposition}\label{embeddedMorseLefschetz} If a Morse-Lefschetz family  $\X=\cup_{z\in \mathbb D} X(z)$ is formed by 
hypersurfaces $X(z)\subset \P(\E)$  in the linear system $2M+dF$, $d\ge0$,
then for each $z\ne 0 $ the hypersurface $X(z)$ is a (nonsingular) conic bundle with respect to the projection
$\pi(z)$  induced from $\pi : \P(\E)\to \P^1$ and the vanishing class $e_z\in H_2(X(z))$ is orthogonal to $K_{X(z)}$
and $C_{X(z)}$,  where $C_{X(z)}$ stands for the class of the fibers of this conic bundle.
\end{proposition}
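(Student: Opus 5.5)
For $z\ne0$ the surface $X(z)$ is a nonsingular member of $2M+dF$, so by Proposition \ref{number-fibers}(3) (equivalently Proposition \ref{unicity-canonical}) it is an embedded conic bundle. That proposition applies provided every fiber $X(z)\cap\pi^{-1}(p)$ is a reduced conic, i.e. the quadratic form is not identically zero on any fiber and has rank $\ge2$ everywhere. I would check this as follows. If the form vanished identically on $\pi^{-1}(p)$, then $X(z)$ would contain a whole $\P^2$-fiber as a component, and a reducible $X(z)$ is singular. If the form had rank $1$ at $p$, the local equation would read $x^2+t\,g$ with $g$ vanishing on the line $x=0$. Then $X(z)$ would be singular at the points of the double line where $g=0$, i.e. where $\partial_t$ of the equation vanishes, and such points exist because $g|_{\{x=0,t=0\}}$ is a quadratic form on a $\P^1$. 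This settles the first claim.

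For the vanishing class, use the standard description: $e_z$ is the class of the Lefschetz vanishing sphere. Under the inclusion $X(z)\hookrightarrow \X|_{\DD}$, it maps to zero in $H_2(\X)$, since $\X$ retracts onto $X(0)$ and the sphere collapses to the node. Both $K_{X(z)}$ and $C_{X(z)}$ are restrictions of classes defined on all of $\X$. For $C$: it is $\pi^*[\pt]$ with $\pi:\X\to\P(\E)\to\P^1$, so $C_{X(z)}=i^*F$. For $K$: by adjunction, $K_{X(z)}=(K_\X+X(z))|_{X(z)}=K_\X|_{X(z)}$, because the fiber $X(z)$ of $f$ has trivial normal bundle. (Alternatively, $K_{X(z)}=(K_{\P(\E)}+2M+dF)|_{X(z)}$ directly.) For any class $L\in H^2(\X)$ we then get
$$
L|_{X(z)}\cdot e_z=\langle L, i_*e_z\rangle=0,
$$
since $i_*e_z=0$ in $H_2(\X)$. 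This gives $e_zK=e_zC=0$.

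The step I expect to need the most care is justifying $i_*e_z=0$, i.e. that the vanishing cycle bounds in $\X$ (the Lefschetz thimble). This is standard from the local model $z=\sum z_i^2$: the thimble is a $3$-disc over the segment $[0,z]$ whose boundary is the vanishing sphere. The secondary point is ensuring the relevant classes extend over $\X$. Here one should note that $\X\subset\DD\times\P(\E)$, so $F$ and $M$ pull back from $\P(\E)$, which makes the extension immediate.
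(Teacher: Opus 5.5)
Your proposal is correct and follows the paper's (one-line) argument. The conic-bundle structure comes from Proposition \ref{unicity-canonical} (equivalently Proposition \ref{number-fibers}(3)). Orthogonality holds because, by adjunction and Proposition \ref{classM}, $K_{X(z)}$ and $C_{X(z)}$ are restrictions of ambient classes, while the vanishing sphere bounds a thimble in $\X$.

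Your extra check that every fiber is a reduced conic is fine, but the rank-one case needs rewording. The equation is $x^2+tg$, and the singular points are the points of the double line $\{x=t=0\}$ where $g=0$; $g$ need not vanish on that line.
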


\begin{proof}
Follows from Propositions \ref{unicity-canonical} and \ref{classM}.
\end{proof}

\subsection{Real embedded conic bundles}

\begin{proposition}\label{spherical}
For any $s\ge4$
there exists a real Morse-Lefschetz family  $\X=\cup_{z\in \mathbb D} X(z)$
formed by hypersurfaces $X(z)$
 in the linear system $\XX^s$, such that 
 $X(z)$ are non-singular
(and thus, conic bundles) for 
$z\ne 0$ and 
$X(0)$  has one nodal singular point, and such that for $t\in\R\cap\mathbb D$
\begin{itemize}\item
for even $s$,
$X_\R(t)=\S^2$ if $t>0$, $X_\R(t)=\oo$ if $t<0$, and
$X_\R(0)=\{\pt\}$,
\item
for odd $s$,
$X_\R(t)= \Rp2\+\S^2$ if $t>0$, $X_\R(t)= \Rp2$, if $t<0$ and 
$X_\R(0)=\Rp2\+\{\pt\}$.
\end{itemize}
In both cases $\{\pt\}$ is a nodal point resulted from collapsing of $\S^2$-component.
\end{proposition}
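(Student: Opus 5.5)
We will construct the family explicitly by diagonal equations, as in Lemma \ref{diagonal}, adding a perturbation term that creates a single real node. Write $\E^s=\E_{0,a_1,a_2}$ with $s=3d+2(a_1+a_2)$ and $0\le a_1\le a_2\le1$. Since $s\ge4$, we have $d\ge1$, and $\deg\CC=d+2a_2\ge2$ for the polynomial $\CC$ (for $s=4$, $(d,a_1,a_2)=(0,1,1)$ is excluded by $d\ge0$ with $s=4=3\cdot0+4$, giving $d=0$; then $\deg\A=0$ and $\deg\B=\deg\CC=2$). The local model is the following. Near a real point $t_0$ of $\P^1_\R$, take an affine chart $x_0=1$, so that the hypersurface reads $\A+\B x_1^2+\CC x_2^2=0$. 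Let $\A(t)=\A_0(t)\bigl((t-t_0)^2-z\bigr)$ with $\A_0>0$ near $t_0$, and let $\B,\CC>0$ there. Then the total space $\{\A_0((t-t_0)^2-z)+\B x_1^2+\CC x_2^2=0\}$ is a smooth 3-fold over $\DD$. The map $z$ has a Morse critical point of type $z=(t-t_0)^2+(\B/\A_0)x_1^2+(\CC/\A_0)x_2^2$, which after rescaling gives form (\ref{ee11_new}) with all $a_i>0$. Hence for real $z=t>0$ a small sphere $\S^2$ appears, for $t<0$ it is absent, and for $t=0$ it collapses to a real isolated point. At $z\ne0$ the two roots $t_0\pm\sqrt z$ of $\A$ are simple, so $X(z)$ is a conic bundle by Lemma \ref{diagonal}, provided all other roots of $\A\B\CC$ are distinct and distinct from these.

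It remains to choose the global signs so that the rest of $X_\R(t)$ is as required. For even $s$, every degree $d,d+2a_1,d+2a_2$ has the parity of $d$, which is even. We take $\A=\A_0\cdot((t-t_0)^2-z)$ with $\A_0$ of degree $d-2$ positive without real roots, and $\B,\CC$ positive without real roots. Everywhere away from the local sphere, all three coefficients are then positive, so the fibers are empty. This gives $X_\R(t)=\S^2$, $\oo$, $\{\pt\}$ as claimed. We need $d\ge2$ here, or, when $d=0$, we put the quadratic factor into $\B$ instead; the roles of $\A,\B,\CC$ are symmetric in the local model. For $s=4$, that is $(d,a_1,a_2)=(0,1,1)$, put the factor in $\B$, of degree 2, and take $\A,\CC>0$ constant or without real roots.

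For odd $s$, $d$ is odd. Choose the local node factor in a coefficient of degree $\ge2$ and choose the other roots so that the remaining real locus is $\Rp2$. Concretely, let $\A$ have exactly one simple real root $t_1$, let $\A<0$ on one side of $t_1$, and let $\B,\CC$ carry the node factor and otherwise be positive. The quadratic factor $(t-t_0)^2-z$ must sit in $\B$ or $\CC$, with $t_0$ placed where $\A>0$, so that locally all coefficients are positive and the model above applies. Away from $t_0$ the real locus is then the aspherical piece with one real singular fiber. By Theorem \ref{R-minimal} and the analysis of Section \ref{aspherical-models}, a conic bundle over $\P^1_\R$ with a single real singular fiber and all other real fibers nonempty is a nonorientable closed surface. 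Its Euler characteristic is the number of real singular points, which is $1$, so it is $\Rp2$. When $d=1$ we need $\B$ or $\CC$ of degree $\ge2$, which holds since $s\ge5$ forces $a_2=1$.

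The main obstacle is the global sign bookkeeping in the odd case: arranging that the node factor lies where the other coefficients have the sign making the local model "elliptic" (all $a_i$ of the same sign), while the rest of the real locus is exactly $\Rp2$. We handle this by direct root placement, as in the proofs of Propositions \ref{construction3} and \ref{spherical-construction}. We must also check genericity, that is distinct roots for $z\ne0$, which holds for generic choices. Finally, smoothness of $\X$ away from the node follows because at any other point of $X(0)$ the hypersurface $X(0)$ is already smooth.
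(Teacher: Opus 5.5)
Your even case is correct and is essentially the paper's construction. The paper takes Proposition \ref{spherical-construction} with $s_\R=2$, where both real roots belong to one coefficient, and lets these two roots collide and become complex conjugate. That is exactly your factor $(t-t_0)^2-z$ with the other coefficients positive.

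The odd case, however, fails, for two reasons.

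\textbf{Parity.} For odd $s$ the degree $d$ is odd, so all three degrees $d$, $d+2a_1$, $d+2a_2$ are odd. A real binary form of odd degree has an odd number of real roots on $\P^1_\R$. Hence $\B$ and $\CC$ cannot be ``otherwise positive''. Apart from the node factor, which contributes an even number of real roots, each of $\A,\B,\CC$ has at least one real root. So a diagonal model has at least three real singular fibers off the node, not one.

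\textbf{Topology.} Over an arc where all real fibers are nonempty, a real singular fiber is a pair of real lines (a figure eight) and contributes $-1$ to $\chi$. So a component covering $\P^1_\R$ with one real singular fiber has $\chi=-1$, i.e.\ it is $\#3\Rp2$, not $\Rp2$; this is the $s_\R=1$ example of Proposition \ref{construction3}. Moreover, in your own set-up the fibers near $t_0$ are empty for $t<0$, so the remaining part cannot cover all of $\P^1_\R$ anyway.

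An $\Rp2$-component must be of spherical type: two point-like real singular fibers at its ends, contributing $+1$ each, and exactly one figure-eight fiber. So you should take one real root of each of $\A,\B,\CC$, in the order $\A\B\CC$, so that the signs pass from $(+,+,+)$ to $(-,-,-)$. Then place the node factor at a point $t_0$ of the arc of empty fibers, where all three coefficients have the same sign. It must go into a coefficient of degree $\ge 3$, which is available since $\deg\CC=d+2a_2\ge3$ for odd $s\ge5$. This is precisely the paper's choice $s_\R=5=2+3$ in Proposition \ref{spherical-construction}, followed by collision of the two real roots lying on the $\S^2$-component.
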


\begin{proof}
A construction of $X=X(t)$, $t>0$ with the required topology of $X_{\R}$ is provided by
Proposition \ref{spherical-construction}, where we set $s_\R=2$ in the case of even $s$ and $s_\R=5$ in the case of odd.
In the first case, the two singular fibers must bound the only
connected component of $X_\R$, one fiber is minimum and another is maximum of the conic-bundle $X_\R\to\R\cong\P_\R^1\sm\{\infty\}$
(recall that the fiber, $C$, over $\infty$ is chosen to have $C_\R=\oo$).

In the second case, we consider a partition $s_\R=2+3$, corresponding to $X$ having $X_\R$ with 2 connected components,
one of which has 2 singular fibers which implies that this component is $\S^2$, like in the previous case,
and another has 3 singular fibers and thus, this component is $\Rp2$. 
To construct a Morse-Lefschets family $X(z), z\in\DD$,
we note that by the construction in Proposition \ref{spherical-construction},
the two singular fibers on $\S^2\subset X_\R$ lie over the roots of the same polynomial, $\A,\B$, or $\CC$ (without loss of generality, we may suppose it is $\A$).
Then we consider $X(t)$, $t>0$, defined by real polynomials
$\A_t$, and constant polynomials $\B_t=\B$, $\CC_t=\CC$, where only the two real roots of $\A$ are changing and are
moving towards each other and remain distinct until $t=0$, and become a pair of imaginary complex-conjugate roots for $t<0$.
\end{proof}

\begin{proposition}\label{connecting}
Any pair of real conic bundles in the 
linear system $\XX^s$ with $s\ge 4$
can be connected by a finite sequence of deformations and 
real Morse-Lefschetz families of conic bundles from this linear system.
If the real locus of the both conic bundles is non-empty the sequence of real Morse-Lefschetz families can be chosen not to 
contain conic bundles with empty real locus.

If the number of singular fibers is odd then the above deformations and Morse-Lefschetz families $X(t)$ can be enriched 
with a continuous family of real nonsingular fibers $C(t)\subset X(t)$ with $C_\R(t)\ne\emptyset$ placed away
from the nodes that
appear in the Morse-Lefschetz families.
\end{proposition}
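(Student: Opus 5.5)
The plan is to work in the space $\XX^s_{\R}$ of real hypersurfaces of the linear system $\XX^s=|2M+dF|$ in $\P(\E^s)$, and to use the discriminant $\Delta\subset\XX^s_\R$ of singular members. The complex discriminant is an irreducible hypersurface of complex codimension one. Its real part has a top-dimensional stratum $\Delta^0$ of members with exactly one node and no further degeneracy. The complement of $\Delta^0$ in $\Delta_\R$ has real codimension $\ge2$ in $\XX^s_\R$. Proposition \ref{number-fibers}(3) says every nonsingular member is an embedded conic bundle. Hence two real conic bundles $X,Y\in\XX^s_\R$ can be joined by a generic real path $\gamma$ in $\XX^s_\R$. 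Such a path avoids the codimension-$\ge2$ strata and crosses $\Delta$ transversally at finitely many points of $\Delta^0$. Away from these points $\gamma$ is a real deformation of conic bundles. Near each crossing I complexify the real parameter $t$ to a small disc. Transversality to $\Delta$ at a one-nodal member means the total space $\X$ is nonsingular at the node and $f$ is locally $\sum a_iz_i^2$. This gives a real Morse-Lefschetz family in the sense of Section \ref{ML-families}, and Proposition \ref{embeddedMorseLefschetz} shows its nonsingular members are conic bundles. The first statement follows.

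For the second statement, I need a path that avoids the locus $\W\subset\XX^s_\R$ of nonsingular members with $X_\R=\oo$. I will cross only walls that do not create empty real loci. At a real node, with $a_i$ as in (\ref{ee11_new}), a crossing either does a Morse surgery on $X_\R$ of index $1$, or creates or annihilates a sphere component. Only the second kind can produce $\oo$, and only when $X_\R$ is a single $\S^2$; this is the situation of Proposition \ref{spherical} for even $s$. Nonsingular real conic bundles with $X_\R\ne\oo$ form finitely many deformation classes. Theorem \ref{exhaustive} and Corollary \ref{determine-defclass} list them, and Sections \ref{i}--\ref{ii} realize each one by an explicit diagonal equation. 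I will connect these representatives by explicit one-parameter moves of the roots of $\A,\B,\CC$, keeping all members nonempty:
\begin{itemize}
\item pass a root of $\A$ through a root of $\B$, producing one node;
\item collide two real roots of the same polynomial into a complex-conjugate pair, as in the proof of Proposition \ref{spherical};
\item change which interval of $\P^1_\R$ carries fibers with empty real part.
\end{itemize}
Every class reaches a fixed base class through moves whose intermediate members all have a nonempty real fiber. An $\R$-minimal spherical component with $2m$ real singular fibers can be merged with a neighbouring component by one index-$1$ crossing. So I reduce to a single component, and then to the aspherical case. The space of real members of a fixed deformation class need not be connected, but each connected component can still be joined to the explicit representative without crossing walls. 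Combined with the explicit moves, this gives the chain.

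For odd $s$ and $d$ odd, $\det A$ has odd degree $s$, so at least one real fiber is always nonsingular with nonempty real part. I want to choose such a fiber continuously along the chain, away from the nodes. Along a generic path the real singular fibers move continuously, and only one node appears at a time, at one point $p(t)\in\P^1_\R$. A point $q(t)\in\P^1_\R$ with $C_\R(q(t))\ne\oo$ and $q(t)\ne p(t)$ can therefore be tracked: among real fibers, the emptiness condition ``$\A,\B,\CC$ of the same sign'' changes only at singular fibers. The case to watch is when all nonempty real fibers lie in one interval that collapses at the node. I avoid this by keeping at least one real root of $\A\B\CC$ fixed throughout; odd degree makes that possible, because one real root is never paired. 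A nonempty fiber adjacent to that fixed root is then carried along.

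The main obstacle is the second step: proving that the walls where $X_\R$ becomes empty can always be bypassed, that is, that the nonempty part of $\XX^s_\R\sm(\Delta\sm\Delta^0)$ together with the index-$1$ walls is connected. I would first try to prove this by the explicit root-moving moves above, reducing every class to a fixed aspherical $\T^2$-model.
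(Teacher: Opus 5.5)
Your first step matches the paper's argument, which takes a generic real pencil and uses the fact that generic pencils in a very ample system are Lefschetz. The second statement, however, is a genuine gap, and the route you sketch for it does not work as stated.

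\begin{itemize}
\item The claim that each connected component of the real members of one deformation class can be joined to the representative ``without crossing walls'' is self-contradictory. Distinct components of the nonsingular real locus can only be joined through the discriminant.
\item Your explicit moves are not Morse--Lefschetz crossings. In diagonal form, when a root of $\A$ meets a root of $\B$ at $t_0$, that fiber becomes the double line $\CC(t_0)x_2^2=0$. The surface then acquires two nodes, at the points of $\{x_2=0\}$ with $\A'(t_0)x_0^2+\B'(t_0)x_1^2=0$, which is a codimension-two event.
\item ``Changing which interval carries empty fibers'' is not a move at all, and nothing shows that your moves connect all classes.
\end{itemize}

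The missing idea is convexity. For even $s$, $X_\R=\oo$ means exactly that the equation $F$ is, up to a global sign, positive on $\P(\E_\R)$. These $F$ form an open convex cone $P$ with $\bar P\cap(-\bar P)=0$. So the empty locus is an open convex ball $\Omega$ in an affine chart of the real projective space $\R P^N$ of the linear system, and $\R P^N\sm\bar\Omega$ is open and connected (it retracts radially onto a hyperplane). A nonsingular member with $X_\R\ne\oo$ does not lie in $\bar\Omega$, because a real zero of a semidefinite real $F$ is a singular point of $X$. Hence you can take the generic path inside $\R P^N\sm\bar\Omega$ from the start, and every member on it has nonempty real locus. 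For odd $s$ there is nothing to prove: $\det A$ has odd degree, hence a real root, so $X_\R\ne\oo$. This is what the paper's remark that the empty locus is contractible amounts to, and no analysis of deformation classes is needed.

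Two smaller points.

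\begin{itemize}
\item In step one you assert, without reason, that the real discriminant has a one-nodal top stratum whose complement has codimension $\ge2$. For $d>0$ this is supplied by very ampleness of $2M+dF$. For $s=4$, i.e.\ $d=0$ and $\E^s=\E_{0,1,1}$, the system $2M$ is not even ample: $|M|$ contracts the section $x_1=x_2=0$ to the vertex of a quadric cone in $\P^4$. The paper treats this case separately, via quadric sections of that cone.
\item In step three your tracking is phrased via roots of $\A\B\CC$, which lose their meaning once the path leaves diagonal form. Also, a real root cannot simply be held fixed along a generic path, and any given real root may be one of the two that collide at a crossing.
\end{itemize}

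Step three can be repaired. Between consecutive crossings, choose a real root of $\det A$ not involved in the next collision, and a real fiber close to it on its nonempty side, held constant near each crossing. The paper does this more directly. It arranges the two models to share a real nonsingular fiber $C$ over a common point $p$, perturbs them slightly into general position, and uses the straight segment of the pencil they span. The fibers over $p$ along that segment are convex combinations of two conics close to $C$, hence nonsingular with nonempty real part, and automatically away from the nodes.
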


\begin{proof} 
Recall that each point in the complement of the discriminant of the linear system  $2M+dF$ is a conic bundle (see Proposition \ref{number-fibers}(3)).

For $d>0$, this linear system is very ample (see, for example, \cite[Theorem 2.5]{reid}). Thus, the first statement follows from \cite[Theorem 2.5]{katz} (which states that, in characteristic zero, every generic projective line in 
a very ample linear system defines a Lefschetz pencil).
As for the second statement, it is sufficient to notice that the locus formed in a real  linear system by hypersurfaces with empty real locus is contractible.

Since $s=3d+2(a_1+a_2)$ and $0\le a_1\le a_2\le 1$, the assumption $s\ge 4$ implies that the only remaining case to be considered is that of $d=0$ and $\E^s=\E_{0,1,1}$.
In this case, the linear system $M$ provides a regular map to $\P^4$ and the image $Z$ of $\P(\E_{0,1,1})$ under this map
is a quadric cone over a nonsingular quadric in $\P^3$. Thus, in this case the discriminant in the linear system $2M+dF=2M$ of sections of $Z$ by quadrics has 2 irreducible components. One of them is the hyperplane in the space of
quadrics passing through the vertex of the cone, while the other is formed by quadrics  tangent to $Z$ at smooth points. Generic points of the second component correspond to 1-nodal sections (as it follows
from simple existence of such sections, easily justified by an example). Therefore, all but possibly one point in a generic subpencil of $2M$ correspond to a nonsingular or 1-nodal section, which implies 
the results stated.

For the final statement, we notice that for any pair of model (constructed in Sections \ref{i}, \ref{aspherical-models}, \ref{ii}) real conic bundles having the same odd number of singular 
fibers we can select the coefficients in the defining equations in such a way that the both bundles, $X$ and $Y$,
have a common real nonsingular fiber, $C=\pi_\E^{-1}(p)\cap X= \pi_\E^{-1}(p)\cap Y$,
with $C_\R\ne \oo$. Then, by a small real perturbation we transform $(X,C)$ and $(Y,C)$ into $(X', C_X)$ and $(Y',C_Y)$,
where $X', Y'$  are
placed in general position and $C_X=\pi_\E^{-1}(p)\cap X', C_Y= \pi_\E^{-1}(p)\cap Y'$.
Next, we observe that the interval $tX'+(1-t)Y'$, $t\in [0,1]$,
of the
Lefschetz pencil generated by $X', Y'$ 
can be
enriched with a family  of real fibers $tC_X+(1-t)C_Y=\pi_\E^{-1}(p)\cap (tX'+(1-t)Y')$,
which 
are all nonsingular, and hence placed away from the nodes of the pencil.
\end{proof}

\section{Combined signed count via $\Pin^-$-structures}\label{prelim-pin}

\subsection{Generalities on $\Pin^-$-structures on surfaces inherited from 3-folds}\label{pin-prelim} 
For a manifold $V$ we denote by $\Pin^-(V)$ the set of $\Pin^-$-structures on $V$, which is, if non-empty,
an affine space (torsor) over $H^1(V;\Z/2)$. Recall that for surfaces, $\SS$, there is a canonical 1--1 correspondence, $\theta\mapsto q_\theta$
 between $\Pin^-(\SS)$ and the set of quadratic functions $H_1(\SS;\Z/2)\to\Z/4$, such that $q_{\theta+h}=q_\theta+2h$, for $h\in H^1(\SS;\Z/2)$.
In particular,  $q_{\theta+w_1}=-q_\theta$ where $w_1=w_1(\SS)$.

\begin{lemma}\label{vanishPin}

Let $\Sigma$ be a closed surface which bounds a compact 3-manifold $V$,
and let $q: H_1(\Sigma;\Z/2)\to \Z/4$ be a quadratic function determined by a restriction to 
$\Sigma$ of a $\Pin^-$-structure defined on $V$.
Then
 $$
 q(h)=0, \text{ for any } h\in\ker(H_1(\Sigma;\Z/2)\to H_1(V;\Z/2)).
 $$
 \end{lemma}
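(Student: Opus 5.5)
The plan is to use the geometric description of the quadratic function $q_\theta$ due to Kirby--Taylor: for a class $h\in H_1(\Sigma;\Z/2)$ represented by an embedded circle $\gamma\subset\Sigma$, $q(h)$ is the number of half-twists, mod $4$, of the framing of $\gamma$ induced by the $\Pin^-$-structure relative to the band given by a regular neighborhood of $\gamma$ in $\Sigma$. Equivalently, $\theta$ on $\Sigma$ arises from a $\Pin^-$-structure on $V$ by the standard identification $TV|_\Sigma=T\Sigma\oplus\nu$, with $\nu$ trivialized by the inward normal. So $q$ is determined entirely by the $\Pin^-$-structure on $V$ near $\Sigma$.

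First, for $h$ in the kernel, choose a representative as a disjoint union of embedded circles $\gamma\subset\Sigma$. Since $h\mapsto0$ in $H_1(V;\Z/2)$, there is a compact, possibly nonorientable, surface $F\subset V$ with $\partial F=\gamma$. It can be made properly embedded and transverse to $\partial V=\Sigma$ after pushing its interior into $V$ and a general position argument (in dimension $3$, surfaces are embedded generically away from double curves, which can be resolved by cut-and-paste to keep $F$ embedded with the same boundary).

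Next, compare framings along $\gamma$. The surface $F$ meets $\Sigma$ transversally along $\gamma$. Therefore the band $N_\Sigma(\gamma)$ and the band $N_F(\gamma)$ differ by exactly a quarter-turn: the normal to $\gamma$ in $\Sigma$ and the normal to $\gamma$ in $F$ (inward direction) are orthogonal at every point. Hence the half-twist count of $\gamma$ computed via $\Sigma$ equals that computed via $F$, up to the convention fixed by the inward normal. The problem thus reduces to computing the Kirby--Taylor invariant of $\partial F$ using the band $N_F(\gamma)$.

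Then invoke the Brown-type formula for a surface inside a $\Pin^-$ $3$-manifold: the restricted $\Pin^-$-structure on $F$ gives $F$ a quadratic enhancement, and for the boundary of a compact surface the count reduces to a characteristic-class computation. Doubling $F$, or capping it mentally, shows that the framing defect along $\partial F$ is the obstruction $w_2+w_1^2$ of $TV|_F$ relative to the boundary framing, paired with $[F,\partial F]$. This vanishes because $w_2(V)+w_1^2(V)=0$ on a $\Pin^-$-manifold, and $TV|_F=TF\oplus\nu_F$ with $w_1(\nu_F)=w_1(TF)+w_1(V)|_F$. A Wu-formula calculation on $F$ then gives $q(\partial F)=0$ mod $4$, not only mod $2$.

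The main obstacle is this last step: getting the mod $4$ (not merely mod $2$) vanishing, and handling the sign conventions and the identification of the two bands. The approach I would try first is the cleaner bordism route. $\Pin^-$ surfaces up to bordism are detected by the Brown invariant in $\Z/8$, and the restriction of a $\Pin^-$-structure on $V$ makes $(\Sigma,q)$ null-bordant. One then uses that the kernel of $H_1(\Sigma)\to H_1(V)$ is a Lagrangian on which, by the Kirby--Taylor relation, $q$ is the characteristic obstruction to extending the induced framing of $\gamma$ across $F$. That obstruction lives in $H^2(F,\partial F;\Z/2)$ of the $\Pin^-$ tangential structure, and it vanishes since the structure extends over all of $V\supset F$.
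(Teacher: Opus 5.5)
Your proposal has a genuine gap: the step that forces $q(h)=0$ in $\Z/4$ is asserted rather than proved, and you flag it yourself as ``the main obstacle''.

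\emph{Geometric route.} The band comparison fails when a component of $\gamma$ is one-sided in $\Sigma$, which is allowed. For example, take $V=\Rp2\times[0,1]$ and let $h$ be the sum of the two generators of $H_1(\partial V;\Z/2)$. Then $N_\Sigma(\gamma)$ is a M\"obius band while the collar $N_F(\gamma)$ is an annulus. The normal plane bundle $\nu_\Sigma(\gamma)\oplus\R n$ is non-orientable, so a ``quarter-turn'' does not preserve half-twist counts. Your claim that the relative obstruction in $H^2(F,\partial F;\Z/2)$ vanishes ``because $w_2(V)+w_1^2(V)=0$'' is circular. A relative class depends on the boundary data, and whether the boundary data used to compute $q(\gamma)$ agrees with the restriction of $\theta$ is exactly what the lemma asserts.

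\emph{Bordism route.} $Br(q)=0$ is correct but does not suffice. A null-bordant enhancement vanishes on \emph{some} Lagrangian, not necessarily on a prescribed one. On $T^2$ with an Arf-zero spin structure, $q$ equals $2$ on exactly one of the three nonzero classes, and that class spans a Lagrangian line. The ``Kirby--Taylor relation'' you invoke to bridge this is the statement itself.

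The missing idea, which is the paper's proof, is to apply the bordism vanishing twice: to $\theta$ and to the shifted structure $\theta+h^*$. First, $h\in\ker(H_1(\Sigma;\Z/2)\to H_1(V;\Z/2))=\operatorname{im}(\partial\colon H_2(V,\Sigma;\Z/2)\to H_1(\Sigma;\Z/2))$. By Lefschetz duality, the dual class $h^*\in H^1(\Sigma;\Z/2)$ is therefore the restriction of a class in $H^1(V;\Z/2)$. Hence $\theta+h^*$ also extends over $V$, and its quadratic function $q+2h^*$ has Brown invariant $0$ too. The identity $q(x+h)=q(x)+q(h)+2x\cdot h$ gives the Gauss-sum relation $\sum_x i^{q(x)+2h\cdot x}=i^{-q(h)}\sum_x i^{q(x)}$. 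It follows that $2q(h)=Br(q)-Br(q+2h^*)=0$ in $\Z/8$, i.e.\ $q(h)=0$ in $\Z/4$. This avoids all framing and band bookkeeping, treats one-sided curves and non-orientable $V$ uniformly, and gives the mod~$4$ statement directly.
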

 
 \begin{proof}
For any closed 2-manifold $\Sigma$ with a $\Pin^-$-structure represented by a quadratic function $q$ we have
(cf., \cite[Th. 1.20({\rm x})]{Brown})
$$2q(h)=Br(q)-Br(q+h^*)\in\Z/8,$$
where $Br$ is the Brown invariant, $h\in H_1(\Sigma;\Z/2)$ and $h^*\in H^1(\Sigma;\Z/2)$ is the dual class.
If $(\Sigma,q)$ bounds then $Br(q)=0$ since it is a cobordism invariant (see \cite[Lemma 3.6]{Kirby}).
And if $h$ bounds in $V$, then
$Br(q+h^*)=0$ for the same reason.
\end{proof}

\subsection{Lifting of the quadratic functions from $H_1(X_\R;\Z/2)$ to  $H_2^-(X)$}\label{lifting}
Recall that for a real algebraic surface $X$ with $X_\R\ne\oo$, there is
 {\it Viro homomorphism} $\bhv : H_2^-(X) \to H_1(X_\R;\Z/2)$ defined in 
 $H_2^-(X)=\ker (1+\conj_* : H_2(X)\to H_2(X))$
  (see \cite[Chapter 1]{DIK} or \cite{TwoKinds}).
This homomorphism respects the intersection form,
and induces an isomorphism
\[H_1(X_\R;\Z/2)\cong H_2^-(X)/(1-\conj_*)H_2(X).\]

This yields a correspondence $q_\theta\mapsto \hat q_\theta$ assigning to the
quadratic function $q_{\theta}: H_1(X_\R;\Z/2)\to\Z/4$ associated with 
$\theta\in\Pin^-(X_\R)$ the function
$$\begin{aligned}
&\hat q_\theta=q_\theta\circ \bhv\: H_2^-(X)\to \Z/4\,\quad\text{satisfying }\\
&\hat q_\theta(x+y)=\hat q_\theta(x)+\hat q_\theta(y)+2x\cdot y\ \rm{mod}\,4.
\end{aligned}$$

\subsection{Inherited $\Pin^-$-structures for $X$ having even number of singular fibers}\label{inh-even}
\begin{lemma}\label{two-sided}  
For any real embedded conic bundle $X\subset \P(\E)$
with even number of singular fibers, 
 $\P(\Cal E_\R)$ splits in a unique way into two halves (3-manifolds),
with $X_\R$ as their common boundary.
\end{lemma}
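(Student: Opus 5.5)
The plan is to show that $X_\R$ is the zero set of a real quadratic form whose sign is globally well defined on $\P(\E_\R)$. The two halves are then the loci where the form is $\ge0$ and $\le0$.

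By Proposition \ref{number-fibers} and Proposition \ref{classM}(4), $X$ is the zero locus of a real section $\sigma$ of the real line bundle $L=\O_{\P(\E)}(2)\otimes\pi^*\O_{\P^1}(d)$. Locally, in the toric coordinates, $\sigma$ is the quadratic form $\sum A_{ij}x_ix_j$, with $A_{ij}$ as in (\ref{eq-matrix}). The sign of $\sigma$ makes sense globally exactly when the real line bundle $L_\R$ over $\P(\E_\R)$ is trivial. Since $\O(2)=\O(1)^{\otimes2}$ has trivial real part, $w_1(L_\R)=\pi^*w_1(\O_\R(d))$, which is $d$ times the generator pulled back from $\P^1_\R$. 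So the first step is to check that $d$ is even. By Proposition \ref{number-fibers}(4), $s=3d+2(a_1+a_2)\equiv d \bmod 2$, so even $s$ gives even $d$ and $L_\R$ is trivial. Concretely, the weighted homogeneity (\ref{homogeneity}) sends $\sigma$ to $\lambda^{d}\mu^2\sigma$, and for even $d$ the sign of $\sigma$ at a real point is independent of the real representative $(\lambda,\mu)\in\R^*\times\R^*$.

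Next, set $V_\pm=\{\pm\sigma\ge0\}\subset\P(\E_\R)$. Then $V_+\cup V_-=\P(\E_\R)$ and $V_+\cap V_-=X_\R$. Since $X$ is nonsingular and $\sigma$ is a section of a line bundle vanishing exactly on $X$, the zero set is transversal at real points: $d\sigma\ne0$ along $X_\R$. Hence each $V_\pm$ is a compact 3-manifold with boundary $X_\R$. This gives the existence of the splitting.

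For uniqueness, suppose $\P(\E_\R)\setminus X_\R$ is split into two unions of components, each closing up to a manifold with boundary exactly $X_\R$. Near each point of $X_\R$ the two sides of $X_\R$ must then lie in different halves, so the assignment is forced locally. If every connected component of $\P(\E_\R)$ meets $X_\R$, this determines the splitting. Each fiber $\P^2_\R$ of $\P(\E_\R)$ meets $X_\R$ unless the real conic is empty. Because $\P(\E_\R)$ is connected (an $\Rp2$-bundle over a circle) and $X_\R$ is two-sided and separating, the halves are exactly the closures of $\{\sigma>0\}$ and $\{\sigma<0\}$. Each of these is the complement of $X_\R$ on one side, so the only ambiguity is which half is called which. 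That is the intended meaning of uniqueness, an unordered pair.

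One caveat concerns empty real fibers. There the conic has no real points and $\sigma$ is definite on the fiber, so the whole fiber lies in one half; this causes no problem.

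The main obstacle I expect is the sign-globalization step. One must verify carefully that the parity of $d$, rather than that of $a_1,a_2$, governs the triviality of $L_\R$. This uses the fact that $\O(2)$ restricted to the real locus is trivial even on the nonorientable $\Rp2$-bundle. If needed, I would instead argue via $\Z/2$-homology: $[X_\R]=w_1(L_\R)$-dual in $H_2(\P(\E_\R);\Z/2)$, and it vanishes iff $d$ is even, since $s\equiv d$.
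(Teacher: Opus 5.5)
Your proof is correct, but it takes a different route from the paper's proof of this lemma.

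\textbf{The paper's route.} It argues fiber by fiber. Over each $t\in\P^1_\R$ with a nonempty nonsingular real conic, it picks one of the two components of $\P^2_{t\R}\sm C_{t\R}$ (a disc or a M\"obius band) and continues this choice around the circle $\P^1_\R$. The parity of the Euler characteristic of the chosen piece flips exactly when $t$ passes a real singular fiber. The number of real singular fibers is $\equiv s\equiv 0 \pmod 2$, so the continuation closes up, and it is determined once the piece over one fiber is fixed.

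\textbf{Your route.} You globalize the sign of the defining section, using $w_1(L_\R)=d\,\pi^*(\text{gen})$ and $s\equiv d \bmod 2$. For an arbitrary splitting type $s=3d+2(a_0+a_1+a_2)$, so the parity conclusion is unchanged. This is exactly the content of Remark \ref{equations-for-halves}, which the paper states just after the lemma. In effect you prove that remark and read the lemma off from it.

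The two arguments are linked by a fiberwise fact: $\chi(\{\sigma>0\}\cap\P^2_{t\R})$ is odd if and only if $\det A(t)>0$. So the paper's parity flips are precisely the sign changes of $\det A$, a form of degree $s$ on $\P^1$.

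\textbf{What each approach buys.}
\begin{itemize}
\item Your version gives the manifold-with-boundary structure directly from transversality of $\sigma$.
\item It produces the explicit halves $\{\sigma\ge0\}$ and $\{\sigma\le0\}$, which are what is actually used for coherence in Section \ref{coherence}.
\item It explains the odd case at once: $X_\R$ is then mod $2$ dual to the nonzero class $\pi^*(\text{gen})$, hence non-separating. This is why Lemma \ref{trivialization} has to cut along a fiber.
\item The paper's version gives instead the fiberwise shape of the halves, disc versus M\"obius band, swapping at real singular fibers.
\end{itemize}

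\textbf{A small point on uniqueness.} In your uniqueness paragraph, the remark about fibers meeting $X_\R$ is beside the point. What you actually use is that connectedness of $\P(\E_\R)$ makes the adjacency graph of the components of $\P(\E_\R)\sm X_\R$ connected. Hence the two-coloring forced locally along $X_\R$ is unique up to swapping the two halves.
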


\begin{proof}
A 3-dimensional manifold bounding  
$X_\R$ is constructed via continuous
fiberwise filling of the real locus $C_{t\R}$ of conics
$C_t=\pi^{-1}(t)$, $t\in\P^1_\R$,  in the corresponding $\P^2_\R$-fibers of $\Cal E_\R$.
For a non-singular conic $C_{t\R}$, the complement $\P_{t\R}^2\sm C_{t\R}$ splits into a union $W_{t}^1\cup W_t^2$ of connected components.
A continuity of choice of one of these components,
$W_{t\R}^1$ means that the parity of $\chi(W_{t\R})$ changes if and only if $t$ overpass a critical value.
\end{proof}

\begin{remark}\label{equations-for-halves}
The halves in Lemma \ref{two-sided} are determined by the signs $F\ge0$ and $F\le0$
of a defining real equation $F=\sum A_{ij}(u,v) x_ix_j$
of $X$ given by Proposition \ref{number-fibers}. 
These signs are
well defined at each point of $X_\R$, 
since the 
homogeneity conditions in $\lambda$ and $\mu$ (see (\ref{homogeneity}) in Section \ref{CanEmbed}) imply pairity of the weighted degrees of the summands
$A_{ij}x_ix_j$.
\end{remark}

Fixing one half of this decomposition defines its boundary coorientation (in the outward normal direction).
This gives
a reduction map
$$\inh^X\:\Pin^-(\P(\Cal E_\R))\to\Pin^-(X_\R), 
$$
which is affine with respect to the inclusion homomorphism $H^1(\P(\E_\R);\Z/2)\to H^1(X_\R;\Z/2)$.
Reversing the coorientation changes  $\theta^X=\inh^X(\theta^\E)$ to
$\theta^X+w_1$ (cf., \cite[Lemma 1.10]{Kirby}).
In particular,
the quadratic function $\hat q(x)$ associated with $\theta^X$ is changed to
$$\hat q(x)+2w_1(x)=-\hat q(x)\,{\rm mod}\,4\quad.$$

We say that a $\Pin^-$-structure $\theta^X\in \Pin^-(X_\R)$ is {\it inherited} from $\P_\R(\E)$ if
$\theta^X=\inh^X(\theta^\E)$ for some choice of $\theta^\E$ and a half of $\P_\R(\E)$ (see Lemma \ref{two-sided}).
The set of all inherited $\Pin^-$-structures will be denoted
$\Pin^-_\inh(X_\R,\E)$, or simply $\Pin^-_\inh(X_\R)$
if $\E$ is clear from the context.

\begin{lemma}\label{torsor}
The set $\Pin^-_\inh(X_\R,\E)$ is a torsor with respect to the subgroup $\Z/2+\Z/2\subset H^1(X_\R;\Z/2)$
spanned by $w_1(X_\R)$ and class $[C_\R]^*$ dual to $[C_\R]$.
Furthermore,
for any pair $\theta_1,\theta_2\in \Pin^-_\inh(X_\R,\E)$, we have
$$\hat q_{\theta_1}(x)=\pm \hat q_{\theta_2}(x)+2\varkappa(C_\R\cdot x)\ \rm{mod}\,2, \quad \varkappa\in\Z/2.$$
\end{lemma}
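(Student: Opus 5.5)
The plan is to describe the set of inherited structures as the image of an affine map, and then to check which choices give distinct structures. By definition, $\Pin^-_\inh(X_\R,\E)$ is the union of two images. The first is the image of $\inh^X$ for the chosen half. The second is the image for the opposite half, which is the first image shifted by $w_1(X_\R)$, as recalled above (cf. \cite[Lemma 1.10]{Kirby}). Since $\inh^X$ is affine over the restriction homomorphism $i^*\colon H^1(\P(\E_\R);\Z/2)\to H^1(X_\R;\Z/2)$, the first image is a torsor over $\Im i^*$. Hence the whole set is a torsor over $\Im i^*+\langle w_1(X_\R)\rangle$. The first statement therefore reduces to showing that $\Im i^*$ is spanned by $[C_\R]^*$, i.e. is $\{0,[C_\R]^*\}$. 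If $[C_\R]^*$ happens to lie in $\langle w_1\rangle$ or vanish, the group is smaller, and the claim is read as "spanned by".

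To compute $\Im i^*$, I use that $\P(\E_\R)$ is an $\Rp2$-bundle over $\P^1_\R=S^1$. Its $H^1(\,\cdot\,;\Z/2)$ is generated by two classes. The first is the pull-back $\pi^*\omega$ of the generator of $H^1(S^1)$. The second is the tautological class $w_1(\O(1))$, which restricts to the generator on each $\Rp2$-fiber. The restriction of $\pi^*\omega$ to $X_\R$ is $\pi|_{X_\R}^*\omega$, which is evaluation of the degree over $S^1$. This is exactly the class Poincaré dual to a fiber, i.e. $[C_\R]^*$, via $x\mapsto C_\R\cdot x$.

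For the tautological class I evaluate $w_1(\O(1))$ on a loop $\gamma\subset X_\R$ as the Stiefel–Whitney class of the restriction of the real line bundle $\O_{\P(\E)}(-1)$. On $X_\R$ the defining equation $F$ is a section of $\O(2)\otimes\pi^*(\cdot)$ vanishing on $X$. Hence $\O(2)|_X$ is a pull-back from the base, and $\O(1)|_{X_\R}^{\otimes2}$ is trivial up to a pull-back. That only says $2w_1=0$, which is useless mod 2, so I argue differently. I use the two halves from Lemma~\ref{two-sided} and the parity of the weighted degrees (Remark~\ref{equations-for-halves}). The argument goes as follows. The restriction of the tautological line to $X_\R$ is the normal line field of $X_\R$ inside the fiber $\Rp2$ spanned by points of the conic. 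The class $w_1(\O(-1))$ restricted to a conic in $\Rp2$ is zero, because a conic is null-homologous mod 2 in $\Rp2$. So the restriction is killed fiberwise and must be a multiple of $[C_\R]^*$ plus possibly something detected by sections. I expect the cleanest route to be the following: pick the section $x_1=x_2=0$ or use the explicit diagonal models from Theorem~\ref{exhaustive}, compute $w_1(\O(1))$ on generators of $H_1(X_\R)$ there, and conclude that the restriction lies in $\langle [C_\R]^*\rangle$, possibly plus $w_1(X_\R)$. By deformation invariance this suffices. This computation is the main obstacle.

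The formula for $\hat q$ then follows. Two inherited structures differ by $\varepsilon w_1+\varkappa[C_\R]^*$. Adding $w_1$ negates $q$, and adding $h$ adds $2h$. Composing with $\bhv$, which respects intersections, gives $\hat q_{\theta_1}(x)=\pm\hat q_{\theta_2}(x)+2\varkappa (C_\R\cdot \bhv x)$. Here $C_\R\cdot\bhv(x)$ equals $C\cdot x$ mod 2.
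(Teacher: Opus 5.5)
Your framework agrees with the paper. The structures inherited from one half form a torsor over $\Im i^*$, and reversing the half shifts by $w_1(X_\R)$. The fiber-dual generator $\pi^*\omega$ restricts to $[C_\R]^*$, and your derivation of the formula for $\hat q$ is the paper's. However, the step that carries the content is left open: identifying $i^*$ of the second generator of $H^1(\P(\E_\R);\Z/2)$. You flag it as ``the main obstacle'' yourself.

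The target you set for that step is also false. $\Im i^*$ is not $\{0,[C_\R]^*\}$ in general. It always contains $w_1(X_\R)$, which can be nonzero while $[C_\R]^*=0$. For example, in spherical minimal type with some real points blown up, every component of $X_\R$ lies over an arc of $\P^1_\R$, so $C_\R$ separates its component and $[C_\R]^*=0$. Yet the blown-up components are non-orientable.

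Your heuristics toward that target do not hold either. The tautological line is not a normal line field of the conic: the normal bundle of a conic $Q\subset\P^2$ is $\O(2)|_Q$. Also, a class vanishing on the generic fiber need not be a multiple of $[C_\R]^*$ ``plus something detected by sections''. Components over arcs have no sections, and real singular fibers with two real branches create further classes in $H_1(X_\R;\Z/2)$; $w_1$ of an $\Rp2$ component is such a class. Finally, the model-by-model computation you propose instead is a large case analysis over all real deformation classes, and you have not carried it out.

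The missing idea is to use $w_1(\P(\E_\R))$ as the second generator instead of the tautological class. It restricts to $w_1(\Rp2)\ne0$ on a fiber, so together with $\pi^*\omega$ it generates $H^1(\P(\E_\R);\Z/2)$ by the exact sequence of the $\Rp2$-bundle. Since $X_\R$ bounds a half of $\P(\E_\R)$ (Lemma \ref{two-sided}), its normal line bundle is trivial, hence $i^*w_1(\P(\E_\R))=w_1(X_\R)$.

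Thus $\Im i^*=\la w_1(X_\R),[C_\R]^*\ra$. The shift caused by reversing the half already lies in this group, and the rest of your argument goes through unchanged.

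If you want to keep the tautological class, the relative Euler sequence $T_\pi\oplus\underline{\R}\cong\O(1)\otimes\pi^*\E_\R$ gives $w_1(\O_\R(1))=w_1(\P(\E_\R))+\pi^*w_1(\E_\R)$. Hence $i^*w_1(\O_\R(1))=w_1(X_\R)+(a_0+a_1+a_2)[C_\R]^*$. This still rests on the same two-sidedness observation, which your proposal never uses.
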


\begin{proof}
The fibration $f_\E:\P(\E_\R)\to\P^1_\R$ 
leads to a short exact sequence 
$$
0\to H^1(\P^1_\R;\Z/2)\to H^1(\P(\E_\R);\Z/2)\to H^1(\P^2_\R;\Z/2)\to 0.
$$ 
It implies that $H^1(\P(\E_\R);\Z/2)$ is isomorphic to $\Z/2\oplus\Z/2$ and 
generated by the class dual to the fiber of $f_\E$ and $w_1(\P(\E_\R))$, because the latter one is mapped to $w_1(\P_\R^2)\ne0$.
%
The pull-back map  $H^1(\P(\E_\R);\Z/2)\to H^1(X_\R;\Z/2)$ 
sends 
the class dual to the fiber  to  $[C_\R]^*$ and, since by Lemma \ref{two-sided} $X_\R$ has a trivial  normal bundle in $\P(\Cal E_\R)$,
it sends $w_1(\P(\E_\R))$ to
 $w_1(X_\R)$.
Under reversal of coorientation the inherited $\Pin^-$-structures are shifted
by $w_1(X_\R)$, which in its turn leads to multiplication by $-1$ of 
the associated quadratic functions.
\end{proof}

\begin{proposition}\label{vanish-prop} 
For any $\theta\in\Pin^-_\inh(X_\R,\E)$, we have:
%
\begin{itemize}\item
$q_\theta([C_\R])=0$,
\item
$q_\theta(w_1^*)=0$ for the class dual to $w_1\in H^1(X_\R;\Z/2)$,
\item
$q_\theta([V_\R])=0$ for each real vanishing cycle $V\subset X$.
\end{itemize}
\end{proposition}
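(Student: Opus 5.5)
The plan is to derive all three vanishing statements from Lemma \ref{vanishPin}. We apply it to $\Sigma=X_\R$ and take for $V$ the chosen half $W\subset\P(\E_\R)$ from Lemma \ref{two-sided}. The inherited structure $\theta=\inh^X(\theta^\E)$ is, by definition, the restriction to $\partial W=X_\R$ of $\theta^\E|_W$, taken with the outward coorientation. So Lemma \ref{vanishPin} gives $q_\theta(h)=0$ for every $h\in\ker(H_1(X_\R;\Z/2)\to H_1(W;\Z/2))$. Changing the half replaces $q_\theta$ by $-q_\theta$, which does not affect vanishing, so the choice of half is irrelevant. It therefore suffices to show that each of the three classes $[C_\R]$, $w_1^*$ and $[V_\R]$ bounds in $W$, or in the complementary half.

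\textbf{The fiber class.} Take a nonsingular real fiber $C_t$ with $C_{t\R}\ne\oo$. Its real part is an oval in $\P^2_{t\R}$. It bounds the disc component of $\P^2_{t\R}\sm C_{t\R}$, and that disc lies in one of the two halves. If it lies in the other half, we use that half instead, since the choice of half does not matter. Hence $[C_\R]$ lies in the kernel for an appropriate half, and $q_\theta([C_\R])=0$. If $C_\R$ is disconnected or empty, it is a sum of such ovals, or zero, and the same argument applies.

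\textbf{The class $w_1^*$.} By Lemma \ref{torsor}, the normal bundle of $X_\R$ in $\P(\E_\R)$ is trivial, so $w_1(X_\R)$ is the restriction of $w_1(\P(\E_\R))$. The natural route is therefore to show that the Poincar\'e dual of $w_1$ on $X_\R$ is cut out by the relevant hypersurface. Concretely, take a generic section of $\O(1)$, say $\{x_i=0\}$ with appropriate parity twisted by $F$; its real part $S$ is dual to $w_1(\P(\E_\R))$. Then $w_1^*=[S\cap X_\R]=\partial[S\cap W]$, which lies in the kernel.

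The obstacle here is the parity of weights: we must check that $S\cap W$ really is a relative cycle with boundary $S\cap X_\R$. This holds because $S$ is transverse to $X_\R$. One also has to check that a representative of $w_1(\P(\E_\R))$ is given by a divisor of class $M+kF$. Since the paper identifies $H^1(\P(\E_\R);\Z/2)$ as generated by $w_1$ and the fiber dual, the fiber ambiguity is harmless: its restriction is $[C_\R]$, which we have already shown to be in the kernel.

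\textbf{The vanishing cycles.} For $[V_\R]$ we use the local model (\ref{ee11_new}). In real coordinates, $X(t)_\R$ near the node is a quadric $a_1z_1^2+a_2z_2^2+a_3z_3^2=t$, and the real vanishing cycle is either a sphere (in which case $[V_\R]=0$ already) or a circle on a one-sheeted hyperboloid. In the circle case the circle bounds a disc in the solid region $\{a_1z_1^2+a_2z_2^2+a_3z_3^2\le t\}$ of the ambient ball. That ball lies in $\P(\E_\R)$, since the family consists of hypersurfaces $X(z)\subset\P(\E)$ and the region is the portion of a half (the sign of $F$, Remark \ref{equations-for-halves}) near the node. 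So $[V_\R]$ lies in the kernel for one of the halves, and again the $\pm$ ambiguity lets us conclude $q_\theta([V_\R])=0$.

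The main care point is identifying the local ambient halves $\{F\le0\}$ and $\{F\ge 0\}$ with the sublevel sets of the local model. Since $X(t)=\{F_0+tG=0\}$, near the node the halves are $\{f\lessgtr t\}$, which matches this model.
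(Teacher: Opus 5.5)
Your argument is correct. For the first two items it is the paper's own argument made explicit. The paper observes that $[C_\R]$ and $w_1^*$ are Poincar\'e dual to restrictions of classes from $H^1(\P(\E_\R);\Z/2)$, hence bound relative mod-$2$ cycles in a half, and then invokes Lemma~\ref{vanishPin}. Your fiber disc and transverse surface $S$ are exactly such relative cycles. Two of your steps are not needed. The detour through sections of $\O(1)$ is unnecessary, and so is the switching of halves: any surface dual to an ambient class and transverse to $X_\R$ meets both halves in relative cycles, and the oval $C_{t\R}$ bounds mod~$2$ on both sides, since the M\"obius band is also a mod-$2$ chain.

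The genuine difference is in the third item. The paper gives no argument there; it cites Lemma~2.4.2 of \cite{TwoKinds}. You instead reduce this case to Lemma~\ref{vanishPin} too, by exhibiting the disc $\{z_3=0,\ z_1^2+z_2^2\le t\}$ inside one half, with the real vanishing circle as its boundary. This makes the proposition self-contained and treats all three classes uniformly.

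The price is that you must identify the halves near the node with the sub- and superlevel sets of the Morse function. This is needed because the coordinates in (\ref{ee11_new}) live on the total space $\X$, not a priori on $\P(\E)$. You justify it for a pencil $F_0+tG$. For a general real analytic family $F(x,t)$ it holds for the same reason: nonsingularity of $\X$ at the node forces $\partial_tF(x_0,0)\neq0$. Hence $\X\to\P(\E)$ is a local diffeomorphism there, and $F(x,t)=(t-\phi(x))\,u(x,t)$ with $u(x_0,0)\neq0$. It is worth saying this explicitly.

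Two remaining points are absorbed by your $\pm$ argument. Whether the disc lies in $\{f\le t\}$ or in $\{f\ge t\}$ depends on the signs of the $a_i$ and of $t$. The real vanishing cycles whose real part is empty or two points are trivial cases.
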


\begin{proof} For the dual of induced classes (like $[C_\R]$ and $w_1^*$)
it follows from Lemma \ref{vanishPin}. For the case of vanishing classes, see \cite[Lemma 2.4.2]{TwoKinds}.
\end{proof}

\subsection{Inherited $\Pin^-$-structures for $X$ having odd number of singular fibers}\label{inh-odd}
Consider an embedded real conic bundle $X\subset\P(\E)$ with an odd number $s$ of singular fibers.
Pick any real fiber $\P^2_t=\P(\E_{t})\subset\P(\E)$ for which the conic $C=X\cap \P^2_t$
is non-singular.
By the same reason as in the case of even number of singular fibers, we have the following.

\begin{lemma}\label{trivialization}
For any real embedded conic bundle $X\subset \P(\E)$
with odd number of singular fibers, and any choice of a distinguished nonsingular  fiber $C=X\cap \P^2_t$,
 $\P(\Cal E_\R)\-\P^2_{t\R}$ 
 splits in a unique way into two halves
with $X_\R\-C$ as their common boundary.
\qed
\end{lemma}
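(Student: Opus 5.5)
We argue exactly as in Lemma \ref{two-sided}, after removing the distinguished fiber. Set $U=\P^1_\R\sm\{t\}$, an open interval, so $\P(\E_\R)\sm\P^2_{t\R}$ is the $\P^2_\R$-bundle over $U$ and $X_\R\sm C_\R$ is the union of the conics $C_{u\R}$, $u\in U$. For each $u\in U$ with $C_u$ nonsingular, $\P^2_{u\R}\sm C_{u\R}$ has two connected components, a disc and a M\"obius band (or one component, $\P^2_\R$, when $C_{u\R}=\oo$). We therefore have to select one component fiberwise so that the selections glue continuously across the critical values, i.e. across the real singular fibers, and then check that such a selection exists and that its complement is the other half.

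The cleanest way is to use a defining equation, as in Remark \ref{equations-for-halves}. By Proposition \ref{number-fibers}, write $X=\{F=0\}$ with $F=\sum A_{ij}(u,v)x_ix_j$. By the homogeneity (\ref{homogeneity}), the $\mu$-weight of $F$ is $2$, so the sign of $F$ is invariant under the real rescaling of $(x_0,x_1,x_2)$. The $\lambda$-weight of $F$ is $d$ (up to sign convention), so rescaling $(u,v)\mapsto(\lambda u,\lambda v)$ multiplies $F$ by $\lambda^{\pm d}$. The sign of $F$ on $\P(\E_\R)$ is therefore well defined only when $d$ is even, which by Proposition \ref{number-fibers}(4) ($s=3d+2(a_1+a_2)$) is exactly the case of even $s$.

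For odd $s$ we remove $\P^2_{t\R}$. Over $U$ we may fix the representative $v=1$ (after a real projective change of coordinates, $t=\infty$), and there $\lambda$ is no longer free. Hence $F(u,1;x)$ has a well-defined sign on $\P(\E_\R)\sm\P^2_{t\R}$, and the two halves are $\{F\ge0\}$ and $\{F\le0\}$. Their common boundary is $X_\R\sm C_\R$, because $X$ is nonsingular, so $dF\ne0$ along $X_\R$ and $F$ changes sign transversally there.

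It remains to prove uniqueness: any decomposition of $\P(\E_\R)\sm\P^2_{t\R}$ into two 3-manifolds with common boundary $X_\R\sm C_\R$ must be this one. Such a decomposition is a $2$-coloring of the connected components of the complement of $X_\R\sm C_\R$ in which adjacent components receive different colors. The regions $\{F>0\}$ and $\{F<0\}$ give one such coloring. Since the complement of $X_\R\sm C_\R$ is the disjoint union of these open sets, the decomposition is unique as soon as each of $\{F>0\}$ and $\{F<0\}$ is connected, and then the only freedom is swapping the two halves.

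We expect connectedness to be the main obstacle. The strategy is to show that each of $\{F>0\}$ and $\{F<0\}$ is fibered over all of $U$ with nonempty connected fibers, apart from finitely many critical values where the fiber pieces merge. Concretely, in each fiber, for $u\in U$, the set $\{F(u,\cdot)>0\}$ is either a disc, a M\"obius band, all of $\P^2_\R$, or a pinched region at a singular conic. None of these is empty, since a real ternary quadratic form of full rank cannot have a constant sign because the $\mu$-weight is even. The exception is a definite form, i.e. a fiber with $C_{u\R}=\oo$, where $\{F>0\}$ is either all of $\P^2_\R$ or empty. In that case we need an extra argument: the region of one sign is still connected along $U$, because one can move fiberwise to a neighbouring nondefinite fiber or to a point of an adjacent definite fiber. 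This bookkeeping of connectivity across degenerate and definite fibers is the step we would check carefully; it relies on $U$ being an interval.
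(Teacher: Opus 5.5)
Your existence argument is correct and matches the paper's description of the halves by the sign of $F(u,1;x)$ (the remark right after the lemma). The gap is in uniqueness. You reduce it to showing that each of $\{F>0\}$ and $\{F<0\}$ is connected, and this is false in general, precisely in the situation the paper uses.

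Take $s=3$ and $X\subset\P^1\times\P^2$ the blow-up of $\P^2$ at two pairs of conjugate points, fibered by the pencil of conics through them. Then $X_\R=\Rp2$ lies over an arc $J\subset\P^1_\R$ carrying the three real singular fibers. Over the complementary arc $I$ the forms $F(u,\cdot)$ are definite. Put the distinguished fiber $t$ inside $J$ with $C_\R\ne\oo$, as the paper does in Proposition \ref{connecting}. Whichever of $\{F>0\}$, $\{F<0\}$ has empty fibers over $I$ still has nonempty fibers on both sides of $t$, because the forms near $t$ are indefinite. On the other hand, every path in $U$ joining the two sides of $t$ must cross $I$. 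So that region has two components, and the complement of $X_\R\sm C_\R$ has three components, not two.

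Your proposed patch, moving ``to a point of an adjacent definite fiber'', is therefore impossible: that region has no points over $I$. A smaller slip: a full-rank ternary form certainly can have constant sign, and the parity of the $\mu$-weight has nothing to do with it.

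Uniqueness does not need the sign regions to be connected. Suppose $W_1\cup W_2$ is a splitting with $W_1\cap W_2=\partial W_1=\partial W_2=X_\R\sm C_\R$. Then the two local sides of each point of this smooth hypersurface, which is closed in $\P(\E_\R)\sm\P^2_{t\R}$, lie in different halves. Hence, in the connected manifold $\P(\E_\R)\sm\P^2_{t\R}$, the half containing a point is determined by the parity of crossings of a generic path from a fixed base point. So the splitting is unique up to swapping the halves. Equivalently, the adjacency graph of the complementary regions is connected, so its 2-colouring is unique up to swap. In the example above, the connected region is adjacent to both components of the other sign, which is why uniqueness still holds there.

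The paper reaches the same conclusion fiberwise, exactly as in Lemma \ref{two-sided}. A half amounts to a continuous choice over the interval $U$ of a piece of $\P^2_{u\R}\sm C_{u\R}$ (everything or nothing over empty conics). Such a choice is determined by its value at one fiber, and the parity of $\chi$ of the chosen piece switches exactly at the real critical values. Removing $t$ is what eliminates the obstruction caused by an odd number of such switches around $\P^1_\R$.
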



If the distinguished fiber $C$ corresponds to the point $(u=1, v=0)\in \P^1$, then the halves in Lemma \ref{trivialization} are determined by the signs $F\ge0$ and $F\le0$
of a defining real equation $F=\sum A_{ij}(u, 1) x_ix_j$ of $X$ given by Proposition \ref{number-fibers}
(cf. Remark  \ref{equations-for-halves}).

Lemma \ref{trivialization} provides a canonical (up to sign) coorientation of 
$X_\R \sm C_{\R}$ in $\P(\E_\R)$, which yields a reduction map
$$\inh_{C}^X\:\Pin^-(\P(\E_\R))\to \Pin^-(X_\R\sm C_\R).$$

\begin{lemma}\label{surjectivity-CB}
The restriction map $\res\: \Pin^-(X_\R)\to \Pin^-(X_\R\sm C_\R)$ is surjective.
\end{lemma}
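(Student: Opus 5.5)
Surjectivity of $\res\colon \Pin^-(X_\R)\to \Pin^-(X_\R\sm C_\R)$ follows from the obstruction theory of $\Pin^-$-structures. Write $\SS=X_\R$ and $\SS^\circ=X_\R\sm C_\R$. Since $C$ is a nonsingular real conic with $C_\R\ne\oo$, the curve $C_\R$ is a smooth embedded circle; it is a fiber of $\pi_\R\colon X_\R\to\P^1_\R$, so it has a trivial normal bundle (a product neighborhood $C_\R\times(-1,1)$). Both sets are nonempty: $\SS^\circ$ inherits a structure from $\P(\E_\R)$ via $\inh^X_C$, and $\Pin^-(\SS)\ne\oo$ for any surface. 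Both sets are torsors, over $H^1(\SS;\Z/2)$ and $H^1(\SS^\circ;\Z/2)$ respectively, and $\res$ is affine over the restriction map $r\colon H^1(\SS;\Z/2)\to H^1(\SS^\circ;\Z/2)$. If $r$ were surjective we would be done, but it need not be. So the argument has to go through quadratic functions.

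The plan uses quadratic enhancements. On a (possibly open) surface, $\Pin^-$-structures correspond bijectively to functions $q\colon H_1(\,\cdot\,;\Z/2)\to\Z/4$ satisfying $q(x+y)=q(x)+q(y)+2x\cdot y$. For the open surface $\SS^\circ$ this is the Kirby--Taylor description, which extends to noncompact surfaces because $\Pin^-$-structures are determined by their values on embedded circles. Via the inclusion $i\colon\SS^\circ\to\SS$, restriction of structures becomes $q\mapsto q\circ i_*$.

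Given a quadratic function $q^\circ$ on $H_1(\SS^\circ;\Z/2)$, we must extend it along $i_*$. The Mayer--Vietoris sequence for $\SS=\SS^\circ\cup N(C_\R)$ gives $\ker i_*$ as generated by the image of the boundary classes of $N(C_\R)$. These are two parallel copies $C_\R^\pm$ of $C_\R$, whose classes agree in $H_1(\SS^\circ;\Z/2)$ modulo $[\partial\,\overline{\SS^\circ}]$. Moreover, $\operatorname{coker} i_*$ is generated by a class $\gamma$ meeting $C_\R$ once, if $[C_\R]\ne0$ in $H_1(\SS;\Z/2)$. There are then two steps:
\begin{enumerate}
\item \emph{Well-definedness.} Check that $q^\circ$ is compatible with $\ker i_*$: for $k\in\ker i_*$ and any $x$, we need $q^\circ(x+k)=q^\circ(x)$. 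The relation $[C_\R^+]=[C_\R^-]$ in $H_1(\SS)$ requires $q^\circ(C_\R^+)=q^\circ(C_\R^-)$. This holds because the two curves cobound the annulus $N(C_\R)$ in $\SS$; more directly, a $\Pin^-$-structure on $C_\R\times(-1,1)\sm C_\R$ extends across the annulus precisely when the two boundary framings agree.
\item \emph{Extension.} Extend $q$ to $\gamma$ by assigning $q(\gamma)$ arbitrarily among the values compatible with $q(\gamma)\equiv\gamma\cdot\gamma \bmod 2$, and then extend quadratically.
\end{enumerate}

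Rather than fiddle with homology, I would argue geometrically. Restrict the given structure $\theta^\circ$ on $\SS^\circ$ to the two collar annuli $C_\R\times(0,1)$ and $C_\R\times(-1,0)$. Each carries one of two structures, distinguished by the value of $q$ on the core circle. The structure on $\SS$ is built by gluing along $C_\R$. Gluing is possible after modifying $\theta^\circ$ only near a curve: we may change $\theta^\circ$ by a class in $H^1(\SS^\circ;\Z/2)$ dual to an arc, namely a proper ray in $\SS^\circ$ going out to one side of $C_\R$. This flips the structure on one collar, but the modification changes $\theta^\circ$ itself, which is not allowed.

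The main obstacle is therefore exactly this matching condition: the two collar structures must already agree for $\theta^\circ$ itself. I would derive this from the fact that $C_\R^+$ and $C_\R^-$ are homologous in $\SS^\circ$ modulo the ends, via the complement. $\SS^\circ$ is connected, because $C_\R$ does not separate: $X_\R\to\P^1_\R$ has connected components over arcs. So $C_\R^+\cup C_\R^-$ together with the remaining part of $X_\R$ fibered over $\P^1_\R$ minus a point gives a cobordism relation, and additivity of $q$ yields $q(C_\R^+)=q(C_\R^-)$ when $\SS^\circ$ fibers as $C_\R\times$ interval away from singular fibers. If a direct argument fails, I would fall back on the fact that $q^\circ$ is inherited, which suffices for the paper's use. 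An inherited $q^\circ$ comes from $\theta^\E$ on $\P(\E_\R)$, and both collar circles bound the same kind of Möbius or disc region in the adjacent $\P^2_\R$-fibers, which forces agreement by Lemma \ref{vanishPin}.
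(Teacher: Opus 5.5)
The decisive error is in your first paragraph, where you say the restriction $r\colon H^1(\SS;\Z/2)\to H^1(\SS^\circ;\Z/2)$ ``need not be'' surjective. It always is. With that fact, the torsor setup you already wrote down proves the lemma in one line, and this is exactly the paper's argument.

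Over $\Z/2$, $r$ is dual to $i_*\colon H_1(\SS^\circ;\Z/2)\to H_1(\SS;\Z/2)$, so it suffices to show that $i_*$ is injective. Mayer--Vietoris for $\SS=\SS^\circ\cup N(C_\R)$ shows that $\ker i_*$ is spanned by $[C_\R^+]+[C_\R^-]=[\partial N(C_\R)]$. This class is already $0$ in $H_1(\SS^\circ;\Z/2)$, because $\partial N(C_\R)$ bounds the compact surface $\SS\sm\operatorname{int}N(C_\R)\subset\SS^\circ$. Hence $i_*$ is injective and $r$ is onto. Now take any $\theta^\circ$ and any $\theta\in\Pin^-(\SS)$. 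If $\tilde h$ is a lift of $\theta^\circ-\res\theta$, then $\theta^\circ=\res(\theta+\tilde h)$. In particular, the ``matching condition'' $q^\circ(C_\R^+)=q^\circ(C_\R^-)$ you were chasing is automatic, since $[C_\R^+]=[C_\R^-]$ in $H_1(\SS^\circ;\Z/2)$.

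Because you missed this, the rest of the proposal never closes, for four reasons:
\begin{enumerate}
\item Your first justification of the matching (``the two curves cobound the annulus $N(C_\R)$ in $\SS$'') is invalid. $q^\circ$ is defined only on $\SS^\circ$, and $N(C_\R)\not\subset\SS^\circ$.
\item Your second justification (``extends precisely when the two boundary framings agree'') just restates what has to be shown.
\item The ``geometric'' argument is only conditional, and it relies on the false claim that $C_\R$ does not separate $X_\R$. It does separate whenever it lies on a component fibered over an arc of $\P^1_\R$, for example an $S^2$ or $\Rp2$ component in the spherical case, where it cuts off a disc.
\item The fallback to inherited $q^\circ$ would prove only a weaker statement. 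The lemma asserts surjectivity onto all of $\Pin^-(X_\R\sm C_\R)$.
\end{enumerate}
The missing idea is simply that $\partial N(C_\R)$ bounds in the complement. That makes $i_*$ injective and the whole detour through quadratic functions unnecessary.
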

\begin{proof}
Since $\Pin^-(X_\R)\ne\oo$, the restriction as a map of torsors is associated with  the inclusion homomorphism
$H^1(X_\R;\Z/2)\to H^1(X_\R\- C_\R;\Z/2)$. So, it remains to notice that the latter one is epimorphic.
\end{proof}

Any $\Pin^-$-structure 
$\theta\in\Pin^-(X_\R)$ 
whose restriction to $X_\R \sm C_{\R}$,
will be called
{\it inherited} from $\P(\E)$. The set of all such inherited structures will be denoted $\Pin^-_\inh(X_\R,C_\R,\E)$, or shortly, 
 $\Pin^-_\inh(X_\R,C_\R)$, if $\E$ is fixed.
 By Lemma \ref{surjectivity-CB}, the set $\Pin^-_\inh(X_\R,C_\R,\E)$ is non-empty.


\begin{lemma}\label{torsor-odd}
Under the assumptions of Lemma \ref{trivialization},
the set $\Pin_\inh^-(X_\R,C_\R,\E)$ is an affine space with respect to the subgroup of
$H^1(X_\R;\Z/2)$  spanned by $w_1(X_\R)$ and  $[C_\R]^*$. 
Furthermore, for any pair $\theta_1,\theta_2\in\Pin_\inh^-(X_\R,C_\R,\E)$, we have
$$q_{\theta_1}(x)=\pm q_{\theta_2}(x)+2\varkappa (C_\R\cdot x),\quad \text{ where }\varkappa\in\Z/2.$$
\end{lemma}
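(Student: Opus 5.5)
The set $\Pin^-_\inh(X_\R,C_\R,\E)$ is by definition the preimage under $\res$ of the image of $\inh^X_C$. So the plan is to describe that image first, then pull back along $\res$, and finally read off the effect on quadratic functions, exactly as in Lemma \ref{torsor}.

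\emph{The image of $\inh^X_C$.} The map $\inh^X_C$ is defined by a fixed coorientation of $X_\R\sm C_\R$ given by Lemma \ref{trivialization}. It is affine over the inclusion-induced homomorphism $H^1(\P(\E_\R);\Z/2)\to H^1(X_\R\sm C_\R;\Z/2)$. As in the proof of Lemma \ref{torsor}, $H^1(\P(\E_\R);\Z/2)$ is generated by the class $f^*$ dual to a fiber $\P^2_{t'\R}$ and by $w_1(\P(\E_\R))$.

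We may take the fiber $\P^2_{t'}$ with $t'\ne t$ and nonsingular conic. Then $f^*$ restricts to the class dual to $C'_\R=X_\R\cap \P^2_{t'\R}$, and $[C'_\R]=[C_\R]$ in $H_1(X_\R;\Z/2)$.

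Since $X_\R\sm C_\R$ is cooriented, its normal bundle is trivial. Hence $w_1(\P(\E_\R))$ restricts to $w_1(X_\R\sm C_\R)$.

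Reversing the coorientation shifts the inherited structure by $w_1$ (cf. \cite[Lemma 1.10]{Kirby}). So the image of $\inh^X_C$, with both coorientations allowed, is an orbit of the subgroup spanned by the restrictions of $w_1(X_\R)$ and $[C_\R]^*$.

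\emph{Pulling back along $\res$.} The map $\res$ is affine over $H^1(X_\R;\Z/2)\to H^1(X_\R\sm C_\R;\Z/2)$, and by the exact sequence of the pair its kernel is generated by the class Poincaré dual to $[C_\R]$ (the image of $H^1(X_\R,X_\R\sm C_\R)$). This is the main point to check: the kernel is exactly $\{0,[C_\R]^*\}$.

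The check goes through the Thom class of the normal bundle of $C_\R$. If $C_\R$ has several components, they are fibers over nearby points $t$ of distinct components. One must verify that only the total class $[C_\R]^*$ lies in the kernel, not the individual component classes. Over one real fiber, $C_\R$ is a single circle, since a nonsingular real conic with nonempty real locus is connected. So this is fine.

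Therefore $\Pin^-_\inh(X_\R,C_\R,\E)$ is an orbit of the subgroup generated by $w_1(X_\R)$ and $[C_\R]^*$, i.e. a torsor over it.

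\emph{The formula for quadratic functions.} Any two inherited structures satisfy $\theta_1=\theta_2+\varepsilon w_1+\varkappa[C_\R]^*$. Use the rules $q_{\theta+h}=q_\theta+2h$ and $q_{\theta+w_1}=-q_\theta$. Adding $[C_\R]^*$ adds $2(C_\R\cdot x)$, and adding $w_1$ changes the sign. Because $2\varkappa(C_\R\cdot x)$ is invariant under sign modulo $4$, the order of these operations does not matter. This gives $q_{\theta_1}=\pm q_{\theta_2}+2\varkappa(C_\R\cdot x)$.
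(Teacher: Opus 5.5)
Your proposal is correct and follows essentially the same route as the paper: the descents from $\P(\E_\R)$ to $X_\R\sm C_\R$ differ by $w_1$ and the fiber-dual class, and lifting through $\res$ gives an orbit of the subgroup spanned by $w_1(X_\R)$ and $[C_\R]^*$; the formula for $q$ then follows from $q_{\theta+h}=q_\theta+2h$ and $q_{\theta+w_1}=-q_\theta$, as in Lemma \ref{torsor}. You make explicit a step the paper leaves implicit, namely that the kernel of $H^1(X_\R;\Z/2)\to H^1(X_\R\sm C_\R;\Z/2)$ is generated by $[C_\R]^*$, because $C_\R$ is a single circle (or empty); note also that the fiber-dual class already restricts to zero on $X_\R\sm C_\R$, so the $[C_\R]^*$ direction comes entirely from this kernel.
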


\begin{proof}
The proof is similar to that of Lemma \ref{torsor}. Indeed, the descents of $\Pin^-$-structures from $\P(\E_\R)$ to $X_\R\sm C_\R$
differ by $aw_1(X_\R\sm C_\R)+b [C'_\R]^*$ where $C'\subset X\sm C$ is a generic real fiber and $a,b\in \Z_2$. This implies that their lifts
up to $\Pin^-$-structures on $X_\R$ differ by combinations of $w_1(X_\R)$ and  $[C_\R]^*$. 
%
\end{proof}

\begin{proposition}\label{extension}
For any $\theta\in \Pin_\inh^-(X_\R,C_\R,\E)$, under 
the assumptions of Lemma \ref{trivialization} we have:
\begin{enumerate}\item
$q_\theta(C_\R)=0$, 
\item
$q_\theta(w_1^*)=\pm1$,
\item
$q_\theta(V_\R)=0$ for each real vanishing cycle 
$V$ of $X\sm C$ in $\P(\E)$.
\end{enumerate}
\end{proposition}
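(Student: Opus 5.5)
I would prove the three items separately, in each case reducing to Lemma \ref{vanishPin} on the complement of the distinguished fiber, with a separate local computation near $C_\R$ for item (2).

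For (1), take a real nonsingular fiber $C'$ close to $C$ and disjoint from it, with $C'_\R\ne\oo$. Then $[C'_\R]=[C_\R]$ in $H_1(X_\R;\Z/2)$, and $C'_\R$ lies in $X_\R\sm C_\R$. There the structure $\theta$ restricts to $\inh^X_C(\theta^\E)$, which comes from $\P(\E_\R)$ together with a coorientation. The curve $C'_\R$ bounds the disc half of $\P^2_{t'\R}\sm C'_\R$, which lies in one of the halves of Lemma \ref{trivialization}. Applying the bounding argument of Lemma \ref{vanishPin} locally (a null-homologous curve in the bounding 3-manifold has $q=0$; near $C'$ the halves restrict to a compact piece over an interval whose boundary contains $C'_\R$), we get $q_\theta(C'_\R)=0$. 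Concretely, the framing of $C'_\R$ induced from the disc in the fiber plane agrees with the framing needed to compute $q$.

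For (3), a real vanishing cycle $V$ of $X\sm C$ lives in a Morse--Lefschetz family supported away from $C$. I would repeat the argument of \cite[Lemma 2.4.2]{TwoKinds}: $V_\R$ bounds a disc (the real vanishing disc) inside one of the halves in $\P(\E_\R)\sm\P^2_{t\R}$. Since the $\Pin^-$-structure near $V_\R$ is inherited from the ambient structure, $q(V_\R)=0$. Only the local structure near the node matters, so removing $\P^2_{t\R}$ does no harm.

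For (2), I expect the main obstacle, since $w_1^*$ necessarily crosses $C_\R$ and parity forces $q\neq 0$. The plan is to split $X_\R$ along $C_\R$ into $X_\R\sm C_\R$ and a collar. On the complement, $q$ is controlled by the inherited structure, exactly as in Proposition \ref{vanish-prop} where $q(w_1^*)=0$. Crossing $C$, however, the coorientation of $X_\R$ induced from the halves flips, because the number of singular fibers is odd, so each loop through $C$ changes half an odd number of times. Changing coorientation shifts the induced structure by $w_1$, so the inherited and the global structures differ by the class $[C_\R]^*$ restricted across the cut. To finish, I would represent $w_1^*$ by a curve $\gamma$ and use $q_{\theta}(\gamma)=q_{\theta'}(\gamma)+2\,\gamma\cdot C_\R$, where $\theta'$ is a comparison structure on which $\gamma$ behaves as in the even case, so that $q_{\theta'}(\gamma)\equiv 0$ or $\pm1$ by the parity constraint $q(w_1^*)\equiv w_1^*\cdot w_1^*=\chi(X_\R)\bmod 2$. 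Since $\chi(X_\R)$ is odd (its parity equals that of $s$), $q_\theta(w_1^*)$ is odd. It is therefore $\pm1$, which is all that is claimed.
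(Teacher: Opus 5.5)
Your proof is correct and follows the paper's route. Items (1) and (3) are the bounding arguments behind Proposition \ref{vanish-prop} (Lemma \ref{vanishPin} and \cite[Lemma 2.4.2]{TwoKinds}), which you sensibly localize by passing to a nearby fiber $C'$ whose fiber disc is transverse to $X_\R$. Item (2) is exactly the paper's parity observation $q_\theta(w_1^*)\equiv (w_1^*)^2=\chi(X_\R)\equiv s\equiv 1 \bmod 2$.

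You should drop everything in (2) before that parity remark, for three reasons. It is not needed. Your claim that $w_1^*$ must cross $C_\R$ is wrong: $w_1^*\cdot[C_\R]=0$ because the normal bundle of $C_\R$ in $X_\R$ is trivial, so your correction term vanishes. And a comparison structure $\theta'$ with $q_{\theta'}(\gamma)=0$ ``as in the even case'' cannot exist, since the same parity constraint applies to every $\Pin^-$-structure.
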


\begin{proof}
(1)--(2) are analogous to the proof of Proposition \ref{vanish-prop}.
Recall that $q_\theta(x)=x^2\,{\rm mod}\,2$ from where 
$q_\theta(w_1^*)$ have the same parity as $\chi(X_\R)$ and, therefore, as $s$.
\end{proof}

We say that $\theta\in \Pin_\inh^-(X_\R,C_\R,\E)$ is {\it monic}
if $q_\theta(w_1^*)=1$.
The set of monic structures form a subset denoted $\Pin_{\inh, {\rm mon}}^-(X_\R,C_\R,\E)\subset \Pin_\inh^-(X_\R,C_\R,\E)$.

\subsection{Welschinger numbers and their modification}\label{welsch-prelim}
{\it Traditional Welschinger numbers}, and our {\it modified Welschinger numbers} are 
defined respectively as
$$
\til w_k({\alpha})=\sum_{ [A]=\a} (-1)^{s_A},\quad \text{ and }\quad
w_k({\alpha})=\sum_{[A]=\alpha} (-1)^{c_A}=(-1)^{g_a(\a)}\til w_k(\a)
$$
for a symplectic 4-manifold $(X,\om)$ with an anti-symplectic involution $\conj:X\to X$ (called real structure)
and a generic tamed almost complex structure $J$  and anti-commuting with $\conj$.
In this definition, 
$\a\in H_2^-(X)=\ker \{1+\conj_* \:H_2(X)\to H_2(X)\}$, $c_1\cdot \alpha>0$
and the curves $A$ counted in the class $\a$
are real, rational
and pass through some generic $\conj$-invariant set of $m-1$ points
$\mathbf x\subset X$, $m=c_1\cdot\a$,
 including $k$ points in $X_\R$ 
 and $\frac12(m-k-1)$
pairs of complex conjugate imaginary points.
The integers $s_A$ and $c_A$
are respectively the number of solitary and cross-like real nodal points of $A$, while $g_a(\a)$ is the arithmetic genus, which
is congruent modulo $2$ to the total number of real nodes  $c_A+s_A$.
For a generic almost complex structure, all the counted curves are irreducible and reduced, see \cite{Welsch}.
For  invariance of $\til w_k({\alpha})$
under real deformations of $(X,\mathbf x)$, see also \cite{Welsch}.
For independence from a distribution of the $k$ real constraints among the connected components of $X_\R$,
see \cite{Br}.

We let  $\til w_k(\a), w_k(\a)$
be defined for all $k\in\Z_{\ge 0}$, $\a\in H^-_2(X)$, 
by setting 
them equal to zero
if the set of curves $A$ to be counted is empty, for instance, if any of the conditions $m>0$,
 $0\le k\le m-1$, and $k= m-1\, \rm{mod}\, 2$ is not satisfied.

In the algebraic setting, if $X$ is a real del Pezzo surface or $c_1\cdot \a>1$
and $X$ is a real rational surface with a generic complex structure, then, similar to Gromov-Witten numbers,  the numbers 
 $\til w_k(\a)$ and $w_k(\a)$ are enumerative, that is for generic point constraints, without passing to almost complex structure,  the counted curves are irreducible and reduced,
 and their count following the above definition gives the same result as for a generic almost complex structure, { \it cf.} \cite{GP}.

\begin{lemma}\label{welsch-vanish}
For any real rational symplectic 4-manifold $X$ 
(in particular, for any real rational conic bundle), 
$\tilde w_k(\alpha)=0$ if $\alpha^2\le -2$.
\end{lemma}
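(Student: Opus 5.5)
We want to show $\tilde w_k(\alpha)=0$ when $\alpha^2\le-2$. The argument is a dimension count for real rational curves in $X$, using the adjunction formula and the genericity of $J$.

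Let $\alpha\in H_2^-(X)$ with $m=c_1\cdot\alpha>0$; otherwise $\tilde w_k(\alpha)=0$ by convention. For a generic tamed $\conj$-anticommuting $J$, the counted curves $A$ in class $\alpha$ through the $m-1$ points $\mathbf x$ are irreducible, reduced rational curves whose only singularities are nodes. Let $\delta\ge0$ be the number of nodes. Adjunction gives
\[
0=g(A)=g_a(\alpha)-\delta,\qquad g_a(\alpha)=\tfrac12(\alpha^2-c_1\cdot\alpha)+1 .
\]
Hence $g_a(\alpha)=\delta\ge0$, which means $\alpha^2\ge m-2\ge -1$. So if $\alpha^2\le-2$, no irreducible reduced rational curve exists in the class $\alpha$. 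The set of curves counted in $\tilde w_k(\alpha)$ is therefore empty, and $\tilde w_k(\alpha)=0$.

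The main point to justify is the first step: that for generic $J$ every counted curve is irreducible, reduced and nodal, so that adjunction applies with $\delta\ge0$. This is the standard genericity statement from \cite{Welsch} already quoted in Section~\ref{welsch-prelim}. If it needs to be checked directly, one uses the standard transversality argument: a $J$-holomorphic rational curve through $m-1$ generic points must move in a family of dimension $c_1\cdot\alpha-1$. Multiply covered or reducible configurations would have to pass through fewer points than required, so they are excluded for generic constraints.

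Two remarks complete the argument. First, $w_k(\alpha)=\pm\tilde w_k(\alpha)$, so the same vanishing holds for the modified numbers. Second, nothing beyond $c_1\cdot\alpha\ge1$ and rationality of $X$ is used; in fact only the existence of Welschinger's invariant framework is needed. Thus the statement holds for any real rational symplectic 4-manifold, and in particular for conic bundles.
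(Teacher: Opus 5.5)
Your argument is correct and is essentially the paper's. The paper simply asserts that for generic $J$ no reduced irreducible $J$-holomorphic curve represents a class with $\alpha^2\le -2$, so the set of counted curves is empty; you supply the reason it leaves implicit, namely adjunction $2g_a(\alpha)=\alpha^2-m+2\ge 0$ combined with $m=c_1\cdot\alpha\ge 1$, which forces $\alpha^2\ge -1$.
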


\begin{proof} 
For a generic 
almost complex structure $J$, no class $\alpha\in H_2(X)$ with $\alpha^2\le -2$ can be realized by a 
reduced irreducible $J$-holomorphic curve.
\end{proof}

\subsection{The numbers $\NN_{m,n}$}\label{defNN}
Consider a real embedded conic bundle $X\subset \E$ with $s\ge0$ singular fibers. Assume $X_\R\ne\varnothing $.
If $s$ is odd, choose a non-singular fiber $C\subset X$.
We pick
$\theta\in \Pin^-_{\inh}(X_\R,\E)$ if $s$ is even, and pick it in
$\Pin^-_{\inh,{\rm mon}}(X_\R,\E,C_\R)$ if $s$ is odd.
For any $m, n\in \Z$, we define
\begin{equation}
\NN_{m,n}(X,\theta)=\sum\limits_{\a\in\lay^{m,n}_\R(X)}i^{\hat q(\a)-m^2}w_k({\a}),\quad k\in\{0,1\}, \quad k=m-1\,\rm{mod}\, 2,\\
\end{equation}
where $
\hat q_\theta =q_\theta\circ \bhv  : H_2^-(X)\to \Z/4$ (see Section \ref{lifting}) and 
$$
\lay^{m,n}_\R(X)=\{ D\in H_2^-(X)\,|\, -KD=m, CD=n\}.
$$

For any real conic bundle $X\to \P^1$,
the above definitions can be applied to its canonical embedding $X\to \E$ (see Section \ref{CanEmbed}).
By Proposition \ref{unicity-general}, 
any other presentation of $X$ as an embedded conic bundle is ambient isomorphic to this one and so,
yields the same set
$ \Pin^-_{\inh}(X_\R,\E)$ if $s$ is even and 
 $\Pin^-_{\inh,{\rm mon}}(X_\R,\E,C_\R)$ (after fixing a conic-fiber $C$)
if $s$ is odd. As a result, we obtain the same collections
of numbers $\NN_{m,n}(X,\theta)$.
This justifies dropping
a specific embedding $X\subset \E$ from the notation $\NN_{m,n}(X,\theta)$.
(Note that for odd $s$, the definition of these numbers depends on the choice of $C$, since $\theta$ does depend.)

By Proposition \ref{deform-canonical}, every  real deformation family $X(z)\to \P^1$, $z\in\DD$, of real conic bundles
extends to a real deformation family of canonical embeddings $\phi(z): X(z)\hookrightarrow  \P(\E(z)), z\in \DD$. By equivariant version of Eheresmann theorem, 
for any real deformation family of real embedded conic bundles $X(z)\subset \P(\E(z))$,
there exist,
well-defined up to isotopy,  diffeomorphisms $\mu_{z_1,z_2} : (\E(z_1),X(z_1))\to (\E(z_2), X(z_2))$, $z_1,z_2\in\DD$, such that $\mu_{z_1,z_3}=\mu_{z_2,z_3}\circ \mu_{z_1,z_2}$
and $\conj \mu_{z_1,z_2} \conj = \mu_{\conj z_1,\conj z_2}$.
They induce well-defined {\it monodromy translation maps} 
$H_*(X(z_1))\to H_*(X(z_2))$, $H_*(\E(z_1))\to H_*(\E(z_2))$ for $z_1,z_2\in \DD$ at the level of homology, and $\Pin^-(X(t_1))\to \Pin^-(X(t_2))$, $\Pin^-(\E(t_1))\to \Pin^-(\E(t_2))$ for $t_1,t_2\in\DD_\R$ at the level of $\Pin^-$-structures,
which also we denote all by respectively $\mu_{z_1,z_2}$ and $\mu_{t_1,t_2}$.

\begin{proposition}\label{Pin-invariance}
Let $X(z)\subset \P(\E(z))$, $z\in\DD$, be a real deformation family of real embedded conic bundles having $s$ singular fibers. If $s$ is odd, let $C(z)\subset X(z)$, $z\in\DD$, 
be a family of nonsingular fibers such that
$C(z_2)=\mu_{z_1,z_2}C(z_1)$ and $\conj C(z)=C(\conj z)$.
Then, for any pair $t_1,t_2\in \DD_\R$
the  map $\mu_{t_1,t_2}$ gives a bijection
$$
\begin{cases}
\Pin^-_\inh(X(t_1)_\R,\E(t_1))\to \Pin^-_\inh(X(t_2)_\R,\E(t_2))\hskip-2mm&\text{if $s$ is even},\\
\Pin^-_{\inh,{\rm mon}}(X(t_1)_\R,C(t_1)_\R,\E(t_1))\to \Pin^-_{\inh,{\rm mon}}(X(t_2)_\R,C(t_2)_\R,\E(t_2))\hskip-2mm&\text{if $s$ is odd}.
\end{cases}
$$
Moreover, for any $\Pin^-$ structure $\theta_1$ sent by this bijection to $\theta_2$,
$$
\NN_{m,n}(X(t_1),\theta_1)=\NN_{m,n}(X(t_2), \theta_2), \ \text{ for all } m,n\in\Z.
$$
%
\end{proposition}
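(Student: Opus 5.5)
The plan is to transport the whole structure along the family, using only the topological triviality guaranteed by the equivariant Ehresmann theorem. Restricted to real parameters $t\in\DD_\R$, the diffeomorphisms $\mu_{t_1,t_2}\:(\P(\E(t_1)),X(t_1))\to(\P(\E(t_2)),X(t_2))$ commute with $\conj$. They therefore restrict to diffeomorphisms of pairs $(\P(\E(t_1))_\R,X(t_1)_\R)\to(\P(\E(t_2))_\R,X(t_2)_\R)$. For odd $s$ we take them compatible with the chosen fibers $C(t)$, and by choosing $\mu$ fiber-preserving over $\P^1$ in a neighborhood of $C(t)$ (possible since $C(z)$ is a continuous family of nonsingular fibers) we also get maps $\P^2_{t_1\R}\to\P^2_{t_2\R}$ of the distinguished real planes.

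First we show that $\mu$ carries the decomposition into halves of Lemma \ref{two-sided} (respectively Lemma \ref{trivialization}) to the corresponding decomposition for $t_2$. This is where we use the uniqueness part of those lemmas: the image of a half is a 3-manifold with boundary $X(t_2)_\R$ (resp. $X(t_2)_\R\sm C(t_2)_\R$), and such a splitting is unique. Alternatively, one can argue that the sign of the defining equation $F_t$ varies continuously in $t$. Because $\mu$ commutes with restriction of $\Pin^-$-structures and with the boundary coorientation, we get $\mu_{t_1,t_2}\circ\inh^{X(t_1)}=\inh^{X(t_2)}\circ\mu_{t_1,t_2}$, and similarly for $\inh_C^X$ and for the restriction map $\res$. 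Hence $\mu$ maps inherited structures bijectively onto inherited structures, with inverse given by $\mu_{t_2,t_1}$.

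Monicity is also preserved. The map $\mu$ sends $w_1(X(t_1)_\R)$ to $w_1(X(t_2)_\R)$, and $q_{\mu\theta}(\mu_*x)=q_\theta(x)$, so in particular $q(w_1^*)$ is unchanged. This proves the first assertion.

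For the equality of invariants, we check that each ingredient of $\NN_{m,n}$ is preserved by $\mu_*\:H_2(X(t_1))\to H_2(X(t_2))$.
\begin{enumerate}
\item Since $\mu$ commutes with $\conj$, the map $\mu_*$ restricts to an isomorphism $H_2^-(X(t_1))\to H_2^-(X(t_2))$.
\item The canonical class $K$ and the fiber class $C$ are deformation invariant, and $\mu_*$ preserves the intersection form. Hence $\mu_*$ bijects $\lay^{m,n}_\R(X(t_1))$ onto $\lay^{m,n}_\R(X(t_2))$.
\item The Viro homomorphism is natural under equivariant diffeomorphisms, so $\bhv\circ\mu_*=\mu_*\circ\bhv$. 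Combined with $q_{\theta_2}\circ\mu_*=q_{\theta_1}$, this gives $\hat q_{\theta_2}(\mu_*\a)=\hat q_{\theta_1}(\a)$.
\item The modified Welschinger numbers satisfy $w_k(\mu_*\a)=w_k(\a)$. This follows from deformation invariance of the traditional numbers $\til w_k$ under real symplectic deformation \cite{Welsch}, together with the invariance of $g_a(\a)$, which depends only on $\a^2$ and $K\a$.
\end{enumerate}
Summing these term-by-term equalities over the bijection of bi-layers gives $\NN_{m,n}(X(t_1),\theta_1)=\NN_{m,n}(X(t_2),\theta_2)$.

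The main point requiring care is the first step, the compatibility of $\mu$ with the halves. Specifically, one must ensure that the isotopy class of $\mu$ is pinned down well enough that the coorientation is transported. For odd $s$ there is the additional issue that the distinguished real plane $\P^2_{t\R}$ must be carried along. I expect to handle this by arguing locally in $t$: for $t_2$ near $t_1$, the halves $\{\pm F_t\ge 0\}$ move by a small isotopy, and the statement then follows by connectedness of $\DD_\R$.
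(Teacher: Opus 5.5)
Your proposal is correct and takes the same approach as the paper. The paper's proof is the one-line remark that the statement follows from the definitions together with deformation invariance of Welschinger numbers. You have spelled out the naturality checks it leaves implicit: transport of the halves and of the inheritance maps, preservation of monicity, compatibility with the Viro homomorphism, preservation of the bi-layers and of $g_a$, and invariance of $\til w_k$.
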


\begin{proof} These are straightforward consequences of definitions and deformation invariance of Welschinger numbers.
\end{proof}

\begin{cor}\label{wall-invariance}
Assume that for some 
$s\ge 2$
the double sequence $\NN_{m,n}(X,\theta)$ is independent 
of the choice of a real embedded conic bundle $X$ with $X_\R\ne\oo$
in the linear system $\XX^s$, and of the choice of
\begin{enumerate}\item
$\theta\in \Pin^-_\inh(X_{\R},\E^s)$, if $s$ is even,
\item 
a real nonsingular fiber $C\subset X$ and $\theta\in\Pin^-_{\inh,{\rm mon}}(X_{\R},C_\R,\E)$, if $s$ is odd.
\end{enumerate}
Then the double sequence $\NN_{m,n}(X,\theta)$  remains the same for all real 
conic bundles $X\to\P^1$ with $s$ singular fibers and $X_\R\ne\oo$.
\end{cor}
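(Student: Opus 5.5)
The plan is to combine Theorem \ref{exhaustive} with Proposition \ref{Pin-invariance}. Let $Y\to\P^1$ be any real conic bundle with $s$ singular fibers and $Y_\R\ne\oo$. By Theorem \ref{exhaustive}, $Y$ is real deformation equivalent to an embedded conic bundle $X\subset\P(\E^s)$ from $\XX^s$. I fix a real deformation family $Y(z)\to\P^1$, $z\in\DD$, of conic bundles joining $Y=Y(t_1)$ to a conic bundle isomorphic to $X=Y(t_2)$, with $t_1,t_2\in\DD_\R$. Proposition \ref{deform-canonical} turns this into a real family of canonical embeddings $Y(z)\subset\P(\E(z))$. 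Proposition \ref{unicity-general} then shows that the canonical embedding of $Y(t_2)$ is ambient isomorphic to $X\subset\P(\E^s)$, so the inherited structures agree, as already observed in Section \ref{defNN}. Along the deformation, $Y(t)_\R$ stays diffeomorphic to itself and in particular nonempty.

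Given an inherited $\theta_1$ on $Y_\R$, I transport it by the bijection $\mu_{t_1,t_2}$ of Proposition \ref{Pin-invariance}, which gives an inherited $\theta_2$ on $X_\R$ with $\NN_{m,n}(Y,\theta_1)=\NN_{m,n}(X,\theta_2)$ for all $m,n$. The hypothesis says the right-hand side is a fixed double sequence $\NN^s_{m,n}$, independent of $X\in\XX^s$ and $\theta_2$. Hence $\NN_{m,n}(Y,\theta_1)=\NN^s_{m,n}$ for every $Y$ and $\theta_1$, which is the claim.

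The only nonroutine point is the odd case, where $\theta_1$ is monic relative to a chosen real nonsingular fiber $C\subset Y$ with $C_\R\ne\oo$. I will carry $C$ along the family as a continuous family $C(z)$ with $C(\conj z)=\conj C(z)$ and $C(z_2)=\mu_{z_1,z_2}C(z_1)$. To do this, I pick a real point $p\in\P^1_\R$ over which the fiber of $Y$ is $C$ and keep $C(t)$ as the fiber over $p$. If the family makes this fiber singular at some parameter, I perturb the path in the (real, finite-dimensional) parameter space. Alternatively, since the singular fibers form a finite set moving continuously, I can move $p$ continuously to avoid them. This keeps $C(t)$ nonsingular with nonempty real part, because nonemptiness is an open condition that persists unless a singular fiber passes, and I choose $p(t)$ to stay away from real singular fibers.

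Proposition \ref{Pin-invariance} then gives monic-to-monic transport, and the hypothesis covers every choice of $C$ on $X$. This fiber-tracking step is where I expect the care to be needed; everything else is bookkeeping.
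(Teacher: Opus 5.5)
Your proposal is correct and follows essentially the same route as the paper. Both arguments go through Theorem \ref{exhaustive}, pass to canonical embeddings via Proposition \ref{deform-canonical}, identify the endpoint with the model in $\P(\E^s)$ up to bundle isomorphism, and conclude by deformation invariance (Proposition \ref{Pin-invariance}); your fiber-tracking argument is a fuller version of the paper's one-line remark that, for odd $s$, the deformation can be enriched by a deformation of the fixed fibers. One small point: the restriction to $C_\R\ne\oo$ is unnecessary, since the definition for odd $s$ allows any real nonsingular fiber, and keeping $p(t)$ in a fixed component of the complement of the real singular fibers handles that case verbatim.
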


\begin{proof}
By Theorem \ref{exhaustive}, 
each real conic bundle $X$ 
having $s\ge 2$ singular fibers over $\C$ is deformation equivalent to a
real conic bundle $X'\in\XX^s$ embedded in $\P(\E^s)$. If $s$ is odd, this deformation can be enriched by a deformation of fixed fibers. According to Proposition \ref{deform-canonical} such a deformation induces a deformation $X(z)\hookrightarrow \P(\E(z))$ connecting the canonical embedding $X(t_0)=X\hookrightarrow  \P(\E(t_0))$ with the canonical embedding $X(t_1)=X'\hookrightarrow \P(\E(t_1))$
which in its turn determines a continuous family of $\Pin^-$-structures $\theta_t$ on $\P(\E(t))$ with $\theta_0=\theta$ as initial value.
By Proposition \ref{unicity-canonical} $X'\hookrightarrow \P(\E(t_1))$ is a conic bundle isomorphic to $X'\hookrightarrow \P(\E^s)$. Now, there remains to notice that the numbers  $\NN_{m,n}(X,\theta)$
are preserved under bundle deformations and bundle isomorphisms of $X(t)\hookrightarrow \P(\E(t))$.
\end{proof}

\section{Proof of main results}\label{proof-main}

\subsection{Coherence of inherited $\Pin^-$-structures under wall-crossing}\label{coherence}

Consider a pair of embedded real conic bundles $X_1, X_2\subset \P(\E)$
with an even number of singular fibers and assume that they are
related by a wall-crossing in the same linear system $2M+dF$, $d\in 2\Z_{\ge 0}$.
More formally, there is 
a real Morse-Lefschetz family $\pi : \Cal X\to \mathbb D$ (see Section \ref{ML-families})
formed by  $X(z)\in \vert 2M+dF\vert $, $z\in  \DD$, such that $X_1=X(t_1)$, $X_2=X(t_2)$ for some $t_1,t_2\in\DD_\R\sm\{0\}$.
%
Such a family is defined by a real equation $\sum A_{ij}(u,v,z) x_ix_j=0$ with coefficients depending analytically on $z$ and polynomial homogeneous in $u,v$ of degrees as in Proposition \ref{number-fibers}. This equation is
unique up to a factor $g(z)$ which is a $\conj$-equivariant function $g: \DD\to \C\-\{0\}$.
For each $t\in\DD_\R$, the signs of $\sum A_{ij}(u,v,t) x_ix_j$ are well-defined at each point of 
 $\P(\Cal E_\R)$, {\rm cf.}, Remark \ref{equations-for-halves}.
Therefore, a choice of family $\Cal X$
determines uniquely 
a continuous family of decompositions of 
 $\P(\Cal E_\R)$
into the halves $\sum A_{ij}(u,v,t) x_ix_j\ge 0$ and $\sum A_{ij}(u,v,t) x_ix_j\le 0$ having $X_\R(t)$ as a common boundary
({\it cf.,} definitions in Section \ref{inh-even}).

Accordingly, we say 
that  $\Pin^-$-structures $\theta^{X_k}\in \Pin^-_\inh(X_{k\R},\E)$, $k=1,2$
are {\it coherent} (with respect to the chosen family $\Cal X$), if there exists
a $\Pin^-$-structure $\theta\in\Pin^-(\P(\E_\R))$ such that $\theta^{X_k}=\inh^{X_k}(\theta)$, 
where $\inh^{X_k}$
 are determined by 
 the halves $\{\sum A_{ij}(u,v,t) x_ix_j\ge 0\}\subset \P(\E_\R)$ with $t=t_k$.

The following is an immediate consequence of definitions.
\begin{lemma}\label{exist-coher-ML-even}
For any real embedded conic bundle $X\subset \P(\E)$ belonging to a given real Morse-Lefschetz family of conic bundles with even number of singular fibers and any $\theta^X\in \Pin^-_\inh(X_\R,\E)$, there exists a  family of coherent inherited $\Pin^-$-structures on
this Morse-Lefschetz family that coincides with $\theta^X$ on $X_\R$.
\qed
\end{lemma}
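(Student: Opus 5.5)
The proof is essentially by construction: we transport the data determining $\theta^X$ along the real part of the family and check that the transported structures are coherent by definition.

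First, fix the real Morse-Lefschetz family $\Cal X=\{X(z)\}_{z\in\DD}$ in $|2M+dF|$ containing $X=X(t_0)$ for some $t_0\in\DD_\R\sm\{0\}$. Fix also a real defining equation $\sum A_{ij}(u,v,z)x_ix_j=0$ for $\Cal X$. This equation is unique up to a nonvanishing $\conj$-equivariant factor $g(z)$. On $\DD_\R$ such a factor is real and nonvanishing, hence of constant sign. So the family of decompositions of $\P(\E_\R)$ into the halves $\{\sum A_{ij}(u,v,t)x_ix_j\ge0\}$ and $\{\le0\}$ is well defined up to a simultaneous swap for all $t\in\DD_\R$. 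By Lemma \ref{two-sided}, for each $t\ne0$ these are exactly the two halves of $\P(\E_\R)$ bounded by $X_\R(t)$.

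Second, write $\theta^X=\inh^{X}(\theta)$ for some $\theta\in\Pin^-(\P(\E_\R))$ and some choice of half, which is possible by the definition of inherited structures. If that half is $\{\sum A_{ij}(u,v,t_0)x_ix_j\le0\}$, multiply the equation by $-1$; this is allowed, since it is a constant factor $g\equiv-1$. We may then assume the chosen half is the ``$\ge0$'' half at $t=t_0$.

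Third, for every $t\in\DD_\R\sm\{0\}$ set $\theta^{X(t)}=\inh^{X(t)}(\theta)$, with $\inh^{X(t)}$ determined by the half $\{\sum A_{ij}(u,v,t)x_ix_j\ge0\}$. The ambient $\P(\E_\R)$ and the structure $\theta$ do not depend on $t$. Hence any two members $\theta^{X(t_1)},\theta^{X(t_2)}$ are coherent by definition, and $\theta^{X(t_0)}=\theta^X$ by the normalization in the second step.

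The only point needing care is the second step: checking that the sign normalization is legitimate, namely that replacing the equation by its negative remains within the allowed equivariant rescalings, and that the resulting halves vary continuously in $t$ (Remark \ref{equations-for-halves}). Both are immediate, so I expect no real obstacle beyond this bookkeeping.
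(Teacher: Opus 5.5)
Your proposal is correct and matches the paper, which states this lemma as an immediate consequence of the definition of coherence and gives no further proof; fixing one ambient $\theta\in\Pin^-(\P(\E_\R))$ and inheriting it to every $X_\R(t)$ through the continuously varying halves $\{\sum A_{ij}(u,v,t)x_ix_j\ge0\}$ is exactly what that definition unpacks to. Your sign normalization is legitimate. Equivalently, you could keep the equation and replace $\theta$ by $\theta+w_1(\P(\E_\R))$; this also shows that coherence does not depend on the sign of the defining equation.
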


Furthemore, for a small Milnor ball $B\subset \P(\E)$ around the nodal point of $X(0)$ 
the family $X(t)\sm B$
is locally trivial over a small concentric disc  $t\in \DD_\R^\e\subset \DD$, which yields
the {\it monodromy transport isomorphisms} 
$$\mu_t : H_2^-(X(0)\sm B)\to H_2^-(X(t)\sm B)= H_2^-(X(t)\sm E(t))=H_2^-(X(t))\cap [E(t)]^\perp,$$ where $E(t)\subset X(t)$ are the vanishing spheres
and for the isomorphism $H_2^-(X(t)\sm E(t))=H_2^-(X(t))\cap [E(t)]^\perp$
see, e.g.,  \cite[Prop. 2.3.2]{surgery-invariant}.
On the other hand, our
choice of the halves in $\P(\E_\R)$, introduced for $t\ne 0$,
extends continuously to $X_\R(0)\sm B$ and, thus, provides
a continuous family of inherited $\Pin^-$-structures $\theta^t\in\Pin^-(X_\R(t)\sm B)$ for all $t\in \DD^\e_\R$, which for $t\ne 0$ coincide with the restriction of the former $\theta^{X(t)}\in \Pin^-_\inh(X_\R(t),\E)$ to $X_{\R}(t)\sm B$.

\begin{proposition}\label{equality-coher-ML-even}
If $\theta^{X(t_1)}\in \Pin^-_\inh(X_\R(t_1),\E) $, $\theta^{X(t_2)}\in \Pin^-_\inh(X_\R(t_2),\E) $ with $t_1,t_2\in\DD_\R\sm\{0\}$ are coherent 
with respect to a chosen real Morse-Lefschetz family $\pi : \Cal X\to \mathbb D$
of conic bundles with even number of singular fibers, then
for any $\alpha\in H_2^-(X(0)\sm B)$ one has
$$
\hat q_{\theta^{X(t_1)}}(\mu_{t_1}(\alpha))=\hat q_{\theta^{X(t_2)}}(\mu_{t_2}(\alpha)).
$$
\end{proposition}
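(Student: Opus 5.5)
The plan is to reduce the statement to a comparison of two $\Pin^-$-structures on the complement of the Milnor ball. By definition, $\hat q_{\theta^{X(t_k)}}(\mu_{t_k}(\alpha))=q_{\theta^{X(t_k)}}(\bhv(\mu_{t_k}\alpha))$. Since $\mu_{t_k}\alpha$ lies in $H_2^-(X(t_k)\sm B)$, its Viro image can be represented by a mod-2 cycle in $X_\R(t_k)\sm B$. Indeed, the Viro homomorphism is natural with respect to equivariant open embeddings: for a $\conj$-invariant representative surface, the real cycle is read off from the fixed locus inside the support. The value $q_\theta$ on a cycle supported in an open subset depends only on the restriction of $\theta$ to that subset. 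It is therefore enough to show two things:
\begin{itemize}
\item the Viro classes are transported by the local trivialization, i.e. $\bhv(\mu_{t_k}\alpha)=\mu^\R_{t_k}\bhv_0(\alpha)$, where $\bhv_0\colon H_2^-(X(0)\sm B)\to H_1(X_\R(0)\sm B;\Z/2)$ is defined the same way and $\mu^\R_t$ is the real part of the equivariant trivialization of $X(t)\sm B$ over $\DD^\e_\R$;
\item the restrictions $\theta^{X(t_k)}|_{X_\R(t_k)\sm B}$ correspond to each other under this trivialization.
\end{itemize}

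For the first point, I would use the equivariant Ehresmann trivialization of $\{X(t)\sm B\}_{t\in\DD^\e}$. Choosing it $\conj$-equivariant, we get equivariant diffeomorphisms $X(0)\sm B\to X(t)\sm B$ for real $t$, and these commute with the Viro construction by naturality. The step from $t_k$ outside $\DD^\e$ to a point inside is handled by ordinary deformation invariance, as in Proposition~\ref{Pin-invariance}, along real paths in $\DD_\R\sm\{0\}$ where the family is locally trivial.

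For the second point, I would use the continuous family of halves described just before the proposition. By coherence, both $\theta^{X(t_k)}$ are obtained by restricting one fixed $\theta\in\Pin^-(\P(\E_\R))$ to the boundary of the half $\{\sum A_{ij}(u,v,t)x_ix_j\ge 0\}$. Over $\P(\E_\R)\sm B$ these halves vary continuously in $t\in\DD^\e_\R$, including $t=0$. So the coorientation of $X_\R(t)\sm B$ varies continuously, and the induced structures $\theta^t=\inh(\theta)|_{X_\R(t)\sm B}$ form a continuous family. This family is carried by the ambient isotopy of $\P(\E_\R)\sm B$ that realizes $\mu^\R_t$ while preserving the halves. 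Because $\theta$ is fixed on the ambient space and $\Pin^-$-structures are discrete, we get $\mu^\R_{t}{}^*\theta^{t}=\theta^0$. Combining the two points gives
\[
q_{\theta^{X(t_1)}}(\bhv\mu_{t_1}\alpha)=q_{\theta^0}(\bhv_0\alpha)=q_{\theta^{X(t_2)}}(\bhv\mu_{t_2}\alpha).
\]

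The main obstacle is to make the ambient trivialization of $\P(\E_\R)\sm B$ compatible both with $X_\R(t)$ and with the chosen halves for real $t$ on both sides of $0$. This holds because the sign function $\sum A_{ij}(u,v,t)x_ix_j$ is a real-analytic family without critical points on $X_\R(t)\sm B$. I would construct the trivialization by integrating a vector field lifting $\partial_t$ that is tangent to the level sets $\{F_t=0\}$ near $X$, which works once $B$ contains the only singular point. A secondary technical point is checking that the Viro homomorphism is compatible with restriction to $X\sm B$ under the identification $H_2^-(X(t)\sm E(t))=H_2^-(X(t))\cap[E(t)]^\perp$. For this I would cite the construction in \cite{TwoKinds} or \cite[Prop. 2.3.2]{surgery-invariant}.
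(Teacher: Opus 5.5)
Your proposal is correct and follows the paper's own argument. Coherence supplies a single ambient $\theta$, and the continuously varying halves $\{\sum A_{ij}(u,v,t)x_ix_j\ge 0\}$ give a continuous family $\theta^t$ on $X_\R(t)\sm B$ for $t\in\DD^\e_\R$, including $t=0$. Discreteness then forces the quadratic functions $\hat q_{\theta^t}\circ\mu_t$ to be constant. The only difference is that you make explicit, through the equivariant ambient trivialization and the naturality of the Viro homomorphism, the continuity that the paper invokes in a single line.
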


\begin{proof} By definition of coherence, there exists $\theta\in\Pic^-(\P(\E))$ such that $\theta^{X(t_1)}=\inh^{X(t_1)}(\theta)$ and $\theta^{X(t_2)}=\inh^{X(t_2)}(\theta)$.
On the other hand, their respective restrictions to $X_\R(t_i)\sm B$ coincide with introduced above $\theta^{t_i}$. Therefore 
both $\hat q_{\theta^{X(t_1)}}\circ\mu_{t_1}$
and $\hat q_{\theta^{X(t_2)}}\circ\mu_{t_2}$
are included in the continuous family of quadratic functions
$\hat q_{\theta^{t}}\circ \mu_t : H_2^-(X(t))\cap [E(t)]^\perp = H_2^-(X(0)\sm B) \to \Z/4$. But any continuous family of quadratic $\Z/4$-valued functions on a finitely generated $\Z/2$-vector spaces is constant.
\end{proof}

All of the above extends to real embedded conic bundles, $X\subset \P(\E)$, with an odd number of singular fibers, but requires a special adjustment caused by the need to fix one of real nonsingular fibers
(see Section \ref{inh-odd}).

So, in the case of odd $d$, we
consider similarly a pair of
real embedded conic bundles $X_1, X_2\subset \P(\E)$ belonging to the same linear system $2M+dF$, and related by wall-crossing.
 For simplicity  (without loss of generality), 
 we can assume that their fixed fibers $C_k= X_k\cap \P(\E_{\infty\R})$
 coincide and put $C=C_{1}=C_{2} $.

The connecting real Morse-Lefschetz family $\pi : \Cal X\to \mathbb D$ formed by  $X(z)\in \vert 2M+dF\vert $, $X_k=X(t_k)$ for some $t_k\in\DD_\R\sm \{0\}$, $k=1,2$,
is defined by a unique (up to a $\conj$-equivariant  factor $g: \DD\to \C\-\{0\}$) real equation $\sum A_{ij}(u,v,t) x_ix_j=0$ with coefficients depending analytically on $t$ and
polynomial homogeneous in $u,v$. Here, we choose  $\{ \sum A_{ij}(u,1,t) x_ix_j\ge 0\}$ as a half of $\P(\E_\R)\times D_\R$ having $\X_\R\sm  (C_\R\times D_\R)$ as a boundary,
and let $\inh^\X_C : \Pin^-(\P(\E_\R))=\Pin^-(\P(\E_\R)\times D_\R)\to \Pin^-(\X_\R\sm (C_\R\times D_\R))$ to be defined by this choice.

\begin{lemma}\label{surjectivity-ML}
The restriction mapping $\res : \Pin^-(\X_\R)\to \Pin^-(\X_\R\sm (C_\R\times D_\R))$ is surjective.
\end{lemma}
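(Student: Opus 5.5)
Our plan is to mimic the proof of Lemma \ref{surjectivity-CB}. Here $\X_\R$ is a $3$-manifold: the real part of the total space of the Morse-Lefschetz family over $D_\R$. We first fix a disc $D$ small enough that $\X$ is nonsingular, so that $\X_\R$ is a smooth $3$-manifold, and we assume $\Pin^-(\X_\R)\ne\oo$. The restriction map $\res$ is a map of torsors that is equivariant with respect to the inclusion homomorphism
$$
i^*\: H^1(\X_\R;\Z/2)\to H^1(\X_\R\sm(C_\R\times D_\R);\Z/2).
$$
So surjectivity of $\res$ reduces to two facts: nonemptiness of $\Pin^-(\X_\R)$, and surjectivity of $i^*$.

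\emph{Nonemptiness.} The obstruction to a $\Pin^-$-structure on a manifold $V$ is $w_2(V)+w_1^2(V)$. We would argue as follows.
\begin{itemize}
\item Since $D_\R$ is an interval, $\X_\R$ has trivial normal bundle in the ambient $\P(\E_\R)\times D_\R$ away from $C_\R\times D_\R$. Near $C_\R\times D_\R$ the family is a product with a smooth fiber $C_\R\cong S^1$.
\item Alternatively, and more simply, $\X_\R$ deformation retracts onto $X_\R(t)$ for $t\ne 0$ on one side of the wall, or onto $X_\R(0)$. Since $D$ is contractible, $\X_\R$ is homotopy equivalent to $X_\R(t_0)$ for the appropriate one-sided retraction.
\item For any surface $\Sigma$ we have $w_2+w_1^2=0$, so the obstruction class pulls back to zero.
\end{itemize}
The safest route is to note that $\X_\R$ is an orientable-or-not $3$-manifold with boundary. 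For $3$-manifolds, $w_2+w_1^2=0$ always holds, by Wu's formula: $v_2=0$ in dimension $3$ gives $w_2=w_1^2$, and mod $2$ this yields $w_2+w_1^2=2w_1^2=0$. Hence $\Pin^-(\X_\R)\neq\oo$.

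\emph{Surjectivity of $i^*$.} Write $N=C_\R\times D_\R$, a $2$-dimensional annulus properly embedded in $\X_\R$, with trivial normal bundle since it is a product of $C_\R$ (two-sided in the surface fibers) with $D_\R$. The exact sequence of the pair $(\X_\R,\X_\R\sm N)$, combined with excision and the Thom isomorphism, gives
$$
H^1(\X_\R)\to H^1(\X_\R\sm N)\to H^2(\X_\R,\X_\R\sm N)\cong H^1(N)\to H^2(\X_\R).
$$
So $i^*$ is onto exactly when the Gysin map $H^1(N;\Z/2)\cong\Z/2\to H^2(\X_\R;\Z/2)$ is injective. This map sends the generator to the dual of the class of the annulus $\{\pt\}\times D_\R$ crossing $C_\R$. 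To detect it we pair it with the closed curve $C'_\R\subset X_\R(t)$ of a nearby nonsingular real fiber. Equivalently, and more concretely, we exhibit a class in $H_1(\X_\R\sm N)$ meeting $N$'s meridian once. This is the same argument used in Lemma \ref{surjectivity-CB}. There, the epimorphism $H^1(X_\R)\to H^1(X_\R\sm C_\R)$ holds because $C_\R$ is nonseparating modulo a dual curve: a section-like loop crossing $C_\R$ once exists, since $\pi_\R\:X_\R\to\P^1_\R$ has a circle's worth of real fibers. We then thicken this loop in $X_\R(t)$, $t\ne0$, transporting it to all $t$ by choosing it away from the Milnor ball $B$. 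This gives a loop $\gamma$ in $\X_\R$ with $\gamma\cdot N=1$, which shows that the Gysin map is nonzero and that $i^*$ is epimorphic.

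The main obstacle is this last step: producing the transversal loop $\gamma$ uniformly in the family, while avoiding the node. This needs $X_\R(t)$ to contain, for the relevant component, a path across $C_\R$ closing up. That holds because the component of $X_\R$ containing $C_\R$ maps onto $\P^1_\R$ when it is aspherical-type. In the spherical-type case, we rely on the fixed fiber being chosen on a component meeting all real fibers away from the vanishing sphere, which we can arrange since the node lies away from $C$ by Proposition \ref{connecting}.
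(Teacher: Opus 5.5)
\textbf{Gap: the proof that $i^*$ is onto.} Your overall structure matches the paper's. Every $3$-manifold is $\Pin^-$, so $\res$ is a torsor map over $i^*$. Surjectivity of $i^*\colon H^1(\X_\R;\Z/2)\to H^1(\X_\R\sm N;\Z/2)$ is then equivalent to injectivity of the Gysin map $H^1(N;\Z/2)\to H^2(\X_\R;\Z/2)$. The gap is in how you prove that injectivity.

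A class in $H^2(\X_\R;\Z/2)$ has to be detected on a closed $2$-cycle, but you use $1$-dimensional objects. Pairing a degree-$2$ class with the curve $C'_\R$ is not defined. A loop $\gamma$ with $\gamma\cdot N=1$ only shows that $[N]^*\in H^1(\X_\R;\Z/2)$ is nonzero, i.e.\ that the \emph{preceding} map $H^0(N)\to H^1(\X_\R)$ in your sequence is nonzero. That map controls the kernel of $i^*$, not its cokernel. For instance, take a solid torus $S^1\times D^2$ cut by the annulus $S^1\times\delta$, with $\delta$ a diameter of $D^2$, and attach a $1$-handle joining the two sides: it contains such a loop, yet $i^*$ is not onto.

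Moreover, the loop you describe does not exist in the cases the paper needs. Suppose the component of $X_\R(t)$ containing $C_\R(t)$ projects onto an interval of $\P^1_\R$, as for $X_\R=\Rp2$, the model used for odd $s$ in Theorem \ref{main-new}. Then $C_\R(t)$ is a level set of $\pi_\R$ on that component and separates it. So $[C_\R]=0$ in $H_1(X_\R(t);\Z/2)$, no loop in a fiber crosses $C_\R$ once, and no choice of $C$ avoids this.

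Your explanation of Lemma \ref{surjectivity-CB} has the same flaw. For any simple closed curve $c$ in a closed surface $\Sigma$, the map $H^1(\Sigma)\to H^1(\Sigma\sm c)$ is onto simply because $H^2(\Sigma,\Sigma\sm c)\cong\Z/2\to H^2(\Sigma)$ is injective: the dual of a point is nonzero on a closed surface. No transverse loop is involved.

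\textbf{The missing idea.} Test the Gysin image on the closed surface $X_\R(t)$, $t\ne0$. It meets $N$ transversally in $C_\R(t)$, which generates $H_1(N;\Z/2)$. By naturality of the Thom class, for the generator $x\in H^1(N;\Z/2)$ you get $\langle\iota_!x,[X_\R(t)]\rangle=\langle x,[C_\R(t)]\rangle=1$.

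Equivalently, in homology: the kernel of $H_1(\X_\R\sm N;\Z/2)\to H_1(\X_\R;\Z/2)$ is spanned by $[C_+]+[C_-]$, the sum of the two push-offs of $C_\R(t)$. These already cobound $X_\R(t)$ minus a collar of $C_\R(t)$ inside $\X_\R\sm N$, so the kernel is zero. This works uniformly, with no case analysis and no need to route anything around the Milnor ball.

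\textbf{Minor points.}
Wu's formula, as you use it, requires a closed manifold; either double $\X_\R$, or cite that every $3$-manifold is $\Pin^-$, as the paper does.
The bullet asserting $\X_\R\simeq X_\R(t_0)$ for some $t_0\ne0$ is false, since crossing the wall changes the homotopy type by attaching a cell.
$\{\pt\}\times D_\R$ is an arc, not an annulus.
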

\begin{proof} Here, $\X_\R$ is a manifold of dimension 3, and therefore, as is well known, $\Pic^-(\X_\R)\ne \emptyset$.
Thus, like in the proof of Lemma \ref{surjectivity-CB}, the result follows from surjectivity of the restriction homomorphism
 $H^1(\X_\R)\to H^1(\X_\R\sm (C_\R\times D_\R))$ .
\end{proof}

We define $\Pin^-_{\inh}(\X_\R, C_\R, \E)\subset \Pin^-(\X_\R)$ to be 
$\res^{-1}(\inh^\X_C(\Pin^-(\P(\E_\R))))$.
We say that $\Pin^-$-structures $\theta^{X_k}\in \Pin^-_{\inh} (X_{k\R},C_\R,\E)$, $k=1,2$,
{\it are coherent}
(with respect to the chosen family $\Cal X$),
if there exists $\theta\in \Pin^-_{\inh}(\X_\R, C_\R, \E)$ such that the reduction maps $\Pin^-(\X_\R)\to \Pin^-(X_{k\R})$, $k=1,2$,
respecting the coorientation given by $\frac{\partial}{\partial t}$ send $\theta$ to $\theta^{X_k}$.
Clearly, such a coherence depends only on the choice of the connecting Morse-Lefschetz family.

\begin{proposition}\label{exist-coher-ML-odd}
For any real embedded conic bundle $X\subset \P(\E)$ belonging to a real Morse-Lefschetz family of conic bundles with odd number of singular fibers
and any $\theta^X\in \Pin^-_{\inh,{\rm mon}}(X_\R,C_\R,\E)$, 
there exists a family of coherent inherited $\Pin^-$-structures on this Morse-Lefschetz family that coincides with $\theta^X$ on $X$.
\end{proposition}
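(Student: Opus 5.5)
Let $X=X(t_0)$, $t_0\in\DD_\R\sm\{0\}$, and write $\X_\R$ for the real 3-manifold swept by the $X_\R(t)$, $t\in\DD_\R$, so that $\X_\R\subset\P(\E_\R)\times\DD_\R$. The plan is to produce one structure $\Theta\in\Pin^-_\inh(\X_\R,C_\R,\E)$ whose reduction to $X_\R$, taken with the coorientation $\frac{\partial}{\partial t}$, equals $\theta^X$. The required coherent family is then the set of reductions of $\Theta$ to the fibers $X_\R(t)$, $t\ne0$.

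\emph{Step 1: realize the restriction of $\theta^X$ to $X_\R\sm C_\R$ ambiently.} By definition there is $\theta^\E\in\Pin^-(\P(\E_\R))$ with $\theta^X|_{X_\R\sm C_\R}=\inh^X_C(\theta^\E)$. Pull $\theta^\E$ back to $\P(\E_\R)\times\DD_\R$ and apply $\inh^\X_C$. This gives a structure on $\X_\R\sm(C_\R\times\DD_\R)$, defined with respect to the half $\{\sum A_{ij}(u,1,t)x_ix_j\ge0\}$. Restricted to the slice $t=t_0$, this half is exactly the half that defines $\inh^X_C$, so the two coorientations agree. The reduction of $\inh^\X_C(\theta^\E)$ to $X_\R\sm C_\R$ should therefore be $\inh^X_C(\theta^\E)$. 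This requires one compatibility check: reducing a $\Pin^-$-structure from $\P(\E_\R)\times\DD_\R$ to $\X_\R$ and then to the slice $X_\R$ gives the same result as reducing from $\P(\E_\R)$ directly to $X_\R$. The check uses the fact that the normal framings $(\partial_t,\nu)$ commute up to the standard convention, as in \cite[Lemma 1.10]{Kirby}.

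\emph{Step 2: extend across $C_\R\times\DD_\R$.} By Lemma \ref{surjectivity-ML}, some $\Theta_0\in\Pin^-(\X_\R)$ restricts to the structure found in Step 1. Any two such extensions differ by a class in $\ker\bigl(H^1(\X_\R)\to H^1(\X_\R\sm C_\R\times\DD_\R)\bigr)$, and this kernel is spanned by the dual $[C_\R\times\DD_\R]^*$ of the fixed fiber family. The reduction of $\Theta_0$ to $X_\R$ agrees with $\theta^X$ away from $C_\R$. Hence the two differ by an element of the kernel of $H^1(X_\R)\to H^1(X_\R\sm C_\R)$, which is spanned by $[C_\R]^*$. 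The restriction of $[C_\R\times\DD_\R]^*$ to $X_\R$ is $[C_\R]^*$, so replacing $\Theta_0$ by $\Theta_0+[C_\R\times\DD_\R]^*$ if necessary makes the reduction equal to $\theta^X$. The resulting $\Theta$ lies in $\Pin^-_\inh(\X_\R,C_\R,\E)$ by construction.

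\emph{Step 3: monicity of the other members.} The reductions $\theta^{X(t)}$ lie in $\Pin^-_\inh(X_\R(t),C_\R,\E)$. For $t\ne0$ we still have to show they are monic, i.e.\ $q(w_1^*)=1$ rather than $-1$. I expect this to be the main obstacle. Two facts make it plausible:
\begin{itemize}
\item For $t$ on the same side of the wall as $t_0$, the family is locally trivial, so the value of $q$ is constant by the continuity argument of Proposition \ref{equality-coher-ML-even}.
\item Across the wall, $w_1^*(X_\R(t))$ may be chosen to avoid the Milnor ball $B$. It can be represented by a cycle in $X_\R(t)\sm B$ that is transported by $\mu_t$, since a vanishing $\S^2$ or a collapsing orientable piece does not contribute to $w_1$.
\end{itemize}
The same continuity argument then gives $q_{\theta^{X(t_1)}}(w_1^*)=q_{\theta^{X(t_2)}}(w_1^*)$. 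If the surgery instead changes $w_1^*$ by the class of the vanishing cycle $V_\R$, I would use $q(V_\R)=0$ from Proposition \ref{extension}(3) together with $V_\R\cdot w_1^*=V_\R^2=0$ mod 2, so the value of $q$ on $w_1^*$ is still unchanged.
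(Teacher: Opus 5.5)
Your overall plan, producing one $\Theta\in\Pin^-_\inh(\X_\R,C_\R,\E)$ that reduces to $\theta^X$, is the right one. Your Step 2 is one of the two ingredients of the paper's proof: extend by Lemma \ref{surjectivity-ML}, then correct by $[C_\R\times\DD_\R]^*$, which restricts to $[C_\R]^*$. The step that does not hold as stated is the compatibility claim in Step 1.

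\begin{itemize}
\item Reducing $\mathrm{pr}^*\theta^\E$ first to $\X_\R$ (along the outward normal $\nu$ of the half) and then to $X_\R$ (along the normal $\tau$ towards increasing $t$) uses the normal frame $(\nu,\tau)$ of $X_\R$ in $\P(\E_\R)\times\DD_\R$.
\item Reducing first to the slice $\P(\E_\R)\times\{t_0\}$ and then to $X_\R$ (along the slice normal $\nu'$ of the half) uses the frame $(\partial_t,\nu')$.
\end{itemize}

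These two frames induce opposite orientations of the normal plane. In the local model where the half is $\{a\ge 0\}$, they are $(-\partial_a,\partial_t)$ and $(\partial_t,-\partial_a)$. So they differ by reversing one coorientation, and the two composite reductions agree in general only up to adding $w_1(X_\R)$ (cf.\ \cite[Lemma 1.10]{Kirby}). The outcome also depends on how you identify $\Pin^-(\P(\E_\R))$ with $\Pin^-(\P(\E_\R)\times\DD_\R)$.

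If you land on $\theta^X+w_1$ away from $C_\R$, then $\Theta_0|_{X_\R}-\theta^X$ restricts to $w_1(X_\R\sm C_\R)$, which need not vanish. It is then not in the kernel you use, and no multiple of $[C_\R\times\DD_\R]^*$ repairs it.

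The fix is the missing second ingredient: also allow $\Theta\mapsto\Theta+w_1(\X_\R)$.
\begin{itemize}
\item This preserves $\Pin^-_\inh(\X_\R,C_\R,\E)$. It corresponds to $\theta^\E\mapsto\theta^\E+w_1(\P(\E_\R))$, because $\X_\R\sm C_\R\times\DD_\R$ is cooriented in $\P(\E_\R)\times\DD_\R$.
\item It restricts to $w_1(X_\R)$, because $X_\R$ has trivial normal bundle in $\X_\R$.
\end{itemize}

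With both corrections you recover the paper's argument, which never needs the convention check. There, $\Pin^-_\inh(\X_\R,C_\R,\E)$ is a torsor over $\langle w_1(\X_\R),[C_\R\times\DD_\R]^*\rangle$. Reduction to $X_\R$ lands in $\Pin^-_\inh(X_\R,C_\R,\E)$, where a $w_1$-ambiguity is harmless, and it is affine over the restriction homomorphism. That homomorphism maps the group above onto $\langle w_1(X_\R),[C_\R]^*\rangle$, which is the translation group of $\Pin^-_\inh(X_\R,C_\R,\E)$ by Lemma \ref{torsor-odd}. Hence reduction is onto.

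Your Step 3 is not required by the statement, since coherence does not involve monicity. The paper treats monicity separately in the Remark after Lemma \ref{equality-coher-ML-odd}, by the same observation you make: $w_1^*$ can be supported away from the vanishing cycle.
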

\begin{proof} 
As it follows from Lemma \ref{surjectivity-ML}, the set  $\Pin^-_{inh}(\X_\R, C_\R, \E)$ is an affine space over a subgroup of $H^1(\X_\R; \Z/2)$ spanned by $w_1(\X_\R)$ and
$[C_\R\times D_\R]^*$. On the other hand, by Lemma \ref{torsor-odd}, $\Pin_\inh^-(X_\R,C_\R,\E)$ is an affine space with respect to the subgroup of
$H^1(X_\R;\Z/2)$  spanned by $w_1(X_\R)$ and  $[C_\R]^*$. So, it remains to notice that the restriction
homomorphism $H^1(\X_\R;\Z/2)\to H^1(X_\R;\Z/2)$ sends $[C_\R\times D_\R]^*$ to  $[C_\R]^*$, and (because of triviality of the normal bundle of $X_\R$ in $\X_\R$)
$w_1(\X_\R)$ to $w_1(X_\R)$.
%
\end{proof}

\begin{lemma}\label{equality-coher-ML-odd}
If 
$\theta^{X(t_k)}\in \Pin^-_{\inh,{\rm mon}}(X_\R(t_k),C_\R,\E)$
with $t_k\in\DD_\R\sm\{0\},\ k=1,2$ 
 are coherent with respect to a real Morse-Lefschetz family $\pi : \Cal X\to \mathbb D$
of conic bundles with odd number of singular fibers, then
for any $\alpha\in H_2^-(X(0)\sm B)$ we have
$$
\hat q_{\theta^{X(t_1)}}(\mu_{t_1}(\alpha))=\hat q_{\theta^{X(t_2)}}(\mu_{t_2}(\alpha)).
$$
\end{lemma}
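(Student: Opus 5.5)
We follow the proof of Proposition \ref{equality-coher-ML-even}. The only change is that the $\Pin^-$-structures now live on the total space $\X_\R$ and are restricted to the fibres $X_\R(t)$.

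\textbf{Step 1: a single structure on $\X_\R$.} Coherence gives $\theta\in\Pin^-_{\inh}(\X_\R,C_\R,\E)$ whose reductions to $X_\R(t_k)$ are $\theta^{X(t_k)}$, $k=1,2$. These reductions use the coorientation $\frac{\partial}{\partial t}$. Let $B$ be a small Milnor ball around the node of $X(0)$, chosen disjoint from $C\times\DD$; this is possible because the fixed fibre $C$ is nonsingular and lies away from the node. Over $\DD^\e_\R$ the family $\X_\R\sm B$ is locally trivial. Reducing $\theta$ along $\frac{\partial}{\partial t}$ to every slice $X_\R(t)\sm B$, $t\in\DD^\e_\R$, gives a continuous family $\theta^t\in\Pin^-(X_\R(t)\sm B)$. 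This family includes $t=0$, and at $t=t_k$ (after shrinking $\DD^\e$ and moving $t_k$ inside it by Proposition \ref{Pin-invariance}) it equals the restriction of $\theta^{X(t_k)}$.

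\textbf{Step 2: express $\hat q$ through the local data.} Take $\alpha\in H_2^-(X(0)\sm B)$. The element $\mu_t(\alpha)$ is represented by a $\conj$-anti-invariant cycle supported in $X(t)\sm B$. Its Viro image $\bhv(\mu_t(\alpha))\in H_1(X_\R(t);\Z/2)$ is therefore represented by a curve in $X_\R(t)\sm B$, namely the transport of $\bhv$ of the corresponding class in $X(0)\sm B$. This requires the Viro homomorphism to be natural under the trivialization of the family away from $B$. We justify this by computing $\bhv$ on chains in the complement of $B$ and applying the isomorphism $H_2^-(X(t)\sm E(t))=H_2^-(X(t))\cap[E(t)]^\perp$ cited above. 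The value of the quadratic function on a curve depends only on the $\Pin^-$-structure in a neighbourhood of that curve. Hence
$$\hat q_{\theta^{X(t)}}(\mu_t(\alpha))=q_{\theta^t}\bigl(\bhv_t(\mu_t\alpha)\bigr).$$

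\textbf{Step 3: conclude by continuity.} The right-hand side is a $\Z/4$-valued function of $t$ that is continuous in $t\in\DD^\e_\R$, because it is defined through the locally trivial family $(X_\R(t)\sm B,\theta^t)$. It is therefore constant, and evaluating at $t_1$ and $t_2$ gives the claimed equality.

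\textbf{Main obstacle.} The delicate point is Step 1. Reducing along $\frac{\partial}{\partial t}$ produces the same structure on $X_\R(t)\sm B$ as the inheritance from $\P(\E_\R)$, but only outside $C_\R$, and the fibre $C$ must be avoided. Near $C_\R$ the structure $\theta$ is merely some extension. Continuity nevertheless holds there, because $\theta$ is a single global structure on $\X_\R$ and reduction to slices of the product neighbourhood $C_\R\times D_\R$ is continuous. So no monic or sign ambiguity arises, in contrast with the choice of halves, which is only defined off $C$.
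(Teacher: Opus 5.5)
Your proposal is correct and follows the paper's argument. The paper's proof is simply ``literally that of Proposition \ref{equality-coher-ML-even}'': the single structure supplied by coherence yields a continuous family $\theta^t$ on $X_\R(t)\sm B$, so $\hat q_{\theta^t}\circ\mu_t$ is a continuous, hence constant, family of $\Z/4$-valued functions on $H_2^-(X(0)\sm B)$. Your reduction of the global $\theta\in\Pin^-_{\inh}(\X_\R,C_\R,\E)$ along $\frac{\partial}{\partial t}$, rather than using halves of $\P(\E_\R)$ (which are undefined along $C_\R$), is how the odd-case coherence is defined, so it is the right way to make that ``literally'' precise.
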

\begin{proof} It coincides literally with that of Proposition \ref{equality-coher-ML-even}.
\end{proof}

\begin{remark} If one of two coherent (with respect to a real Morse-Lefschetz family) inherited structures is monic, then the other is monic too. To justify this, it is sufficient
to notice that $w_1^*(X_\R(t))$ is supported in the complement of a vanishing cycle.
\end{remark}

\subsection{Push-forward formula}\label{push}
Assume that real  embedded conic bundles $X, Y\subset  \P(\E)$ belonging to the same linear system $2M+dF$, 
$d\in\Z_{\ge 0}$,
are related by wall-crossing,
and consider a connecting them real Morse-Lefschetz family $\pi : \Cal X\to \mathbb D$ formed by  $X(z)\in \vert 2M+dF\vert $, $X=X(t_1)$, $Y=X(t_2)$ for some $t_1, t_2\in\DD_\R$, $t_1<0<t_2$.
To shorten further notation, let us identify the
lattices $H_2(X;\Z)$ and $H_2(Y;\Z)$ using a smooth trivialization of the fibration $\X\to\DD$
over the punctured upper half disc $\DD^+=\{z\in \DD, \Im z\ge 0, z\ne 0\}$. Clearly, this identification preserves the canonical class $K$, the class of the fibers $C$, and the vanishing class $e$.
The involutions $c_t:
H_2(X(t))\to H_2(X(t))$ 
for $t=t_1,t_2$ with $t_1<0<t_2$
become related by the Picard-Lefschetz transformation:
\begin{equation}
c_{t}=s_e\circ c_{-t} \quad s_e(v)=v+(ev)e,
\end{equation}
where $e\in H_2(X(t))$ is the vanishing class of the underlying nodal degeneration.

Suppose that the direction of wall-crossing is chosen so that $\chi(X_\R)<\chi(Y_\R)$, which means that
the Morse index of the modification $X_\R\to Y_\R$ has either index $2$ (pinching of a circle), or index $0$ (birth of a spherical component).
Consider $\conj$-invariant vanishing cycles (2-spheres)
$E^X\subset X$ and $E^Y\subset Y$ and their traces on the real loci, $E^X_\R=E^X\cap X_\R$ and $E^Y_\R=E^Y\cap Y_\R$.
Under above identification of $H_2(Y;\Z)$ with $H_2(X;\Z)$ and the chosen direction of wall-crossing, we have
$$
H_2^-(Y)=\{\alpha\in H_2^-(X)\,|\, \alpha e=0\} 
$$
We consider the orthogonal projection
\begin{equation}\label{projection}
p^e: H_2^-(X)\to H_2^-(Y)\otimes\Q,\quad \text{given by}
\qquad v\mapsto v+\frac12(ev)e,
\end{equation}
and the functions
$\W_k^{X,\theta^X}: H_2^-(X)\to \Z$, $\W_k^{Y,\theta^Y}: H_2^-(Y)\to \Z$ with $k\in \Z$ defined by
$\W_k^{X,\theta^X}(\a)=i^{\hat q_X(\a)-m^2}w_k({\a}), \a\in  H_2^-(X)$ and, respectively,
$\W_k^{Y,\theta^Y}(\a)=i^{\hat q_Y(\a)-m^2}w_k({\a}), \a\in H_2^-(Y)$ (see Section \ref{welsch-prelim} for notation $w_k({\a})$).

The push-forward operation treated in the next proposition transforms functions 
 $h:H_2^-(X)\to \Z$ into functions $p^e_*(h): H_2^-(Y)\otimes\Q\to\Z$ defined by 
$p^e_*(h)(y)=\sum_{x\in (p^e)^{-1}(y)}h(x)$. It is properly defined, if $h$
 has finite support in $(p^e)^{-1}(y)$, for each  $y\in H_2^-(Y)\otimes\Q$.

The proof of this proposition is literally the same as the proof of \cite[Prop. 3.1.1]{surgery-invariant}, with a replacement (at the very end of the proof)
of the reference to \cite[Prop. 2.3.2]{surgery-invariant} by reference to Proposition \ref{equality-coher-ML-even} if $d$ is even and 
\ref{equality-coher-ML-odd} if $d$ is odd.
Therefore we omit the proof.

 \begin{proposition}\label{push-forward}
 Assume that
 real conic bundles $X, Y\subset \P(\E)$ with $s$ singular fibers
 are related by a Morse-Lefschetz family, 
 both $X_\R$, $Y_\R$ are non-empty and 
 $\chi(X_\R)<\chi(Y_\R)$.
Let  $\theta^X\in\Pin^-(X_\R)$, $\theta^Y\in\Pin^-(Y_\R)$ be coherent $\Pin^-$-structures taken from 
$\Pin^-_{\inh}$ if $s$ is even and $\Pin^-_{\inh, {\rm mon}}$ if odd.
 When $s$ is odd,
 assume also that the fixed conic-fibers in the family
 are placed away from the node that appears in this family.
Then:
\begin{enumerate}
\item The push-forward $p^e_* (\Cal W_k^{X,\theta^X})$ is properly defined.
\item
It is supported on
$H_2^-(Y)=H_2^-(Y)\otimes1\subset H_2^-(Y)\otimes\Q$.
\item
$p^e_* (\Cal W_k^{X,\theta^X})=\Cal W_k^{Y,\theta^Y}$ on $H_2^-(Y)$. \qed
\end{enumerate}
\end{proposition}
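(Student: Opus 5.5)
The plan is to follow the scheme of the proof of \cite[Prop. 3.1.1]{surgery-invariant}. The key ingredient is a comparison of Welschinger counts of real rational curves on $X$ and on $Y$ through the nodal degeneration $X(0)$. For $\alpha\in H_2^-(Y)$, I would degenerate the real curves in class $\alpha$ on $Y=X(t_2)$ to $X(0)$ and then deform them to $X=X(t_1)$. Under this process a curve on $Y$ through the real point constraints (kept away from the Milnor ball $B$) corresponds to curves on $X$ in classes $\beta$ with $p^e(\beta)=\alpha$, that is, $\beta=\alpha+ je$ with $\beta e=-2j$. The integer $j$ records how many times the curve passes through the node, counted with orientation of the vanishing cycle.

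Steps (1) and (2) come first and are largely formal. For (1), fix $y$. Every $x\in(p^e)^{-1}(y)$ has the form $y'+je$ with $-Kx=-Ky$ and $Cx=Cy$, because $e\perp K,C$ by Proposition \ref{embeddedMorseLefschetz}. Only those $x$ realized by curves contribute, and Lemma \ref{finiteness} says these lie in a finite set. For (2), I would argue that $w_k(x)\ne0$ forces $x^2\ge -1$ by Lemma \ref{welsch-vanish}. I would then check that the contributing classes project to integral classes. Classes with odd $ex$ have half-integral projections, and these should cancel in pairs under the involution $x\mapsto s_e(x)=x+(ex)e$, which exchanges $x$ with a class of the same projection. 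The relative sign $i^{\hat q}$ on such a pair differs by $i^{2ex\cdot(\ldots)}$, giving opposite signs. This uses $\hat q(e)$, which is $0$ by Proposition \ref{vanish-prop} (even $s$) or Proposition \ref{extension} (odd $s$, with the fixed fiber away from the node).

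Step (3) is the main obstacle. It has two parts: the real enumerative degeneration formula and the sign bookkeeping.

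For the enumerative part, I would use the local model near the node. Take $E^Y_\R$ to be the collapsing circle or sphere. Curves on $Y$ passing near the node correspond on $X$ to curves in the classes $\alpha\pm e$, $\alpha\pm 2e,\dots$ with binomial-type multiplicities. Their solitary/crossing nodes change in a controlled way, and this explains the factor $(-1)^{g_a}$ in the modified count. I would take the exact local correspondence from the proof in \cite{surgery-invariant}, since that argument is local near the node and does not use the del Pezzo hypothesis.

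For the signs, the factor $i^{\hat q_X(\beta)}$ must equal $i^{\hat q_Y(\alpha)}$ times a factor depending only on $j$, so that the weighted sum over $j$ reproduces $w_k(\alpha)$ on $Y$. This is exactly where coherence enters. Proposition \ref{equality-coher-ML-even} (respectively Lemma \ref{equality-coher-ML-odd}) identifies $\hat q_X\circ\mu_{t_1}$ with $\hat q_Y\circ\mu_{t_2}$ on $H_2^-(X(0)\sm B)$. Combined with $\hat q(e)=0$ and the quadratic rule $\hat q(x+je)=\hat q(x)+j^2\hat q(e)+2j\,xe$, this reduces the comparison to the same identity as in \cite{surgery-invariant}.

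The hardest point is to ensure that, in the odd case, the fixed fiber $C$ stays away from $B$, so that the monic inherited structure transports continuously. This is guaranteed by the hypothesis together with Proposition \ref{connecting}.
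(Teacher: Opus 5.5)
\textbf{Overall.} Your skeleton is the paper's. The paper reuses the proof of \cite[Prop.~3.1.1]{surgery-invariant} and changes only the coherence input. That is what you do in (3): you combine Proposition~\ref{equality-coher-ML-even} (resp.\ Lemma~\ref{equality-coher-ML-odd}) with $\hat q(e)=0$ to get $\hat q_X(\alpha+je)=\hat q_X(\alpha)=\hat q_Y(\alpha)$ for $\alpha\cdot e=0$. Your finiteness argument for (1) is also fine.

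\textbf{The gap is in (2).} For $ex$ odd you pair $x$ with $s_e(x)=x+(ex)e$. The prefactors are indeed opposite: $\hat q(s_ex)=\hat q(x)+2(ex)^2\equiv\hat q(x)+2$, and $g_a(s_ex)=g_a(x)$. But the two terms cancel only if $\tilde w_k(s_ex)=\tilde w_k(x)$ on $X$, and you give no reason for this.

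It is not a formality. $s_e$ is the homology action of the complex Picard--Lefschetz monodromy $\tau$. Since $\conj$ reverses the loop around $0$, one has $\conj\circ\tau\circ\conj\simeq\tau^{-1}$, so $\tau$ is not an equivariant diffeomorphism of $(X,\conj_X)$. The Picard--Lefschetz invariance used for $GW$ in Lemma~\ref{zero-sum} therefore does not transfer to Welschinger numbers by itself.

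\textbf{How to fill it.} The symmetry does hold on the side where $\conj_*e=-e$, which is $X$ here.
\begin{itemize}
\item After the base change $t=\pm s^2$, the total space acquires an ordinary double point whose tangent-cone quadric has signature $(2,2)$ or $(4,0)$.
\item Hence $\conj$ preserves its two rulings, and a real small resolution exists.
\item This gives a smooth real family over the real segment $-s_0\le s\le s_0$ that begins and ends at $X$.
\item Its transport is isotopic to transport over the half-circle in $s$, i.e.\ over the full loop in $t$, so it acts on $H_2(X)$ by $s_e$.
\item Real deformation invariance of Welschinger numbers then gives $\tilde w_k(s_ex)=\tilde w_k(x)$.
\end{itemize}
Some such input must be added before (2) is proved.

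\textbf{A smaller correction in (3).} Since $g_a(\alpha+je)=g_a(\alpha)-j^2$, the enumerative identity you need is $w_k^Y(\alpha)=\sum_{j\in\Z}w_k^X(\alpha+je)$. Equivalently, $\tilde w_k^Y(\alpha)=\sum_{j\in\Z}(-1)^j\tilde w_k^X(\alpha+je)$. These are unit coefficients, not binomial-type multiplicities.

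Also, on $Y$ the real part of the vanishing sphere is a sphere or a pair of points; the pinched circle lies on $X$, not on $Y$.
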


\begin{cor}\label{wall-crossing-N}
For $X$, $Y$ and $\theta^X$, $\theta^Y$ as in the assumptions of Proposition \ref{push-forward},
the numbers $\NN_{m,n}(X,\theta^X)$ and $\NN_{m,n}(Y,\theta^Y)$ coincide for all $m,n\in\Z$.
\end{cor}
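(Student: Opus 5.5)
The plan is to sum the push-forward identity of Proposition \ref{push-forward}(3) over the bi-layer. By definition,
$$\NN_{m,n}(Y,\theta^Y)=\sum_{\beta\in\lay^{m,n}_\R(Y)}\W_k^{Y,\theta^Y}(\beta),\qquad \NN_{m,n}(X,\theta^X)=\sum_{\alpha\in\lay^{m,n}_\R(X)}\W_k^{X,\theta^X}(\alpha).$$
Here the exponent $k$ is fixed by $m$, and the sums are finite by Lemma \ref{finiteness}: the Welschinger numbers vanish off effective classes. We identify $H_2(X)$ with $H_2(Y)$ via the trivialization over $\DD^+$, as in Section \ref{push}. Under this identification $K$, $C$ and $e$ are preserved, and by Proposition \ref{embeddedMorseLefschetz} the vanishing class satisfies $eK=eC=0$.

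The first step is to check that the projection $p^e(v)=v+\frac12(ev)e$ preserves both intersection numbers. Since $eK=eC=0$, we get $-Kp^e(v)=-Kv$ and $Cp^e(v)=Cv$. Hence $p^e$ maps $\lay^{m,n}_\R(X)$ into the set of rational classes $y\in H_2^-(Y)\otimes\Q$ with $-Ky=m$ and $Cy=n$. Conversely, every $\alpha\in H_2^-(X)$ with $p^e(\alpha)=\beta\in\lay^{m,n}_\R(Y)$ lies in $\lay^{m,n}_\R(X)$.

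Next we split $\sum_{\alpha\in\lay^{m,n}_\R(X)}\W_k^{X,\theta^X}(\alpha)$ according to the value $y=p^e(\alpha)$. Each fiber sum equals $p^e_*(\W_k^{X,\theta^X})(y)$, which is well defined by Proposition \ref{push-forward}(1). By part (2), only $y\in H_2^-(Y)$ contribute, and these are exactly the points of $\lay^{m,n}_\R(Y)$. By part (3), each such contribution equals $\W_k^{Y,\theta^Y}(y)$. Summing over $y$ gives $\NN_{m,n}(X,\theta^X)=\NN_{m,n}(Y,\theta^Y)$.

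The only point needing care is exchanging the order of summation. Both total sums are finite by Lemma \ref{finiteness} together with the vanishing of $w_k$ on non-effective classes, so regrouping a finitely supported sum by fibers of $p^e$ is legitimate. No other obstacle is expected. The hypotheses on coherence, monicity and the placement of $C$ are already built into Proposition \ref{push-forward}, so they are used only through it.
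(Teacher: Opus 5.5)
Your proposal is correct and follows the paper's argument. The paper likewise observes that $p^e$ preserves the bi-level $(m,n)$ because $e\perp K,C$, and then sums the identity of Proposition~\ref{push-forward}(3) over $\lay^{m,n}_\R(Y)$; your regrouping by fibers of $p^e$, with finiteness from Lemma~\ref{finiteness}, spells out that one-line proof.
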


\begin{proof}
It follows from Proposition \ref{push-forward},
since $(m,n)$ is preserved by $p^e_*$ and 
\[\NN_{m,n}(X,\theta^X)=\hspace{-2mm}\sum_{\a\in\LL^{m,n}_\R(X)}\hspace{-2mm}\W_k^{X,\theta^X}(\a),\quad k\in\{0,1\},\quad k=m-1\,\rm{mod}\,2.\qedhere\]
\end{proof}

\subsection{Invariance of $\NN_{m,n}$
and proof of Theorems \ref{main-main}, \ref{refinement}}

\begin{theorem}\label{main-new}
For  any fixed $s\ge 2$, the double sequence $\NN_{m,n}(X,\theta^X)$ does not depend
on a choice of a real conic bundle $X\subset\P(\E^s)$, $X_\R\ne\oo$,
from the linear system $\XX^s$ and a choice of a $\Pin^-$-structure $\theta^X$ taken in
 $\Pin^-_{\inh}(X,\E^s)$ if $s$ is even and in $\Pin^-_{\inh,{\rm mon}}(X,\E^s,C)$ (with arbitrary choice of a fiber $C$) if $s$ is odd.
Moreover, these numbers vanish if $n$ is odd, and in case of even $s$ they vanish also if $m$ is odd.
\end{theorem}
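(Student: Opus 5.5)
The plan is to combine wall-crossing invariance (Corollary \ref{wall-crossing-N}) with connectivity of the linear system $\XX^s$ (Proposition \ref{connecting}), and then to deal separately with the dependence on $\theta$ and with the vanishing statements. The case $s=2$, and the cases with $s\ge 4$, are treated in parallel, except that the connectivity input for $s=2$ must be checked directly. For $s=2$ we have $d=0$ and $\E=\E_{0,0,1}$, and the real deformation classes are few: all of them are listed in Sections \ref{i}--\ref{ii}.

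\textbf{Step 1: independence of $X$.} Fix $X,Y\in\XX^s$ with nonempty real loci. By Proposition \ref{connecting}, they are joined by a chain of real deformations and real Morse--Lefschetz families avoiding empty real loci. For odd $s$, the chain is enriched with a continuous family of nonsingular real fibers $C(t)$ placed away from the nodes. Along deformations, Proposition \ref{Pin-invariance} transports $\theta$ and preserves $\NN_{m,n}$. At each wall we choose coherent structures on the two sides, using Lemma \ref{exist-coher-ML-even} or Proposition \ref{exist-coher-ML-odd}. By the Remark, monicity is preserved, and orienting the crossing so that $\chi$ increases, Corollary \ref{wall-crossing-N} gives equality of $\NN_{m,n}$. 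Thus for each $\theta^X$ there is some $\theta^Y$ with the same double sequence.

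\textbf{Step 2: independence of $\theta$ and of $C$.} By Lemmas \ref{torsor} and \ref{torsor-odd}, any two admissible structures satisfy $\hat q_{\theta_1}=\pm\hat q_{\theta_2}+2\varkappa(C\cdot\alpha)$. The term $2\varkappa(C\cdot\alpha)$ only multiplies the summands by $(-1)^{\varkappa n}$. For even $n$ this is trivial, and for odd $n$ it is harmless once we know $\NN_{m,n}=0$ (Step 3).

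The sign change $\hat q\mapsto-\hat q$ replaces $i^{\hat q-m^2}$ by its complex conjugate. Since $\NN_{m,n}$ is computed to be an integer, or is symmetric under conjugation of the sum, we get $i^{-\hat q-m^2}=\overline{i^{\hat q-m^2}}$ up to the factor $i^{-2m^2}=(-1)^m$. So independence amounts to showing that the sum is real, together with control of that sign. I would handle this by noting that $\hat q(\alpha)\equiv\alpha^2 \pmod 2$ and that, by adjunction and the relation between $m$ and $\alpha^2$, the exponent $\hat q(\alpha)-m^2$ is even. Hence each term is $\pm w_k$ and is unchanged by $\hat q\mapsto-\hat q$. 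This parity check is the point where the monic condition for odd $s$ should enter. Independence of the choice of $C$ for odd $s$ follows by moving $C$ continuously among real nonsingular fibers, which is a deformation, and then applying Proposition \ref{Pin-invariance}.

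\textbf{Step 3: vanishing.} Here we exploit freedom in the choice of $X$. For odd $n$, pick $X$ in the class of Proposition \ref{construction1} or \ref{construction3} and compare $\theta$ with $\theta+[C_\R]^*$. Both are admissible, and by Step 2 they give the same $\NN_{m,n}$ but differ by the factor $(-1)^n$, so $\NN_{m,n}=-\NN_{m,n}=0$. This forces us to prove $\theta$-independence of $\NN_{m,n}$ without assuming vanishing, for instance through an intermediate realization with $[C_\R]=0$ in the relevant sense, such as $X_\R=\oo$ on part of the fibers. Alternatively, use the spherical model of Proposition \ref{spherical}, where $C_\R$ bounds, so that the shift by $[C_\R]^*$ acts trivially on $\bhv(\alpha)$ while Step 1 relates the structures.

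For even $s$ and odd $m$, use the wall of Proposition \ref{spherical}, on which $X_\R=\S^2$. There $\bhv$ is zero modulo classes whose $\hat q$ vanishes by Proposition \ref{vanish-prop}. With $k=0$ points, $w_0$ counts curves through $m-1$ points all of which are imaginary; when $m$ is odd this count is combined with $i^{-m^2}=\pm i$ to give a purely imaginary integer, which must therefore vanish. Alternatively, the shift $\theta\mapsto\theta+w_1$ multiplies the sum by $(-1)^m$. I expect the main obstacle to be exactly this sign bookkeeping in Steps 2--3, namely showing that the complex-conjugation/$w_1$-shift acts by $(-1)^m$ and the $[C_\R]^*$-shift by $(-1)^n$ without circularity. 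The invariance itself is essentially formal.
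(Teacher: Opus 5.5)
Your Step 1 is the paper's argument. Steps 2--3, however, contain a false step and a circularity that you only gesture at repairing.

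In Step 2 you conclude that each summand is unchanged under $\hat q\mapsto-\hat q$. This is wrong. Since $\hat q(\alpha)\equiv\alpha^2\equiv -K\cdot\alpha=m \pmod 2$, one has $i^{-\hat q(\alpha)-m^2}=(-1)^{\hat q(\alpha)}\,i^{\hat q(\alpha)-m^2}=(-1)^m\, i^{\hat q(\alpha)-m^2}$. So reversing the coorientation, $\theta\mapsto\theta+w_1$, multiplies $\NN_{m,n}$ by $(-1)^m$, as you yourself say at the end. In the same way the shift by $[C_\R]^*$ multiplies it by $(-1)^n$. For odd $s$, monicity does not enter through a parity check. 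It excludes the $w_1$-shift altogether, since $q_\theta(w_1^*)$ is odd and $q_{\theta+w_1}(w_1^*)=-q_\theta(w_1^*)$.

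Consequently $\theta$-independence is \emph{equivalent} to the vanishing statements, and your main vanishing argument ($\NN_{m,n}=-\NN_{m,n}$ via Step 2) is circular. Your argument for independence of $C$ also fails as stated: the real nonsingular fibers do not form a connected family, since they are separated by real singular fibers and by fibers with empty real locus.

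The missing idea, which is how the paper proceeds and which removes all sign bookkeeping, is to choose the endpoint $Y$ of the chain in Step 1 with $Y_\R=S^2$ for even $s$ and $Y_\R=\Rp2$ for odd $s$. Such $Y\in\XX^s$ exist by Proposition \ref{spherical-construction}. Since $S^2$ carries a unique $\Pin^-$-structure and $\Rp2$ a unique monic one, Step 1 alone gives $\NN_{m,n}(X,\theta^X)=\NN_{m,n}(Y)$ for every $X$, every admissible $\theta^X$ and every $C$.

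Vanishing then comes from emptiness of the layers on $Y$, not from a cancellation. The surface $Y$ has real fibers with empty real locus, so $[C_\R]=0$ in $H_1(Y_\R;\Z/2)$. Hence $D\cdot C\equiv\bhv(D)\cdot[C_\R]=0\pmod 2$ for all $D\in H_2^-(Y)$, and $\lay^{m,n}_\R(Y)=\oo$ for odd $n$. If $Y_\R=S^2$, then $\bhv=0$, so $D\cdot K\equiv D^2\equiv\bhv(D)^2=0\pmod 2$ and $\lay^{m,n}_\R(Y)=\oo$ for odd $m$.

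Your hints (the model where $C_\R$ bounds, the $S^2$ computation) point this way. They could be assembled into a correct but roundabout proof:
\begin{enumerate}
\item prove both vanishings on the model, for every structure there;
\item transport them to every $(X,\theta^X)$ via Step 1;
\item only then use the torsor description with the correct factors $(-1)^m$, $(-1)^n$.
\end{enumerate}
As written, this ordering is not established. Nor is the uniqueness of the structure on $S^2$ or $\Rp2$ ever used, and that uniqueness is what makes the ordering unnecessary.

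Your caveat that Proposition \ref{connecting} is stated only for $s\ge4$, so that $s=2$ needs a separate connectivity check, is legitimate. The paper invokes it for all $s\ge2$.
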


\begin{proof}
As it follows from Theorem \ref{exhaustive},  there exists a real conic bundle $Y$ which belongs to the same linear system $\XX^s$, has $Y_\R=S^2$
if $s$ is even, and has $Y_\R=\Rp2$ otherwise. On the other hand, Proposition \ref{connecting} and Corollary \ref{wall-crossing-N} (to make its assumptions satisfied we use Lemma \ref{exist-coher-ML-even} if $s$ is even and Lemma \ref{exist-coher-ML-odd} if odd)
imply $\NN_{m,n}(X,\theta^X)$ $=\NN_{m,n}(Y,\theta^Y)$.
Thus, due to uniqueness of $\Pin^-$-structures on $S^2$ and that for $\Pin^-$-structures on $\Rp2$ with $q_\theta(w_1^*)=1$,
we conclude that $\NN_{m,n}(X,\theta^X)$ does not depend on $\theta^X$ and on $X$ (for a fixed $s$).
Furthermore, in both cases some of the real fibers of $Y$ have no real points, and therefore, for any real divisor class $D\in H_2(Y)$ we get $D C\,\overset{{\rm mod}\,2}=\,D_\R C_\R=0$,
which implies that $\NN_{m,n}=0$ for odd $n$. Similarly, if $s$ is even,
$D K\,\overset{{\rm mod}\,2}=D_\R K_\R=0$ because $H_1(Y_\R;\Z/2)=0$, wherefrom we get vanishing of $\NN_{m,n}=0$ for odd $m$.\,
 \end{proof}

\begin{proof}[Proof of Theorem \ref{main-main}]
The first part of Theorem \ref{main-main} and
vanishing of $\NN_{m,n}(X,\theta)$ under the indicated parity conditions
follow from Theorem \ref{main-new} and Corollary \ref{wall-invariance}.
The vanishing for $n<0$  follows from non-negativity of the intersection 
of effective classes with the fibers of our conic bundle.
For the vanishing in the case $m\le 0$, see \cite[Lemma 4.2]{GP}.
\end{proof}

\begin{proof}[Proof of Theorem \ref{refinement}] Recall that, by definition,  for any real conic bundle $Y$ we have 
$ \lay^{m}_\R(Y)=\cup_{n\ge 0}\lay^{m,n}_\R(Y)$.  
Thus, as in the proof of Theorem \ref{main-new}, it is sufficient to choose $Y$ with 
$Y_\R=S^2$ (if $s$ is even) or $\Rp2$ (if odd)
and to use uniqueness of $\theta$ (monic if $s$ is odd).
 \end{proof}

\subsection{An invariance of the right-hand side of (\ref{mag-formula})}\label{right-side}
By $\DD_k, k\ge 0$, we denote the  lattice
$\{\bar x\in\Z^k\,|\,\sum_ix_i\in2\Z\}$, where $\Z^k$ is equipped with the quadratic form $-\sum_i x_i^2$.
It gives $\DD_0=0$, $\DD_1=\la 4\ra$,
$\DD_2=\AA_1\oplus \AA_1$ and $\DD_3=\AA_3$.

\begin{lemma}\label{root-vanishing} For any conic bundle with $s \ge 2$ singular fibers the lattice $(K,C)^\perp$ is isomorphic to $\DD_s$, and 
each of its roots represents a vanishing cycle.
\end{lemma}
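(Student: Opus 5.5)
We plan to compute $\la K,C\ra^\perp$ explicitly in a standard basis and then to realize its roots as vanishing cycles by degenerating conic bundles inside the linear system $\XX^s$.

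\emph{Lattice computation.} A conic bundle with $s$ singular fibers is obtained from a Hirzebruch surface $\Sigma_k$ by blowing up $s$ points in distinct fibers. We choose $k\in\{0,1\}$ so that $H_2(X)$ has the basis $h$ (the class of a section), $C$ (the fiber class), and $E_1,\dots,E_s$ (one exceptional component in each singular fiber), with $C^2=0$, $CE_i=0$, $E_i^2=-1$ and $E_iE_j=0$ for $i\ne j$. In this basis $K=-2h+(k-2)C+\sum E_i$. A class $v=ah+bC+\sum x_iE_i$ is orthogonal to $C$ exactly when $a=0$, since $hC=1$. For such $v$ we have $Kv=-2b+\sum x_i$, because $Kh$ contributes through $hC=1$ and $E_i\cdot E_i=-1$ gives $KE_i=-1$. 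Hence $Kv=0$ forces $b=\tfrac12\sum x_i$, and then $v^2=-\sum x_i^2$, since $C^2=0$ and $CE_i=0$. The orthogonal complement $\la K,C\ra^\perp$ is therefore the set of vectors $(x_i)\in\Z^s$ with $\sum x_i$ even, carrying the form $-\sum x_i^2$, which is $\DD_s$. The sign conventions have to be checked once, but they do not affect the result.

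\emph{Roots.} The roots of $\DD_s$ are the vectors $\pm E_i\pm E_j$ shifted by the appropriate multiples of $C$. Concretely, they are $E_i-E_j$ and $E_i+E_j-C$ for $i\ne j$, together with their negatives. All roots of this form are related by monodromy coming from moving the singular fibers around in $\P^1$, and by swapping the two components $E_i\leftrightarrow C-E_i$ of a singular fiber, which is monodromy from a loop around the discriminant of a single fiber. Because of this, it suffices to realize one root, say $E_1-E_2$, as a vanishing class, and then to transport it by monodromy inside the space of conic bundles in $\XX^s$. That space is connected because it is the complement of a discriminant hypersurface in a linear system, and monodromy carries vanishing classes to vanishing classes.

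\emph{Realizing one root.} We need a nodal member of $\XX^s$ whose node has vanishing class orthogonal to $K$ and $C$; by Proposition \ref{embeddedMorseLefschetz} this orthogonality is automatic. For $s\ge4$, Proposition \ref{spherical} already supplies such a Morse-Lefschetz family. The resulting nonzero vanishing class $e$ has $e^2=-2$ and lies in $\la K,C\ra^\perp\cong\DD_s$, so it is a root. For $s=2,3$ we would instead use a direct construction: two roots of $\A$ in the diagonal model collide, as in the proof of Proposition \ref{spherical}, and this produces a node with a root as vanishing class. It then remains to show that the monodromy group acts transitively on the roots of $\DD_s$.

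\emph{Main obstacle.} The key step is this transitivity. The Picard–Lefschetz reflections in vanishing classes generate $W(\DD_s)$ once one knows that the vanishing classes span the lattice and form one orbit, and for $s\ge 3$ the Weyl group of $D_s$ is transitive on roots. We expect to prove transitivity directly using braid monodromy of the singular fibers, which permutes the $E_i$, combined with the component swaps $E_i\mapsto C-E_i$. These two operations generate $S_s\ltimes(\Z/2)^s$ acting on $\{\pm E_i\}$ modulo $C$, which is clearly transitive on the roots. The degenerate cases $s=2,3$ ($\DD_2=\AA_1\oplus\AA_1$, $\DD_3=\AA_3$) are handled by the same argument.
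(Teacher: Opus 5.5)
\textbf{There is a genuine gap in the transport step.} Your lattice computation is fine. There is one sign slip: $KE_i=-1$ gives $Kv=-2b-\sum x_i$, hence $b=-\tfrac12\sum x_i$, and that is the convention your list of roots $\pm(E_i-E_j)$, $\pm(E_i+E_j-C)$ actually uses.

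The problem is your description of the monodromy group. A single swap $E_i\leftrightarrow C-E_i$ with all other $E_j$ fixed is \emph{not} a monodromy transformation. Monodromy in a family of conic bundles fixes $K$ and $C$, and the only extension of this swap that fixes both sends the section class to $h-E_i+\tfrac12 C\notin H_2(X;\Z)$. On $\DD_s$ this swap is a single sign change, which lies outside $W(\DD_s)$ and acts nontrivially on the discriminant group. Components can only be exchanged an even number at a time. For example, in the diagonal model, letting a root of $\B$ make one full turn around a root of $\A$ swaps the components of both fibers.

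Likewise, a loop around a one-node collision of two singular fibers does not simply permute the $E_i$. It acts by the Picard--Lefschetz reflection in $E_i-E_j$ or in $E_i+E_j-C$. So the group you describe is at most $W(\DD_s)$. To show it is that large, and hence transitive on roots for $s\ge3$, you need vanishing classes of both types, i.e.\ essentially a spanning set of vanishing roots. Your argument does not supply this: a single vanishing root together with its orbit could a priori span only an $\AA_{s-1}$.

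\textbf{For $s=2$ the strategy fails outright.} Here $\DD_2=\AA_1\oplus\AA_1$. Any isometry of $H_2(X)$ fixing $K$ and $C$ must act trivially on the discriminant group of $\la K,C\ra^\perp$, so it induces an element of $W(\DD_2)$. Therefore $\pm(E_1-E_2)$ and $\pm(E_1+E_2-C)$ are never exchanged, and each must be realized as a vanishing class separately.

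\textbf{Your $s=3$ construction also needs repair.} The diagonal model has $\deg\A=\deg\B=\deg\CC=1$, so there are no two roots of one coefficient to collide. Colliding roots of two different coefficients produces two nodes, not one. You would need, for instance, a Lefschetz pencil in the very ample system $2M+F$.

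\textbf{How the paper supplies the missing ingredient.} It exhibits a whole basis of simple roots as vanishing classes in the model $\P^2\#(s+1)\bar\P^2$ with $C=H-E_0$. For instance, $E_i-E_{i+1}$ arises from two blown-up points colliding, and $H-E_0-E_{s-1}-E_s$ from three points becoming collinear. The monodromy then contains $W(\DD_s)$, and every root is $W$-conjugate to a simple root, which for $s\ge3$ is the transitivity you wanted. For $s=2$, both roots are already in the basis.
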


\begin{proof}
As is known (and easy to check by taking as representative examples generic blowups $X=\P^2\#(s+1)\bar\P^2$ with $s\ge  2$
and  the conic bundle $H-E$), 
$(K,C)^\perp$ admits a geometric basis consisting of roots represented by vanishing cycles. 
Therefore, the action of the monodromy group  on $H_2(X)$ contains the Weyl group of $(K,C)^\perp=\DD_s$. 
The latter group is generated by reflections in the basic roots, and in the case of $\DD_s$ with $s\ge 3$ it acts transitively on the set of roots, see  \cite[Ch. 6, §1.3, Prop. 11]{Bour}.
\end{proof}

\begin{proposition}\label{root-moments} For any embedded conic bundle $X$ with 
$s\ge 3$ singular fibers,
the sum $\sum_{\a\in\Cal L^{m,n}}(e\cdot \a)^2 GW(\a)$ does not depend on the choice of a root $e\in (K,C)^\perp$.
Furthermore for every
$u,v\in (K,C)^\perp$, we have
$$
u^2\sum_{\a\in\Cal L^{m,n}}(v\cdot \a)^2 GW(\a)= v^2\sum_{\a\in\Cal L^{m,n}}(u\cdot \a)^2 GW(\a).
$$
\end{proposition}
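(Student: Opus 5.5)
Consider the quadratic form $Q(v)=\sum_{\a\in\Cal L^{m,n}}(v\cdot\a)^2GW(\a)$ on the lattice $L=(K,C)^\perp\cong\DD_s$ (Lemma \ref{root-vanishing}); the sum is finite by Lemma \ref{finiteness}. Since $Q$ is a quadratic form, both claims follow once we show that $Q$ is a scalar multiple of the intersection form on $L\otimes\Q$: $Q(v)=\lambda v^2$. Then $u^2Q(v)=\lambda u^2v^2=v^2Q(u)$. On roots this gives $Q(e)=-2\lambda$, independent of $e$.

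To get proportionality, I use invariance of $Q$ under the monodromy group. By Lemma \ref{root-vanishing}, every root $e\in L$ is a vanishing class. Hence the Picard–Lefschetz reflection $s_e(v)=v+(ev)e$ is realized by monodromy of a family of conic bundles, for instance a Lefschetz pencil in $|2M+dF|$ as in Proposition \ref{embeddedMorseLefschetz} and Proposition \ref{connecting}. It fixes $K$ and $C$, so it preserves $\Cal L^{m,n}$, and it preserves Gromov–Witten invariants by deformation invariance. Substituting $\a\mapsto s_e\a$ in the sum, and using $s_e$ isometric and involutive, gives $Q(s_e v)=Q(v)$ for all $v\in L$. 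So $Q$ is invariant under the Weyl group $W(\DD_s)$, which is generated by the reflections in roots.

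It then suffices to check that a $W(\DD_s)$-invariant quadratic form on $\Q^s$ is a multiple of $\sum x_i^2$. Take the reflections in the roots $\epsilon_i-\epsilon_j$ and $\epsilon_i+\epsilon_j$, where $\epsilon_i$ is the standard basis of $\Z^s\supset\DD_s$. They generate all permutations and all even sign changes of coordinates. Write $Q=\sum a_{ij}x_ix_j$. For $s\ge3$, changing the signs of $x_i$ and $x_k$ with $k\ne i,j$ negates the cross term $x_ix_j$, so $a_{ij}=0$ for $i\ne j$. Permutation invariance then gives $a_{ii}=a_{jj}$. This is where $s\ge3$ enters; for $s=2$ the lattice is $\AA_1\oplus\AA_1$ with two non-conjugate pairs of roots, and the argument fails.

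The main obstacle is justifying that every root reflection is realized by an actual deformation (monodromy) of the surface, so that GW invariants are preserved, for the given embedded conic bundle. Lemma \ref{root-vanishing} gives this for vanishing cycles in the linear system $\XX^s$. If needed, I would transport along a deformation to such a model (Theorem \ref{exhaustive}, complex version), noting that $Q$ is deformation invariant. Alternatively, one can argue directly that the Weyl group generated by the geometric basis of vanishing cycles equals $W(\DD_s)$, which suffices since only invariance under generators is needed.
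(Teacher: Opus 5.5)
Your proposal is correct and follows essentially the paper's argument: the moment form is invariant under the Weyl group (via Picard--Lefschetz monodromy, which fixes $K$ and $C$, and deformation invariance of $GW$), and for $s\ge 3$ the only $W(\DD_s)$-invariant quadratic forms are multiples of $x\mapsto x^2$. The differences are cosmetic. You verify that uniqueness directly with permutations and even sign changes, where the paper cites irreducibility from Bourbaki, and you deduce root-independence from the proportionality, where the paper uses transitivity of the Weyl group on roots.
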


\begin{proof} 

The independence of $\sum_{\a\in\Cal L^{m,n}}(e\cdot \a)^2 GW(\a)$ from the choice of $e$  follows from invariance of $GW(\a)$ 
and $\Cal L^{m,n}$ under deformation of $X$ as a conic bundle combined with transitivity of the action of the Weyl group of $\DD_s$,
$s\ge 3$,
on the set of roots.

The second statement follows from the same arguments plus the fact that for $s\ge 3$
the function $\DD_s \to \Z$, $x\mapsto x^2$ is the only 
(up to scalar factor) quadratic function invariant under the action of the Weyl group
(as it follows from the irreducibility of the action, see \cite[§2.1, Prop. 1]{Bour}).
\end{proof}
 
 \subsection{Proof of Theorem \ref{magic-theorem}}\label{proof-magic}
 This proof repeats nearly verbatim the arguments from the proof of Theorem 1.2.3 in \cite{surgery-invariant}. 
 
 First of all, we notice that due to Theorem \ref{main-main} and Proposition \ref{root-moments}
 it is sufficient to check the formula (\ref{mag-formula})
for some particular example of a
real conic bundle $X$ with
a given number $s\ge4$ of singular fibers
and $X_\R\ne\oo$, with
 some 
particular choice of a lattice root in $(K,C)^\perp\subset H_2(X)$.

An appropriate set of examples
is given by Proposition \ref{spherical}. 
It provides, for each $s\ge4$,
a real Morse-Lefschetz family
$f : \X\to \mathbb D$, $\DD=\{z \in\C\,|\,| z |<1\}$ of embedded conic bundles $X(z)\subset \P(\Cal E)$ such that for real $0< z <1$
a spherical component $S^2\subset X_\R(z)$
is collapsing to the unique node of $X(0)$ as $z\to 0^+, z\in\R$. 
In accord with 
the construction given in Proposition \ref{spherical}, we choose a continuous $\conj$-invariant family of embedded nonsingular rational curves $C(z)\subset X(z)$, $z\in\mathbb D$,  such that, for each 
 $z\in\mathbb D$,
 $C(z)$ is a fiber 
of  $\pi(z): X(z)\to \P^1$, and $C(0)$ is disjoint from the node of $X(0)$.
Then, we follow  the proof from \cite{surgery-invariant} passing from algebraic to symplectic setting, use the curves $C(z)$ for defining, by duality,
a dimension 2 cohomology class on the total space of the symplectic fibration described there. This class extends the notion of conic level uniformly to
all the fibers of the latter fibration including the singular fiber, with the property to vanish on every homology class of the $\P^1\times\P^1$-component of the singular fiber.
After that, it is sufficient to repeat 
the proof from \cite{surgery-invariant} restricting the consideration of involved real (respectively, complex)
curves to those of conic-level $2n$ (respectively, $n$).
\qed

\subsection{Positivity}\label{positivity}
Recall that due to Theorem \ref{main-main} the bivariate sequence $\NN_{m,n}$
is well defined for any real rational surface with a fixed $K^2\le 6$.

\begin{theorem} $\NN_{m,n}\ge 0$ for any $m,n\in\Z$.
\end{theorem}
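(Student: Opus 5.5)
Since Theorem \ref{main-main} makes $\NN_{m,n}$ independent of the real conic bundle and of the inherited $\Pin^-$-structure, positivity can be checked on a single, conveniently chosen representative for each $s=8-K^2\ge 2$. The plan is to pick $X$ so that every term in (\ref{invrnt-definition}) is visibly non-negative. By Theorem \ref{main-main}, only $n\in2\Z_{\ge0}$ (and even $m$ when $s$ is even) need to be treated. Everything else is zero.

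For even $s$, I would use the conic bundle $Y$ from the proof of Theorem \ref{main-new}, with $Y_\R=S^2$. Then $H_1(Y_\R;\Z/2)=0$, so $\hat q\equiv 0$ and the phase is $i^{-m^2}$. For even $m$ this is $1$, so $\NN_{m,n}=\sum w_1(\alpha)$ with $k=1$. For odd $s$, I would take $Y_\R=\Rp2$ with the monic structure. Then $\hat q(\alpha)=q(\bhv(\alpha))$ equals $0$ or $1$ according to whether $\bhv(\alpha)$ is $0$ or $w_1^*$. This is the parity of $\alpha\cdot K$, that is, of $m$. So $i^{\hat q(\alpha)-m^2}=i^{m-m^2}=\pm1$, with a sign depending only on $m$, and for $m\equiv0,1 \pmod 4$ it is $+1$.

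Next, the modified Welschinger numbers on such a surface should be shown non-negative. The model statement is that for real surfaces whose real locus is a sphere, or $\Rp2$ with only one real point constraint relevant, every counted real rational curve has $(-1)^{c_A}$ of a fixed sign. More precisely, $w_k(\alpha)=(-1)^{g_a}\til w_k(\alpha)$, and $(-1)^{g_a}\til w_k$ counts curves with sign $(-1)^{c_A}$. For $k=0$ or $k=1$, a curve meeting $S^2$ (or $\Rp2$) in at most one constrained point should have only solitary real nodes in the appropriate sense. Alternatively, I would invoke the known positivity results of Itenberg--Kharlamov--Shustin type for real del Pezzo and rational surfaces with $k\le1$. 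Here I would cite the analogous positivity statement from \cite{surgery-invariant}, whose proof used the same spherical degeneration. The aim is to show that $(-1)^{m^2/4}$-type corrections combine with the sign of $w_k$ to give a non-negative total.

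A cleaner route for even $m=2m'$ and $s\ge4$ is Theorem \ref{magic-theorem}. There $\NN_{2m',2n}=2^{m'-3}\sum(e\alpha)^2GW_X(\alpha)$, and genus-$0$ Gromov--Witten invariants of rational surfaces are enumerative and non-negative (see \cite{GP}), so the right-hand side is $\ge0$ for $m'\ge1$. Hence the remaining cases are odd $m$ (only for odd $s$), together with $s=2,3$. For odd $m$ and odd $s$, I would use the $\Rp2$ model above with $k=0$, where $w_0(\alpha)$ counts curves through purely imaginary point pairs. I would try to prove the sign directly via the relation in Corollary \ref{N-recursion} expressing $\NN_{m,n}$ for odd $m$ through non-negative quantities. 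That recursion is the step I expect to be the main obstacle, since it must be checked that all its coefficients are non-negative. The cases $s=2,3$ ($K^2=6,5$) I would handle by direct computation, using the toric/low-degree description of the surfaces.
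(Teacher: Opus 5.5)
There is a genuine gap. The decisive input is that the modified Welschinger numbers $w_k(\alpha)$ are non-negative on a suitable model, and you never supply it: you describe a ``model statement'' and gesture at IKS-type results. The paper settles exactly this step by citing Welschinger's positivity theorem \cite[Theorem 1.1]{optimality}. It applies the theorem to a model with $X_\R=S^2$ for even $s$, $X_\R=\Rp2\+S^2$ for odd $s\ge5$, and $X_\R=\Rp2$ only for $K^2=5$, where the theorem is noted to apply.

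What pushed you away from this direct route is an error in the phase. On $\Rp2$ with a monic structure you correctly get $\hat q(\alpha)\in\{0,1\}$ with $\hat q(\alpha)\equiv m \pmod 2$, but you then replace $\hat q(\alpha)$ by $m$ in $\Z/4$. Since $m^2 \bmod 4$ is also $0$ or $1$ according to the parity of $m$, in fact $\hat q(\alpha)\equiv m^2 \pmod 4$. Hence $i^{\hat q(\alpha)-m^2}=1$ for every real $\alpha$. The same holds on $\Rp2\+S^2$ with a monic structure, and trivially on $S^2$. So there are no sign corrections to compensate. Once $w_k(\alpha)\ge 0$ is granted, every term of the defining sum is non-negative, and all $(m,n)$ and all $s\ge2$ are handled at once.

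Your fallback routes do not close the gap.
\begin{itemize}
\item Theorem \ref{magic-theorem} does give the even-$m$ cases for $s\ge4$, and hence all of even $s\ge4$; the paper records this as a remark. It says nothing for $s=2,3$ or for odd $m$.
\item The recursion of Corollary \ref{N-recursion} cannot yield positivity by induction. The quadratic term carries an overall minus sign, and the factors $n_1m_2-n_2m_1$, $n_Fm_U-n_Um_F$ and $m_Un_F+m_Fn_U-\frac d2 n_Un_F$ are indefinite. For instance, for $K^2=3$ and $(m,n)=(3,2)$, the splitting $(1,2)+(2,0)$ contributes $-4\NN_{1,2}\NN_{2,0}=-4$. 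The left side $2\NN_{3,2}=4$ is positive only because the $GW$-term contributes $+8$.
\item ``Direct computation'' for $K^2=6,5$ is not an argument, since infinitely many pairs $(m,n)$ are involved.
\end{itemize}
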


\begin{proof} Assume, first, that $K^2$ is $\le 6$ but $\ne 5$ (which in terms of conic bundles means that the number of singular fibers is $\ge 2$ and $\ne 3$).
Take as an example a real conic bundle $X$
with $X_\R=\Rp2\+S^2$ or $S^2$. In such a case,
all curves are counted with sign $+1$, which is
due to $i^{\hat q(\alpha)-m^2}=1$ and $w_k(\alpha)\ge0$ for any $\alpha\in \lay^{m,n}_\R(X)$ (the former relation follows from the choice of monic $\Pin^-$-structures when $X_\R=\Rp2\+S^2$,
while the latter one is due  to Welschinger positivity theorem \cite[Theorem 1.1]{optimality}).
In the case $K^2=5$ there exists an example of a real conic bundle with $X_\R=\Rp2$, to which
the Welschinger theorem is also applicable (for that value of $K^2$), and the same argument works.
\end{proof}

\begin{remark} For $K^2\le 4$, the positivity of $\NN_{2m,2n}$ for all $m,n\in\Z$
is also a straightforward consequence of Theorem \ref{magic-theorem} 
and positivity of $GW^{[2]}_{m,n}$.
\end{remark}

\section{Recursion formulas}\label{sect-recursion}

\subsection{Values at the levels $(m,0)$}

\begin{proposition}\label{GWm0}
For any real conic bundle $X$ with $s\ge2$ singular fibers
the numbers $GW_{m,0}, GW_{m,0}^{[2]}$ and
$ \NN_{m,0}$, $m\ge1$  vanish, except 
$$
GW_{1,0}=2s,\quad GW_{2,0}=1,\quad GW_{1,0}^{[2]}=4,\quad \NN_{2,0}=1,
$$
and,  in the case of odd $s$,  $\NN_{1,0}=2$.
\end{proposition}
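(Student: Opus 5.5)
We first classify the classes $\alpha$ with $C\alpha=0$ that can carry a nonzero Gromov--Witten or Welschinger invariant. An irreducible rational curve $A$ with $CA=0$ lies in a fiber of $\pi$, since $\pi(A)$ would otherwise be all of $\P^1$ and $CA>0$. So $A$ is either a smooth fiber, of class $C$ with $-KC=2$, or a component $E$ of one of the $s$ singular fibers, with $E^2=-1$ and $-KE=1$.

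For a generic (almost) complex structure, $GW_X(\alpha)$ with $C\alpha=0$ and $-K\alpha=m\ge1$ can be nonzero only in three cases. The first is $\alpha=C$, $m=2$: there is one fiber through one point, so $GW(C)=1$. The second is one of the $2s$ exceptional classes $E$, with $m=1$ and $GW=1$. In the remaining classes we expect vanishing. Multiples $kC$ with $k\ge2$ vanish by genus-$0$ deformation invariance: a curve through $2k-1\ge3$ generic points cannot lie in a single fiber. Classes such as $E+E'$ or $C+E$ vanish for the same reason, or because $GW(\alpha)=0$ when $\alpha^2\le-2$ (these classes are not represented by irreducible curves for generic $J$). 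This gives $GW_{1,0}=2s$ and $GW_{2,0}=1$, all other $GW_{m,0}=0$.

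For the second moment we choose a root $e\in\la K,C\ra^\perp$, say $e=E_1-E_1'$ for the two components $E_1,E_1'$ of one singular fiber. This is a root of $(K,C)^\perp$ and is admissible by Proposition \ref{root-moments}, since any root works for $s\ge3$; for $s=2$ we simply compute with this explicit $e$. Then $eC=0$, while $eE_1=-1-1=-2$ and $eE_1'=1+1=2$. For every other exceptional curve $E$, $eE=0$ because $E$ is disjoint from $E_1\cup E_1'$. Hence $GW^{[2]}_{1,0}=4+4$? The contributions $(\pm2)^2=4$ from $E_1$ and $E_1'$ together would give $8$, which contradicts the statement. Rechecking: $E_1\cdot E_1'=1$ and $E_1^2=-1$, so $e\cdot E_1=-1-1=-2$ is correct. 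The intended normalization is therefore probably $e^2=-2$ with $e=E_1-E_2$, where $E_1,E_2$ are components of two different singular fibers chosen so that $E_1+E_1'=E_2+E_2'=C$. Then $eE_1=-1$, $eE_2=1$, $eE_1'=1$, $eE_2'=-1$, and the sum is $4$, consistent with the statement. Checking that this $e$ really has $e^2=-2$ and lies in $(K,C)^\perp$ is part of the plan. With it, $GW^{[2]}_{2,0}=(eC)^2=0$, and all higher moments vanish because the underlying GW numbers vanish.

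For $\NN_{m,0}$ we use Theorem \ref{main-main} to pick a convenient model. For even $s$, $\NN_{m,0}$ vanishes unless $m$ is even, so only $\alpha=C$ (and potentially $kC$) contributes. Here $w_1(C)=1$ because exactly one fiber passes through a real point, its modified sign is $(-1)^{g_a}=1$, and $\hat q(C)=q(C_\R)=0$ by Proposition \ref{vanish-prop}. With $m=2$ the factor is $i^{0-4}=1$, so $\NN_{2,0}=1$. For odd $s$ we use the model with $X_\R=\Rp2$ from the proof of Theorem \ref{main-new}, which has three real singular fibers. The real classes with $m=1$ are the exceptional curves $E$ with $\conj E=-E$ in $H_2$, i.e.\ real lines; these are the $2\cdot3=6$ components of the real singular fibers, which are real curves since $X_\R=\Rp2$ with three real singular fibers. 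Each has $w_0=1$. The factor $i^{\hat q(E)-1}$ is computed from $\bhv(E)$: here $\bhv(E)=w_1^*$ (the real line is the nontrivial class of $\Rp2$) and monicity gives $q=1$, so each line contributes $1$. This yields $6\ne2$. Resolving the discrepancy is the main obstacle. I would recheck which real singular fibers are pairs of real lines versus pairs of conjugate lines meeting at a real point (only the former contribute), expecting exactly one such fiber and hence $2$. Alternatively, sums of signs $\pm1$ could cancel down to $2$. The final step is to check via Proposition \ref{extension} which lines have $\hat q=1$ and which have $\hat q=-1$, and to confirm that $w_k(kC)=0$ for $k\ge2$, since real curves in class $kC$ through points are reducible.
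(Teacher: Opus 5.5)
The parts for $GW_{m,0}$, $GW^{[2]}_{m,0}$ and $\NN_{2,0}$ are sound and follow the paper in substance: curves with $C\alpha=0$ lie in fibers, and your root $E_1-E_2$ plays the role of the paper's vanishing sphere meeting the four components of two singular fibers. Your first choice $E_1-E_1'$ was not a root, since it has square $-4$. So the value $8$ was no contradiction: it rescales to $4$ by Proposition \ref{root-moments}. Your direct evaluation of $\NN_{2,0}$ is better than the paper's route. You use the unique real fiber through a real point, $w_1(C)=1$ and $q(C_\R)=0$. The paper instead appeals to Theorem \ref{magic-theorem}, which needs $s\ge4$.

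The genuine gap is $\NN_{1,0}$ for odd $s$. You claim that in the model with $X_\R=\Rp2$ all six components of the three real singular fibers are real lines, obtain $6$, and leave the discrepancy unresolved. That claim is false. Your fallback of sign cancellation cannot work either: every real line $E$ satisfies $[E_\R]^2\equiv E^2=1$, so $\bhv(E)=w_1^*$ is the generator of $H_1(\Rp2;\Z/2)$, and monicity gives $\hat q(E)=1$. Hence every real line contributes $+1$.

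The missing idea is that real singular fibers come in two types. In the local model $x^2\pm y^2+tz^2=0$, the sign $+$ gives two complex conjugate components meeting at a real point, which is a local extremum of $\pi|_{X_\R}$. The sign $-$ gives two real lines crossing at a saddle point. These nodes are exactly the critical points of $\pi|_{X_\R}$, so $\chi(\Rp2)=1$ equals the number of extrema minus the number of saddles. With three real singular fibers, exactly one of them has real components.

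A conjugate pair $E,\bar E$ contributes nothing. Indeed $\conj_*[E]=-[\bar E]\ne-[E]$, so $[E]\notin\lay^{1,0}_\R$, and the same holds for components of imaginary singular fibers. Only the two real lines of the saddle fiber remain, each with weight $i^{\hat q(E)-1}w_0(E)=1$, so $\NN_{1,0}=2$. With this correction your $\Rp2$ model works as well as the paper's model with a single real singular fiber. It even makes $\hat q(E)=1$ immediate, because $H_1(\Rp2;\Z/2)=\Z/2$.
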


\begin{proof}
For any effective divisor $D\subset X$, vanishing of $n=D\cdot C$ implies that $D$ is a component of a conic fiber.
This gives the values of $GW_{m,0}$, for $m=1,2$ along with $GW_{m,0}=GW_{m,0}^{[2]}=0$ for $m>2$.
To check $GW_{1,0}^{[2]}=4$, $GW_{2,0}^{[2]}=0$, one can take as an example a real conic bundle
with a spherical component and observe that the spherical component represents a vanishing cycle.
This cycle does not meet some conic-fibers, which gives  $GW_{2,0}^{[2]}=0$,
and meets
transversely the 4 irreducible components of two singular fibers and is disjoint from other singular fibers, which implies $GW_{1,0}^{[2]}=4$.

By Theorem \ref{magic-theorem}, $\NN_{2m,0}=2^{m-3}GW^{[2]}_{m,0}$, while $\NN_{2m+1,0}=0$ if $s$ even  by Theorem \ref{main-main}.
For the remaining formula $\NN_{1,0}=2$ in the case of odd $s\ge2$ one can take as an example a real conic bundle with one and only one real singular fiber. Then,
the only real effective divisors of bi-level $(1,0)$ are the irreducible components of this real singular fiber. Each of these components contribute $1$ to $\NN_{1,0}$, since the $\Pin^-$-structure on 
$X_\R$ is monic and vanishes on the conic-fibers.
\end{proof}

We obtain these relations using the following statement, which is a direct consequence of the associative law for the small quantum product.

\subsection{Recursion for $GW_{m,n}$.}\label{recursion-any-d}
The following statement  is a direct consequence of the associative law for the small quantum product.

\begin{proposition}\label{prop-associativity}
Let $X$ be a rational surface. Then, for any $H_1, H_2, H_3, H_4\in H_2(X)$ and any $\alpha \in H_2(X)$ with $-K\alpha\ge 2$, the following relation holds:
\begin{equation}\label{associativity}
\begin{aligned}
(H_1H_2)(\alpha H_3)(\alpha H_4) GW(\alpha) + (H_3H_4)(\alpha H_1)(\alpha H_2)GW(\alpha) + &\\
\sum_{\alpha=\alpha_1+\alpha_2} GW(\alpha_1)GW(\alpha_2)\binom{m-2}{m_1-1}(\alpha_1H_1)(\alpha_1H_2)(\alpha_2H_3)(\alpha_2H_4)(\alpha_1\alpha_2)= & \\
(H_1H_3)(\alpha H_2)(\alpha H_4) GW(\alpha) + (H_2H_4)(\alpha H_1)(\alpha H_3)GW(\alpha) + &\\
\sum_{\alpha=\alpha_1+\alpha_2}  GW(\alpha_1)GW(\alpha_2)\binom{m-2}{m_1-1}(\alpha_1H_1)(\alpha_1H_3)(\alpha_2H_2)(\alpha_2H_4)(\alpha_1\alpha_2)
\end{aligned}
\end{equation}
where $m=-K\alpha$ and $m_1=-K\alpha_1$.\qed
\end{proposition}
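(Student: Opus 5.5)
The plan is to derive the identity from the WDVV equation, i.e.\ from associativity of the small quantum product on $H^*(X;\Q)$ for a rational surface $X$, by extracting the coefficient of $q^\alpha$ in
\[
(H_1 * H_2) * (H_3 * H_4) = (H_1 * H_3) * (H_2 * H_4)
\]
with respect to a suitable insertion of a point class. Equivalently, I will work with the genus-$0$ potential
\[
\Phi(\gamma)=\sum_{\beta}GW(\beta)\,\frac{(\gamma\cdot\beta)^{\ldots}}{\ldots}
\]
restricted to the subspace $t_0\,\mathbf 1+\gamma+t_{pt}\,[\pt]$ with $\gamma\in H^2$, and apply the third-derivative WDVV relation in the directions $H_1,H_2,H_3,H_4$.

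By the divisor axiom, the genus-$0$ invariant of $\beta$ with $k$ divisor insertions $H_j$ and $m_\beta-1$ point insertions equals $GW(\beta)\prod_j(\beta H_j)$. Here $m_\beta=-K\beta$ and $m_\beta-1$ is the dimension count for point constraints on a surface.

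First, I write the WDVV equation
\[
\sum_{a,b}\Phi_{H_1H_2a}\,g^{ab}\,\Phi_{bH_3H_4}=\sum_{a,b}\Phi_{H_1H_3a}\,g^{ab}\,\Phi_{bH_2H_4},
\]
where $a,b$ run over a basis of $H^*(X)$ and $g$ is the Poincaré pairing. Second, I separate the contributions according to the splitting of the curve class. The classical cubic term pairs with the degree-$\alpha$ part across the diagonal through $a=\mathbf 1$, $b=[\pt]$ and its symmetric counterpart. This produces $(H_1H_2)(\alpha H_3)(\alpha H_4)GW(\alpha)$ and $(H_3H_4)(\alpha H_1)(\alpha H_2)GW(\alpha)$, because $\Phi_{\pt H_3H_4}$ in degree $\alpha$ is $GW(\alpha)(\alpha H_3)(\alpha H_4)$ with one point used up. Third, the terms with both classes nonzero, $\alpha=\alpha_1+\alpha_2$, come from $a,b\in H^2$. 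Here $\sum_{a,b}(\alpha_1 a)g^{ab}(\alpha_2 b)=\alpha_1\alpha_2$, which yields the factor $(\alpha_1H_1)(\alpha_1H_2)(\alpha_2H_3)(\alpha_2H_4)(\alpha_1\alpha_2)$.

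Fourth, I take the coefficient of $t_{pt}^{m-2}/(m-2)!$. The $m-2$ point insertions available beyond the two ends must be distributed with $m_1-1$ on the first factor and $m_2-1$ on the second, so $m_1+m_2-2=m-2$. This gives the binomial coefficient $\binom{m-2}{m_1-1}$. The hypothesis $-K\alpha\ge2$ ensures that at least one point is present, so that the comparison is made in a nontrivial coefficient.

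The main care needed is bookkeeping. I must check that the terms with $a$ or $b$ in $H^0$ or $H^4$ and a zero class on one side give exactly the two classical terms on each side, and that no extra terms of the form $a=[\pt]$ with nonzero classes survive; such terms vanish by dimension. I also need the convention that $GW$ of classes with $m_i\le0$ is zero, except for the degree-zero classical term, which is already handled.
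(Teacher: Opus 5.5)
Your proposal is correct and follows the same route as the paper, which simply invokes associativity of the quantum product. Extracting the coefficient of $e^{\gamma\cdot\alpha}\,t_{\pt}^{m-2}/(m-2)!$ from the WDVV relation $\sum_{a,b}\Phi_{H_1H_2a}g^{ab}\Phi_{bH_3H_4}=\sum_{a,b}\Phi_{H_1H_3a}g^{ab}\Phi_{bH_2H_4}$, with the potential deformed along $t_{\pt}[\pt]$ as you do, is the standard way to make that one-line justification explicit. Two small touch-ups: the quantum part of the potential should read $\sum_{\beta\ne0}GW(\beta)\,e^{\gamma\cdot\beta}\,t_{\pt}^{m_\beta-1}/(m_\beta-1)!$, and the cross terms with $\{a,b\}=\{\mathbf 1,[\pt]\}$ and both classes nonzero vanish by the fundamental class axiom (the quantum part does not depend on $t_0$), not by a dimension count.
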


\begin{lemma}\label{zero-sum}
For $K^2\le 6$ and any $m\in \Z_{\ge 1}, n\in\Z_{\ge 0}$, we have
\begin{equation}
\sum_{\substack{v\in (K,C)^\perp\otimes\Q\\ \beta\in (K,C)\otimes \Q, -K\beta=m, C\beta=n
\\ \beta+v\in H_2(X;\Z)\\}}
\hskip-8mm
GW(\beta+v)v =0.
\end{equation}
\end{lemma}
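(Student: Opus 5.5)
The sum in Lemma \ref{zero-sum} runs over all classes $\alpha=\beta+v\in\lay^{m,n}(X)$, decomposed orthogonally into a component $\beta$ in $\la K,C\ra\otimes\Q$ and a component $v$ in $(K,C)^\perp\otimes\Q$. Note that $\la K,C\ra$ is nondegenerate: $K^2\le 6$ and $KC=-2$, $C^2=0$ give discriminant $-4\neq 0$. Hence $\beta$ is determined by $(m,n)$ alone. The sum is therefore a single vector
\[
S=\sum_{\alpha\in\lay^{m,n}(X)}GW(\alpha)\,\bigl(\alpha-\beta_{m,n}\bigr)\in (K,C)^\perp\otimes\Q .
\]
It is finite by Lemma \ref{finiteness}, since $GW(\alpha)\ne0$ only for effective $\alpha$. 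Our plan is to show that $S$ is fixed by a group acting on $(K,C)^\perp\otimes\Q$ without nonzero invariant vectors.

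First we use monodromy invariance. By Lemma \ref{root-vanishing}, $(K,C)^\perp\cong\DD_s$ with $s=8-K^2\ge 2$, and every root $e$ is realized by a vanishing cycle of a Lefschetz degeneration within conic bundles. By Proposition \ref{embeddedMorseLefschetz}, the Picard--Lefschetz reflection $s_e(x)=x+(ex)e$ fixes $K$ and $C$. Gromov--Witten invariants are deformation invariant, and monodromy along a loop around the discriminant acts on $H_2(X)$ by $s_e$. Hence $GW(s_e\alpha)=GW(\alpha)$, and $s_e$ preserves $\lay^{m,n}(X)$ and fixes $\beta_{m,n}$. Reindexing the sum by $\alpha\mapsto s_e\alpha$ gives $s_e(S)=S$ for every root $e$, so $S$ is invariant under the Weyl group $W(\DD_s)$.

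Next we show that $W(\DD_s)$ has no nonzero invariant vectors in $\DD_s\otimes\Q$. A vector fixed by $s_e$ satisfies $(e\cdot S)e=0$, so $S$ is orthogonal to every root. The roots of $\DD_s$ are the vectors $\pm\epsilon_i\pm\epsilon_j$. For $s\ge2$ they span $\DD_s\otimes\Q=\Q^s$, because $\epsilon_i=\tfrac12\bigl((\epsilon_i+\epsilon_j)+(\epsilon_i-\epsilon_j)\bigr)$. The form on $\DD_s\otimes\Q$ is $-\sum x_i^2$, which is nondegenerate, so $S=0$.

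The main point requiring care is the first step: we need the full reflection group $W(\DD_s)$, not only the reflections in a particular geometric basis, to be realized by deformations of conic bundles preserving $K$ and $C$. It suffices that reflections in a set of roots spanning $\Q^s$ are realized. This is exactly what the proof of Lemma \ref{root-vanishing} provides through a geometric basis of vanishing cycles, together with the connectedness of the family of complex conic bundles with $s$ singular fibers used in Proposition \ref{connecting}. The case $s=2$, where $\DD_2=\AA_1\oplus\AA_1$ and transitivity on roots fails, needs no separate treatment: only the spanning property of the roots is used.
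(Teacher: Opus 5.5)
Your proposal is correct and follows essentially the same route as the paper: invariance of $GW$ under the Picard--Lefschetz reflections $s_e$ forces $e\cdot S=0$ for every root $e$, and since these roots span the nondegenerate lattice $(K,C)^\perp\cong\DD_s$, the vector $S$ vanishes. Your remark that only a spanning set of vanishing-cycle roots is needed, so that $s=2$ requires no transitivity argument, is a slightly more careful version of the paper's appeal to Lemma \ref{root-vanishing}.
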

\begin{proof} 
For any root vector $e\in (C,K)^\perp$, 
the invariance of Gromov-Witten invariants under Picard-Lefschetz transformations, implies
\begin{equation*}
\sum_{\substack{v\in (K,C)^\perp\otimes\Q\\ \beta\in (K,C)\otimes \Q, -K\beta=m, C\beta=n
\\ \beta+v\in H_2(X;\Z)\\}}
\hskip-8mm
GW(\beta+v)v =\sum_{\substack{v\in (K,C)^\perp\otimes\Q\\ \beta\in (K,C)\otimes \Q, -K\beta=m, C\beta=n
\\ \beta+v\in H_2(X;\Z)\\}}
\hskip-8mm
GW(\beta+v)(v+(ev)e),
\end{equation*}
which in its turn implies
\begin{equation*}
\sum_{\substack{v\in (K,C)^\perp\otimes\Q\\ \beta\in (K,C)\otimes \Q, -K\beta=m, C\beta=n
\\ \beta+v\in H_2(X;\Z)\\}}
\hskip-7mm
GW(\beta+v)(ev)e=0.
\end{equation*}
Thus, for any root vector  $e\in (K,C)^\perp$ we have
\begin{equation*}
e\cdot\hskip-15mm\sum_{\substack{v\in (K,C)^\perp\otimes\Q\\ \beta\in (K,C)\otimes \Q, -K\beta=m, C\beta=n
\\ \beta+v\in H_2(X;\Z)\\}}
\hskip-7mm
GW(\beta+v)v=0
\end{equation*}
Since $(K,C)^\perp$ is a non-degenerate lattice and it is generated by the root vectors (see Lemma \ref{root-vanishing}),
the 
result stated follows from the latter relation.
\end{proof}

{\sl Convention.} In what follows, to simplify writing sums with many rules or limits on parameters, we authorize, when working under summation with expressions like $GW(\beta+v)$ to take vectors $\beta\in(K,C)\otimes \Q$, $v\in (K,C)^\perp\otimes\Q$ with a convention that $GW(\beta+v)=0$ if $\beta, v$ are not respecting the gluing rule $\beta+v \in H_2(X)$.

\begin{theorem}\label{simple-recursion-any-d}  For 
any $d=K^2\le 6$
and any 
$m\in \Z_{\ge 2}, n\in\Z_{\ge 1}$
the following 
 holds
\begin{equation}
\begin{aligned}
&(4mn-dn^2)\,GW_{m,n}=
\sum_{\substack{m=m_1+m_2 \\ n=n_1+n_2}}GW_{m_1,n_1} GW_{m_2,n_2}\binom{m-2}{m_1-1} \cdot \\
&(m_1-\frac{n_1d}2)n_2
(m_1n_2-m_2n_1)(\frac12(m_1n_2+n_1m_2)-\frac14 n_1n_2d).
\end{aligned}
\end{equation}

If $m\ge 3$ and $mn-\frac{dn^2}4\le 0$, or $m=2$ and $mn-\frac{dn^2}4 < 0$, then $GW_{m,n}=0$. For $m=2$ and $mn-\frac{dn^2}4 = 0$, the only non zero values of $GW_{m,n}$
are $GW(2, \frac8{d})=1$ with $d=1, 2$, and $4$.
\end{theorem}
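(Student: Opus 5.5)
The plan is to apply the associativity relation of Proposition \ref{prop-associativity} with all four classes $H_1,\dots,H_4$ taken in the span $\la K,C\ra$, and then sum it over all $\alpha\in\lay^{m,n}(X)$. For such $H_i$ each product $\alpha H_i$ depends only on the bi-level: for $H=aK+bC$ we get $\alpha H=-am+bn$. Hence the terms $(H_1H_2)(\alpha H_3)(\alpha H_4)GW(\alpha)$ etc. sum to a multiple of $GW_{m,n}$. The one coupling term that depends on more than the bi-levels is $\alpha_1\alpha_2$.

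To handle $\alpha_1\alpha_2$, decompose each class orthogonally as $\alpha_j=\beta_j+v_j$ with $\beta_j\in\la K,C\ra\otimes\Q$ and $v_j\in(K,C)^\perp\otimes\Q$, using the gluing convention stated before the theorem. Then $\alpha_1\alpha_2=\beta_1\beta_2+v_1v_2$. Summing over all splittings $\alpha_1+\alpha_2$ with fixed bi-levels $(m_1,n_1),(m_2,n_2)$, the $v_1v_2$ contribution factorizes as
\[
\Bigl(\textstyle\sum GW(\beta_1+v_1)v_1\Bigr)\cdot\Bigl(\sum GW(\beta_2+v_2)v_2\Bigr),
\]
and this vanishes by Lemma \ref{zero-sum}. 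For $\beta_1\beta_2$, use $K^2=d$, $KC=-2$, $C^2=0$. Solving $-K\beta=m$, $C\beta=n$ gives $\beta=-\frac n2K+\frac12(m-\frac{nd}2)C$. Therefore
\[
\beta_1\beta_2=\tfrac12(m_1n_2+m_2n_1)-\tfrac14 n_1n_2d ,
\]
which is exactly the last factor in the formula. The summed relation thus becomes an identity purely in the numbers $GW_{m_i,n_i}$.

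The main step, and the one I expect to be the real obstacle, is choosing $H_1,\dots,H_4$ so that this identity is the stated recursion. Three things must come out right. The single-$\alpha$ terms must leave a net coefficient $4mn-dn^2$. The factor $(m_1-\frac{n_1d}2)$ should appear as $\alpha_1A$ with $A=-K-\frac d2C$; note that $A\cdot C=2$ and $A$ is orthogonal to $C$ up to the $C^2=0$ term. The difference of the two quadratic sums should yield, after symmetrizing in $(\alpha_1,\alpha_2)$, the antisymmetric factor $(m_1n_2-m_2n_1)$. I would try $H_1=A$ and $H_2=H_3=C$ first, and take $H_4$ to be a class in $\la K,C\ra$ chosen so that the two sides differ. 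Examples such as $H_4=C$ or $H_4=-K$ give trivial identities by symmetry, so $H_4$ has to break the symmetry between $H_2$ and $H_3$. I would fix $H_4$ by matching the coefficients of $GW(\alpha)$ to $4mn-dn^2$, and then check that the split terms reduce to $(m_1-\frac{n_1d}2)\,n_2\,(m_1n_2-m_2n_1)$. If no such choice works, the fallback is to take a linear combination of two instances of the relation.

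For the vanishing statements I would use adjunction and the Hodge index theorem instead of the recursion. A class with $GW(\alpha)\ne0$ and $m\ge2$ is represented by an immersed rational curve, so $\alpha^2=m-2$. The orthogonal complement $(K,C)^\perp$ is negative definite, so $v^2\le0$ and
\[
\alpha^2=\beta^2+v^2\le\beta^2=mn-\tfrac{dn^2}4 .
\]
For $m\ge3$ this forces $mn-\frac{dn^2}{4}\ge m-2>0$, so $GW_{m,n}=0$ when $mn-\frac{dn^2}4\le0$. For $m=2$ we need $\beta^2\ge0$, and equality forces $v=0$. Then $\alpha=\beta$ is integral with $C\beta=n$ and $\beta^2=0$, which gives $n=8/d$, so $d\in\{1,2,4,8\}$. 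For $d\le6$ this leaves $d=1,2,4$, and in those cases $\alpha=-\frac4dK$ must be integral. It is the class of the fibers of a second pencil of rational curves, through one point, so $GW=1$.
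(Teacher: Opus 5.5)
\textbf{Gap: the choice of $H_1,\dots,H_4$.} Your framework is sound. You sum over $\lay^{m,n}(X)$, split $\alpha_1\alpha_2=\beta_1\beta_2+v_1v_2$, kill the $v_1v_2$ part with Lemma \ref{zero-sum}, and use $\beta_1\beta_2=\frac12(m_1n_2+m_2n_1)-\frac14n_1n_2d$. However, the step that actually produces the recursion, the choice of $H_1,\dots,H_4$, is left open, and the choice you start from cannot work. The right-hand side of the relation in Proposition \ref{prop-associativity} is the left-hand side with $H_2$ and $H_3$ interchanged. So whenever $H_2=H_3$ the relation is a tautology, for every $H_1$ and $H_4$. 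With $H_2=H_3=C$, no $H_4$ can ``break the symmetry'', and linear combinations of such instances are still empty.

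\textbf{A choice that works.} You need $H_2\ne H_3$, for instance $H_1=H_2=A$ and $H_3=H_4=C$ with $A=-K-\frac d2C=-K'$. Here $A^2=-d$, $AC=2$, $C^2=0$ and $\alpha A=m-\frac{nd}2$. The single-$\alpha$ terms then give $A^2n^2-2(AC)(m-\frac{nd}2)n=-(4mn-dn^2)$ times $GW(\alpha)$. The split terms give
\[
(m_1-\tfrac{n_1d}2)^2n_2^2-(m_1-\tfrac{n_1d}2)(m_2-\tfrac{n_2d}2)n_1n_2=(m_1-\tfrac{n_1d}2)\,n_2\,(m_1n_2-m_2n_1)
\]
times $\binom{m-2}{m_1-1}(\alpha_1\alpha_2)GW(\alpha_1)GW(\alpha_2)$. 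Your summation argument then yields exactly the stated recursion.

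\textbf{Comparison with the paper.} With this fix your route coincides with the paper's, in a slightly leaner form. The paper takes $H_1=-K'$, $H_2=C-e$, $H_3=-K'-e$, $H_4=C$ with a root $e\in(K,C)^\perp$ and averages over $\pm e$. The $(ev_1)(ev_2)$ terms cancel and the linear terms average out. What survives is the $e$-free instance $H_1=H_3=-K'$, $H_2=H_4=C$, which is your corrected identity with the two sides interchanged. The root $e$ is genuinely needed only for the moment recursion.

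\textbf{Slips in the vanishing part.}
\begin{enumerate}
\item For a nodal rational curve, adjunction gives $\alpha^2=m-2+2\delta\ge m-2$, not $\alpha^2=m-2$. You only use the inequality, so the argument survives.
\item In the equality case $m=2$, $v=0$, $nd=8$, the class is $\alpha=\beta=-\frac4dK-C$, not $-\frac4dK$; the latter has $-K\cdot\alpha=4$. The correct class is the fiber class of the second conic bundle in Remark \ref{palyndrom}, and this is why $GW=1$.
\end{enumerate}
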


\begin{proof}
We put $K'=\frac{d}2 C+K$ and  choose $H_1=-K'$, $H_2= C-e$, $H_3=-K'-e$, $H_4= C$ where $e\in (K,C)^\perp$ is a root vector.
 Then we apply Proposition \ref{associativity}
 to  $\alpha\in H_2(X)$ written in a form $\alpha=\frac{m}2C-\frac{n}2K'+v$ with $v\in (K,C)^\perp\otimes \Q$.
This gives
\begin{equation}
\begin{aligned}
&(2(m-\frac{nd}2 -ev)n+2(m-\frac{nd}2)(n-ev)+dn(n-ev))GW(\alpha)=\\
&\sum_{\substack{m=m_1+m_2\\ n=n_1+n_2\\ v=v_1+v_2\\v_1,v_2\in (K,C)^\perp\otimes\Q}}
\hskip-7mm
GW(\alpha_1)GW(\alpha_2)\binom{m-2}{m_1-1}(m_1-\frac{n_1d}2 )n_2\big((m_1-\frac{n_1d}2 -ev_1)(n_2-ev_2)- \\
&(n_1-ev_1)(m_2-\frac{n_2d}2-ev_2)\big) (\frac12(m_1n_2+m_2n_1)-\frac{n_1n_2d}4+v_1v_2)
\end{aligned}
\end{equation}
where $\alpha_j$, $j=1,2$, stands for $\frac{m_j}2C-\frac{n_j}2K'+v_j$.  Taking the average with respect to $\pm e$ we get
\begin{equation}
\begin{aligned}
&(4(m-\frac{nd}2)n+dn^2)\,GW(\alpha)=\\
&\sum_{\substack{m=m_1+m_2\\ n=n_1+n_2\\ v=v_1+v_2\\v_1,v_2\in (K,C)^\perp\otimes \Q}}
\hskip-4mm
GW(\alpha_1)GW(\alpha_2)\binom{m-2}{m_1-1}(m_1-\frac{n_1d}2 )n_2\big((m_1-\frac{n_1d}2 )n_2-\\
&n_1(m_2-\frac{n_2d}2)\big)(\frac12(m_1n_2+m_2n_1)-\frac{n_1n_2d}4+v_1v_2).
\end{aligned}
\end{equation}
Then summing over all $v\in (K,C)^\perp\otimes\Q$ we obtain
$$
\begin{aligned}
&(4mn-dn^2)GW_{m,n}\!=
\hskip-6mm
\sum_{\substack{m=m_1+m_2\\ n=n_1+n_2}}
\hskip-2mm
\binom{m-2}{m_1-1}(m_1-\frac{n_1d}2 )n_2(m_1n_2-n_1m_2)
\big(\big(\frac{m_1n_2+m_2n_1}2 \\
&-\frac{n_1n_2d}4)
\hskip-5mm
\sum_{
v_1,v_2\in (K,C)^\perp\otimes\Q}
\hskip-5mm
GW(\alpha_1)
 GW(\alpha_2)\big) +
 \hskip-7mm
\sum_{
v_1,v_2\in (K,C)^\perp\otimes\Q}
GW(\alpha_1)GW(\alpha_2)(v_1v_2)\big).
\end{aligned}
$$
Finally, due to Lemma \ref{zero-sum},
\begin{equation*}
\sum_{
v_1,v_2\in (K,C)^\perp\otimes\Q}
\hskip-5mm
GW(\alpha_1)GW(\alpha_2)(v_1v_2)= 
\hskip-5mm
\sum_{
v_1\in (C,K)^\perp\otimes\Q}
\hskip-5mm
GW(\alpha_1)v_1\,\,\cdot 
\hskip-5mm
\sum_{
v_2\in (C,K)^\perp\otimes\Q}
\hskip-5mm
GW(\alpha_2)v_2= 0
\end{equation*}
and the required recursion relation follows.

To prove the stated vanishing of $GW(\alpha)$ with $\alpha=\frac{m}2C-\frac{n}2K'+v$ we note that $\alpha^2=mn-\frac{dn^2}4+v^2\le mn-\frac{dn^2}4$
and argue as follows. If $m\ge 3$ then the counted rational curves $D$ with $[D]=\alpha$ are constrained by at least 2 points, so that moving one these points
and keeping the others implies $\alpha^2=D^2 >0$, contradicting the assumption $mn-\frac{dn^2}4\le 0$. Similarly, if $m\ge 2$, moving one of the point-constraints
we conclude that $mn-\frac{dn^2}4\ge 0$, contradicting assumption  $mn-\frac{dn^2}4<0$. Finally, if $m=2$ and $GW(\alpha)\ne 0$, then, by above, $\alpha^2\ge 0$,
which under assumption $mn-\frac{dn^2}4=0$ implies $v=0$ and $dn=8$. Thus, there remains to consider $\alpha= -C-\frac{n}2K$ with $d=4, 2$, and $1$,
and to observe that $GW(2, \frac{8}d)=1$ in each of these 3 cases (cf. Remark \ref{palyndrom}).
\end{proof}

\subsection{Recursion for $GW^{[2]}_{m,n}$.}
\label{moment-any-d}
\begin{lemma}\label{vector-moment} Let $K^2\le 5$. Then, for
any $e\in (K,C)^\perp$ with $e^2=-2$ we have
\begin{equation}
\sum_{v\in (K,C)^\perp\otimes\Q} GW(\frac{m}2 C-\frac{n}2K'+v)(ev)v=-\frac12 GW_{m,n}^{[2]}e.
\end{equation}
\end{lemma}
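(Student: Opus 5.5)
Define the vector
\[
w=\sum_{v\in (K,C)^\perp\otimes\Q} GW\bigl(\tfrac{m}2 C-\tfrac{n}2K'+v\bigr)(ev)v\in (K,C)^\perp\otimes\Q .
\]
The plan is to show that $w$ is proportional to $e$ and then to find the coefficient by pairing with $e$. Since $e^2=-2$, the pairing gives
\[
e\cdot w=\sum_v GW(\beta+v)(ev)^2 .
\]
Here $\beta=\tfrac m2C-\tfrac n2K'$, and $\beta$ is orthogonal to $e$. Because $e\cdot\alpha=e\cdot v$ for $\alpha=\beta+v$, this sum is exactly $GW^{[2]}_{m,n}$. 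So if $w=\lambda e$, then $-2\lambda=GW^{[2]}_{m,n}$, which gives $w=-\tfrac12 GW^{[2]}_{m,n}e$, as claimed.

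To prove proportionality, I will use the invariance of Gromov--Witten invariants under the monodromy/Weyl group of $\DD_s$, where $s=8-K^2\ge 3$; this is the content of Lemma~\ref{root-vanishing}. The group fixes $K$, $C$ and $\beta$, so $GW(\beta+gv)=GW(\beta+v)$. Consider the symmetric bilinear form
\[
B(x,y)=\sum_v GW(\beta+v)(xv)(yv)
\]
on $(K,C)^\perp\otimes\Q$. It is Weyl-invariant, and $B(x,e)=x\cdot w$. As in the proof of Proposition~\ref{root-moments}, the irreducibility of the Weyl group action on $\DD_s\otimes\Q$ for $s\ge3$ forces $B=c\,(\cdot,\cdot)$ for some scalar $c$. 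Absolute irreducibility then gives uniqueness of the invariant form up to scalar. Hence $x\cdot w=c\,(x\cdot e)$ for all $x$. By nondegeneracy, $w=ce$, and the computation above gives $c=-\tfrac12 GW^{[2]}_{m,n}$.

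The sum is finite by Lemma~\ref{finiteness}, since $\alpha$ ranges over effective classes of bi-level $(m,n)$; this makes the manipulations legitimate.

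The main point to check is the uniqueness of the invariant quadratic form. This requires the hypothesis $K^2\le5$, i.e.\ $s\ge3$. For $s=2$ the lattice $\DD_2=\AA_1\oplus\AA_1$ is reducible, and there are two independent invariant forms. For $s\ge3$, $\DD_s\otimes\C$ is the reflection representation of the Weyl group of type $D_s$ (with $D_3=A_3$). This representation is irreducible, so Schur's lemma applies.

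One alternative route avoids Schur's lemma. Apply the reflection $s_u$ in a root $u\perp e$ to the sum $w$: this shows $w$ is $s_u$-invariant, so $w\cdot u=0$ for every root $u\perp e$. A similar pairing with roots $u$ satisfying $u\cdot e=\pm1$ then pins down $w$. I would fall back to this only if the representation-theoretic citation is deemed insufficient.
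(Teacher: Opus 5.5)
Your proof is correct, but it takes a heavier route than the paper.

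\textbf{The paper's route.} The paper uses only the single reflection $s_e(v)=v+(ev)e$ in the root $e$ itself. Substituting $v\mapsto s_e(v)$ in the sum is legitimate by Picard--Lefschetz invariance of $GW$, since $e$ is a vanishing class. Using $e\cdot s_e(v)=-(ev)$, this gives
\[
w=-\sum_v GW(\beta+v)(ev)\bigl(v+(ev)e\bigr)=-w-GW^{[2]}_{m,n}\,e ,
\]
so $2w=-GW^{[2]}_{m,n}e$ immediately. Proportionality to $e$ and the coefficient come out together, with no appeal to transitivity or irreducibility.

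\textbf{Your route.} You show that the whole second-moment form $B$ is a scalar multiple of the intersection form, and then read off the lemma by pairing with $e$. That intermediate statement is exactly the second assertion of Proposition~\ref{root-moments}, so you could simply have cited it. It gives a stronger, basis-free statement. The cost is that it needs the full Weyl group of $\DD_s$ acting by monodromy, plus absolute irreducibility, hence genuinely $s\ge3$. In the paper's argument the identity holds for any root whose reflection preserves $GW$; the hypothesis $K^2\le5$ is needed only to make $GW^{[2]}_{m,n}$ independent of the choice of $e$.

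\textbf{Your fallback.} The fallback via reflections in roots $u\perp e$ misses this simplest move of reflecting in $e$ itself. For $s=3$ the roots orthogonal to $e$ span only a rank-one part of $e^\perp$, so there it rests entirely on the unspecified extra step with roots $u\cdot e=\pm1$.
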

\begin{proof} Due to invariance of $GW(\frac{m}2 C-\frac{n}2K'+v)$ with respect $v\mapsto v+e(ve)$
we have
\begin{equation}
\begin{aligned}
&\sum_{v\in (K,C)^\perp\otimes\Q} GW(\frac{m}2 C-\frac{n}2K'+v)(ev)v=\\
&\sum_{v\in (K,C)^\perp\otimes\Q} GW(\frac{m}2 C-\frac{n}2K'+v)(e(v+e(ve))) 
(v+e(ve))=\\
&-\hskip-5mm
\sum_{v\in (K,C)^\perp\otimes\Q} GW(\frac{m}2 C-\frac{n}2K'+v)(ev)v - 
\hskip-5mm
\sum_{v\in (K,C)^\perp\otimes\Q} GW(\frac{m}2 C-\frac{n}2K'+v)(ev)^2 e
\end{aligned}
\end{equation}
which gives the result stated.
\end{proof}

\begin{theorem}\label{moment-recursion-any-d}
Let $K^2\le 5$.
Then,
for any $m\ge 2, n\ge 0$ the following recursion relation holds
\begin{equation}
\begin{aligned}
&(2(m-\frac{nd}2)^2+(2+d)n^2)GW_{m,n}+(4+d)GW^{[2]}_{m,n}=\\
&\sum_{\substack{m=m_1+m_2\\ n=n_1+n_2}}\big(a\,GW_{m_1,n_1}GW_{m_2,n_2}+b_1GW^{[2]}_{m_1,n_1}GW_{m_2,n_2}+\\
&b_2 GW_{m_1,n_1}GW^{[2]}_{m_2,n_2} +c \, GW^{[2]}_{m_1,n_1}GW^{[2]}_{m_2,n_2}\big) \\
\end{aligned}
\end{equation}
where
$$
\begin{aligned}
 &a= \binom{m-2}{m_1-1} (\frac{m_1n_2+m_2n_1}2- \frac{n_1n_2d}4)((m_1-\frac{n_1d}2)^2n_2^2- (m_1-\frac{n_1d}2)(m_2-\frac{n_2d}2)n_1n_2 ) \\
& b_1= \binom{m-2}{m_1-1}(\frac{m_1n_2+m_2n_1}2- \frac{n_1n_2d}4)(n_2^2-( m_2-\frac{n_2d}2)n_2 ) \\
&b_2=\binom{m-2}{m_1-1} (\frac{m_1n_2+m_2n_1}2- \frac{n_1n_2d}4)((m_1-\frac{n_1d}2)^2- (m_1-\frac{n_1d}2)n_1 ) \\
&c= \frac12 \binom{m-2}{m_1-1} ((m_1-\frac{n_1d}2)(m_2-\frac{n_2d}2)+n_1n_2+n_1(m_2-\frac{n_2d}2)-3(m_1-\frac{n_1d}2)n_2 ).  \\
\end{aligned}
$$
\end{theorem}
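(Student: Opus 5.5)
We follow the scheme of the proof of Theorem \ref{simple-recursion-any-d}, but with a choice of $H_1,\dots,H_4$ in Proposition \ref{prop-associativity} that is quadratic in a root. Set $K'=\frac d2C+K$, fix a root $e\in(K,C)^\perp$, and write $\alpha=\frac m2C-\frac n2K'+v$ and $\alpha_j=\frac{m_j}2C-\frac{n_j}2K'+v_j$, with $v,v_j\in(K,C)^\perp\otimes\Q$. Note that $C\cdot K'=-2$, $K'^2=-K^2+\dots$, and $K'\perp(K,C)^\perp$. These give $C\alpha=n$, $-K'\alpha=m-\frac{nd}2$ and $\alpha_1\alpha_2=\frac{m_1n_2+m_2n_1}2-\frac{n_1n_2d}4+v_1v_2$. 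We take
\[
H_1=-K'+e,\quad H_2=C+e,\quad H_3=-K'-e,\quad H_4=C-e,
\]
or a close variant, chosen so that the products $(\alpha_jH_i)$ produce factors of the form $(m_j-\frac{n_jd}2\pm ev_j)$ and $(n_j\pm ev_j)$. The left-hand side then contains terms with $(ev)^2$, while $H_1H_2,\dots$ contribute constants involving $e^2=-2$ and $K'^2$. This is where a coefficient like $(4+d)$ in front of $GW^{[2]}_{m,n}$ should come from.

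The plan has four steps.

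First, expand the associativity relation for fixed $\alpha$ as a polynomial in $ev$, $ev_1$, $ev_2$ and $v_1v_2$.

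Second, average over $e\mapsto -e$, which kills the odd terms. This is legitimate because $GW$ is invariant under the reflection $s_e$ (Picard–Lefschetz invariance, as in Lemma \ref{zero-sum}).

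Third, sum over all $v$, which produces $GW_{m,n}$ and $GW^{[2]}_{m,n}=\sum (ev)^2GW$ on the left. On the right, it produces sums over $v_1,v_2$ of monomials in $GW(\alpha_1)GW(\alpha_2)$ times the following factors:
\begin{itemize}
\item $1$, giving the $a$-term;
\item $(ev_1)^2$ or $(ev_2)^2$, giving the $b_1$- and $b_2$-terms;
\item $(ev_1)(ev_2)$ or $(ev_1)^2(ev_2)^2$;
\item any of these multiplied by $v_1v_2$.
\end{itemize}

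Fourth, reduce all these sums to products of the moments $GW_{m_j,n_j}$ and $GW^{[2]}_{m_j,n_j}$, using the following tools:
\begin{itemize}
\item Lemma \ref{zero-sum}, which gives $\sum GW(\alpha_j)v_j=0$. This kills terms linear in $v_j$ alone, such as $(v_1v_2)\cdot 1$ and $(ev_1)(ev_2)$.
\item Lemma \ref{vector-moment}, which gives $\sum GW(\alpha_j)(ev_j)v_j=-\frac12GW^{[2]}e$. Hence $\sum GW(\alpha_1)GW(\alpha_2)(ev_1)(ev_2)(v_1v_2)=\frac14GW^{[2]}GW^{[2]}e^2=-\frac12 GW^{[2]}GW^{[2]}$, which explains the factor $\frac12$ in $c$.
\end{itemize}

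The main obstacle will be the terms of degree four in $e$, namely $(ev_1)^2(ev_2)^2$ and $(ev_j)^2(v_1v_2)$. These are fourth moments and do not reduce via Lemma \ref{vector-moment}. I would first try to select $H_i$ so that each $H_i$ contains $e$ at most in a way giving total $e$-degree $\le 2$ per side. For instance, $e$ could appear only in $H_1$ and $H_3$, as in Theorem \ref{simple-recursion-any-d}, but with $H_1=-K'+e$ and $H_3=-K'-e$ interchanged between the two sides, so that the quartic terms cancel between the left- and right-hand sides of \eqref{associativity}.

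If that fails, I would instead average over the Weyl group of $\DD_s$ and use Proposition \ref{root-moments}: the invariant quadratic form is unique, so $\sum GW(\alpha)(ev)^2$ relates to $\sum GW(\alpha)v^2$. For cross terms, the average of $(ev_1)(ev_2)$ over roots is proportional to $v_1v_2$. This would replace all $e$-dependence by invariant expressions, whose quartic pieces should again cancel.

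Finally, I would collect coefficients and match them with $a,b_1,b_2,c$. The hypothesis $K^2\le5$, i.e. $s\ge3$, enters through Lemma \ref{vector-moment} and Proposition \ref{root-moments}.
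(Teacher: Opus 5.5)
Your strategy is the paper's: associativity with test classes built from $-K'$, $C$ and $\pm e$, averaging over $\pm e$, summing over $v$, then Lemmas \ref{zero-sum} and \ref{vector-moment}. Your evaluation of the $c$-term as $\frac14 GW^{[2]}GW^{[2]}e^2=-\frac12 GW^{[2]}GW^{[2]}$ is also correct.

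\textbf{The gap.} The whole content of this proof is the choice of $H_1,\dots,H_4$, and the choice you write down does not give the stated identity. For $H_1=-K'+e$, $H_2=C+e$, $H_3=-K'-e$, $H_4=C-e$ one has $H_1H_2=H_3H_4=0$, $H_1H_3=2-d$ and $H_2H_4=2$. The left-hand side then becomes $\bigl(2(m-\frac{nd}2)^2+(2-d)n^2-(4-d)(ev)^2\bigr)GW(\alpha)$. After summation this is $(2(m-\frac{nd}2)^2+(2-d)n^2)GW_{m,n}-(4-d)GW^{[2]}_{m,n}$. That is a different (valid) relation, with the constant part of the $GW\cdot GW$ bracket reversed in sign, and it loses $GW^{[2]}_{m,n}$ entirely when $d=4$.

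Your other suggestion, putting $e$ only into $H_1,H_3$, removes the $(ev)^2$-term from the left altogether, because then $H_2H_4=C^2=0$. This is exactly why Theorem \ref{simple-recursion-any-d} contains no moments.

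\textbf{The choice that works.} Take $H_1=H_3=-K'-e$ and $H_2=H_4=C-e$. Then $H_1H_2=H_3H_4=0$, $H_1H_3=-(2+d)$ and $H_2H_4=-2$. The left side is $\bigl(2(m-\frac{nd}2-ev)^2+(2+d)(n-ev)^2\bigr)GW(\alpha)$, and its $\pm e$-average produces the coefficient $4+d$. Write $A_j=m_j-\frac{n_jd}2$. The right side carries the bracket $(A_1-ev_1)^2(n_2-ev_2)^2-(A_1-ev_1)(n_1-ev_1)(A_2-ev_2)(n_2-ev_2)$. Its constant part gives $a$, its $(ev_1)^2$ and $(ev_2)^2$ parts give $b_1$ and $b_2$, and its $(ev_1)(ev_2)$-coefficient $3A_1n_2-A_1A_2-n_1n_2-n_1A_2$ gives $c$.

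\textbf{The anticipated obstacle.} The obstacle you single out does not exist, so the Weyl-group fallback is unnecessary.
\begin{itemize}
\item For any $H_i=P_i+\epsilon_i e$, both products $(\alpha_1H_1)(\alpha_1H_2)(\alpha_2H_3)(\alpha_2H_4)$ and $(\alpha_1H_1)(\alpha_1H_3)(\alpha_2H_2)(\alpha_2H_4)$ have degree at most $2$ in each of $ev_1$ and $ev_2$. They share the same top term $\epsilon_1\epsilon_2\epsilon_3\epsilon_4(ev_1)^2(ev_2)^2$, so the quartic terms cancel identically between the two sides of \eqref{associativity}.
\item The terms $(ev_1)^2(v_1v_2)$ and $(ev_2)^2(v_1v_2)$ are only quadratic in $e$. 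After summation they factor as $\bigl(\sum GW(\alpha_1)(ev_1)^2v_1\bigr)\cdot\bigl(\sum GW(\alpha_2)v_2\bigr)$, or symmetrically, and so vanish by Lemma \ref{zero-sum}.
\end{itemize}
With the correct $H_i$ the argument closes using only the two lemmas you list. The step your proposal leaves undone, matching the coefficients with $a,b_1,b_2,c$, is precisely the one that would have exposed the wrong choice.
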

\begin{proof}
We put $K'=\frac{d}2 C+K$ and  choose $H_1=H_3=-K'-e$, $H_2= H_4=C-e$,  where $e\in (K,C)^\perp$, $e^2=-2$, and apply Proposition \ref{associativity} to  $\alpha\in H_2(X)$ written in a form $\alpha=\frac{m}2C-\frac{n}2K'+v$ with $v\in (K,C)^\perp\otimes\Q$.
This gives
\[
\begin{aligned}
&(2(m-\frac{nd}2-ev)^2+(2+d)(n-ev)^2)GW(\alpha)=\\
&\sum_{\substack{(m,n)=(m_1,n_1)+(m_2,n_2)\\ v=v_1+v_2,v_1,v_2\in (K,C)^\perp\otimes\Q}}
\hskip-8mm
GW(\alpha_1)GW(\alpha_2)\binom{m-2}{m_1-1}((m_1-\frac{n_1d}2-ev_1)^2(n_2-ev_2)^2- \\
&
(m_1-\frac{n_1d}2-ev_1)(n_1-ev_1)(m_2-\frac{n_2d}2-ev_2)(n_2-ev_2)) (\frac{m_1n_2+m_2n_1}2-\frac{n_1n_2d}4+(v_1v_2))
\end{aligned}
\]
where $\alpha_j$, $j=1,2$, stands for $\frac{m_j}2C-\frac{n_j}2K'+v_j$. Taking the average with respect to $\pm e$ we get
%
\[
\begin{aligned}
&(2(m-\frac{nd}2)^2+(2+d)n^2+(4+d)(ev)^2)\,
GW(\alpha)=\\
&\sum_{\substack{(m,n)=(m_1,n_1)+(m_2,n_2)\\ v=v_1+v_2,v_1,v_2\in (K,C)^\perp\otimes\Q}}
\hskip-8mm
GW(\alpha_1)GW(\alpha_2)\binom{m-2}{m_1-1} (\frac12(m_1n_2+m_2n_1)- \frac{n_1n_2d}4+v_1v_2) \\
&((m_1-\frac{n_1d}2)^2n_2^2- (m_1-\frac{n_1d}2)(m_2-\frac{n_2d}2)n_1n_2 + (ev_1)^2(n_2^2-(m_2-\frac{n_2d}2)n_2)+\\
&(ev_2)^2((m_1-\frac{n_1d}2)^2-(m_1-\frac{n_1d}2)n_1)+(ev_1)(ev_2)\big(4(m_1-\frac{n_1d}2)n_2-(m_1-\frac{n_1d}2)n_2\\
&-(m_1-\frac{n_1d}2)(m_2-\frac{n_2d}2)-n_1n_2-n_1(m_2-\frac{n_2d}2 )\big).
\end{aligned}
\]
Then we sum over all $v_1,v_2\in (K,C)^\perp\otimes\Q$ and cancel the terms with $(ev_1)(ev_2), (v_1v_2)$, $(v_1v_2)(ev_1)^2, (v_1v_2)(ev_2)^2$ (that is justified by
splitting off a factor $\sum GW(\alpha_j)(ev_j)$, or $\sum GW(\alpha_j)v_j$ and applying Lemma \ref{zero-sum})
which gives
\begin{equation}
\begin{aligned}
&(2(m-\frac{nd}2)^2+(2+d)n^2)GW_{m,n}+(4+d)GW^{[2]}_{m,n}=\\
&\sum_{\substack{(m,n)=(m_1n_1)+(m_2,n_2)}}GW_{m_1,n_1}GW_{m_2,n_2}\binom{m-2}{m_1-1} (\frac12(m_1n_2+m_2n_1)- \frac{n_1n_2d}4)((m_1-\frac{n_1d}2)^2n_2^2-\\
&- (m_1-\frac{n_1d}2)(m_2-\frac{n_2d}2)n_1n_2 ) +
\sum_{\substack{(m,n)=(m_1,n_1)+(m_2,n_2)}}GW^{[2]}_{m_1,n_1}GW_{m_2,n_2}\binom{m-2}{m_1-1} \\
&(\frac12(m_1n_2+m_2n_1)- \frac{n_1n_2d}4)(n_2^2-( m_2-\frac{n_2d}2)n_2 ) +
\sum_{\substack{m=m_1+m_2\\ n=n_1+n_2}}GW_{m_1,n_1}GW^{[2]}_{m_2,n_2}\binom{m-2}{m_1-1}\\
& (\frac12(m_1n_2+m_2n_1)- \frac{n_1n_2d}4)((m_1-\frac{n_1d}2)^2- (m_1-\frac{n_1d}2)n_1 ) + \\
& \sum_{\substack{(m,n)=(m_1,n_1)+(m_2,n_2)\\ v=v_1+v_2,v_1,v_2\in (K,C)^\perp\otimes\Q}}GW(\alpha_1)GW(\alpha_2)\binom{m-2}{m_1-1}(v_1v_2)(ev_1)(ev_2) \\
& (3(m_1-\frac{n_1d}2)n_2-(m_1-\frac{n_1d}2)(m_2-\frac{n_2d}2)-n_1n_2-n_1(m_2-\frac{n_2d}2) ). \\
\end{aligned}
\end{equation}

Finally, it remains to apply Lemma \ref{vector-moment} allowing to replace the last sum by
\begin{equation*}
\begin{aligned}
&\frac12 \sum_{\substack{m=m_1+m_2\\ n=n_1+n_2}}GW^{[2]}_{m_1,n_1}GW^{[2]}_{m_2,n_2}\binom{m-2}{m_1-1} ((m_1-\frac{n_1d}2)(m_2-\frac{n_2d}2)+n_1n_2+\\
&n_1(m_2-\frac{n_2d}2)-3(m_1-\frac{n_1d}2)n_2 ). 
\end{aligned}\qedhere
\end{equation*}
\end{proof}

\subsection{Recursion for $\NN_{m,n}$.}\label{N-section}
The recursion relation suggested in this section (see Corollary \ref{N-recursion}) reduces computing the double sequence $\NN_{m,n}$ to a computation of 
three sequences of initial valuers, $GW_{1,n}$,  $GW^{[2]}_{1,n}$, and $\NN_{1,n}$. More precisely, this recursion relation involves also the double sequence $GW_{m,n}$ and, as additional initial values, the sequences $GW_{2,n}$,  $\NN_{2,n}$.
For the first one, $GW_{m,n}$, we have already provided a recursion relation reducing its computation to the sequences  $GW_{1,n}$ and $GW_{m,0}$. As to $\NN_{2,n}$, these numbers vanish for $n$ odd (see Theorem \ref{main-main}),
while for $n$ even we may use, as above, the formula $\NN_{2,2k}= 2^{-2}GW^{[2]}_{1,k}$ (see Theorem \ref{magic-theorem}) and consider $GW^{[2]}_{1,k}$ (instead of $\NN_{2,2k}$) as a sequence of initial valuers.

Following \cite{surgery-invariant}, we introduce the following notations:
\begin{equation}\label{definition-Gamma}
\Gamma_{m,n}=-2^{1-l}\NN_{m,n}, 
\end{equation}
where $l=\frac12(m-1-k), k\in\{0,1\}, k=m-1\,mod \, 2$;
and  
$$\Gamma_B=-2^{1-l}N_B
$$  
with $B$ written in the form $B=\frac{m}2C-\frac{n}2K'+v$ with $K'=\frac{d}2 C+K$, $v\in (C,K)^\perp\otimes\Q$.
Note that, according these definitions,
\begin{equation}\label{def-Gamma}
\Gamma_{m,n}=\sum_{B\in\LL^{m,n}_\R}\Gamma_B
\end{equation}

Recall Solomon's recursion rule ({\it cf.} \cite[(OGW3)]{Jake} and \cite[Th.1.1(RWDVV3)]{Chen})
which is a corollary of an analog for WDVV-equation designed by Solomon  \cite{open} in the framework of open strings.

\begin{theorem}\label{ThirdRelation}
Let $H_1,H_2,H_3\in H_2^-(X)$
 be fixed elements with  $H_1H_3=0$. Then, for any $B\in\lay^{m,n}_\R$ and each pair $(m,n)\in \Z_{\ge 3}\times\Z_{\ge 0}$
the following relation holds:
$$(H_1H_2)(H_3B)
\Gamma_{B}=\FS_{B}+\Theta_B^{(2)},
\text{ where }$$
\scalebox{0.9}{$
\begin{aligned}
&\FS_{B}=\frac14
\sum_{\substack {B_1+B_2=B}}
(H_1B_1)((H_3B_1)(H_2B_2)-(H_2B_1)(H_3B_2))\binom{l-1}{l_1}\binom{k}{k_1}\Gamma_{B_1}\Gamma_{B_2}\\
&\Theta_B^{(2)}=\frac12\sum _{\substack{B_F-\conj_* B_F\\+B_U=B}}\hskip-5mm
(B_UB_F)(H_1B_F)((H_3B_F)(H_2B_U)-(H_2B_F)(H_3B_U))\binom{l-1}{l_U}GW(B_F)\Gamma_{B_U}\\
& l=\frac12(m-1-k), k\in\{0,1\}, k=m-1\,mod \, 2. \qed
\end{aligned}$
}
\end{theorem}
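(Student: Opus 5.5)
This relation is not proved from scratch: it is Solomon's open WDVV relation (RWDVV3 in Chen's formulation, OGW3 in Horev--Solomon) for open Gromov--Witten invariants of $(X,X_\R)$. The plan is to translate that relation into the present normalization, in the same way as in \cite{surgery-invariant}.

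\textbf{Step 1: identify the invariants.} In the cited works the relevant open invariants, with $k$ real and $l$ pairs of conjugate interior point constraints, are defined using a relative $\Pin^-$-structure. With the $\Pin^-$-structure $\theta$ used here, I would show that each such invariant equals, up to a universal factor, the signed count $i^{\hat q(B)-m^2}w_k(B)$. Concretely:
\begin{itemize}
\item Split a class $B\in H_2^-(X)$ into disc classes via the Viro homomorphism $\bhv$.
\item Compare the orientation sign of each real rational curve with Welschinger's sign. The comparison goes through the spinor-state/quadratic-function formula: the sign attached to a real curve is $i^{q_\theta(\bhv(B))}$ times $(-1)^{\text{solitary nodes}}$, up to a factor depending only on $m$.
\item Absorb the correction $(-1)^{g_a}$ and the power $i^{-m^2}$ into the normalization $\Gamma_B=-2^{1-l}\NN_B$.
\end{itemize}
This is exactly what \cite{surgery-invariant} did for del Pezzo surfaces. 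The argument there is local to the real locus and uses only that $\theta$ is a $\Pin^-$-structure on $X_\R$, so it carries over verbatim. For odd $s$ the monic condition fixes the remaining global sign.

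\textbf{Step 2: rewrite the relation.} Substitute into RWDVV3 the cohomology classes Poincar\'e dual to $H_1,H_2,H_3$, chosen anti-invariant so that they pair with $H_2^-$. The divisor axiom turns each insertion into a factor $(H_iB)$. The constraint $H_1H_3=0$ kills the term in which $H_1,H_3$ would be merged into a single closed insertion; this is why only $(H_1H_2)(H_3B)\Gamma_B$ survives on the left.

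On the right:
\begin{itemize}
\item Splittings into two discs give $\FS_B$. The binomials $\binom{l-1}{l_1}\binom{k}{k_1}$ come from distributing the point constraints, after one conjugate pair has been used to fix the degeneration.
\item Splittings into a sphere $B_F$ plus a disc give $\Theta^{(2)}_B$, with sphere class $B_F-\conj_*B_F$. This produces the closed factor $GW(B_F)$ and the intersection weight $(B_UB_F)$.
\end{itemize}
The factors $\tfrac14$ and $\tfrac12$ come from the rescaling $-2^{1-l}$ together with the symmetrization over the doubled disc.

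\textbf{Step 3: check the range.} Verify that $m\ge 3$ guarantees at least one interior conjugate pair. This makes $l\ge1$ when $k$ is odd, and in the parity-compatible cases $k\ge 2$ when $l=0$, which is what the form of RWDVV3 requires.

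\textbf{Main obstacle.} The main difficulty is the sign bookkeeping in Step 1: matching Solomon's orientation of the disc moduli spaces, fixed by the relative spin or $\Pin$ datum, with $i^{\hat q(B)-m^2}$ times the modified Welschinger sign, uniformly in $B$. I would handle it by reducing to the comparison lemma already established in \cite{surgery-invariant}. The only new point to check is that the inherited $\Pin^-$-structures define admissible relative $\Pin$ data; this holds because $\Pin^-(X_\R)\neq\varnothing$ and the construction never used the conic bundle beyond fixing $\theta$.
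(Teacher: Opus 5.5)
Your proposal takes essentially the paper's approach: the paper gives no independent proof and simply recalls the relation as Solomon's open WDVV rule ((OGW3) of Horev--Solomon, Th.~1.1 (RWDVV3) of Chen), written in the normalization $\Gamma_B=-2^{1-l}\NN_B$ carried over from \cite{surgery-invariant}. One correction to your Step~1: the factor $(-1)^{g_a(B)}$ depends on $B$ (through $B^2$), not only on $m$, so it cannot be absorbed into the constant $-2^{1-l}$, and a per-curve sign of the form $i^{\hat q}(-1)^{s_A}$ times a function of $m$ contradicts your own target $i^{\hat q(B)-m^2}w_k(B)$, which is built from cross-like nodes; the translation instead rests on how $\chi(B)=i^{\hat q(B)}(-1)^{g_a(B)}$ behaves along the two kinds of splittings in the relation, namely $\chi(B_1+B_2)=-\chi(B_1)\chi(B_2)$ and $\chi(B_U+B_F-\conj_*B_F)=(-1)^{B_F\cdot\conj_*B_F}\chi(B_U)$.
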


\begin{theorem}\label{Gamma-recursion}
Let $d=K^2 \le 6$. Then, for any pair  $(m,n)\in \Z_{\ge 3}\times\Z_{\ge 1}$ and  $l=\frac12(m-1-k), k\in\{0,1\}, k=m-1\,mod \, 2$, 
we have a recursion relation
\begin{equation*}
\begin{aligned}
&2n\Gamma_{m,n}=
\frac14\sum_{\substack{n_1+n_2=n\\m_1+m_2=m\\ m_1,m_2\ge 1}}n_1(n_1m_2-n_2m_1)\binom{l-1}{ l_1}\binom{k}{k_1} \Gamma_{m_1,n_1}\Gamma_{m_2, n_2}
\\
&+ 
\frac14
\hskip-5mm
\sum_{\substack{2n_F+n_U=n\\ 2m_F+m_U=m\\ m_F, m_U\ge 1}}
\hskip-5mm
n_F(m_Un_F+m_Fn_U-\frac{dn_Un_F}2)(n_Fm_U-n_Um_F)
\binom{l-1}{ l_U} GW_{m_F,n_F}\Gamma_{m_U,n_U}.
\end{aligned}
\end{equation*}
\end{theorem}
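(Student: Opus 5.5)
The plan is to specialize Solomon's relation (Theorem \ref{ThirdRelation}) to a particular triple $H_1,H_2,H_3$ that sees only the bi-level of the classes involved. Then sum it over all $B\in\lay^{m,n}_\R$ and reduce both sides to the bi-level numbers $\Gamma_{m,n}$ and $GW_{m,n}$.

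\textbf{Choice of the triple.} I take $H_1=H_3=C$ and $H_2=-K'$, with $K'=\frac d2C+K$ as in Section \ref{sect-recursion}. If $d$ is odd, $K'$ is only a rational class. In that case I use $H_2=-2K'=-dC-2K\in H_2^-(X)$ and divide the resulting identity by $2$; Theorem \ref{ThirdRelation} is linear in $H_2$, so this is harmless. The classes $C$ and $K$ are represented by real curves, so they lie in $H_2^-(X)$, and $H_1H_3=C^2=0$ as required. For any class $\beta=\frac{m_\beta}2C-\frac{n_\beta}2K'+v$ with $v\in(K,C)^\perp\otimes\Q$ we then have $C\beta=n_\beta$ and $-K'\beta=m_\beta-\frac{d}2n_\beta$. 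Moreover $H_1H_2=-CK'=-CK=2$. Hence the left-hand side of Theorem \ref{ThirdRelation} becomes $2n\Gamma_B$, and summing over $B\in\lay^{m,n}_\R$ gives $2n\Gamma_{m,n}$ by (\ref{def-Gamma}).

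\textbf{The term $\FS_B$.} Here $H_1B_1=n_1$ and
$$(H_3B_1)(H_2B_2)-(H_2B_1)(H_3B_2)=n_1(m_2-\tfrac d2n_2)-(m_1-\tfrac d2n_1)n_2=n_1m_2-n_2m_1.$$
So the coefficient depends only on the bi-levels $(m_i,n_i)$. The binomial factors $\binom{l-1}{l_1}\binom k{k_1}$ also depend only on the $m_i$. Summing over $B$ in the layer is the same as summing over all pairs $(B_1,B_2)$ with $B_i\in\lay^{m_i,n_i}_\R$ and $(m_1,n_1)+(m_2,n_2)=(m,n)$. This factorizes into $\Gamma_{m_1,n_1}\Gamma_{m_2,n_2}$ and produces the first sum. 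The terms with $m_i\le0$ vanish by Theorem \ref{main-main}.

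\textbf{The term $\Theta^{(2)}_B$.} The class $-\conj_*B_F$ has the same bi-level $(m_F,n_F)$ as $B_F$. Hence $B_U\in\lay^{m_U,n_U}_\R$ with $2m_F+m_U=m$ and $2n_F+n_U=n$. The factor $H_1B_F$ equals $n_F$, and the bracket equals $n_Fm_U-n_Um_F$, exactly as for $\FS_B$. The only non-bi-level factor is $B_UB_F$. Write $B_F=\frac{m_F}2C-\frac{n_F}2K'+v$ and $B_U=\frac{m_U}2C-\frac{n_U}2K'+u$. Then
$$B_UB_F=\tfrac12(m_Un_F+m_Fn_U)-\tfrac d4 n_Un_F+u\cdot v.$$
Fix $B_U$ and sum over all complex classes $B_F$ of bi-level $(m_F,n_F)$. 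By Lemma \ref{zero-sum}, $\sum_v GW(B_F)\,v=0$, so the $u\cdot v$ contribution disappears. The remaining factor is $\frac12(m_Un_F+m_Fn_U-\frac d2n_Un_F)$, which together with the prefactor $\frac12$ gives the stated $\frac14$. Summing then over $B_U$ gives $GW_{m_F,n_F}\Gamma_{m_U,n_U}$.

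\textbf{Expected obstacle.} The main difficulty is the bookkeeping in $\Theta^{(2)}$. I must check that, as $B$ runs over the real layer, the decompositions $B=B_F-\conj_*B_F+B_U$ of Theorem \ref{ThirdRelation} correspond bijectively to pairs $(B_F,B_U)$ with $B_F$ arbitrary in $\lay^{m_F,n_F}(X)$ and $B_U$ arbitrary in $\lay^{m_U,n_U}_\R$. In particular I must confirm there is no double count under $B_F\mapsto-\conj_*B_F$ beyond what the normalization in Theorem \ref{ThirdRelation} already accounts for. I must also check the integrality convention $GW(\beta+v)=0$ when $\beta+v\notin H_2(X)$, so that Lemma \ref{zero-sum} applies verbatim. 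Finally, I must check that the hypotheses $m\ge3$ and $n\ge1$ keep every term within the range where Theorem \ref{ThirdRelation} and the vanishing statements of Theorem \ref{main-main} hold.
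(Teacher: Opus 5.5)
Your proof is correct. Its core is the paper's argument: the same choice $H_1=H_3=C$ in Theorem~\ref{ThirdRelation}, the same observation that every coefficient except $B_UB_F$ depends only on bi-levels, and the same use of Lemma~\ref{zero-sum} to kill the $(K,C)^\perp$-part of $B_UB_F$.

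Your $H_2=-K'$ gives the same identity as the paper's $H_2=-K$. The two differ by a multiple of $C$, and such a term contributes zero on both sides: $C^2=0$, and the bracket is antisymmetric in $H_2,H_3$ with $H_3=C$. Taking $H_2=-K$ directly spares you the doubling trick for odd $d$.

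The one structural difference is how the general real structure is handled:
\begin{itemize}
\item \textbf{The paper's route.} It first uses the invariance of $\NN_{m,n}$ (Theorem~\ref{main-main}) to pass to a maximal real conic bundle, where $\conj_*=-\id$. There $\lay^{m,n}_\R(X)=\lay^{m,n}(X)$ and $B_F-\conj_*B_F=2B_F$, so the bookkeeping you list as the expected obstacle never arises. Solomon's relation is then needed only on one convenient model.
\item \textbf{Your route.} You work on an arbitrary real conic bundle, and the obstacle you flag is in fact immediate. For any complex $B_F$ and real $B_U$, the class $B_F-\conj_*B_F+B_U$ is automatically real, of bi-level $(2m_F+m_U,\,2n_F+n_U)$. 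So summing Theorem~\ref{ThirdRelation} over the real layer is literally summing over the product $\lay^{m_F,n_F}(X)\times\lay^{m_U,n_U}_\R(X)$.
\item Since the sum in Theorem~\ref{ThirdRelation} runs over all $B_F$, the terms for $B_F$ and $-\conj_*B_F$ both appear, and no extra normalization is involved. Lemma~\ref{zero-sum} then applies for each fixed $B_U$.
\end{itemize}

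Your route shows the identity holds on every model without invoking invariance. The cost is that you rely on Theorem~\ref{ThirdRelation}, with the twisted counts $\Gamma_B$, for every real conic bundle, whereas the paper needs it only on a maximal one.
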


\begin{proof} Due to invariance of the numbers $\NN_{m,n}$, it is sufficient to check the formula for
a particular real conic bundle $X$ with 
a given $d=K^2$. Let us choose a maximal one. Then, $\conj_*$ acts on $H_2(X)=\Pic X$ as multiplication by $-1$, and $(K,C)^\perp$ is isomorphic to $\DD_r$, $r=6-K^2$.

To apply Theorem \ref{ThirdRelation}, we choose $H_1=H_3=C$, $H_2=-K$. We put, as above, $K'=\frac{d}2 C+K$ and write $B$ in the form $B=\frac{m}2C-\frac{n}2K'+v$ with $v\in (C,K)^\perp\otimes\Q$.
Then,
$$
(H_1H_2)(H_3B)\Gamma_B=2n\Gamma_B
$$
while on the other side we get
\begin{equation*}
\begin{aligned}
&\FS_{B}=\frac14
\sum_{\substack {B_1+B_2=B}}
n_1(n_1m_2-m_1n_2)\binom{l-1}{l_1}\binom{k}{k_1}
\Gamma_{B_1}\Gamma_{B_2},\\
&\Theta_B^{(2)}=
\frac12
\hskip-5mm
\sum _{\substack{B_F-\conj_* B_F\\+B_U=B}}
\hskip-4mm
(\frac{m_Un_F+m_Fn_U}2-d\frac{n_Un_F}4+v_Uv_F)
(n_Fm_U-m_Fn_U)n_F
\binom{l-1}{l_U}
GW(B_F)\Gamma_{B_U}.
\end{aligned}
\end{equation*}

Since 
$\conj_*=-\id$ (due to our choice of $X$) and (by Lemma \ref{zero-sum})
$$
\sum_{B_F\in\lay^{m_F,n_F}_\C} v_F GW_{B_F}=0,
$$
the summing the above expressions for $\FS_B$ and $\Theta_B^{(2)}$ over $B\in \lay^{m,n}_\R=\lay^{m,n}_\C$ gives the result stated.
\end{proof}

Passing from $\Gamma_{m,n}$ to $\NN_{m,n}$ according  (\ref{definition-Gamma}), we get the following.

\begin{cor}\label{N-recursion}
Let $d=K^2 \le 6$. Then, for any pair  $(m,n)\in \Z_{\ge 3}\times\Z_{\ge 1}$
and $l=\frac12(m-1-k)\ge 1, k\in\{0,1\}, k=m-1\,{\rm mod} \, 2$, we have a recursion relation
\begin{equation*}
\begin{aligned}
&n \NN_{m,n}=
 \sum_{\substack{2n_F+n_U=n\\ 2m_F+m_U=m\\ m_F, m_U\ge 1}}2^{m_F-3}n_F(m_Un_F+m_Fn_U-\frac{dn_Un_F}2)(n_Fm_U-n_Um_F)\\
 &\binom{l-1}{ l_U} GW_{m_F,n_F} \NN_{m_U,n_U}
- \frac12 
\hskip-3mm
\sum_{\substack{n_1+n_2=n\\m_1+m_2=m\\ m_1,m_2\ge 1}}
\hskip-5mm
n_1(n_1m_2-n_2m_1)
\binom{l-1}{ l_1}\binom{k}{k_1} \NN_{m_1,n_1}\NN_{m_2, n_2}.
\end{aligned}
\end{equation*}
\qed
\end{cor}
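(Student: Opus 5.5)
The corollary is a direct translation of Theorem \ref{Gamma-recursion} into the $\NN$-notation, so I plan to substitute the definition (\ref{definition-Gamma}), $\Gamma_{m,n}=-2^{1-l}\NN_{m,n}$ with $l=\frac12(m-1-k)$, into every term of that theorem and clear the common power of $2$. Throughout I will use the index conventions of Theorem \ref{ThirdRelation}: $l_i=\frac12(m_i-1-k_i)$ and $k_i\in\{0,1\}$ for the factors $\Gamma_{m_i,n_i}$.

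\emph{Left-hand side and the $\Theta^{(2)}$-type sum.} The left-hand side becomes $2n\Gamma_{m,n}=-2^{2-l}n\NN_{m,n}$. In the second sum of Theorem \ref{Gamma-recursion} we have $m=2m_F+m_U$ and $k_U\equiv m_U-1\equiv m-1 \pmod 2$, so $k_U=k$. Hence $l_U=\frac12(m_U-1-k)=l-m_F$, and
$$\Gamma_{m_U,n_U}=-2^{1-l+m_F}\NN_{m_U,n_U}.$$
Multiplying the whole identity by $-2^{l-2}$ turns the left-hand side into $n\NN_{m,n}$. The $GW$-term then carries the power $\frac14\cdot 2^{1-l+m_F}\cdot 2^{l-2}=2^{m_F-3}$, which matches the first sum of the corollary, with the plus sign.

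\emph{The $\FS$-type sum.} Here $m_1+m_2=m$, and I need to track $k_1+k_2$. If $k=0$ ($m$ odd), one of $m_1,m_2$ is odd and the other even, so $k_1+k_2=1$ and $l_1+l_2=\frac12(m-1)=l$. If $k=1$ ($m$ even), either both $m_i$ are odd, giving $k_1+k_2=0$ and $l_1+l_2=\frac12(m-2)=l$, or both are even, giving $k_1+k_2=2$. In the last case $\binom{k}{k_1}$ must be read as requiring $k_1\le k$, and this is exactly the case where Solomon's rule allots point constraints $(k_1,k_2)$ summing to $k$. I will check that the admissible decompositions always satisfy $k_1+k_2=k$, so that $l_1+l_2=l$.

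With $l_1+l_2=l$, the product is
$$\Gamma_{m_1,n_1}\Gamma_{m_2,n_2}=2^{2-l}\NN_{m_1,n_1}\NN_{m_2,n_2}.$$
Multiplying by $\frac14$ and by $-2^{l-2}$ gives the coefficient $-2^{-2}$. Then I rewrite $n_1(n_1m_2-n_2m_1)$ and symmetrize, or simply recheck the constant against the stated $-\frac12$; a factor $2$ may enter from counting ordered pairs $(B_1,B_2)$ versus the stated normalization. This constant bookkeeping is the only real obstacle. I will settle it by carefully matching the normalization of $\FS$ in Theorem \ref{ThirdRelation} with the one used in Theorem \ref{Gamma-recursion}, and then verifying the final constant on a low case, for instance $m=3$ using Proposition \ref{GWm0}.
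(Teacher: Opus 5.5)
Your approach is the paper's: there the corollary is obtained simply by substituting $\Gamma_{m,n}=-2^{1-l}\NN_{m,n}$ into Theorem \ref{Gamma-recursion}. Your treatment of the left-hand side and of the $GW$-sum (with $k_U=k$, $l_U=l-m_F$, coefficient $2^{m_F-3}$) is correct. However, the proposal does not prove the stated formula. The exponent count in the $\FS$-sum is wrong, which is why you land on $-\frac14$ instead of $-\frac12$. Your ``ordered pairs'' explanation is not the cause: both sums run over the same ordered splittings $B_1+B_2=B$, and no extra factor $2$ comes from that.

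The error is the claim $l_1+l_2=l$. Since $l_i=\frac12(m_i-1-k_i)$, one has $l_1+l_2=\frac12(m-2-k_1-k_2)$, because each summand carries its own $-1$. For odd $m$ you correctly get $k_1+k_2=1$, but then $l_1+l_2=\frac12(m-3)=l-1$, not $\frac12(m-1)$. In general $k_1+k_2\equiv m_1+m_2\equiv k+1 \pmod 2$, so the condition $k_1+k_2=k$ you intend to verify never holds. The binomial $\binom{l-1}{l_1}$ records a distribution of $l-1$ pairs, i.e.\ $l_2=l-1-l_1$ and $k_1+k_2=k+1$. For even $m$ these are the splittings with both $m_i$ even ($k_1=k_2=1$), which are exactly the ones you propose to discard. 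The both-odd splittings you would keep have $l_1+l_2=l$ and do not fit this pattern.

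With $l_1+l_2=l-1$ the substitution closes at once:
$$\Gamma_{m_1,n_1}\Gamma_{m_2,n_2}=2^{3-l}\NN_{m_1,n_1}\NN_{m_2,n_2},$$
and $\frac14\cdot 2^{3-l}\cdot(-2^{l-2})=-\frac12$, which is the stated constant. A numerical check confirms this. For $d=1$, $m=3$, $n=2$, the stated formula gives $2\NN_{3,2}=\frac14\cdot 64\cdot 2-\frac12\cdot 8\cdot 4=16$, so $\NN_{3,2}=8$, as in Table \ref{k=8}. Your constant $-\frac14$ would give $12$ instead, and it would also break the symmetry $\NN_{3,4}=\NN_{3,8}$ of Remark \ref{palyndrom}: you would get $52$ versus $48$.
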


\section{Up to eight blowups}\label{upto8}

Here, we find sequences $GW_{1,n}$, $GW_{1,n}^{[2]}$, and $\NN_{1,n}$  for 
conic bundles on rational surfaces with $1\le K^2\le 6$.
\footnote{Recall that the numbers $\NN_{1,n}$ are well defined only for $K^2\le 6$, while $GW_{1,n}^{[2]}$ only for $K^2\le 5$.}
Over $\C$,
such surfaces can be seen as blowups of 
$\P^2$
at $k$ points with $3\le k\le 8$.
Since each, complex or real, deformation class of conic bundles $\pi\: X\to \P^1$ has a del Pezzo representative, 
we restrict our computations to del Pezzo case.
An advantage of considering del Pezzo surfaces 
%
is based on the fact that for 
them 
 the values of $GW_{1,n}$, $GW_{1,n}^{[2]}$, and $\NN_{1,n}$  
come from the contribution of 
$(-1)$-curves,
except for the case $k=8$, $n=2$, where the contribution of 
anticanonical curves should be added.

\subsection{Class $C$ and $\DD_n$-coordinates in a blowup model}\label{lattices-prelim}
A blowup presentation $X=\P^2\#k\bar\P^2$, $k\ge1$,
yields an identification
$$
H_2(X)=H_2(\P^2)\bigoplus_{i=0}^{k-1} H_2(\bar\P^2)=\Z\oplus\Z^{k}, \text{ with standard generators } H,e_0,\dots,e_{k-1}.
$$
Considering a conic bundle with the fiber class $C=H-e_0$, we obtain a description
\[
\la K,C\ra^\perp=\{ v=yH-\sum_{i=0}^{k-1}x_ie_i \,|\, y,x_0,\dots,x_{k-1}\in\Z\, , y=
x_0 ,2y=x_1+\dots+x_{k-1}\}
\]
and an identification, $v\mapsto(x_1,\dots,x_{k-1})$, of $\la K,C\ra^\perp$ with the model of $\DD_{k-1}$ given in Section \ref{right-side}.
 

\begin{lemma}\label{Dn}
Let $X=\P^2\#k\bar\P^2$, $k\ge 1$.
Then every $h\in H_2(X)$ with $-K\cdot h=m$, $C\cdot h=n$
can be decomposed as
\begin{equation}\label{h=}
h=\frac12(mC-nK'+v),\quad  K'=K+\frac{d}2C, \ v\in \DD_{k-1}=\la K,C\ra^\perp,\ d=9-k.
\end{equation}
Conversely, 
a vector $h=\frac12(mC-nK'+v)\in H_2(X)\otimes\Q$ with
\be
 v\in \DD_{k-1}=\la K,C\ra^\perp, \frac{v}2=yH-ye_0-\sum_{i=1}^{k-1}x_ie_i
\ee
belongs to $H_2(X)$ if and only if:
\begin{enumerate}
\item $\sum_{i=1}^{k-1}x_i=n+m+\frac{nd}2\ \rm{mod}\,2$,
\item the vector $\bar x=(x_1,\dots,x_{k-1})$  lies in $\Z^{k-1}$ if $n$ is even and in $(\Z+\frac12)^{k-1}$ if $n$ is odd.
\end{enumerate}
\end{lemma}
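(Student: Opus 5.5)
The proof is a coordinate computation in the basis $H,e_0,\dots,e_{k-1}$. First I would record the intersection numbers. We have $K=-3H+\sum_{i=0}^{k-1}e_i$ and $C=H-e_0$, so $K^2=9-k=d$, $KC=-2$ and $C^2=0$. For $K'=K+\frac d2C$ this gives $K'C=-2$ and $KK'=K^2+\frac d2 KC=0$.

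For the direct statement, take $h\in H_2(X)$ and decompose $h=aC+bK'+w$ over $\Q$, with $w\in\la K,C\ra^\perp\otimes\Q$. This is possible because the form restricted to $\la K,C\ra$ is nondegenerate for every $d$: the Gram determinant is $-4$. Pairing with $C$ gives $n=Ch=bK'C=-2b$, so $b=-\frac n2$. Pairing with $-K$ gives $m=-aKC-bKK'=2a$, so $a=\frac m2$. Hence $h=\frac12(mC-nK'+v)$ with $v=2w\in\la K,C\ra^\perp\otimes\Q$.

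The content of the lemma is the integrality description of $v$. I would therefore write $h=aH-\sum_{i=0}^{k-1}b_ie_i$ with $a,b_i\in\Z$, so that $n=a-b_0$ and $m=3a-\sum_i b_i$. Then I would expand
\[
\tfrac v2=h-\tfrac m2C+\tfrac n2K',\qquad K'=(-3+\tfrac d2)H+(1-\tfrac d2)e_0+\sum_{i\ge1}e_i .
\]
For $i\ge1$ the $e_i$-coefficient is $-b_i+\frac n2$, so $x_i=b_i-\frac n2$ lies in $\Z$ or in $\Z+\frac12$ according to the parity of $n$; this is condition (2). The $H$- and $e_0$-coefficients are automatically opposite, since $v\perp C$, and their common value is
\[
y=a-\tfrac m2+\tfrac n2\bigl(-3+\tfrac d2\bigr).
\]
Orthogonality to $K$ gives $2y=\sum_{i\ge1}x_i$, which is the defining relation of $\DD_{k-1}$ in the model of Section \ref{right-side}. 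Rewriting $y$ in terms of $m,n,d$ and the integer $a$ and reducing modulo $2$ should give $\sum x_i\equiv n+m+\frac{nd}2 \pmod 2$. Here I have to keep track of the term $\frac{nd}2$ when $nd$ is odd, and that bookkeeping is condition (1).

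For the converse, start from $h=\frac12(mC-nK'+v)$ with $\frac v2=yH-ye_0-\sum_{i\ge1} x_ie_i$ satisfying (1) and (2). Invert the formulas above: put $b_i=x_i+\frac n2$ for $i\ge1$, which is an integer by (2), then $a=y+\frac m2-\frac n2(-3+\frac d2)$ and $b_0=a-n$. It remains to show that $a$ is an integer. Substituting $2y=\sum x_i$ reduces this to exactly the congruence (1), and then $b_0$ is an integer as well.

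The only delicate step is this parity bookkeeping: the half-integers $\frac m2$, $\frac n2$ and $\frac{nd}2$ must combine to give precisely the congruence stated in (1), with the convention for the sign of $x_i$ matching the identification $v\mapsto(x_1,\dots,x_{k-1})$ of Section \ref{lattices-prelim}. I would check it separately for $n$ even and for $n$ odd, and for $d$ even and $d$ odd. If a sign or factor-$2$ discrepancy appears, it will come from the normalization $v$ versus $\frac v2$ in that identification, and I would fix the normalization so that the $x_i$ in (1)--(2) are the coordinates of $\frac v2$, as written in the statement.
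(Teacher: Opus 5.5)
Your proposal is correct and follows the paper's proof. Both arguments expand $h$ in the basis $H,e_0,\dots,e_{k-1}$. Condition (2) comes from the $e_i$-coefficients, $i\ge1$. Condition (1) comes from the $H$-coefficient (equivalently, the $e_0$-coefficient) together with $2y=\sum x_i$. Your formula $2y=2a-m-3n+\frac{nd}2$ settles the parity bookkeeping in one line, so the proposed case split by parities of $n$ and $d$ is unnecessary.

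Your conclusion $v\in\la K,C\ra^\perp\otimes\Q$ is also the right reading of the statement's "$v\in\DD_{k-1}$". When $nd$ is odd, $v$ is genuinely non-integral; for example, $k=2$ and $h=H-e_1$ give $v=\frac12(H-e_0-2e_1)$.
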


\begin{proof}
Expressing $C,K$ and $v$ 
in the  basis formed by $H$ and $e_i$, $0\le i\le k-1$, we obtain
$$h=(\frac{m}2-\frac{nd}4)(H-e_0)+\frac{n}2(3H-\sum_{i=0}^{k-1}e_i)+\frac{v}2,\quad \frac{v}2=yH-ye_0-\sum_{i=1}^{k-1}x_ie_i,\quad
2y=\sum_{i=1}^{k-1}x_i.
$$
Integrality of the coefficients
$-\frac{n}2-x_i$ of $h$ at $e_i$, for $1\le i\le k-1$, means that $\bar x=(x_1,\dots,x_k)$ lies in $\Z^{k-1}$ for even $n$ and $(\Z+\frac12)^{k-1}$ for odd.
Integrality of the $e_0$-coefficient $\frac{nd}4-\frac{m}2-\frac{n}2-y$ is equivalent to integrality of the $H$-coefficient
$\frac{m}2-\frac{nd}4+\frac{3n}2+y$, which in its turn equivalent to
$\frac{nd}2-m-n-2y\in 2\Z$.
\end{proof}

For any $k\ge1$, $n\ge0$ we
consider a subgroup $\Cal D^{1,n}\subset \frac12 \DD_{k-1}\subset \frac12 \Z^{k-1}$ defined in terms of
$\bar x=(x_1,\dots,x_{k-1})$ and $|\bar x|^2=\sum_i x_i^2$ as 
$$
\Cal D^{1,n}=\begin{cases}
\{\bar x\in\Z^{k-1}\,|\,
|\bar x|^2=n+1-\frac{n^2d}4 \}, &\text{ if $n$ is even},\\ 
\{\bar x\in(\Z+\frac12)^{k-1}\,|\,\sum_{i=1}^{k-1}x_i\in2\Z+\frac{nd}2,
|\bar x|^2=n+1-\frac{n^2d}4 \},  &\text{ if $n$ is odd}.
\end{cases}
$$ 

\subsection{
Counting lines} 
In a del Pezzo surface $X$, by a {\it line} we mean a smooth rational curve $L\subset X$
with $L^2=-1$. By adjunction, $KL=-1$ and we
 denote by $L^{1,n}\subset\Cal L^{1,n}$ the subset formed by the classes of lines.

\begin{lemma}\label{C1n}
Let $X=\P^2\#k\bar\P^2, 1\le k\le 8,$ be a del Pezzo surface equipped with a conic bundle. Then:
\begin{enumerate}
\item
The only effective divisors whose classes belong to $\Cal L^{1,n}$, $n\ge 0$, are lines
unless $k=8$ and $n=2$. 
In the case $k=8$, 
they are the lines and the anticanonical divisors.
\item
Each line in $X$ is completely determined by its homology class.
\item
$L^{1,n}=
\{\frac{C}2-\frac{n}2K'+\frac{v}2\,|\,\frac{v}2\in\Cal D^{1,n}\}$.

\end{enumerate}
\end{lemma}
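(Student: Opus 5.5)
The plan is to use the standard structure of effective curves on a del Pezzo surface. Since $-K$ is ample, every effective divisor $D$ with $-KD=1$ is irreducible and reduced: any decomposition $D=D_1+D_2$ into nonzero effective parts gives $-KD_i\ge1$, hence $-KD\ge2$. So $D$ is an integral curve with $-KD=1$. Adjunction gives $D^2=2p_a(D)-2+1=2p_a(D)-1\ge -1$. The Hodge index theorem gives $D^2K^2\le (KD)^2=1$, so $D^2\le 1/d$ with $d=K^2=9-k$. For $k\le7$ we get $d\ge2$, hence $D^2\le0$; odd parity forces $D^2=-1$, $p_a=0$, and $D$ is a smooth rational $(-1)$-curve, i.e. a line. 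For $k=8$ we have $d=1$, and $D^2=1$ is also possible. In that case equality holds in Hodge index, so $D\equiv -K$ numerically, hence $D\in|-K|$, and $CD=-KC=2$. This accounts for the exceptional case $n=2$ and proves (1). In the curve-count setting the relevant curves are these integral members, together with the lines.

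For (2) I would argue as follows. If $L\ne L'$ are lines in the same class, then $L\cdot L'=L^2=-1<0$, which is impossible for distinct irreducible curves. So a line is determined by its class.

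For (3), I first use that on a del Pezzo surface with $k\le8$ every class $\ell$ with $\ell^2=-1$ and $K\ell=-1$ is the class of a unique line. This is standard, via the Weyl group action or Riemann--Roch plus the argument above. Riemann--Roch gives $h^0(\ell)\ge \frac{\ell^2-K\ell}{2}+1=1$, so $\ell$ is effective. Since $-K\ell=1$, the effective divisor is integral, hence a line by part (1). (For $k=8$, the class $\ell$ has $\ell^2=-1\ne1$, so it cannot be anticanonical.) Thus $L^{1,n}$ is exactly the set of classes $h\in H_2(X)$ with $-Kh=1$, $Ch=n$, and $h^2=-1$.

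It remains to translate these conditions using Lemma~\ref{Dn} with $m=1$, writing $h=\frac C2-\frac n2K'+\frac v2$. I would compute $C^2=0$, $CK'=-2$, and $K'^2=K^2+dCK=d-2d=-d$. This gives
\[
h^2=\frac{2n\cdot 2}{4}\cdot\frac12\cdot 2\cdots
\]
More cleanly, $h^2=\tfrac14\bigl(-2n\,CK'+n^2K'^2+v^2\bigr)=\tfrac14(4n-n^2d)+\tfrac{v^2}4$, using that $v$ is orthogonal to $C$ and $K'$. With $\frac v2=yH-ye_0-\sum x_ie_i$, we have $\frac{v^2}{4}=y^2-y^2-\sum x_i^2=-|\bar x|^2$. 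So $h^2=-1$ is equivalent to $|\bar x|^2=n+1-\frac{n^2d}4$.

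The integrality conditions of Lemma~\ref{Dn} for $m=1$ then read as follows. For $n$ even, $\bar x\in\Z^{k-1}$ and $\sum x_i\equiv 1+\frac{nd}{2}+n \pmod 2$. For $n$ odd, $\bar x\in(\Z+\frac12)^{k-1}$ with the analogous congruence. The one point I expect to need care is matching these parity conditions with the definition of $\Cal D^{1,n}$. In the even case the sum condition should follow automatically from the norm condition, because $\sum x_i\equiv\sum x_i^2=n+1-\frac{n^2d}{4}\pmod 2$. I would check that this agrees with $1+n+\frac{nd}2$ modulo $2$ for even $n$, which reduces to $\frac{n^2d}4\equiv\frac{nd}{2}\pmod 2$, i.e. $\frac{nd}{2}\cdot\frac{n-2}{2}$ even. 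In the odd case the congruence has to be reconciled with the normalization $\sum x_i\in 2\Z+\frac{nd}2$ in the definition, possibly up to the sign convention for $y$. This bookkeeping is the main obstacle; the rest is formal.
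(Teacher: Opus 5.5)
Your proposal is correct, and for part (3) it is the paper's argument: write $h=\frac C2-\frac n2K'+\frac v2$ via Lemma~\ref{Dn}, use $C^2=0$, $CK'=-2$, $K'^2=-d$ to turn $h^2=-1$ into $|\bar x|^2=n+1-\frac{n^2d}4$, and then match the integrality conditions of Lemma~\ref{Dn}.

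The paper only cites (1) and (2) as well known. Your ampleness/adjunction/Hodge-index argument and the $L\cdot L'=-1$ argument are valid self-contained substitutes. Your Riemann--Roch step supplies the reverse inclusion in (3), which the paper uses tacitly. There you should add that $h^2(\ell)=h^0(K-\ell)=0$, because $-K\cdot(K-\ell)=-d-1<0$.

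The step you flag as the main obstacle is immediate and involves no sign ambiguity. From $K\cdot\frac v2=0$ you get $\sum x_i=2y$. The $e_0$-coefficient of $h$ is $\frac{nd}4-\frac m2-\frac n2-y$, so Lemma~\ref{Dn}(1) reads $\sum x_i\equiv \frac{nd}2+m+n \pmod 2$, with a plus sign on $\frac{nd}2$. For $m=1$ and $n$ odd, the integer $n+1$ is even. The condition is then literally $\sum x_i\in 2\Z+\frac{nd}2$, as in the definition of $\Cal D^{1,n}$.

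Your even-case check also closes at once: for $n=2j$ one has $\frac{nd}2\cdot\frac{n-2}2=dj(j-1)$, which is even.
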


\begin{proof}
(1) and (2) are well known (see for instance \cite{TwoKinds} for enumerating of effective divisors
with $m=1$ on del Pezzo surfaces).

Relation (\ref{h=}) gives presentation of lines at level $(1,n)$ as
$L= \frac{C}2-\frac{n}2K'+\frac{v}2$. Since  $C\cdot K'=C\cdot K=-2$, $C^2=K\cdot K'=0$, we obtain
$$-1=L^2=
\frac14(4n+n^2(K')^2)+\frac{v^2}4=n-\frac{n^2d}4+\frac{v^2}4\Longrightarrow |\bar x|^2= n+1-\frac{n^2d}4.$$

By Lemma \ref{Dn}, $\bar x\in\Z^{k-1}$ for $n$ even and $\bar x\in(\Z+\frac12)^{k-1}$ for $n$ odd. Moreover, item (1) of Lemma \ref{Dn} gives
$\sum_{i=1}^{k-1}x_i=n+1+\frac{nd}2\,\rm{mod}\,2$, which for odd $n$ can be recorded as $\sum_{i=1}^{k-1}x_i\in2\Z+\frac{nd}2$.
For even $n$, item (1) is satisfied automatically:
$$\sum_{i=1}^{k-1}x_i \ \ \overset{\rm{mod}\,2}=\ \ |\bar x|^2\ \ \overset{\rm{mod}\,2}=\ \ n+1-\frac{n^2d}4\ \ \overset{\rm{mod}\,2}=\ \ n+1+\frac{nd}2.$$
It is by this reason that  it is omitted in the definition of $\Cal D^{1,n}$ for $n$ even.
\end{proof}

Let us fix $k\in\Z_{\ge1}$. Put  $d=9-k$ and consider a sequence in $n\ge0$:
\newline
\scalebox{0.95}{
\hskip-2mm$
S_{1,n}=|\Cal D^{1,n}|=
\begin{cases}
 \sharp\{ \bar x\in\Z^{k-1}\,|\ \,|\bar x|^2=n+1-\frac{n^2d}4\} &\hskip-2mm\text{ if } n \text{ is even},\\
\sharp\{ \bar x\in(\Z+\frac12) ^{k-1}\,|\ \, \sum\limits_{i=1}^{k-1}x_i\in2\Z+\frac{nd}2,
|\bar x|^2=n+1-\frac{n^2d}4\} &\hskip-2mm\text{ if } n \text{ is odd}.
\end{cases}
$
}
\begin{remark}\label{half}
Note that for odd $n$ the condition $\sum_{i=1}^{k-1}x_i\in2\Z+\frac{nd}2$
is satisfied precisely for a half of vectors
$\bar x\in(\Z+\frac12) ^{k-1}$ satisfying condition $|\bar x|^2=n+1-\frac{n^2d}4$.
This is because replacing $x_1$ by $-x_1$ changes the pairity of $\sum_{i=1}^{k-1}x_i$ while not changing $|\bar x|^2$.
Therefore, we may equivalently define
$$S_{1,n}=|\Cal D^{1,n}|=
\frac12\sharp\{ \bar x\in(\Z+\frac12) ^{k-1}\,|\ \, 
|\bar x|^2=n+1-\frac{n^2d}4\} \ \ \text{ if } n \text{ is odd}.
$$
\end{remark}

The following proposition is a direct consequence of Lemma \ref{C1n}.

\begin{proposition}\label{dP-count}
For a conic bundle on a del Pezzo surface $X=\P^2\#k\bar\P^2, 1\le k\le 8$:

{\rm (1) } 
The number of lines of level $(1,n)$ is equal to $S_{1,n}$.

{\rm(2)} For $k\le 7$ as well as for $k=8$, $n\ne2$,
 we have $GW_{1,n}=S_{1,n}$,
 while for $k=8$ we have $GW_{1,2}=S_{1,2}+GW(-K)$.
 \qed
\end{proposition}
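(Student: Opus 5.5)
The proposition has two parts, and both will be deduced from Lemma \ref{C1n}.

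For part (1), Lemma \ref{C1n}(3) identifies the set $L^{1,n}$ of classes of lines at bi-level $(1,n)$ with
$$\Big\{\tfrac{C}2-\tfrac n2K'+\tfrac v2 \;\Big|\; \tfrac v2\in\Cal D^{1,n}\Big\}.$$
The assignment $\tfrac v2\mapsto \tfrac{C}2-\tfrac n2K'+\tfrac v2$ is injective, because $v$ is recovered as the $\la K,C\ra^\perp$-component. Hence $|L^{1,n}|=|\Cal D^{1,n}|=S_{1,n}$. By Lemma \ref{C1n}(2), a line is determined by its class, so the number of lines of level $(1,n)$ is exactly $|L^{1,n}|$. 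This gives part (1).

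For part (2), recall that $GW_{1,n}=\sum_{\alpha\in\lay^{1,n}}GW(\alpha)$. Since $m=-K\alpha=1$, the point constraint is empty ($m-1=0$ points). Only classes with a rational representative contribute, so only effective classes matter. By Lemma \ref{C1n}(1), apart from the case $k=8$, $n=2$, the effective classes in $\lay^{1,n}$ are exactly classes of lines.

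For each line class $\alpha=[L]$ I will check that $GW(\alpha)=1$. The argument is the standard one for a $(-1)$-curve $L$. The moduli space of stable maps in the class $[L]$ consists of the single embedding of $L$, because the class is rigid and, on a del Pezzo surface with $-K$ ample, $[L]$ is not representable by any other effective curve. Unobstructedness follows from $H^1(N_L)=H^1(\O_{\P^1}(-1))=0$. This yields $GW_{1,n}=S_{1,n}$.

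In the case $k=8$, $n=2$, the class $-K$ also lies in $\lay^{1,2}$, since $-K\cdot(-K)=d=1$ and $C\cdot(-K)=2$. By Lemma \ref{C1n}(1), the only additional effective divisors are the anticanonical ones, all of which have class $-K$. This class contributes the term $GW(-K)$, which gives $GW_{1,2}=S_{1,2}+GW(-K)$.

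The only point requiring care is the claim that every line class has $GW=1$ with no extra contributions from reducible or multiple covers. Multiple covers cannot occur in a class with $-K\alpha=1$. Reducible representatives would split $\alpha$ into effective classes with positive anticanonical degree summing to $1$, which is impossible on a del Pezzo surface. So this step is routine.
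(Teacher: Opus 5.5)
Your proposal is correct and follows the paper's route. The paper simply records Proposition~\ref{dP-count} as a direct consequence of Lemma~\ref{C1n}: parts (3) and (2) give the count of lines, and part (1) gives the list of effective classes in $\lay^{1,n}$. You also spell out the standard facts the paper leaves implicit. These are that each line class contributes $GW=1$, since the line is rigid and unobstructed with $H^1(N_L)=0$, and multiple covers and reducible representatives cannot occur because $-K\alpha=1$ with $-K$ ample. The other is that $-K\in\lay^{1,2}$ when $k=8$.
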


\begin{proposition}\label{gf-GW}
For  a conic bundle on $X=\P^2\#k\bar\P^2, 2\le k\le 8$:
$$\sum_{n\ge0} GW_{1,n}q^{n+1}=
\begin{cases}
2(k-1)q+2^{k-2}q^2&\text{ for } 2\le k\le5,\\
10q+16q^2+q^3 &\text{ for } k=6,\\
12q+32q^2+12q^3 &\text{ for } k=7,\\
14q+64q^2+96q^3+64q^4+14q^5 &\text{ for } k=8.\\
\end{cases}$$
\end{proposition}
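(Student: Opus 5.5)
The generating function follows by combining Proposition~\ref{dP-count} with an explicit count of lattice points. By Proposition~\ref{dP-count}(2), $GW_{1,n}=S_{1,n}$ in all cases except $k=8$, $n=2$, where $GW_{1,2}=S_{1,2}+GW(-K)$. For $X=\P^2\#8\bar\P^2$ we have $GW(-K)=12$: this is the classical count of rational (nodal) cubics through $8$ generic points, i.e.\ the number of singular members of the pencil of cubics through them, which equals $\chi(\P^2\#9\bar\P^2)=12$. It therefore suffices to compute $S_{1,n}$, with $d=9-k$, directly from its definition. For odd $n$ we use Remark~\ref{half}, which lets us drop the parity condition on $\sum x_i$ at the cost of a factor $\frac12$.

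First we bound the range of $n$. Put $r(n)=n+1-\frac{n^2d}{4}$. If $r(n)<0$, there are no vectors $\bar x$ with $|\bar x|^2=r(n)$, so $S_{1,n}=0$. Since $r(n)-r(n-1)=1-\frac{(2n-1)d}{4}$, the function $r$ is decreasing for $n\ge 2$ whenever $d\ge 1$. Hence once $r(n)<0$ at some $n\ge2$, it stays negative for all larger $n$. This means only finitely many small values of $n$ need to be checked:
\begin{itemize}
\item $n\le 2$ when $k\le 7$;
\item $n\le 4$ when $k=8$, since $r(5)=6-\frac{25}{4}<0$.
\end{itemize}
For $k\le 5$ we get $r(2)=3-d<0$, and for $k=6,7$ we get $r(3)=4-\frac{9d}{4}<0$.

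Next we evaluate the finitely many remaining values.
\begin{itemize}
\item For $n=0$: $|\bar x|^2=1$ in $\Z^{k-1}$ has $2(k-1)$ solutions.
\item For $n=1$: $|\bar x|^2=\frac{k-1}{4}$ with $\bar x\in(\Z+\frac12)^{k-1}$ forces all $x_i=\pm\frac12$. Halving the count gives $2^{k-2}$.
\item For $n=2$: $|\bar x|^2=k-6$ in $\Z^{k-1}$. This is empty for $k\le5$, gives $1$ solution for $k=6$, gives $2\cdot6=12$ solutions for $k=7$, and gives $4\binom72=84$ solutions for $k=8$. For $k=8$ we add $GW(-K)=12$, obtaining $96$.
\item For $k=8$, $n=3$: $|\bar x|^2=\frac74$ with seven half-integer coordinates forces all $x_i=\pm\frac12$, so $S_{1,3}=2^7/2=64$.
\item For $k=8$, $n=4$: $|\bar x|^2=1$ in $\Z^7$ gives $14$.
\end{itemize}
Multiplying each value by $q^{n+1}$ and summing yields exactly the four stated polynomials.

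The only input not contained in the elementary count is the value $GW(-K)=12$ for $k=8$; everything else is routine bookkeeping once Proposition~\ref{dP-count} is granted.
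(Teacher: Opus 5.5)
Your proof is correct and is essentially the paper's: both use Proposition~\ref{dP-count} to reduce to counting vectors of norm $n+1-\frac{n^2d}{4}$ (halving for odd $n$), and both add $GW(-K)=12$ at $k=8$, $n=2$. The only difference is that you compute the $k=7,8$ entries directly, where the paper invokes the symmetry $n\leftrightarrow\frac4d-n$ of Table~\ref{dP-constants}.

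One minor slip: $r(2)-r(1)=1-\frac{3d}{4}>0$ for $d=1$, so $r$ is not decreasing from $n=1$ to $n=2$. What is true, and all your range bound uses, is that $r(n)-r(n-1)<0$ for $n\ge3$, so $r$ is decreasing on $\{2,3,\dots\}$.
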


\begin{proof}
It follows from  Proposition \ref{dP-count} 
that $GW_{1,0}$ counts unit vectors
in $\Z^{k-1}$, which gives $2(k-1)$, while $GW_{1,1}$ counts with coefficient $\frac12$
the vectors
of square $\frac{k-1}4$ in $(\frac12+\Z)^{k-1}$, which gives $2^{k-2}$ (see Table \ref{dP-constants}).
For $k=6$, the value $GW_{1,2}=S_{1,2}=1$ counts the zero element in $\Z^5$. 
The symmetry in the last two rows of Table \ref{dP-constants} shows that 
$GW_{1,2}=GW_{1,0}=14$ for $k=7$, while
$GW_{1,4}=GW_{1,0}=2$ and $GW_{1,3}=GW_{1,1}=64$ for  $k=8$.
The last remaining value $GW_{1,2}=4\binom72+12=96$ results from counting vectors $\bar x\in\Z^7$, $|\bar x|^2=2$, which gives $S_{1,2}$
to which we add the number $GW(-K)=12$ of anti-canonical rational curves.
\end{proof}
\begin{table}[h!]
\caption{Values of $n+1-\frac{n^2d}4$. Sign "-" stands for negative values.}\label{dP-constants}
\begin{tabular}{cccccc}
\hline
k\textbackslash n&0&1&2&3&4\\
\hline
2&1&$\frac14$&-&-&-\\
3&1&$\frac12$&-&-&-\\
4&1&$\frac34$&-&-&-\\
5&1&1&-&-&-\\
6&1&$\frac54$&0&-&-\\
7&1&$\frac32$&1&-&-\\
8&1&$\frac74$&2&$\frac74$&1\\
\hline
\end{tabular}
\end{table}

\subsection{
Generating function for the moments}
For a fixed  $k\in\Z_{\ge1}$, put $d=9-k$ and consider a sequence in  $n\in\Z_{\ge0}$:
$S_{1,n}^{[2]}=2\sum x_1^2$,  with
summation 
over the set
\[
\begin{cases}
\{\bar x\in\Z^{k-1} \,|\ \ |\bar x|^2=n+1-\frac{n^2d}4\}, \text{ if }n\ \text{ is even}, \\
\{\bar x\in(\Z+\frac12)^{k-1}\,|\  \sum_{i=1}^{k-1} x_i=\frac{nd}2\,{\rm mod}\,2, |\bar x|^2=n+1-\frac{n^2d}4\}
\text{ if }n \text{ is odd}.
\end{cases}
\]
Note that for odd $n$ the definition can be reformulated (see Remark \ref{half}):
$$
S_{1,n}^{[2]}=\sum x_1^2, \text{ summed over the set }
\{\bar x\in(\Z+\frac12)^{k-1}\,|\ |\bar x|^2=n+1-\frac{n^2d}4\}.
$$

\begin{proposition}
For $X=\P^2\#k\bar\P^2, 4\le k\le 8$ endowed with a conic bundle:
 \begin{enumerate}\item
 $GW_{1,n}^{[2]}=S_{1,n}^{[2]}$.
 \item
The  generating function for 
 $GW_{1,n}^{[2]}$ is
$$\sum_{n\ge0} GW_{1,n}^{[2]}q^{n+1}=
\begin{cases}
4q+2^{k-3}q^2 &\text{ for } 4\le k\le 6,\\
4q+16q^2+4q^3 &\text{ for } k=7,\\
4q+32q^2+48q^3+32q^4+4q^5 &\text{ for } k=8.\\
\end{cases}$$
\end{enumerate}
\end{proposition}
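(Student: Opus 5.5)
The plan is to unwind the definition of $GW^{[2]}_{1,n}$ using the explicit description of level-$(1,n)$ classes in Lemma \ref{C1n}, and then evaluate the resulting lattice sums by symmetry.

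First, $4\le k\le 8$ means the conic bundle has $s=k-1\ge 3$ singular fibers. So by Proposition \ref{root-moments} the moment $GW^{[2]}_{1,n}=\sum_{\alpha\in\lay^{1,n}}(e\alpha)^2GW(\alpha)$ does not depend on the root $e\in\la K,C\ra^\perp$. I will take the root $e=e_1-e_2$, which has $e^2=-2$ and is orthogonal to $K$ and to $C=H-e_0$.

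By Lemma \ref{C1n} and Proposition \ref{dP-count}, the only classes contributing at level $(1,n)$ are:
\begin{itemize}
\item lines, each contributing $GW=1$, and parametrized by $\frac v2=(x_1,\dots,x_{k-1})\in\Cal D^{1,n}$;
\item for $k=8$, $n=2$, in addition the class $-K$.
\end{itemize}
The class $-K$ contributes nothing, since $e\cdot K=0$.

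For a line $\alpha=\frac C2-\frac n2K'+\frac v2$, the classes $C$ and $K'$ are orthogonal to $e$. With $\frac v2=yH-ye_0-\sum x_ie_i$, this gives $e\cdot\alpha=x_1-x_2$. Hence
$$GW^{[2]}_{1,n}=\sum_{\bar x\in\Cal D^{1,n}}(x_1-x_2)^2 .$$
Expanding the square, the cross term $-2\sum x_1x_2$ vanishes because of the involution $x_1\mapsto -x_1$, $x_2\mapsto -x_2$. This involution preserves $|\bar x|^2$. It also preserves the parity constraint on $\sum x_i$ for odd $n$: the sum changes by $2x_1+2x_2$, which is a sum of two odd integers and hence even. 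The swap $x_1\leftrightarrow x_2$ then gives $\sum x_2^2=\sum x_1^2$. Therefore $GW^{[2]}_{1,n}=2\sum_{\Cal D^{1,n}}x_1^2=S^{[2]}_{1,n}$, which proves (1).

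For (2), I will use the full coordinate-permutation symmetry of $\Cal D^{1,n}$. It gives $\sum x_1^2=\frac1{k-1}\sum|\bar x|^2$. Writing $r_n=n+1-\frac{n^2d}4$ (the entries of Table \ref{dP-constants}), this yields
$$S^{[2]}_{1,n}=\frac{2r_n}{k-1}S_{1,n}.$$
Equivalently, for odd $n$ one may use the reformulation via Remark \ref{half}, summing over all half-integral vectors. The values of $S_{1,n}$ are known from Proposition \ref{gf-GW}, remembering that for $k=8$ we have $S_{1,2}=96-12=84$.

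It then remains to plug in. Two sample checks:
\begin{itemize}
\item $n=0$: here $r_0=1$ and $S_{1,0}=2(k-1)$, which gives $4$.
\item $n=1$ with $k\le 6$: here $r_1=\frac{k-1}4$ and $S_{1,1}=2^{k-2}$, which gives $2^{k-3}$.
\end{itemize}
The remaining entries follow the same way: for $k=6,n=2$ we have $r=0$; for $k=7$ we get $16$ and $4$; for $k=8$ we get $32,48,32,4$.

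The main point needing care is the parity constraint for odd $n$. One must check that the involutions used preserve $\Cal D^{1,n}$, or else work with the halved full set of Remark \ref{half}, which is symmetric under all sign changes and permutations. The other delicate point is the correct treatment of the anticanonical contribution at $k=8$, $n=2$: it drops out of $GW^{[2]}$, but it must be subtracted from $GW_{1,2}$ to get $S_{1,2}$.
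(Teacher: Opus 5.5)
Your plan works, but the step that kills the cross term $-2\sum x_1x_2$ uses the wrong symmetry. The involution $(x_1,x_2)\mapsto(-x_1,-x_2)$ does preserve $\Cal D^{1,n}$, as you check. However, it leaves $x_1x_2$ unchanged, so it produces no cancellation.

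For even $n$ you can flip $x_1$ alone, since there is no parity condition. For odd $n$ a single sign change shifts $\sum x_i$ by the odd integer $2x_1$, so it does not preserve $\Cal D^{1,n}$. You must flip $x_1$ together with a \emph{third} coordinate, e.g.\ $(x_1,x_3)\mapsto(-x_1,-x_3)$. This preserves $\Cal D^{1,n}$ and negates $x_1x_2$. It requires $k-1\ge 3$, which is exactly the hypothesis $k\ge 4$.

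The third coordinate is genuinely needed. For $k=3$, $n=1$ one has $\Cal D^{1,1}=\{\pm(\frac12,\frac12)\}$. This set is invariant under your involution, yet $\sum x_1x_2=\frac12\neq 0$. So your argument as written would "prove" a false vanishing. With this repair, part (1) follows as you say.

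Once repaired, your argument is correct and differs from the paper's in how the root is handled. The paper uses the second statement of Proposition \ref{root-moments}: up to a scalar, the only Weyl-invariant quadratic function on $\DD_{k-1}$ is $x\mapsto x^2$. This lets it replace $e$ by the non-lattice vector $v=(1,0,\dots,0)$, of square $-1$, at the cost of a factor $e^2/v^2=2$. Since $(v\cdot\alpha)^2=x_1^2$, it gets $2\sum x_1^2$ directly, with no cross term at all.

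Your concrete root $e_1-e_2$ needs only the first statement of that proposition plus explicit sign and permutation symmetries of $\Cal D^{1,n}$. That is more elementary, but it is exactly where the parity condition must be handled with care.

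For part (2) the paper leaves the evaluation implicit. Your identity $S^{[2]}_{1,n}=\frac{2r_n}{k-1}S_{1,n}$, from permutation invariance of $\Cal D^{1,n}$, reads every entry off Proposition \ref{gf-GW} uniformly (with $S_{1,2}=84$ for $k=8$). The values you list are correct.
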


\begin{proof}
By Proposition \ref{root-moments}, for counting the moments we can use a vector
$v=(1,0,\dots,0)$ instead of a root $e$ and then multiply the $v$-moment by the normalization factor $\frac{e^2}{v^2}=2$.
This gives a sum $2\sum x_1^2$ over the set $\Cal D^{1,n}$ from Lemma \ref{C1n}(3), which proves the claim for even $n$.
In the case of odd $n$, we drop the condition on $\sum_{i=1}^{k-1}x_i$ for $\Cal D^{1,n}$ in Lemma \ref{C1n}(3)
and preserve only the condition $|\bar x|^2=n+1-\frac{n^2d}4$. This doubles the number of vectors  with the same value $|x_1|$ (see the proof of Proposition  \ref{dP-count}(1)),
and therefore we drop factor $2$ (which appeared for even $n$).

Finally, we $GW_{1,n}^{[2]}=S_{1,n}^{[2]}$,  using Proposition \ref{dP-count}, since
in the case $d=1$, $n=2$ the curves in $|-K|$ contribute 0 to $GW_{1,2}^{[2]}$.
\end{proof}

\begin{remark}\label{palyndrom}
The numbers $GW_{m,n}$, $GW_{m,n}^{[2]}$ and $\NN_{m,n}$
have the following symmetry properties if $d=1,2,4$:
$$
GW_{m,n}=GW_{m,\frac4dm-n},\qquad GW_{m,n}^{[2]}=GW_{m,\frac4dm-n}^{[2]},\qquad \NN_{m,n}=\NN_{m,\frac4dm-n}
$$
The reason is existence of an involution on $X$ for $d=1,2,4$ (namely, Bertini, Geiser and one of  basic involutions for $d=4$, see \cite[Sect. 6.4]{dolg-isk})
which transforms a conic bundle whose fiber class is $C$ to a conic bundle with the fiber class $C'=\frac4d(-K)-C$.
A divisor class 
$\a\in\Cal L^{m,n}$ with respect to $C$, belongs to
$\Cal L^{m,\frac4dm-n}$ with respect to $C'$.
On the other hand, the count for each of the indicated numbers is independent of the choice of a conic bundle.
\end{remark}

\subsection{Numbers $\NN_{1,n}$}

\begin{proposition}
The only non-zero values of $\NN_{1,n}$ for 
surfaces $X=\P^2\#k\bar\P^2$ with $3\le k\le 8$ are
$\begin{cases}
\NN_{1,0}=2, &\text{ for } k=4,\\
\NN_{1,0}=2, \NN_{1,2}=1&\text{ for } k=6,\\
\NN_{1,0}=2, \NN_{1,2}=4, \NN_{1,4}=2&\text{ for } k=8.\\
\end{cases}$
\end{proposition}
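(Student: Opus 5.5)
The plan is to handle the statement by parity first, then by emptiness of bi-layers, and finally to compute the few remaining values on convenient real models.

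\emph{Parity.} For $X=\P^2\#k\bar\P^2$ the conic bundle has $s=8-K^2=k-1$ singular fibers. For $k=3,5,7$ the number $s$ is even, so Theorem \ref{main-main} gives $\NN_{1,n}=0$ for every $n$, because $m=1$ is odd. For $k=4,6,8$ (odd $s$) the same theorem gives vanishing for odd $n$. Proposition \ref{GWm0} gives $\NN_{1,0}=2$.

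\emph{Emptiness.} For even $n\ge 2$, $m=1$ is below the range of Corollary \ref{N-recursion}, so I would not try the recursion. Instead I would use Lemma \ref{C1n}: every effective class in $\lay^{1,n}$ is a line, except that for $k=8$, $n=2$ anticanonical divisors also occur. Hence $\NN_{1,n}=0$ whenever $\Cal D^{1,n}=\varnothing$, which Table \ref{dP-constants} shows happens when $n+1-\frac{n^2d}4<0$. This leaves three cases: $k=6$, $n=2$; $k=8$, $n=2$; and $k=8$, $n=4$. The case $k=8$, $n=4$ follows from Remark \ref{palyndrom} ($d=1$, so $\NN_{1,4}=\NN_{1,0}=2$).

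\emph{The case $k=6$, $n=2$.} Here $\Cal D^{1,2}=\{0\}$, so there is a unique line $L=\frac{C}{2}-K'=-K-C$. It is real for any real structure, since $C$ and $K$ are, and it contributes $i^{\hat q(L)-1}w_0(L)$ with $w_0(L)=1$. To evaluate $\hat q(L)=\hat q(-K)+\hat q(-C)+2KC$, I use three facts. First, $\bhv(K)=w_1^*$, so monicity gives $\hat q(\pm K)=1$. Second, $\hat q(C)=q(C_\R)=0$ by Proposition \ref{extension}. Third, $2KC=-4\equiv 0 \pmod 4$. Hence $\hat q(L)=1$ and $\NN_{1,2}=1$.

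\emph{The case $k=8$, $n=2$.} Two routes are available. The cleaner one uses Theorem \ref{refinement}: $\NN_1=\NN_{1,0}+\NN_{1,2}+\NN_{1,4}=4+\NN_{1,2}$. I would then import the value of $\NN_1$ for $K^2=1$ from \cite{surgery-invariant}, which I expect to be $8$, giving $\NN_{1,2}=4$; for $k=6$ the same identity $\NN_1=3$ serves as a cross-check.

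As an independent check I would compute directly on the maximal real surface, where $\conj_*=-\id$ and $X_\R=\Rp2\+4S^2$. Each of the $84$ lines $\alpha=-K-C+\frac v2$ with $|\bar x|^2=2$ is real and contributes $i^{\hat q(\alpha)-1}$. I would compute $\hat q(\alpha)=1+\hat q(\tfrac v2)+2(-K-C)\cdot\tfrac v2$ using that roots of $\la K,C\ra^\perp$ are vanishing cycles, so $\hat q$ vanishes on them (Proposition \ref{extension}), and then sum these contributions. To this I would add $i^{\hat q(-K)-1}w_0(-K)=w_0(-K)$, the modified Welschinger count of real rational curves in $|-K|$ on the maximal surface.

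\emph{Main obstacle.} The hard step is this last sign bookkeeping: the $\hat q$-values of the half-root vectors $\frac v2$, and the modified Welschinger count $w_0(-K)$ with its $(-1)^{c_A}$ signs. This is why I would rely on Theorem \ref{refinement} and the known value of $\NN_1$ as the primary argument, and use the direct count only as a check.
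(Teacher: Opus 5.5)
Your proposal is correct, and its skeleton is the paper's. The shared steps are parity from Theorem~\ref{main-main}, emptiness of the bi-layers read off from Lemma~\ref{C1n} and Table~\ref{dP-constants}, Remark~\ref{palyndrom} for $\NN_{1,4}=\NN_{1,0}$ at $k=8$, and Theorem~\ref{refinement} with the known value $\NN_1=8$ (the paper cites \cite{TwoKinds}) to get $\NN_{1,2}=4$ at $k=8$.

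The difference is in the two direct evaluations. The paper obtains both $\NN_{1,0}=2$ and, for $k=6$, $\NN_{1,2}=1$ on a single model. This is $\P^2$ blown up at $r$ pairs of conjugate points, so $X_\R=\Rp2$, with the conic bundle given by the conics through $p_1,\bar p_1,p_2,\bar p_2$. The real lines of levels $(1,0)$ and $(1,2)$ are then $L_1,L_2$ and $L_3=H-e_3-\bar e_3$, which is your $-K-C$. Each counts $+1$ because its real part is a real line in $\Rp2$, which represents $w_1^*$, and the structure is monic. You instead quote Proposition~\ref{GWm0} for $\NN_{1,0}$ and compute $\hat q(-K-C)=\hat q(K)+\hat q(C)+2KC\equiv 1$ intrinsically. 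Your computation holds for every real structure, so it shows the unique line of level $(1,2)$ always counts $+1$. The price is the standard identity $\bhv(K)=w_1^*$, which the paper does not spell out; the model computation needs only to decide which lines are real.

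Your direct cross-check for $k=8$, $n=2$ is not needed, but as set up it would not work.
\begin{itemize}
\item The real structure with $\conj_*=-\id$ is $\P^2$ blown up at $8$ real points, with $X_\R=\#9\Rp2$ (Lefschetz gives $\chi(X_\R)=2-9$). The M-surface with $X_\R=\Rp2\+4S^2$ is a different one: it has $H_2^-(X)=\Z K$ and admits no real conic bundle at all (Proposition~\ref{exceptional-cases}), so $\NN_{1,n}$ is not even defined there.
\item For $d=1$ the lines of level $(1,2)$ are $-K+\frac v2$, not $-K-C+\frac v2$; the latter has level $(-1,2)$. The vectors $\frac v2$ are honest roots of $\la K,C\ra^\perp$, not half-roots.
\item $\hat q$ does not vanish on these roots. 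Lemma~\ref{root-vanishing} only says they are complex vanishing cycles, whereas Proposition~\ref{extension}(3) concerns real vanishing cycles. Concretely, on $\#9\Rp2$ with $C=H-e_0$ one has $\hat q(e_i-e_j)=q(E_{i\R})+q(E_{j\R})$. Seven values $q(E_{i\R})=\pm1$ cannot be pairwise opposite, so some of the $84$ lines necessarily count $-1$. Had they all counted $+1$, you would get $\NN_{1,2}\ge 84-12$.
\end{itemize}
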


\begin{proof}
In accordance with Theorem \ref{main-main}, $\NN_{1,n}=0$ if $n$ or $k$ is odd (since $s$ and $k$ have opposite parity).
As it follows, for example, from Propositions \ref{dP-count}-\ref{gf-GW}, in
the cases $k=4, 6,8$ the numbers $\NN_{1,n}$ vanish for $n\ge2$, $n\ge3$ and $n\ge5$ respectively,
which shows that the only non-vanishing $\NN_{1,n}$ are the indicated ones.

For proving $\NN_{1,0}=2$ in the case  $k=2r\in\{4,6,8\}$, consider a real model of $X$ obtained by blowing up  $r$ generic pairs of conjugate points,
$p_1,\bar p_1,\dots,p_r,\bar p_r$, so that $X_\R=\P^2_\R$. 
Equip $X$ with a conic bundle obtained by taking pull-back to $X$ of the pencil of conics passing through
$p_1,\bar p_1,p_2,\bar p_2$. Denote by $L_i$ the 
in $X$ obtained by taking pull-back 
of the line
in $\P^2$ passing through $p_i$ and $\bar p_i$. The lines $L_1,L_2$ are the only real ones on level $(1,0)$, and each of them contributes $+1$ to $\NN_{1,0}$ with respect to the monic $\Pin^-$-structure on $\P^2_\R$.

In the case $k=6$, the lines $L_1,L_2,L_3$ are the only real ones and only $L_3$ 
has level $(1,2)$. This gives $\NN_{1,2}=1$.

In the case $k=8$, the symmetry indicated in Remark \ref{palyndrom} implies $\NN_{1,4}=\NN_{1,0}=2$.
A similar straightforward count gives also $\NN_{1,2}=4$, but it looks smarter to use
$\sum_n\NN_{1,n}=\NN_1$ (see Theorem 
\ref{refinement}) and $\NN_1=8$ (see \cite{TwoKinds}).
\end{proof}

\section{Nine blowups}\label{nine}
In this section, we consider rational conic bundles with $K^2 = 0$ and provide a full set of initial values and bivariate recursive relations for the invariants $N_{m,n}$.

Under assumption $K^2=0$, Theorem \ref{magic-theorem} shows that $N_{2m,2n}= 2^{m-3}GW^{[2]}_{m,n}$ 
for any $m\in\Z_{\ge 1}, n\in\Z_{\ge 0}$, while
according to Theorem \ref{main-main} all the other numbers $N_{*,*}$ are zero. This reduces computing of the double sequence $N_{*,*}$ to that of $GW^{[2]}_{*,*}$. On the other hand,
by substituting $d=0$ into Theorems \ref{simple-recursion-any-d} and \ref{moment-recursion-any-d}
we get a bivariate recursion reducing computation of $GW_{m,n}$ and $GW^{[2]}_{m,n}$ with $m\ge 2$ to computation of the 
initial values, $GW_{m,0}$, $GW_{1,n}$ and $GW^{[2]}_{1,n}$.

Below, we accomplish the latter task in
two steps.
First, we find a generating function for $GW_{1,n}$,
and then we derive from it a generating function for $GW^{[2]}_{1,n}$.
As to $GW_{m,0}$, their values are already given in Proposition \ref{GWm0}.

Since Gromov–Witten invariants are preserved under deformation, it suffices to perform these calculations for a particular choice of nine points.
Following J.~Bryan and N.C.~Leung \cite{BL} we choose nine blow-up points to be the base points of a pencil of plane cubics. Then, the blown-up surface, which we continuer to denote by $X$, becomes equipped with an elliptic fibration $f: X\to \P^1$ with fibers that form the anticanonical linear system.

\subsection{
Generating function for $GW_{1,n}$}\label{gener-simple}
First, let us recall some basic results.

\begin{proposition}{\cite[§4]{Manin},\cite[Lemma 12 and Corollary 13]{M-P}}\label{section-criter}
Let $f\: X\to \P^1$ be a rational relatively minimal elliptic surface with a section and without reducible fibers. Then:
\begin{enumerate} 
\item
A homology class $\alpha\in H_2(X)$ can be realized by a section, if and only if $\alpha^2=
\alpha  K=-1$.
\item Each section is uniquely determined by its homology class.
\item If $D\subset X$ is a rational curve with $-DK=1$, then
the splitting of $D$ into irreducible components has form $D=L+\sum m_iF_i$, $m_i\in\Z_{\ge 0},$ where $L$ is a section and $F_i$ are the singular fibers of $f$.
\qed
\end{enumerate}
\end{proposition}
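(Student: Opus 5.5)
The statement is quoted from Manin and Miranda--Persson; I would nonetheless reprove it along classical lines, working on a rational relatively minimal elliptic surface $f\colon X\to\P^1$ with a section. For such surfaces $K=-F$, with $F$ the fiber class, so $K^2=0$.

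For item (1), necessity is immediate. A section $L$ is a smooth rational curve with $L\cdot F=1$, so $-K\cdot L=1$. Adjunction $L^2+KL=-2$ then gives $L^2=-1$.

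For sufficiency, take $\alpha$ with $\alpha^2=\alpha K=-1$.
\begin{itemize}
\item Riemann--Roch gives $\chi(\O(\alpha))=1+\frac12(\alpha^2-\alpha K)=1$, so $h^0(\alpha)+h^0(K-\alpha)\ge1$.
\item Since $(K-\alpha)\cdot F=-\alpha\cdot F=-1<0$ and $F$ is nef, $K-\alpha$ is not effective. Hence $\alpha$ is effective; let $D\in|\alpha|$.
\item Because $D\cdot F=1$, exactly one component $L$ of $D$ is horizontal, and it meets the general fiber once, so $L$ is a section. The other components are vertical. As there are no reducible fibers, every vertical curve is a whole fiber, so $D=L+aF$ with $a\ge0$.
\item Computing $D^2=L^2+2a=-1+2a$ and setting it equal to $-1$ forces $a=0$. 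Therefore $\alpha=[L]$.
\end{itemize}

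For item (2), let $L,L'$ be distinct sections in the same class. Then $L\cdot L'\ge0$, since distinct irreducible curves meet non-negatively. But $L\cdot L'=L^2=-1$, a contradiction. So each section is determined by its class.

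For item (3), take $D$ effective with $-KD=D\cdot F=1$. The argument of item (1) again yields exactly one horizontal component $L$, which is a section, and all remaining components are fibers. The sum of fibers can be rewritten as $\sum m_iF_i$ with the $F_i$ singular fibers, which gives the required splitting.

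The only delicate point is the effectivity step in item (1), which needs the vanishing $h^2(\alpha)=h^0(K-\alpha)=0$; this is exactly the nefness argument above. One must also check that reducedness of all fibers really excludes vertical components other than full fibers. That holds because every fiber is irreducible, so any vertical curve is supported on a single fiber and is a positive multiple of it (the only possible non-reduced fibers here are the irreducible multiple fibers $mF_0$, which do not occur on a surface with a section).
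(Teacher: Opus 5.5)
Your arguments for (1) and (2) are correct, and they are the standard ones behind the results of Manin and Miranda--Persson (the paper gives no proof of its own, only these references). In (1), Riemann--Roch together with nefness of $F$ gives effectivity. Then $D\cdot F=1$ isolates a unique horizontal component of multiplicity one, which is a section. Irreducibility of the fibers (and reducedness, forced by the section) makes the rest of $D$ a sum of whole fibers, and $D^2=-1+2a$ kills them. In (2), uniqueness follows from $L\cdot L'=L^2=-1<0$.

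The gap is in (3): you never use the hypothesis that $D$ is \emph{rational}, and without it the claim is false. Your argument applies verbatim to $D=L+F_p$ with $F_p$ a smooth fiber. This $D$ is effective with $-K\cdot D=1$, yet it has an elliptic component, so it does not split as a section plus singular fibers. The sentence ``the sum of fibers can be rewritten as $\sum m_iF_i$ with the $F_i$ singular fibers'' is true only in $H_2(X)$, where all fibers are homologous. It says nothing about the decomposition of $D$ into irreducible components, which is what (3) asserts.

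The missing step is this. Every irreducible component of a rational curve (for example, the image of a genus-$0$ stable map) is dominated by $\P^1$, so it has geometric genus $0$, whereas a smooth fiber is an elliptic curve. Hence each vertical component of $D$ is a singular fiber. Here such a fiber is irreducible and reduced, of type $I_1$ or $II$, hence rational, so it can legitimately occur. With this step added, (3) follows.

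The paper's later use of (3), in the proof of the generating function for $GW_{1,n}$, needs only the class identity $[D]=[L]-rK$, which your argument already gives. The statement as formulated, however, requires the extra step.
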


Through the rest of this section we use notations $S_{1,n}, S^{[2]}_{1,n}$ introduced in Section \ref{upto8} which we specialize to the case $d=K^2=0$. In particular,
this gives
\begin{equation}\label{def-S}
S_{1,n}=
\begin{cases}
 &\sharp\{ \bar x\in\Z^{8}\,|\ \,|\bar x|^2=n+1\},\ \text{ for even } n,\\
& \sharp\{ \bar x\in(\Z+\frac12) ^{8}\,|\ \, |\bar x|^2=n+1, \, \sum x_i=0 \,{\rm mod}\, 2\} \text{ for odd } n.
\end{cases}
\end{equation}

To compute 
$GW_{1,n}$, we find first 
the cardinality $|L^{1,n}|$
of the subset 
$L^{1,n}\subset \Cal L^{1,n}$ formed by classes
realized by sections.

\begin{cor}\label{S=L}
If an elliptic surface as in Proposition \ref{section-criter} is endowed
with a conic bundle, then
$|L^{1,n}|=S_{1,n}$,
for any $n\ge 0$.
\end{cor}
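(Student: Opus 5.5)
The plan is to combine the section criterion of Proposition \ref{section-criter}(1),(2) with the lattice parametrization of Lemma \ref{Dn} in the case $k=9$, $d=0$. This mirrors the argument of Lemma \ref{C1n}(3). By Proposition \ref{section-criter}(1),(2), the set $L^{1,n}$ is in bijection with the set of classes $\alpha\in H_2(X)$ satisfying
\[
\alpha^2=-1,\quad -K\alpha=1,\quad C\alpha=n .
\]
So the task is to count integral classes of bi-level $(1,n)$ with square $-1$.

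First we write such a class as in Lemma \ref{Dn}:
\[
\alpha=\tfrac12(C-nK'+v),\qquad K'=K \ (\text{since } d=0),\qquad v\in\la K,C\ra^\perp,\qquad \tfrac{v}2=yH-ye_0-\textstyle\sum_{i=1}^{8}x_ie_i .
\]
Using $C^2=K^2=0$ and $CK=-2$, we get $\alpha^2=n+\tfrac14v^2=n-|\bar x|^2$. Hence $\alpha^2=-1$ is equivalent to $|\bar x|^2=n+1$.

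Next we check that the integrality conditions of Lemma \ref{Dn} become exactly the conditions in (\ref{def-S}).
\begin{itemize}
\item Condition (2) of Lemma \ref{Dn} forces $\bar x\in\Z^8$ for even $n$ and $\bar x\in(\Z+\tfrac12)^8$ for odd $n$.
\item Condition (1) reads $\sum x_i\equiv n+1 \pmod 2$, since $nd/2=0$.
\item For even $n$, condition (1) is automatic: $\sum x_i\equiv|\bar x|^2=n+1 \pmod 2$.
\item For odd $n$, condition (1) becomes $\sum x_i\equiv 0\pmod 2$, which is precisely the constraint in (\ref{def-S}).
\end{itemize}
The map $\alpha\mapsto\bar x$ is injective, because $y$ is determined by $2y=\sum x_i$. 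Therefore $|L^{1,n}|$ equals the cardinality defined in (\ref{def-S}), which is $S_{1,n}$.

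The main subtlety is the applicability of Proposition \ref{section-criter}(1). It asks only that $\alpha^2=\alpha K=-1$, with no reference to $C$, so the count is purely lattice-theoretic. One must still confirm that the surface fits its hypotheses: it is a relatively minimal rational elliptic surface with a section, for instance the exceptional curve $e_9$, and it has no reducible fibers for a generic pencil. If that holds, the computation above is routine, and one only has to double-check the parity bookkeeping in the odd-$n$ case.
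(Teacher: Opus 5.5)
Your proposal is correct and is essentially the paper's argument: combine the section criterion $\alpha^2=\alpha K=-1$ of Proposition \ref{section-criter} with the parametrization of Lemma \ref{Dn} for $k=9$, $d=0$. Under this parametrization, $\alpha^2=-1$ becomes $|\bar x|^2=n+1$. Your explicit check of the integrality and parity conditions only spells out what the paper leaves implicit here; the paper carries out the same bookkeeping for general $d$ in the proof of Lemma \ref{C1n}.
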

\begin{proof} 
It follows immediately from Proposition \ref{section-criter} and Lemma \ref{Dn} applied to $X=\P^2\# 9\bar\P^2$, since in terms of decomposing $\alpha$ in a form $\alpha=\frac12(C-nK+v)$ with $v\in \DD_8=\la C,K\ra^\perp$, $\frac{v}2=\bar x$
we get  $\alpha^2=-1$ if and only if  $|\bar x|^2= n+1$.
\end{proof}

One can deduce from \cite{conway}[Sec.8.1] a formula
$$
S_{1,n}=16[\s_3(n)-\s_3(\frac{n}2)],\quad \text{where } \s_3(n)=\sum_{d|n}d^3, \text{ and } \s_3(n)=0 \text{ if } n\notin \Z,
$$
with the generating function in $q=e^{\pi i z}$ expressed through theta-series
$$\sum_{n\ge1} S_{1,n}q^n=
\frac12(\theta_2(z)^8+\theta_3(z)^8-\theta_4(z)^8)=
16q+128q^2+448q^3+1024 q^4+\dots
$$

We give details on a slightly different way of presentation.

\begin{proposition}\label{lines-series}
\begin{equation}\label{identity-1}
\begin{aligned}
\sum_{k=0}^\infty S_{1,2k}q^{2k+1}= & \frac12(\theta^8_3(q) - \theta^8_3(-q)),\\
\sum_{k=0}^\infty S_{1,2k+1}q^{2k+2}= & \frac12
\theta^8_2(0,z),
\end{aligned}
\end{equation}
where
\begin{equation}\label{theta-defin}
\theta_3(q)=\sum_{m=-\infty}^{+\infty} q^{m^2}, \,\, \theta_2(\tau, z)=\sum_{m=-\infty}^{+\infty} e^{(2m+1 ) i\tau + \pi i z (m+\frac12)^2}
\end{equation}
with $q=e^{\pi i z}$.
\end{proposition}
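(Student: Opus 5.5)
The plan is to unwind the definition (\ref{def-S}) of $S_{1,n}$ directly in terms of the theta series $\theta_3(q)=\sum_m q^{m^2}$ and $\theta_2$. Both identities are coefficient comparisons: in each case the right-hand side will turn out to be a generating function counting lattice vectors of given norm, and we will match it with the left-hand side one exponent at a time.

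For the first identity, we write
\[
\theta_3(q)^8=\sum_{\bar x\in\Z^8}q^{|\bar x|^2}=\sum_{N\ge0}\sharp\{\bar x\in\Z^8\mid |\bar x|^2=N\}\,q^N .
\]
The odd part $\frac12(\theta_3^8(q)-\theta_3^8(-q))$ keeps exactly the terms with odd exponent $N$. Writing $N=2k+1$, the coefficient of $q^{2k+1}$ is $\sharp\{\bar x\in\Z^8\mid |\bar x|^2=2k+1\}$. By (\ref{def-S}) with $n=2k$ even, this is precisely $S_{1,2k}$.

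For the second identity, we read $\theta_2(0,z)$ as $\sum_m q^{(m+\frac12)^2}$, since the $\tau$-dependent factor becomes $1$ at $\tau=0$. Then
\[
\theta_2(0,z)^8=\sum_{\bar x\in(\Z+\frac12)^8}q^{|\bar x|^2}.
\]
For $\bar x\in(\Z+\frac12)^8$ we have $|\bar x|^2\equiv 8\cdot\frac14=2\pmod 2$, because $(m+\frac12)^2=m^2+m+\frac14$ and $m^2+m$ is even. So every exponent is an even integer, and we write it as $2k+2=n+1$ with $n=2k+1$ odd. The coefficient of $q^{2k+2}$ in $\theta_2^8$ is then $\sharp\{\bar x\in(\Z+\frac12)^8\mid |\bar x|^2=n+1\}$. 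By Remark \ref{half} (with $d=0$, so the condition reads $\sum x_i\in 2\Z$), exactly half of these vectors satisfy the parity condition in (\ref{def-S}). Hence $\frac12\theta_2^8$ has coefficient $S_{1,2k+1}$ at $q^{2k+2}$. The shift $q^{n+1}$ on the left-hand side matches the exponent $|\bar x|^2=n+1$ exactly.

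The main point requiring care is the half-counting step in the odd case. Remark \ref{half} uses the sign flip $x_1\mapsto -x_1$; for a half-integer $x_1$ this changes $\sum x_i$ by $2x_1$, which is an odd integer. The flip is therefore a fixed-point-free involution exchanging the two parity classes of $\sum x_i$ modulo $2$. Note that $\sum x_i$ is an integer, since it is a sum of eight half-integers. The rest of the argument is bookkeeping with the normalization of $\theta_2$.
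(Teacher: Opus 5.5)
Your proposal is correct and follows essentially the paper's proof: the first identity is the odd-exponent part of $\theta_3^8$ matched against the even-$n$ case of (\ref{def-S}), and the second expands $\theta_2(0,z)^8$ over $(\Z+\frac12)^8$ and halves the count using Remark \ref{half}. Your extra checks make explicit two points the paper leaves implicit: that every exponent in $\theta_2^8$ is even, and that $x_1\mapsto -x_1$ is a fixed-point-free involution exchanging the two parity classes of $\sum x_i$.
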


\begin{proof} According to (\ref{theta-defin})
 we have $\frac12(\theta^8_3(q) - \theta^8_3(-q))= \sum_{k=0}^\infty c_{2k+1}q^{2k+1}$ where
$c_{2k+1}=\sharp\{ \bar x\in\Z^8 \,|\,\sum x_i^2=2k+1\}$. Thus, the first identity in (\ref{identity-1}) follows from 
the first part of (\ref{def-S}).

Similarly, we have
$$
\frac12 \theta^8_2(0,z)
= \sum_{\bar m\in (\Z+\frac12)^8}\frac12 
q^{\sum m_j^2} =
 \sum_{k=0}^\infty  \frac12 c_{2k+2}q^{2k+2}
$$
where $c_{2k+2}=\sharp\{ \bar m\in (\Z+\frac12)^8 \,|\, \sum m_i^2=2k+2\}$.
Therefore, the second identity in  (\ref{identity-1}) follows from 
the second
part of (\ref{def-S}) and Remark \ref{half}.
\end{proof}

\begin{theorem}{\cite[Theorem 6.2]{BL}}\label{bl-theorem}  Let $f \: X\to \P^1$ be a rational relatively minimal elliptic surface with a section $L$ and without reducible fibers. Then,
\begin{equation}\hskip24mm
\sum_{r\ge 0}
GW(L-rK)q^r=\prod_{j\ge 1}  (1-q^j)^{-12}.
\hskip25mm \qed \end{equation}
\end{theorem}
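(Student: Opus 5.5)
This statement is Theorem~\ref{bl-theorem}, quoted from Bryan--Leung \cite[Theorem 6.2]{BL}. The paper uses it as an input, so a proposal only needs to reconstruct the Bryan--Leung argument.

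\emph{Setup.} Fix the section $L$, so $L^2=LK=-1$, and recall $F=-K$ is the fiber class with $F^2=0$, $LF=1$. For $\alpha=L+rF$ we get $-K\alpha=1$. The expected dimension of genus-$0$ stable maps in class $\alpha$ is therefore $-K\alpha-1=0$, and no point constraints are needed. By deformation invariance we may take $X$ generic: exactly $12$ nodal rational fibers $F_1,\dots,F_{12}$, no reducible fibers, and we may count in the algebraic setting.

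\emph{Support of the moduli space.} By Proposition~\ref{section-criter}(3), every rational curve of class $L+rF$ has the form $L'+\sum m_iF_i$ with $L'$ a section. Then $L'=L+(r-\sum m_i)F$ satisfies $L'^2=-1$, which forces $\sum m_i=r$, and by Proposition~\ref{section-criter}(2) $L'=L$. So the genus-$0$ stable maps in class $L+rF$ are exactly the maps whose image is $L\cup\bigcup F_i$, with the $i$-th nodal fiber covered with total degree $m_i$ and $\sum m_i=r$. This moduli space is compact and splits into connected pieces indexed by the partitions $(m_1,\dots,m_{12})$. The invariant is then a sum of local contributions, and each contribution factorizes as a product over $i$ of a contribution of a \emph{rigid section glued to a nodal rational fiber with multiplicity $m_i$}.

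\emph{Local contribution (main obstacle).} The hard step is showing that a single nodal fiber covered with total degree $m$ contributes $p(m)$, the number of partitions of $m$. Granting that, the product over the $12$ fibers gives
\[
\sum_r GW(L+rF)\,q^r=\Bigl(\sum_m p(m)q^m\Bigr)^{12}=\prod_j(1-q^j)^{-12}.
\]
Bryan--Leung obtain the local contribution by a degeneration/symplectic argument rather than by computing virtual classes directly. They deform to an almost complex structure under which each nodal fiber is treated as an immersed sphere, and count maps $L\cup(\text{covers of fiber components})$. Equivalently, the generating function can be fixed through a family version of the Yau--Zaslow computation: there the same local contributions produce the K3 count $\prod_j(1-q^j)^{-24}$, with $24$ nodal fibers in place of $12$, which pins down the local series as $\prod_j(1-q^j)^{-1}$. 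I would first try to import that local computation, checking it on the small cases $r=0$ (giving $1$) and $r=1$ (giving $12$, one contribution per nodal fiber). The genuinely delicate points are the obstruction theory on the non-reduced components and the gluing at the node.
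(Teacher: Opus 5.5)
The paper does not prove this theorem; it quotes it from \cite[Theorem 6.2]{BL}. Your text therefore has to stand as a proof on its own, and it does not. Your support argument is fine: Proposition~\ref{section-criter} together with $L'^2=-1$ does force every genus-$0$ stable map in class $L+rF$ to have image $L\cup\bigcup F_i$ with $\sum m_i=r$. But the identity $GW(L+rF)=\sum_{m_1+\dots+m_{12}=r}\prod_i p(m_i)$ is the whole content of the theorem, and it rests on two claims you only assert.

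\emph{Factorization.} All these maps have a domain component covering $L$ that carries a node over each $p_i=L\cap F_i$ with $m_i>0$. So the moduli space lies entirely in boundary strata of negative expected dimension; it is an excess locus. You need a genuine argument that the degree-$0$ virtual class splits into contributions localized near the individual $F_i$.

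\emph{Local value.} The value $p(m)$ of a single local contribution is explicitly ``granted''. Checking $r=0$ and $r=1$ cannot pin down a power series.

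Your proposed way of fixing the local series through the K3 count does not repair this. In Bryan--Leung, the formula $\prod_j(1-q^j)^{-24}$ for the elliptic K3 is derived from the very same nodal-fiber computation. Invoking it is therefore circular unless you use an independent proof of the (family/reduced) Gromov--Witten version.

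Even then, taking a $24$th root only gives the local series in the K3 setting. There the section is a $(-2)$-curve, the invariants are family invariants, and the obstruction theory is different. To transport it to the rational elliptic surface, where $L$ is a $(-1)$-curve and ordinary invariants are used, you must prove a universality statement. Namely, the contribution of an $m$-fold nodal fiber glued to a rigid section must depend only on a neighbourhood of the fiber, not on the normal bundle of the section or on the kind of invariant.

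That universality is exactly the delicate obstruction and gluing analysis, at the node and at $L\cap F_i$, that you list at the end and leave undone. As written, you have an accurate outline of the structure of the Bryan--Leung argument, not a proof of the theorem.
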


\begin{theorem}
\begin{equation}\label{simple-sum}
\sum^\infty_{n=0} GW_{1,n} q^{n+1}=\Cal F(q)\cdot\Pi_{j=1}^\infty (1-q^{2j})^{-12}
\end{equation}
where
\begin{equation}
\Cal F(q)= \frac12(\theta^8_3(q) - \theta^8_3(-q)+\theta^8_2(0,z)).
\end{equation}
\end{theorem}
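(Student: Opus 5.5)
We work with the Bryan–Leung model: $X$ is $\P^2$ blown up at the nine base points of a generic cubic pencil, so $f\:X\to\P^1$ is a relatively minimal elliptic surface with fiber class $-K$, twelve nodal fibers and no reducible fibers. We equip it with the conic bundle $C=H-e_0$. By deformation invariance of $GW$ it suffices to compute in this model. Since $d=K^2=0$, we have $K'=K$, and any class of bi-level $(1,n)$ is $\alpha=\frac12(C-nK+v)$.

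The plan is to classify all classes $\alpha\in\lay^{1,n}$ with $GW(\alpha)\neq0$. A class with nonzero genus-$0$ invariant and $-K\alpha=1$ is represented by a rational curve $D$ with $-DK=1$. By Proposition~\ref{section-criter}(3), $D=L+\sum m_iF_i$ with $L$ a section and $F_i$ singular fibers. Since each $F_i$ has class $-K$, we get $\alpha=L-rK$ with $r=\sum m_i\ge0$. Intersecting with $C$ and using $-KC=2$ gives
\[
n=C\alpha=C\cdot L+2r .
\]
The section $L$ has $-KL=1$, so $L\in\lay^{1,n-2r}$, and by Proposition~\ref{section-criter}(1),(2) the sections of bi-level $(1,n')$ correspond bijectively to their classes. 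Their number is $|L^{1,n'}|=S_{1,n'}$ by Corollary~\ref{S=L}. The decomposition $\alpha=L-rK$ is unique: $L$ is the unique class of square $-1$ in $\alpha+\Z K$, because $(L-rK)^2=-1+2r$ and $K^2=0$.

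Therefore
\[
GW_{1,n}=\sum_{r\ge0}\ \sum_{L\in L^{1,n-2r}} GW(L-rK).
\]
Theorem~\ref{bl-theorem} shows that $GW(L-rK)$ depends only on $r$ and equals the coefficient $p_{12}(r)$ of $q^r$ in $\prod_j(1-q^j)^{-12}$, independently of $L$. This gives the convolution
\[
GW_{1,n}=\sum_r S_{1,n-2r}\,p_{12}(r).
\]
Multiplying by $q^{n+1}$ and substituting $q^{2r}$ yields
\[
\sum_n GW_{1,n}q^{n+1}=\Big(\sum_{n'} S_{1,n'}q^{n'+1}\Big)\prod_j(1-q^{2j})^{-12}.
\]
Proposition~\ref{lines-series} identifies the first factor. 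The even-$n'$ part is $\frac12(\theta_3^8(q)-\theta_3^8(-q))$ and the odd-$n'$ part is $\frac12\theta_2^8(0,z)$; together they give $\Cal F(q)$.

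The main obstacle is justifying that only the classes $L-rK$ contribute and that Bryan–Leung's count applies verbatim. Bryan–Leung's count is a genuine GW invariant and includes multiple-cover and reducible contributions. So we must argue the converse direction: if $GW(\alpha)\neq0$ with $-K\alpha=1$, then $\alpha$ is effective and represented by a rational (possibly reducible) curve, and Proposition~\ref{section-criter}(3) applies.

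I would address this by a degeneration argument. A nonzero GW invariant forces a stable map for the chosen generic complex structure. Its image is a connected curve of genus-$0$ components with $-K\cdot$(image)$\le1$. Since each component other than fiber components has $-K\cdot\ge1$, and fiber components have $-K\cdot=0$, the image consists of one section plus fibers; only rational fibers, namely the nodal ones, can occur. Hence $\alpha=L-rK$.
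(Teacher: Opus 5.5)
Your proposal is correct and follows the paper's proof. Like the paper, you decompose each contributing class uniquely as $L-rK$, with $L$ a section of level $n-2r$, count such sections by $S_{1,n-2r}$, and insert Bryan--Leung to obtain the convolution with $\prod_j(1-q^{2j})^{-12}$. Your stable-map argument only spells out a step the paper leaves implicit: that every class with $GW\neq 0$ at level $(1,n)$ is of the form $L-rK$.
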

\begin{proof}
For $L-rK$ as in 
Theorem \ref{bl-theorem}, the $C$-level of $L-rK$ is equal to $C\cdot L +2r$. Since in addition $(L-rK)^2=2r-1$, 
Proposition \ref{section-criter}
implies that 
the set of divisor classes of rational curves of bi-level $(1,n)$
splits into a disjoint union of the sets 
$L^{1,n-2r}-rK=\{\a-rK\,|\,\a\in L^{1,n-2r}\}$, ${r\ge0}$,
which implies
$$
GW_{1,n}=S_{1,n}GW(L)+ S_{1,n-2}GW(L-K)+S_{1,n-4}GW(L-2K)+\dots
$$
Combining this with Theorem \ref{bl-theorem} we get (\ref{simple-sum}).
\end{proof}

\subsection{Generating function for $GW^{[2]}_{1,n}$}
By analogy with Section \ref{gener-simple}, we start with counting the partial moments $S^{[2]}_{1,n}$ 
for sections 
of bi-level $(1,n)$:
$$
S^{[2]}_{1,n}:=\sum_{\alpha\in L_{1,n}}
(e\alpha)^2 GW(\alpha).
$$
For that, we use once more 
$S_{1,n}=\vert L_{1,n}\vert$, $GW(\a)=1$
and, in accordance with Proposition \ref{root-moments},  get the following expression by replacing $e$ with a vector $(1, 0, \dots, 0)$:
\begin{equation}\label{renormalization}
S^{[2]}_{1,n}= 2 \sum x_1^2
\,\,\text{taken over}\,\, 
\begin{cases}
\bar x\in\Z^8 : \sum x_i^2=n+1\,\text{for $n$ even,} \\
\bar x\in(\Z+\frac12)^8 : \sum x_i^2=n+1, \sum x_i=0\, ({\rm mod}\,2)\, \text{for $n$ odd.}
\end{cases}
\end{equation}

\begin{proposition}\label{moment-series}
\begin{equation}\label{identity-2}
\begin{aligned}
\sum_{k=0}^\infty S^{[2]}_{1,2k}q^{2k+1}=
& \frac{1}{8\pi i}\big(\frac{\partial \theta^8_3}{\partial z} (z)-\frac{\partial \theta^8_3}{\partial z} (z+1)\big),
\\
\sum_{k=0}^\infty S^{[2]}_{1,2k+1}q^{2k+2}= 
& \frac{1}{8\pi i}\frac{\partial \theta^8_2}{\partial z} (0,z),
\end{aligned}
\end{equation}
where $q=e^{\pi i z}$
and $\theta_2(\tau,z)$ is as defined in Proposition \ref{lines-series}.
\end{proposition}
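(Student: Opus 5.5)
The plan is to mimic the proof of Proposition \ref{lines-series}, replacing the plain count of lattice vectors by the weighted count $\sum x_1^2$. The weight is then produced by differentiating the theta series in $z$.

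\textbf{Step 1 (symmetrization).} By (\ref{renormalization}), $S^{[2]}_{1,n}=2\sum x_1^2$. By the $S_8$-symmetry of the summation sets we have $\sum x_1^2=\frac18\sum |\bar x|^2$. Since $|\bar x|^2=n+1$ is constant on each set, this gives
$$S^{[2]}_{1,n}=\frac{n+1}{4}\,\sharp(\text{summation set}).$$

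\textbf{Step 2 (odd index of $q$, even $n$).} Let $c_j=\sharp\{\bar x\in\Z^8: |\bar x|^2=j\}$, so that $\theta_3^8(z)=\sum_j c_j q^j$ with $q=e^{\pi i z}$. Then $\frac{\partial}{\partial z}\theta_3^8=\pi i\sum_j j c_j q^j$. The shift $z\mapsto z+1$ replaces $q$ by $-q$, so
$$\frac{\partial \theta_3^8}{\partial z}(z)-\frac{\partial \theta_3^8}{\partial z}(z+1)=2\pi i\sum_{j\ \text{odd}} j c_j q^j .$$
Multiplying by $\frac1{8\pi i}$ yields $\sum_{j\ \text{odd}}\frac j4 c_j q^j$. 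With $j=2k+1=n+1$ this matches Step 1, using $S_{1,2k}=c_{2k+1}$ from (\ref{def-S}).

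\textbf{Step 3 (even index of $q$, odd $n$).} In the same way, $\theta_2^8(0,z)=\sum_j c'_j q^j$ with $c'_j=\sharp\{\bar m\in(\Z+\frac12)^8:|\bar m|^2=j\}$. Its derivative is $\pi i\sum_j j c'_j q^j$, and multiplying by $\frac1{8\pi i}$ gives $\sum_j \frac j8 c'_j q^j$. By Remark \ref{half} the parity condition $\sum x_i\equiv 0$ keeps exactly half of the vectors, so the set count is $\frac12 c'_{2k+2}$. Step 1 then gives $\frac{2k+2}{4}\cdot\frac12 c'_{2k+2}=\frac{2k+2}{8}c'_{2k+2}$, which is the required coefficient.

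\textbf{Main obstacle.} The only delicate point is bookkeeping of normalizations: the factor $2$ in (\ref{renormalization}), the halving from Remark \ref{half}, and the factor $\pi i$ coming from $dq/dz$. These must combine to the stated constant $\frac1{8\pi i}$ in both identities. One also needs to check that the symmetrization in Step 1 is legitimate in the odd case. It is, because the parity condition $\sum x_i\equiv 0$ is preserved by permutations of the coordinates, so the summation set is still $S_8$-invariant.
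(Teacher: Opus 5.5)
Your proposal is correct and follows essentially the paper's route. The paper writes $\partial_z\theta_3^8=8\pi i\sum_{\bar m}m_1^2q^{|\bar m|^2}$, which is the coordinate symmetry of your Step 1 read in the opposite direction. It then uses $q\mapsto -q$ under $z\mapsto z+1$ to isolate the odd exponents, and invokes (\ref{renormalization}) and Remark \ref{half} for the normalizations, which you have tracked correctly.
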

\begin{proof} By definition,
\begin{equation*}
\begin{aligned}
\frac{\partial \theta^8_3}{\partial z}(z)=& 8\pi i\sum_{\bar m\in \Z^8}m_1^2q^{\sum m_j^2}, \\
\frac{\partial \theta^8_3}{\partial z}(z+1)=& 8\pi i\sum_{\bar m\in \Z^8}m_1^2(-q)^{\sum m_j^2}, \\
\frac{\partial\theta^8_2}{\partial z}(0,z)=&  8\pi i \sum_{\bar m\in \Z^8}(m_1+\frac12)^2 q^{\sum( m_j+\frac12)^2}.
\end{aligned}
\end{equation*}
Being combined with (\ref{renormalization}) and Remark \ref{half} this gives the stated result.
\end{proof}

\begin{theorem}
\begin{equation}\label{moment-sum}
\sum^\infty_{n=0} GW^{[2]}_{1,n} q^{n+1}=\Cal F^{[2]}(q)\cdot\Pi_{k=1}^\infty (1-q^{2k})^{-12}
\end{equation}
where
\begin{equation}
\Cal F^{[2]}(q)=  \frac{1}{8\pi i}\big(\frac{\partial \theta^8_3}{\partial z} (z)-\frac{\partial \theta^8_3}{\partial z} (z+1)+\frac{\partial \theta^8_2}{\partial z} (0,z)\big)
\end{equation}
with $q=e^{\pi i z}$.
\end{theorem}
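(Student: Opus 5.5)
The plan mirrors the proof of (\ref{simple-sum}), now weighted by $(e\alpha)^2$. First, by Proposition \ref{section-criter}, every class of bi-level $(1,n)$ carrying a nonzero Gromov--Witten invariant has the form $L-rK$. Here $L\in L^{1,n-2r}$ is a section class and $r\ge0$. Hence $\Cal L^{1,n}$ restricted to classes with $GW\ne0$ is the disjoint union of the shifted sets $L^{1,n-2r}-rK$. We take the root $e\in\la K,C\ra^\perp$, so $e\cdot K=0$ and therefore $e\cdot(L-rK)=e\cdot L$. The weight $(e\alpha)^2$ is thus unchanged under the shift by $-rK$.

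Second, Theorem \ref{bl-theorem} shows that $GW(L-rK)$ does not depend on the section $L$; it depends only on $r$. Write $g_r$ for this common value, with $\sum_r g_rq^r=\prod_j(1-q^j)^{-12}$. Every section is a $(-1)$-curve, which gives $g_0=GW(L)=1$. Putting these together yields
$$GW^{[2]}_{1,n}=\sum_{r\ge0} g_r\sum_{\alpha\in L^{1,n-2r}}(e\alpha)^2=\sum_{r\ge0}g_r\,S^{[2]}_{1,n-2r}.$$
Here the inner sums are exactly the partial moments $S^{[2]}_{1,m}$ defined before (\ref{renormalization}), since $GW(\alpha)=1$ for sections. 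This convolution is the identity
$$\sum_n GW^{[2]}_{1,n}q^{n+1}=\Big(\sum_m S^{[2]}_{1,m}q^{m+1}\Big)\cdot\prod_{j\ge1}(1-q^{2j})^{-12}.$$
The substitution $q\mapsto q^2$ in the Bryan--Leung product accounts for the level shift $n\mapsto n+2r$.

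Third, we identify the first factor with $\Cal F^{[2]}(q)$ by adding the two identities of Proposition \ref{moment-series}, which handle even and odd $n$. That proposition already relies on (\ref{renormalization}), that is, on replacing $e$ by $(1,0,\dots,0)$ with normalization factor $2$ via Proposition \ref{root-moments}, together with Remark \ref{half}.

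The main obstacle is the first step. We need every curve class counted in $GW_{1,n}$ that could carry a nonzero $(e\alpha)^2$-weighted invariant to be of the form $L-rK$, and we need $GW(L-rK)$ to be the Bryan--Leung number independent of $L$. Both statements were already used in the proof of (\ref{simple-sum}), so I would cite that argument verbatim. The only new point to check is $e\cdot K=0$, which holds because $e\in\la K,C\ra^\perp$.
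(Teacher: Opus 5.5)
Your proposal is correct and follows essentially the same route as the paper. The paper also decomposes bi-level $(1,n)$ classes as $L-rK$, writes $GW^{[2]}_{1,n}=\sum_{r\ge0}S^{[2]}_{1,n-2r}\,GW(L-rK)$, and concludes via Theorem \ref{bl-theorem} and Proposition \ref{moment-series}; your explicit check that $e\cdot K=0$ leaves the weight $(e\alpha)^2$ unchanged under the shift is a detail the paper leaves implicit.
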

\begin{proof} This follows from Proposition \ref{moment-series} and a decomposition
\begin{equation*}
GW^{[2]}_{1,n}=S^{[2]}_{1,n}+ S^{[2]}_{1,n-2}GW(L-K)+S^{[2]}_{1,n-4}GW(L-2K)+\dots
\end{equation*}
where $L-nK$ is as in 
Theorem \ref{bl-theorem}.
\end{proof}

A few first terms of the sequences  $GW_{1,n}$ and $GW_{1,n}^{[2]}=4\NN_{2,2n}$ (computed using Theorems \ref{simple-sum}, \ref{moment-sum})
are as follows.

 \begin{table}[h!]
 \caption{First values of $GW_{1,n}$ and $GW_{1,n}^{[2]}=4\NN_{2,2n}$}
\begin{tabular}{|ccccccc|}
\hline
{$GW_{1,0}$}&{$GW_{1,1}$}&{$GW_{1,2}$}&{$GW_{1,3}$}&{$GW_{1,4}$}&{$GW_{1,5}$}&{$GW_{1,6}$}\\
\hline
16&128&640&2560&8832&27392&78336\\
\hline
\end{tabular}
\vskip2mm 
\begin{tabular}{|ccccccc|}
\hline
{$GW_{1,0}^{[2]}$}&{$GW_{1,1}^{[2]}$}&{$GW_{1,2}^{[2]}$}&{$GW_{1,3}^{[2]}$}&{$GW_{1,4}^{[2]}$}&{$GW_{1,5}^{[2]}$}&{$GW_{1,6}^{[2]}$}\\
\hline
4&64&384&1792&6912&23712&50832\\
\hline
\hline
$\Cal N_{2,0}$&$\Cal N_{2,2}$&$\Cal N_{2,4}$&$\Cal N_{2,6}$&$\Cal N_{2,8}$&$\Cal N_{2,10}$&$\Cal N_{2,12}$\\
\hline
1&16&96&898&3456&5928&12708\\
\hline
\end{tabular}
\end{table}

\section{Concluding Remarks}\label{concluding}

\subsection{An additional recursion formula for $GW_{m,n}$.}
The following proposition is one more straightforward consequence of the associativity law for the small quantum product.
\begin{proposition}\label{additional-prop-associativity}
Let $X$ be a rational surface. Then, for any $H_1, H_2, H_3 \in H_2(X)$ and any $\alpha \in H_2(X)$ with $-K\alpha\ge 3$, the following relation holds:
\begin{equation}\label{additional-associativity}
\begin{aligned}
\bigl((H_1H_2)(\alpha H_3) - (H_2H_3)(\alpha H_1)\bigr)GW(\alpha) = &\\
\sum_{\alpha=\alpha_1+\alpha_2} GW(\alpha_1)GW(\alpha_2)(\alpha_1\alpha_2)\bigl( \binom{m-3}{m_2-1}(\alpha_1H_1)(\alpha_2H_2)(\alpha_2H_3)- & \\
\binom{m-3}{m_1-1}(\alpha_1H_1)(\alpha_1H_2)(\alpha_2H_3)\bigr) &\\
\end{aligned}
\end{equation}
where $m=-K\alpha$ and $m_i=-K\alpha_i$, $i=1,2$.\qed
\end{proposition}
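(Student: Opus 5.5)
This relation is a WDVV identity, so I would derive it from associativity of the small quantum product on $QH^*(X)$, using a different grouping of four insertions than in Proposition \ref{prop-associativity}. The classes $H_1,H_2,H_3$ enter as divisor insertions, and the extra point class $pt$ is the fourth insertion. The condition $-K\alpha\ge 3$ guarantees that, after the point constraints are placed, at least two free points remain for the $\binom{m-3}{\cdot}$ distribution.

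The steps are as follows.

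\emph{Set up the equation.} I write the WDVV equation for the genus-$0$ potential, with insertions $H_1,H_2,H_3,pt$, and take its coefficient in $\alpha$. Concretely, this is the statement that the two expressions
$$(H_1*H_2)*(H_3*pt)\quad\text{and}\quad (H_1*pt)*(H_2*H_3)$$
agree, paired against $pt^{m-3}$ generic points.

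\emph{Identify the unsplit terms.} A term in which the whole class $\alpha$ goes to one side produces $(H_iH_j)\,GW(\alpha)$ times the divisor factor $(\alpha H_k)$. The divisor axiom turns each divisor insertion $H$ into a factor $(\alpha H)$. This gives the left-hand side, $\bigl((H_1H_2)(\alpha H_3)-(H_2H_3)(\alpha H_1)\bigr)GW(\alpha)$. The dual-basis pairing leaves only a Kronecker contraction, and the splitting with degree $0$ on one side vanishes by the point axiom.

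\emph{Identify the split terms.} A term with $\alpha=\alpha_1+\alpha_2$, both $\alpha_i\neq 0$, gives $GW(\alpha_1)GW(\alpha_2)(\alpha_1\alpha_2)$. The factor $(\alpha_1\alpha_2)$ comes from summing over a dual basis of $H^2$, since $\sum_a(\alpha_1 T_a)(\alpha_2 T^a)=\alpha_1\alpha_2$. The higher-degree dual-basis terms vanish for dimension reasons on the appropriate side.

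\emph{Count point distributions.} The extra $pt$ insertion must lie on the curve of class $\alpha_1$ or $\alpha_2$, depending on which side of the WDVV equation it sits. The remaining $m-3$ points are then distributed so that each $\alpha_i$ passes through $m_i-1$ points in total. In the first grouping $pt$ sits with $H_3$, and $H_3$ goes to $\alpha_2$; hence $\alpha_2$ carries $pt$ and needs $m_2-2$ of the free points, so the count is $\binom{m-3}{m_1-1}=\binom{m-3}{m_2-2}$. Matching exactly which grouping yields which binomial, so that the result reads $\binom{m-3}{m_2-1}$ with factors $(\alpha_1H_1)(\alpha_2H_2)(\alpha_2H_3)$, and $\binom{m-3}{m_1-1}$ with factors $(\alpha_1H_1)(\alpha_1H_2)(\alpha_2H_3)$, is bookkeeping. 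The symmetric terms cancel after relabelling $\alpha_1\leftrightarrow\alpha_2$.

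The main obstacle is this last bookkeeping step: getting the signs and binomial indices exactly as stated, including which terms cancel under the swap $\alpha_1\leftrightarrow\alpha_2$. I would check the final form on a small case, such as $X=\P^2$ with $\alpha=H$ and $m=3$, where the identity should reduce to $GW(H)=1$ consistency.
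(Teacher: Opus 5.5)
Your proposal is correct and is the argument the paper intends (the paper says only that this is a consequence of associativity of the quantum product). WDVV with the groupings $\{H_1,H_2\}\,|\,\{H_3,pt\}$ and $\{H_1,pt\}\,|\,\{H_2,H_3\}$, plus $m-3$ extra point insertions, gives the stated formula directly: the first grouping produces the $\binom{m-3}{m_1-1}$ sum and the second produces the $\binom{m-3}{m_2-1}$ sum.

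Two small corrections. First, no cancellation under $\alpha_1\leftrightarrow\alpha_2$ is needed; it happens only in degenerate cases such as $H_1=H_3$ or $H_2(X)$ of rank one, which is why your $\P^2$ check is vacuous. Second, $m\ge 3$ only ensures that there are $m-3\ge 0$ free points, not at least two.
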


Taking $H_1=K, H_2=H_3=C$ in Proposition \ref{additional-prop-associativity} we get the following recursion statement.

\begin{theorem}\label{additional-recursion-any-d} For any $m\ge 3$, the following relation holds
\begin{equation}
\begin{aligned}
& 2n\,GW_{m,n}=
\sum_{\substack{m=m_1+m_2 \\ n=n_1+n_2}}GW_{m_1,n_1} GW_{m_2,n_2}\bigl(\binom{m-3}{m_1-1} m_1n_1n_2 -  \\
&\binom{m-3}{m_2-1} m_1 n_2^2\bigl)
(\frac14 n_1n_2d-\frac12(m_1n_2+n_1m_2)).
\end{aligned}
\end{equation}
\end{theorem}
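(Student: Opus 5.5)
The plan is to specialize Proposition \ref{additional-prop-associativity} to $H_1=K$, $H_2=H_3=C$. Then, as in the proof of Theorem \ref{simple-recursion-any-d}, I sum the resulting identity over all $\alpha\in\lay^{m,n}(X)$, and finally remove the contribution of the $(K,C)^\perp$-components with Lemma \ref{zero-sum}. All sums involved are finite by Lemma \ref{finiteness}, since only effective classes carry nonzero $GW$.

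\emph{Step 1: specialization.} Fix $\alpha$ with $-K\alpha=m\ge 3$ and $C\alpha=n$. Using $KC=-2$ and $C^2=0$, the left-hand side of (\ref{additional-associativity}) becomes
\[
\bigl((KC)(\alpha C)-(C^2)(\alpha K)\bigr)GW(\alpha)=-2n\,GW(\alpha).
\]
For a splitting $\alpha=\alpha_1+\alpha_2$ with $-K\alpha_i=m_i$ and $C\alpha_i=n_i$, we have $\alpha_1H_1=-m_1$, $\alpha_1H_2=n_1$ and $\alpha_2H_2=\alpha_2H_3=n_2$. Hence the bracket on the right equals
\[
-\binom{m-3}{m_2-1}m_1n_2^2+\binom{m-3}{m_1-1}m_1n_1n_2 .
\]

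\emph{Step 2: the intersection $\alpha_1\alpha_2$.} Put $K'=K+\frac d2C$. Then $K'C=-2$ and $K'^2=d+2\cdot\frac d2\cdot(-2)=-d$. Write $\alpha_j=\frac{m_j}2C-\frac{n_j}2K'+v_j$ with $v_j\in(K,C)^\perp\otimes\Q$, as in Theorem \ref{simple-recursion-any-d}. A direct expansion gives
\[
\alpha_1\alpha_2=\tfrac12(m_1n_2+m_2n_1)-\tfrac14 n_1n_2d+v_1v_2 .
\]
Multiplying the identity of Step 1 by $-1$, the sign change turns $\alpha_1\alpha_2$ into the factor $\frac14 n_1n_2d-\frac12(m_1n_2+n_1m_2)-v_1v_2$. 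This already matches the right-hand side of the theorem, except for the term $-v_1v_2$.

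\emph{Step 3: summation.} Summing over $\alpha\in\lay^{m,n}(X)$ turns the sum over splittings $\alpha=\alpha_1+\alpha_2$ into a sum over $(m_1,n_1)+(m_2,n_2)=(m,n)$ together with independent sums over $\alpha_1\in\lay^{m_1,n_1}$ and $\alpha_2\in\lay^{m_2,n_2}$. Apart from $v_1v_2$, the coefficients depend only on $(m_i,n_i)$. The $v_1v_2$-part therefore factors as
\[
\Bigl(\sum GW(\alpha_1)v_1\Bigr)\cdot\Bigl(\sum GW(\alpha_2)v_2\Bigr),
\]
and each factor vanishes by Lemma \ref{zero-sum}. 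The remaining terms give exactly $GW_{m_1,n_1}GW_{m_2,n_2}$ times the stated coefficient, which yields the formula.

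The main obstacle is bookkeeping rather than anything conceptual. One has to track signs carefully, in particular $KC=-2$ and $K'^2=-d$. One also has to note that Lemma \ref{zero-sum} is stated for $K^2\le 6$. Outside that range $(K,C)^\perp$ degenerates, so I would either restrict to $K^2\le6$, as elsewhere in the section, or check the small cases $s<2$ separately.
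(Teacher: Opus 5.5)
Your proof is correct and follows the paper's intended argument. The paper only says to take $H_1=K$, $H_2=H_3=C$ in Proposition~\ref{additional-prop-associativity}, and it leaves implicit exactly the steps you carry out: the decomposition $\alpha_j=\frac{m_j}2C-\frac{n_j}2K'+v_j$, summation over bi-levels, and removal of the $v_1v_2$-term by Lemma~\ref{zero-sum}, as in the proof of Theorem~\ref{simple-recursion-any-d}.

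Restricting to $K^2\le 6$ is genuinely necessary, but the reason is not that $(K,C)^\perp$ degenerates. For $K^2=7$ it is $\DD_1$, which is nondegenerate but has no roots, so Lemma~\ref{zero-sum} fails: on $\P^2\#2\bar\P^2$ with $C=H-e_0$, the only class of bi-level $(1,1)$ with nonzero $GW$ is $e_0$, whose $(K,C)^\perp$-component is nonzero. The stated formula then fails there as well, so checking the case $s=1$ separately would turn up a counterexample. For instance, at $(m,n)=(3,2)$ the right-hand side is $-\frac14$ while $GW_{3,2}=0$. For $K^2=8$ one has $(K,C)^\perp=0$, so the $v_1v_2$-term is identically zero and the formula holds trivially.
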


\subsection{An additional recursion formula for $GW^{[2]}_{m,n}$.}
The idea is to apply twice 
Proposition \ref{prop-associativity} and follow the same simplification arguments
as  in the proof of Theorem \ref{moment-recursion-any-d}: first, to $H_1= H_3=-K'$,
$H_2=H_4=C-e$; second, to $H'_1=H'_3=-K$, $H'_2=H'_4=C-e$.
Then, we take the sum of the results (which eliminates $d$ from the coefficient 
before $GW^{[2]}_{m,n}$). As an outcome, we get the following recursion having a non-zero constant coefficient in front of $GW^{[2]}_{m,n}$.

\begin{theorem}\label{better-moment-recursion}
Let $K^2\le 5$.
Then,
for any $m\ge 2, n\ge 0$ the following recursion relation holds
\begin{equation}
\begin{aligned}
&(2(m-\frac{nd}2)^2+2m^2+4n^2)GW_{m,n}+8GW^{[2]}_{m,n}=
\sum_{\substack{m=m_1+m_2\\ n=n_1+n_2}}\big(a\,GW_{m_1,n_1}GW_{m_2,n_2}+\\
&b_1GW^{[2]}_{m_1,n_1}GW_{m_2,n_2}+b_2 GW_{m_1,n_1}GW^{[2]}_{m_2,n_2} +c \, GW^{[2]}_{m_1,n_1}GW^{[2]}_{m_2,n_2}\big) \\
\end{aligned}
\end{equation}
where
$$
\begin{aligned}
 &a= \binom{m-2}{m_1-1} (\frac12(m_1n_2+m_2n_1) - \frac{n_1n_2d}4)((m_1-\frac{n_1d}2)^2n_2^2 +m_1^2n_2^2- \\
 &-m_1m_2n_1n_2 -(m_1-\frac{n_1d}2)(m_2-\frac{n_2d}2)n_1n_2 ), \\
& b_1= \binom{m-2}{m_1-1}(\frac12(m_1n_2+m_2n_1)- \frac{n_1n_2d}4)(2n_2^2-( 2m_2-\frac{n_2d}2)n_2), \\
&b_2=\binom{m-2}{m_1-1} (\frac12(m_1n_2+m_2n_1)- \frac{n_1n_2d}4)((m_1-\frac{n_1d}2)^2+m_1^2 - (2m_1-\frac{n_1d}2)n_1), \\
&c= \frac12 \binom{m-2}{m_1-1} ((m_1-\frac{n_1d}2)(m_2-\frac{n_2d}2)+n_1n_2+n_1(m_2-\frac{n_2d}2)+\\
&(m_1+n_1)(m_2+n_2)--3(m_1-\frac{n_1d}2)n_2 -4m_1n_2 ).  \\
\end{aligned}
$$
\end{theorem}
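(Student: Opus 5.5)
The idea is to run the derivation of Theorem \ref{moment-recursion-any-d} twice and add the two results, as indicated just before the statement.

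\emph{First application.} This is literally the relation obtained in the proof of Theorem \ref{moment-recursion-any-d}. We take $H_1=H_3=-K'-e$, $H_2=H_4=C-e$ with $K'=K+\frac d2C$, and write $\alpha=\frac m2C-\frac n2K'+v$, $v\in(K,C)^\perp\otimes\Q$.

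\emph{Second application.} We apply Proposition \ref{prop-associativity} with $H'_1=H'_3=-K-e$ and $H'_2=H'_4=C-e$. The needed pairings are
\[
(-K-e)\alpha=m-ev,\qquad (C-e)\alpha=n-ev,\qquad (-K-e)^2=d-2,\qquad (C-e)^2=-2,\qquad (-K-e)(C-e)=2-2=0 .
\]
The last one uses $e\perp K,C$, $e^2=-2$, $CK=-2$, $C^2=0$.

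Substituting these, the left-hand side becomes
\[
\bigl[2(m-ev)^2+(d-2)(n-ev)^2\bigr]GW(\alpha)
\]
up to the same sign conventions as before. We then average over $\pm e$, which kills the odd terms in $ev$. This produces a coefficient $2m^2+(d-2)n^2+d\,(ev)^2$.

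\emph{Adding the two averaged relations.} The first gives $2(m-\frac{nd}2)^2+(2+d)n^2+(4+d)(ev)^2$, and the second gives $2m^2+(d-2)n^2+d(ev)^2$. The difference of the $(ev)^2$ coefficients is $4$, not $8-2d$. So to make $d$ disappear from the coefficient of $GW^{[2]}_{m,n}$ I would actually \emph{subtract} the second relation from the first. Then the $(ev)^2$ coefficient is exactly $4$, and after summing over $v$ it becomes $8\,GW^{[2]}_{m,n}$ up to sign, since $\sum(ev)^2GW=GW^{[2]}$.

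I expect the main bookkeeping obstacle to be matching precisely the stated coefficient $2(m-\frac{nd}2)^2+2m^2+4n^2$ of $GW_{m,n}$. With subtraction the $n^2$ terms combine to $(2+d)n^2-(d-2)n^2=4n^2$, but the $m$-terms combine to $2(m-\frac{nd}2)^2-2m^2$. Hence the sign with which the second relation enters must be fixed by recomputing the orientation of the two sides of (\ref{associativity}) for $H'_i$. I would redo that side assignment carefully so that both the $4n^2$ and the $+2m^2$ appear, which is consistent with the statement's $b_2$ containing $+m_1^2$.

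\emph{Right-hand sides.} For the second relation, expand
\[
(\alpha_1H'_1)^2(\alpha_2H'_2)^2-(\alpha_1H'_1)(\alpha_1H'_2)(\alpha_2H'_1)(\alpha_2H'_2),
\]
with $\alpha_jH'_1=m_j-ev_j$ and $\alpha_jH'_2=n_j-ev_j$, multiplied by $(\alpha_1\alpha_2)=\frac12(m_1n_2+m_2n_1)-\frac{n_1n_2d}4+v_1v_2$.

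Next, sum over $v_1,v_2$. Every term linear in $ev_j$, or containing $v_1v_2$ times an even function, is discarded by Lemma \ref{zero-sum}. The terms $(ev_1)^2$ and $(ev_2)^2$ give the $GW^{[2]}\,GW$ contributions. The term $(v_1v_2)(ev_1)(ev_2)$ is converted by Lemma \ref{vector-moment} into $\frac12GW^{[2]}_{m_1,n_1}GW^{[2]}_{m_2,n_2}$ times its coefficient.

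This yields the extra pieces $m_1^2n_2^2-m_1m_2n_1n_2$ in $a$, $n_2^2-m_2n_2$ in $b_1$, $m_1^2-m_1n_1$ in $b_2$, and $(m_1+n_1)(m_2+n_2)-4m_1n_2$ in $c$. Adding these to the coefficients of Theorem \ref{moment-recursion-any-d} gives exactly the stated $a,b_1,b_2,c$, which also confirms the correct combination sign.
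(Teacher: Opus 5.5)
Your plan matches the paper's sketch: a second application of Proposition \ref{prop-associativity} with $H'_1=H'_3=-K-e$, $H'_2=H'_4=C-e$, treated exactly as in the proof of Theorem \ref{moment-recursion-any-d}, and then combined with the first relation. Your pairings are right. Your choice $-K-e$ is the one that works: the paper's sketch drops the $-e$, but without it the $(ev)^2$-terms of the two relations would cancel. Your right-hand-side increments to $a,b_1,b_2,c$ are also correct. One small point: Lemma \ref{vector-moment} produces the factor $\frac14e^2=-\frac12$, not $+\frac12$, and this is what turns the coefficient $4m_1n_2-(m_1+n_1)(m_2+n_2)$ into your increment for $c$.

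The gap is that you miscomputed the left-hand side of the second relation. This is what sends you to subtraction and leaves the argument unfinished. Use the orientation of (\ref{associativity}) that you already use on the right, with bracket $(\alpha_1H'_1)^2(\alpha_2H'_2)^2-(\alpha_1H'_1)(\alpha_1H'_2)(\alpha_2H'_1)(\alpha_2H'_2)$. Then the left-hand side is
\[
\bigl[2(H'_1H'_2)(\alpha H'_1)(\alpha H'_2)-(H'_1)^2(\alpha H'_2)^2-(H'_2)^2(\alpha H'_1)^2\bigr]GW(\alpha)=\bigl[2(m-ev)^2+(2-d)(n-ev)^2\bigr]GW(\alpha).
\]
Here $(H'_1)^2=d-2$ enters with a minus sign; you wrote $(d-2)$. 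Averaging over $\pm e$ gives $2m^2+(2-d)n^2+(4-d)(ev)^2$.

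So the two relations must be added, not subtracted:
\[
\bigl[2(m-\tfrac{nd}2)^2+(2+d)n^2+(4+d)(ev)^2\bigr]+\bigl[2m^2+(2-d)n^2+(4-d)(ev)^2\bigr]=2(m-\tfrac{nd}2)^2+2m^2+4n^2+8(ev)^2 .
\]
After summing over $v$, this is exactly the stated left-hand side. It is also consistent with the addition you perform on the right-hand sides.

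There is no side assignment to redo, since the orientation is fixed by (\ref{associativity}). Your subtraction step should simply be discarded. It would give $2(m-\frac{nd}2)^2-2m^2+4n^2$, and coefficient $4$, hence $4\,GW^{[2]}_{m,n}$ rather than $8\,GW^{[2]}_{m,n}$, so your claim of $8\,GW^{[2]}_{m,n}$ there is also an arithmetic slip. With this one sign corrected, your argument is complete and coincides with the paper's.
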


\subsection{Tables}
The values $GW_{1.n}$, $\NN_{1,n}$, and $GW_{1,n}^{[2]}$ given in the tables below
for $X=\P^2\#k\bar\P^2$, $k\le8$, were found in Section \ref{upto8}.

For $k\le6$, the 
classes of effective divisors of bi-level $(2,n)$
are limited to
those of fiber-classes of conic bundles,
which reduces their enumeration to a purely arithmetical task,
and, thus, we can find $GW_{m.n}$ and $GW_{m,n}^{[2]}$ for $m=2$ straightforwardly, as we did it for $m=1$.
By Theorem \ref{main-main},  $\NN_{2,n}=0$ for odd $n$.
For even $n$ and $k\ge 5$, in accordance with Theorem \ref{magic-theorem} one can use $\NN_{2,n}=\frac14 GW_{1,\frac{n}2}^{[2]}$.
For even $n$ and $k=3,4$, the values $\NN_{2,n}$ can be easily found
taking as example a conic bundle $X\to \P^1$ having minimal number of real singular fibers.
Then, $X_\R$ is $S^2$ if $k=3$ (with 
its  unique $\Pin^-$-structure),
and $\Rp2$ if $k=4$ (with 
its
unique monic $\Pin^-$-structure), and, thus,
all 
effective divisor classes of bi-level $(2,n)$
are counted with weight $+1$.

In the case $k=7$, the computations are analogous, except for the layer $\Cal L^{2,2}$,
which contains the
class $-K$ in addition to the fiber-classes of conic bundles.
This unique additional effective divisor class, $-K$, contributes $12$ to $GW_{2.2}$ and $0$ to $GW_{2,2}^{[2]}$.
Similar to above cases, $\NN_{2,2}=\frac14 GW_{1,1}^{[2]}=4$.

In the case $k=8$,
we apply the recurrence relations
from Theorems \ref{simple-recursion-any-d} and \ref{moment-recursion-any-d}) to find $GW_{m.n}$ and $GW_{m,n}^{[2]}$ for $m=2$ and $3$,
and then use Theorem \ref{magic-theorem} to find 
$\NN_{2,2n}=\frac14 GW^{[2]}_{1,n}$.
By Theorem \ref{main-main}, $\NN_{2,2n+1}=0$.
The values of $\NN_{3,n}$ for $k\ge 3$ are found using Corollary \ref{N-recursion}.

\begin{remark}
The condition $s\ge4$ in Theorem \ref{magic-theorem} turns out to be essential. Namely,
 for $k=4$ (that is, $s=3$) the claim of that theorem is not satisfied for instance for $N_{2,2}=0$, which is not $\frac14GW_{1,1}^{[2]}=2$.
\end{remark}

In Tables \ref{below8}--\ref{k=8}
below we let
$$
GW_m:=\sum_nGW_{m,n}, \quad GW^{[2]}_m:=\sum_nGW^{[2]}_{m,n}.
$$

\begin{table}[h!]
\caption{The values on the  levels $m=1,2$ for $k\le7$}\label{below8}
\hskip-15mm
\begin{tabular}{|c|ccc|c||ccccc|c|}
\hline
k&\scalebox{0.6}{$GW_{1,0}/\NN_{1,0}$}&\scalebox{0.60}{$GW_{1,1}/\NN_{1,1}$}&\scalebox{0.60}{$GW_{1,2}/\NN_{1,2}$}
&\scalebox{0.6}{$GW_1/\NN_{1}$}&
\scalebox{0.6}{$GW_{2,0}/\NN_{2,0}$}&\scalebox{0.6}{$GW_{2,1}/\NN_{2,1}$}&\scalebox{0.6}{$GW_{2,2}/\NN_{2,2}$}&\scalebox{0.7}{$GW_{2,3}/\NN_{2,3}$}&\scalebox{0.6}{$GW_{2,4}/\NN_{2,4}$}
&\scalebox{0.6}{$GW_2/\NN_{2}$}\\
\hline
3&4/0&2/0&0/0&6/0&1/1&2/0&0/0&0/0&0/0&3/1\\
4&6/2&4/0&0/0&\scalebox{0.9}{10/2}&1/1&4/0&0/0&0/0&0/0&5/1\\
5&8/0&8/0&0/0&\scalebox{0.9}{16/0}&1/1&8/0&1/1&0/0&0/0&10/2\\
6&\scalebox{0.9}{10/2}&\scalebox{0.9}{16/0}&1/1&\scalebox{0.9}{27/3}&1/1&16/0&10/2&0/0&0/0&27/3\\
7&\scalebox{0.9}{12/0}&\scalebox{0.9}{32/0}&\scalebox{0.8}{12/0}&\scalebox{0.9}{56/0}&1/1&32/0&72/4&32/0&1/1&\scalebox{0.9}{138/6}\\
\hline
\end{tabular}

\hbox{
\hskip-1mm\begin{tabular}{|c|ccc|c|}
\hline
k&\scalebox{0.85}{$GW^{[2]}_{1,0}$}&\scalebox{0.85}{$GW^{[2]}_{1,1}$}&\scalebox{0.85}{$GW^{[2]}_{1,2}$}&
\scalebox{0.85}{$GW^{[2]}_1$}\\
\hline
4&\phantom{Aa}4\phantom{Aa}&\phantom{Aa}2\phantom{Aa}&\phantom{aa-}0\phantom{aa-}&\phantom{A}6\phantom{A}\\
5&4&4&0&8\\
6&4&8&0&12\\
7&4&16&4&24\\
\hline
\end{tabular}
\begin{tabular}{|ccccc|c|}
\hline
  \phantom{a}\scalebox{0.85}{$GW^{[2]}_{2,0}$}\phantom{a}&\phantom{}\scalebox{0.85}{$GW^{[2]}_{2,1}$}\phantom{a}&\phantom{}\scalebox{0.85}{$GW^{[2]}_{2,2}$}\phantom{a} &\phantom{a}\scalebox{0.85}{$GW^{[2]}_{2,3}$}\phantom{a}&
 \phantom{-}\scalebox{0.85}{$GW^{[2]}_{2,4}$}\phantom{a}&
\phantom{}\scalebox{0.85}{$GW^{[2]}_2$}\\
\hline
0&\phantom{Aa}2\phantom{Aa}&\phantom{Aa}0\phantom{Aa}&\phantom{Aa}0\phantom{Aa}&\phantom{A}0\phantom{A}&\phantom{A}2\phantom{A}\\
0&4&0&0&0&4\\
0&8&4&0&0&12\\
0&16&40&16&0&72\\
\hline
\end{tabular}
}
\end{table}

\begin{table}[h!]
\caption{The values on the level $m=3$  for $k\le7$}
\begin{tabular}{|c|cccccc||c|}
\hline
k&\scalebox{0.8}{$GW_{3,0}/\NN_{3,0}$}&\scalebox{0.8}{$GW_{3,1}/\NN_{3,1}$}&\scalebox{0.8}{$GW_{3,2}/\NN_{3,2}$}&\scalebox{0.8}{$GW_{3,3}/\NN_{3,3}$}&\scalebox{0.8}{$GW_{3,4}/\NN_{3,4}$}
&\scalebox{0.8}{$GW_{3,5}/\NN_{3,5}$}
&\scalebox{0.8}{$GW_3/\NN_3$}\\
\hline
3&0/0&2/0&0/0&0/0&0/0&0/0&2/0\\
4&0/0&4/0&1/1&0/0&0/0&0/0&5/1\\
5&0/0&8/0&8/0&0/0&0/0&0/0&16/0\\
6&0/0&16/0&52/2&16/0&0/0&0/0&84/2\\
7&0/0&32/0&304/0&576/0&304/0&32/0&1248/0\\
\hline
\end{tabular}
\end{table}

\begin{table}[h!]
\caption{The values on the levels $m=1,2$ and $3$ for $k=8$}\label{k=8}
\hskip-40mm
\vbox{\begin{tabular}{|c|ccccccccc||c|}
\hline
\scalebox{0.8}{n}&\scalebox{0.8}{$0$}&\scalebox{0.8}{$1$}&\scalebox{0.8}{$2$}&\scalebox{0.8}{$3$}&\scalebox{0.8}{$4$}&\scalebox{0.8}{$5$}&\scalebox{0.8}{$6$}&\scalebox{0.8}{$7$}&\scalebox{0.8}{$8$}&\scalebox{0.8}{the sum}\\
\hline
\scalebox{0.8}{$GW_{1,n}/GW_{1,n}^{[2]}/\NN_{1,n}$}&
\scalebox{0.8}{14/4/2}&\scalebox{0.8}{64/32/0}&\scalebox{0.8}{96/48/4}&\scalebox{0.8}{64/32/0}&\scalebox{0.8}{14/4/2}&\scalebox{0.8}{0/0/0}&\scalebox{0.8}{0/0/0}&\scalebox{0.8}{0/0/0}&\scalebox{0.8}{0/0/0}&\scalebox{0.8}{252/120/8}\\
\scalebox{0.70}{$GW_{2,n}/GW_{2,n}^{[2]}/\NN_{2,n}$}&
\scalebox{0.7}{1/0/1}&\scalebox{0.7}{64/32/0}&\scalebox{0.7}{$448/288/8$}&\scalebox{0.7}{$1216/864/0$}&\scalebox{0.7}{1672/1232/12}
&\scalebox{0.7}{$1216/864/0$}&\scalebox{0.7}{$448/288/8$}&\scalebox{0.7}{64/320}&\scalebox{0.7}{1/0/1}&\scalebox{0.7}{5130/3600/30}\\
\hline
\end{tabular}}
\vskip3mm
\hskip-37mm\vbox{\begin{tabular}{|c|cccccccccccc||c|}
\hline
\scalebox{0.7}{n}&\scalebox{0.7}{$0$}&\scalebox{0.7}{$1$}&\scalebox{0.7}{$2$}&\scalebox{0.7}{$3$}&\scalebox{0.8}{$4$}&\scalebox{0.7}{$5$}&\scalebox{0.7}{$6$}
&\scalebox{0.7}{$7$}&\scalebox{0.7}{$8$}&\scalebox{0.7}{$9$}&\scalebox{0.7}{$10$}&\scalebox{0.7}{$11$}&\scalebox{0.7}{$GW_3/\NN_3$}\\
\hline
\scalebox{0.70}{$GW_{3,n}/\NN_{3,n}$}&
\scalebox{0.7}{$0/0$}&\scalebox{0.7}{64/0}&\scalebox{0.7}{1664/8}&\scalebox{0.7}{12864/0}&\scalebox{0.7}{47328/40}
&\scalebox{0.7}{98688/0}
&\scalebox{0.7}{125184/64}
&\scalebox{0.7}{98688/0}
&\scalebox{0.7}{47328/40}
&\scalebox{0.7}{12864/0}&\scalebox{0.7}{1664/8}&\scalebox{0.7}{64/0}
&\scalebox{0.7}{446400/160}\\
\hline
\end{tabular}
}
\end{table}


\begin{remark}
It turns out that the values of $GW_{m,1}$ and $GW_{m,1}^{[2]}$
do not depend on $m\ge 1$, but only on the number $s\ge2$ of singular fibers in a conic bundle $X$.
Namely,
$$GW_{m,1}=2^{s-1},\qquad GW_{m,1}^{[2]}=2^{s-2},$$
which follows from the relations in Theorems \ref{simple-recursion-any-d} and \ref{moment-recursion-any-d}, or alternatively, can be derived directly,
using a simple observation that all curves from layers $(m,1)$ have arithmetic genus 0.

Using Theorem \ref{magic-theorem} we conclude that
$$\NN_{2m,2}=2^{m+s-5},\quad\text{ for } s\ge4.$$
\end{remark}

\subsection{
Rational surfaces admitting no real conic bundle structure}\label{except}

\begin{proposition} {\rm (}{\it cf. }\cite[Theorem 5.7]{Manin-II}{\rm )}\label{exceptional-cases} 
Real del Pezzo surfaces $X$ with $K_X^2=1$, $X_\R=\R P^2\+4S^2$ and $Y$ with
$K_Y^2=2$, $Y_\R=4S^2$ do not admit a real conic bundle structure.
 Moreover, if surfaces $X'$ and $Y'$ are birational over the reals respectively to $X$ and $Y$, then 
 they also do not
admit a real conic bundle structure.
\end{proposition}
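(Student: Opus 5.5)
Since every real surface obtained from $X$ or $Y$ by real blowing up or down is again birational over $\R$ to it, the plan is to attach to a real rational surface an invariant of real birational equivalence that is incompatible with a real conic bundle, and then check that $X$ and $Y$ carry it. The natural candidate is the number of connected components of the real locus, together with the Comessatti-type bound that a real conic bundle imposes on it.

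First I bound the components for a real conic bundle. Let $\pi:Z\to\P^1$ be a real conic bundle with $Z_\R\ne\oo$. Each connected component of $Z_\R$ maps onto an arc or onto all of $\P^1_\R$. If a component maps onto all of $\P^1_\R$, every real fiber is nonempty, so $Z_\R$ is connected. Otherwise each component lies over a maximal arc whose endpoints are real singular fibers where the real conic degenerates to a point. Distinct components lie over disjoint arcs, each with two such endpoints, and these endpoints are distinct singular fibers. Hence the number of components $c$ satisfies $c\le s_\R/2\le s/2=(8-K_Z^2)/2$. For $X$ this gives at most $7/2$, hence at most $3$ components, against the $5$ of $\R P^2\+4S^2$. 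For $Y$ it gives at most $3$ components, against the $4$ of $4S^2$. So neither surface itself admits a real conic bundle.

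Next I pass to $X'$ and $Y'$, which is where real-birational invariance is needed. The bound above depends on $K^2$, and $K^2$ is not a birational invariant, so a different invariant is required. I intend to use the result cited as \cite[Theorem 5.7]{Manin-II}, the Comessatti–Iskovskikh classification of real minimal rational surfaces. By it, the surfaces $X$ (with $K^2=1$, $X_\R=\Rp2\+4S^2$) and $Y$ (with $K^2=2$, $Y_\R=4S^2$) are $\R$-minimal, with $\Pic(X)^{\conj}=\Z K$. Moreover, every real birational map from them is a composition of Bertini or Geiser involutions and blowups, so they are birationally rigid, and any $\R$-minimal model of $X'$ or $Y'$ is isomorphic to $X$ or $Y$.

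Given this, suppose $X'$ had a real conic bundle structure. Running the relative real MMP over $\P^1$ contracts real $(-1)$-curves and conjugate pairs of $(-1)$-curves in fibers. The result is an $\R$-minimal real conic bundle $Z$ over $\P^1$, real birational to $X$, whose $\R$-minimal model is then also the model of $X$. But by Theorem \ref{R-minimal}, $Z$ is either spherical or a $\Sigma_k$. Neither is isomorphic to $X$ or $Y$, which have $\operatorname{rk}\Pic^{\conj}=1$ while $Z$ has $\operatorname{rk}\Pic^{\conj}\ge 2$. Rigidity then rules this out.

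The main obstacle is this rigidity input, which amounts to Iskovskikh's theorem on real links. I will cite it rather than reprove it. If only weaker tools were allowed, an alternative would be to use that the number of components of the real locus is a birational invariant among surfaces with nonempty real locus, and compare with the $\R$-minimal conic bundles of Theorem \ref{R-minimal}. Those are spherical with $mS^2$ but $2m$ fibers, and then the second Stiefel–Whitney or Kharlamov–Comessatti invariant distinguishes $\Rp2\+4S^2$ from $5S^2$-type loci.
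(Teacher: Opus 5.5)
Your main line is correct, but for the birational part it follows a different route from the paper. For $X$ and $Y$ themselves, your count $c\le s_\R/2\le (8-K^2)/2$ is a direct, elementary argument that the paper does not use.

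For $X'$ and $Y'$ you import Iskovskikh's birational rigidity of $\R$-minimal del Pezzo surfaces of degree $1$ and $2$, which is a deep Sarkisov-link-type result. The paper instead treats $X$ and $X'$ uniformly with an elementary birational invariant, the $\conj_*$-invariant lattice $H_2^+=\ker(1-\conj_*)$:
\begin{itemize}
\item Under a real blow-up, $H_2^+$ either gains an orthogonal $A_1$ summand (spanned by $E-\bar E$ for a conjugate pair) or does not change (for a real point). Since two real birational surfaces have a common real blow-up, $H_2^+$ modulo $A_1$ summands is a real birational invariant.
\item For $X$ and $Y$ this lattice is $K^\perp$, namely $E_8$ and $E_7$.
\item The number of real components ($5$, resp. $4$) is also a birational invariant. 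So a conic bundle structure would have a spherical relatively minimal model with real locus $5S^2$ (resp. $4S^2$). Its invariant lattice is $\langle K,C\rangle^\perp\cong D_{10}$ (resp. $D_8$).
\item Uniqueness of the decomposition of root lattices into irreducible summands then gives the contradiction.
\end{itemize}
Your route yields the stronger statement (rigidity), at the cost of a heavy external theorem. The paper's route is self-contained and uses only lattice facts already established for conic bundles.

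Two remarks on details.

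First, in your rigidity step the relatively minimal conic bundle $Z$ need not a priori be $\R$-minimal as a surface. So ``the $\R$-minimal model of $Z$ is $X$'' does not by itself contradict ``$Z\not\cong X$''. You can fix this in either of two ways:
\begin{itemize}
\item invoke rigidity in its Mori-fibre-space form, namely that every real Mori fibre space birational to $X$ is $X\to\pt$; or
\item note that the component count forces $Z$ to be spherical with $5$ (resp. $4$) spheres, hence $K_Z^2=-2$ (resp. $0$), and that a relatively minimal real conic bundle with $K^2\le 0$ is $\R$-minimal.
\end{itemize}
For the second way: a real $(-1)$-curve, or a disjoint conjugate pair, would have class $D=aK+bC$ with $D\cdot C=-2a>0$. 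Then the conditions $D^2=K\cdot D=-1$ (resp. $-2$) give $a^2K^2+2a=1$ (resp. $a^2K^2+4a=2$), which is impossible for $a<0$ and $K^2\le0$.

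Second, your fallback argument does not work. Beyond the number of components, the topology of the real locus is not a real birational invariant. Blowing up a real point on one sphere of a spherical conic bundle with real locus $5S^2$ produces a real conic bundle with real locus $\Rp2\+4S^2$. So no Stiefel--Whitney-type invariant of the real locus can separate $X$ from conic bundles; a finer invariant, such as the lattice above, is genuinely needed.
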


Here is a proof for convenience of the reader.

\begin{proof}
In the first case, we observe that for any $X'$ birational over the reals to $X$ the conj-invariant sublattice of $H_2(X')$ is $E_8\oplus rA_1$, while
for real conic bundles real birational to a minimal spherical real conic bundle with 5 connected components, such sublattice is $D_{10}\oplus sA_1$.
So, the result follows from the uniqueness of decomposition into irreducible summands
for elliptic root lattices.

In the second case, the conj-invariant sublattice of any $Y'$ birational over the reals to $Y$ is $E_7\oplus rA_1$ while on the conic bundle side it is $D_8\oplus sA_1$, and the same argument applies.
\end{proof}

It is easy to notice (e.g., by comparing the above proof with the classification of minimal models) that the class of surfaces described in Proposition \ref{exceptional-cases}
is closed under deformations.

It is known also (see \cite{iskov}) that each of the other real rational surfaces with $K^2\le 6$ admits a real rational ruling
and hence admits a real conic bundle structure after a small real deformation of the surface. Thus,
the surfaces described in Proposition \ref{exceptional-cases} are the only ones to which
our counting scheme can not be applied directly, even after a deformation.
However, every 
of these exceptional surfaces can be connected by wall-crossing with a surface admitting a real conic bundle. Therefore, it seems natural to expect that
our bi-graded counting can be extended to these two exceptional classes of surfaces as well. Then, for example, in the case of  $K_X^2=1$, $X_\R=\R P^2\+4S^2$ the number 
$N_1=W(-K)=8$ should split as $2+4+2$ and $N_2=W(-2K)=30$ as $1+8+12+8+1$, see Table \ref{k=8}, and a question arises: what can be a geometrical meaning of such a splitting?

\end{document}